\renewcommand{\arraystretch}{1.3}
\theoremstyle{plain}% default
\newtheorem{thm}{Theorem}[section]
\newtheorem{lem}[thm]{Lemma}
\newtheorem{prop}[thm]{Proposition}
\newtheorem{cor}[thm]{Corollary}
\theoremstyle{definition}
\newtheorem{defn}[thm]{Definition}
\newtheorem{exa}[thm]{Example}
\newtheorem{rem}[thm]{Remark} % Remark entry in bold
\numberwithin{equation}{section}
\newcommand{\parderv}[2] {\frac{\partial #1}{\partial #2}}
\renewcommand{\d} {\mathrm{d}}
\renewcommand{\div} {\mathrm{div}}
\newcommand{\ol} [1] {\overline{#1}}
\newcommand{\wh} [1]{\widehat{#1}}
\newcommand{\wt} [1]{\widetilde{#1}}
\newcommand{\mr}[1] {\mathring{#1}}
\newcommand{\ul} [1]{\underline{#1}}
\newcommand{\ut} [1]{\undertilde{#1}}
\newcommand{\mbf} [1]{\mathbf{#1}}
\newcommand{\mbb} [1]{\mathbb{#1}}
\newcommand{\mc} [1]{\mathcal{#1}}
\newcommand{\mf} [1]{\mathfrak{#1}}
\newcommand{\mrm} [1]{\mathrm{#1}}
\newcommand{\msf} [1]{\mathsf{#1}}
\renewcommand{\i} {\mathrm{i}}
\newcommand{\e} {\mathrm{e}}
\newcommand{\dbl} {[\![}
\newcommand{\dbr} {]\!]}
\renewcommand{\bigwedge}{\scaleobj{1.2}{\wedge}}
\renewcommand{\bigodot}{\scaleobj{1.2}{\odot}}
\newcommand{\hook}{\makebox[7pt]{\rule{6pt}{.3pt}\rule{.3pt}{5pt}}\,}
\newcommand{\Rho} {\mathsf{P}}
\newcommand{\Nh} {\mathsf{N}}
\newcommand{\Lv} {\mathsf{L}}
\newcommand{\Hom} {\mathrm{Hom}}
\newcommand{\Ann} {\mathrm{Ann}}
\newcommand{\gr} {\mathrm{gr}}
\renewcommand{\u} {\mathfrak{u}}
\newcommand{\su} {\mathfrak{su}}
\newcommand{\so} {\mathfrak{so}}
\newcommand{\g} {\mathfrak{g}}
\newcommand{\p} {\mathfrak{p}}
\newcommand{\R} {\mathbf{R}}
\newcommand{\C} {\mathbf{C}}
\newcommand{\U}{\mathbf{U}}
\newcommand{\SU}{\mathbf{SU}}
\newcommand{\SO}{\mathbf{SO}}
\newcommand{\Spin} {\mathbf{Spin}}
\newcommand{\CO}{\mathbf{CO}}
\newcommand{\Sim}{\mathbf{Sim}}
\newcommand{\CP} {\mathbf{C}\mathbb{P}}
\newcommand{\RP} {\mathbf{R}\mathbb{P}}
\newcommand{\V}{{\mathbb V}}
\newcommand{\W}{{\mathbb W}}
\newcommand{\K}{{\mathbb K}}
\def\ydhook{\raisebox{-.4ex}{\scaleobj{0.35}{\yng(2,1)}}}
\def\ydskew{\raisebox{-.4ex}{\scaleobj{0.35}{\yng(1,1,1)}}}
\begin{document}

\title
%[Almost Robinson geometries]
{Almost Robinson geometries}

\begin{abstract}
We investigate the geometry of almost Robinson manifolds, Lorentzian analogues of almost Hermitian manifolds, defined by Nurowski and Trautman as Lorentzian manifolds of even dimension equipped with a totally null complex distribution of maximal rank. Associated to such a structure, there is a congruence of null curves, which, in dimension four, is geodesic and non-shearing if and only if the complex distribution is involutive. Under suitable conditions, the distribution gives rise to an  almost Cauchy--Riemann structure on the leaf space of the congruence.

We give a comprehensive classification of such manifolds on the basis of their intrinsic torsion. This includes an investigation of the relation between an almost Robinson structure and the geometric properties of the leaf space of its congruence. We also obtain conformally invariant properties of such a structure, and we finally study an analogue of so-called generalised optical geometries as introduced by Robinson and Trautman.
\end{abstract}

\date{\today\ at \xxivtime}

\author{Anna Fino}\address{Universit\`{a} di Torino, Dipartimento di Matematica ``G. Peano", Via Carlo Alberto, 10 - 10123, Torino, Italy $\&$  Department of Mathematics and Statistics\\
Florida International University\\
Miami Florida, 33199, USA}
 \email{annamaria.fino@unito.it, afino@fiu.edu}
\author{Thomas Leistner}\address{School of Mathematical Sciences, University of Adelaide, SA 5005, Australia}\email{thomas.leistner@adelaide.edu.au}
\author{Arman Taghavi-Chabert${}^\ast$}\address{Faculty of Physics, University of Warsaw, ul. Pasteura 5, 02-093 Warszawa, Poland}\email[Corresponding author]{arman.taghavi-chabert@fuw.edu.pl}

\thanks{\textit{Funding.} AF was supported by GNSAGA of INdAM, by PRIN 2017 \lq \lq Real and Complex Manifolds: Topology, Geometry and Holomorphic Dynamics" and by a grant from the Simons Foundation (\#944448).
TL was supported by
 the Australian Research
Council (Discovery Program DP190102360). ATC \& TL declare that this work was partially supported by the grant 346300 for IMPAN from the Simons Foundation and the matching 2015-2019 Polish MNiSW fund.  The research of ATC leading to these results has received funding from the Norwegian Financial Mechanism 2014-2021 UMO-2020/37/K/ST1/02788. ATC was also supported by a long-term faculty development grant from the American University of Beirut for his visit to IMPAN, Warsaw, in the summer 2018. He received funding from the GA\v{C}R (Czech Science Foundation) grant 20-11473S in 2020.
\\
\indent \textit{Conflict of interest.}
On behalf of all authors, the corresponding author states that there is no conflict of interest.
\\
\indent \textit{Data Availability Statement.}	
	Data sharing is not applicable to this article as no datasets were generated or analyzed during the current study.
}
\subjclass[2010]{Primary 53C50, 53C10; Secondary 53B30, 53C18}
\keywords{Lorentzian manifolds, almost Robinson structures, G-structure, intrinsic torsion, congruences of null geodesics, conformal geometry, almost CR structures}

\maketitle
\tableofcontents

\section{Introduction}
In a recent article \cite{Fino2020}, the authors give a comprehensive review of the notion of \emph{optical structure} on a Lorentzian manifold $(\mc{M},g)$, simply understood as a null line distribution $K$ on $\mc{M}$. Many of the geometric properties of this distribution and its orthogonal complement are encoded in terms of its screen bundle $H_K = K^\perp/K$, which is naturally equipped with a bundle metric $h$ inherited from $g$. One may naturally wish to endow $H_K$ with further bundle structures. In the present article, where we assume $\mc{M}$ to have dimension $2m+2$, we equip $H_K$ with a bundle complex structure $J$ compatible with $h$. Such a structure was introduced by Nurowski and Trautman in \cite{Nurowski2002,Trautman2002,Trautman2002a}, where it is equivalently described in terms of a totally null complex $(m+1)$-plane distribution $N$. The real span of the intersection $N \cap \overline{N}$ then determines the line distribution $K$. Following their terminology, we shall refer to the pair $(N,K)$ as an \emph{(almost) Robinson structure}. The structure group of the frame bundle is reduced to $(\R_{>0}\times \U(m))\ltimes (\R^{2m})^*$, which is a subgroup of the group $\Sim(2m)$, which characterises optical structures, and as in \cite{Fino2020}, we shall describe the geometric properties of an almost Robinson structure in terms of its \emph{intrinsic torsion}. Our approach is analogous to that of Gray and Hervella in the almost Hermitian setting \cite{Gray1980}. In our case, however, it is the decomposition of the screen bundle with its complex structure, rather than the tangent bundle, that encodes the geometric properties of the almost Robinson structure. To this end, we exploit the interaction with the optical structure and use results already obtained in \cite{Fino2020}. The main results, contained in Theorems \ref{thm-main-Rob} and \ref{thm:intors-rob}, give an invariant description of the module of intrinsic torsions of an almost Robinson structure. On the basis of this description, we proceed to examine the implications of the torsion classes in terms of geometric properties.

Such geometries have already been studied, notably in dimension four, and we shall briefly review some existing results below. As is well-known \cite{Penrose1986,Nurowski2002,Trautman2002,Trautman2002a,Fino2020}, an almost Robinson structure $(N,K)$ in dimension four is essentially equivalent to an optical structure. The key point, here, is that the involutivity of the totally null complex $2$-plane distribution $N$ is equivalent to the congruence $\mc{K}$ of null curves tangent to $K$ being geodesic and \emph{non-shearing}, that is, the conformal class of the bundle metric $h$ is preserved along the geodesic curves of $\mc{K}$. What is more,  the rank-one complex vector bundle $N/{}^{\C}K$ descends to the leaf space $\ul{\mc{M}}$ of $\mc{K}$, thereby endowing it with a \emph{Cauchy--Riemann (CR) structure}. This CR geometrical aspect of Robinson manifolds was particularly emphasised by Robinson, Trautman and the `Warsaw' school \cite{Tafel1985,Tafel1986,Robinson1986,nurowski93-phd,Lewandowski1990,Nurowski1997}, and in parallel, by the twistor school \cites{Mason1985,Mason1998}. This property is useful when seeking solutions to the Einstein field equations \cite{Lewandowski1991a,Tafel1991,Nurowski1992}, a problem that is in turn linked to analytic questions regarding the  embeddability of CR manifolds \cite{Tafel1985,Tafel1986,Lewandowski1990a,Hill2008,Schmalz2019}.

There are also three important theorems worthy of mention in the development of mathematical relativity in the present context:
\begin{itemize}
\item The \emph{Mariot--Robinson theorem} \cite{Mariot1954,Robinson1961} gives a correspondence between analytic non-shearing congruences of null geodesics and \emph{null} or \emph{algebraically special} electromagnetic fields in vacuum.
\item The \emph{Goldberg--Sachs theorem} \cite{Goldberg1962,Goldberg2009} relates the existence of non-shearing congruences of null geodesics to the algebraic degeneracy of the Weyl tensor for Einstein spacetimes.
\item The \emph{Kerr theorem}, as formulated in \cite{Penrose1967}, tells us how such congruences arise in Minkowski space from complex submanifolds of three-dimensional complex projective space.
\end{itemize}

In higher (even) dimensions, the congruence of null curves of an involutive almost Robinson structure $(N,K)$ is always geodesic, but shearing in general \cite{Trautman2002a}. The leaf space of $\mc{K}$ nevertheless still acquires a CR structure \cite{Nurowski2002,Trautman2002}. In addition, (almost) Robinson structures are Lorentzian analogues of (almost) Hermitian structures, to borrow the expression from Nurowski and Trautman \cite{Nurowski2002}. In both cases, the underlying geometric object is that of an \emph{almost null structure}, that is, a totally null complex $(m+1)$-plane distribution. This perspective allows one to have a unified approach to pseudo-Riemannian geometry in any signature. In dimension four, the analogies between Lorentzian and Hermitian geometries were already pointed out in \cite{nurowski93-phd,nurowski96} especially in connection with the aforementioned theorems of mathematical relativity. For instance, the Kerr theorem finds an articulation in Riemannian signature as follows: any local Hermitian structure on four-dimensional Euclidean space corresponds to a holomorphic section of its twistor bundle \cite{Eells1985,Salamon2009}. A Riemannian counterpart of the Goldberg--Sachs theorem is given in \cite{Przanowski1983,nurowski93-phd,Apostolov1997,GoverHillNurowski11}. In split signature, one obtains analogous results -- see e.g.\ \cite{Guillemin1986,GoverHillNurowski11}.

Almost null structures are also intimately connected with the notion of \emph{pure spinors}, and thus hark back to \'{E}lie Cartan's seminal work \cite{Cartan1967}, which was subsequently developed in \cite{Budinich1988,Budinich1989,Kopczy'nski1992,Kopczy'nski1997} among others. It is then no surprise that in dimension four, the spinorial approach to general relativity promoted by Penrose and his school \cite{Witten1959,Penrose1960,Penrose1984,Penrose1986}  shed much light on the \emph{complex} aspect of congruences of null geodesics, and was influential in the development of twistor theory \cite{Penrose1967}. These ideas were later developed in higher even dimensions in \cite{Hughston1990b,Hughston1990a,Hughston1990,Hughston1995,Jeffryes1995}, and most notably in the article \cite{Hughston1988} by Hughston and Mason, where the Kerr and Robinson theorems are generalised in the context of involutive almost null structures. These results were expanded by the third author of the present article in \cite{Taghavi-Chabert2016,Taghavi-Chabert2017a,TaghaviChabert2017}, where a comprehensive study of almost null structures  according to their intrinsic torsion is given in both even and odd dimensions. The recent articles \cite{Papadopoulos2019,FigueroaOFarrill2020,Petit2019} also touch on related topics on pseudo-Riemannian geometry.

Non-shearing congruences of null geodesics are ubiquitous in four-dimensional mathematical relativity, see e.g.\ \cite{Stephani2003} and references therein. One question that arises is, which of non-shearing congruences and (almost) Robinson structures have most relevance in higher dimensions? On the one hand, Robinson--Trautman and Kundt spacetimes, which are by definition characterised by the existence of a \emph{non-twisting} non-shearing congruence of null geodesics, have been well studied in arbitrary dimensions, see e.g. \cite{Podolsky2008,Podolsky2015}. On the other hand, the Kerr metric and its variants admit a pair of \emph{twisting} congruences of null geodesics, which are non-shearing in dimension four, but fail to be so in higher dimensions \cite{Pravda2007}.
Nonetheless, as was first brought to light in \cite{Mason2010}, these metrics admit several Robinson structures in any dimensions\footnote{These are not explicitly referred to as Robinson structures there, but may be interpreted as such.}. Almost Robinson structures can also be defined in terms of a maximal totally null complex distribution on odd-dimensional Lorentzian manifolds: the black ring in dimension five is equipped with a pair of Robinson structures,\footnote{In \cite{Taghavi-Chabert2011}, these Robinson structures are referred to as optical structures in a sense similar to \cite{nurowski96}. This terminology is now obsolete by virtue of \cite{Fino2020}.} but does not admit any non-shearing congruences of null geodesics \cite{Taghavi-Chabert2011} --- see also \cite{Taghavi-Chabert2012}. In dimension three, one can similarly obtain analogous results -- see e.g.\ \cite{Nurowski2015}.

Considering the length of this article and the technicalities involved, the following section includes a detailed summary of our main results, section by section.

\section{Summary of results}
Our journey starts in Section \ref{sec:algebra}, where we introduce the algebraic notion of a \emph{Robinson structure} on  a   $(2m+2)$-dimensional Minkowski space  $(\V,g)$, as a pair $(\mbb{N},\K)$, where $\mbb{N}$ is a totally null complex $(m+1)$-plane distribution and $\K$ the real null distribution whose complexification is given by $\mbb{N} \cap \ol{\mbb{N}}$. Proposition \ref{prop:char-Robinson} gives various algebraic characterisations of $(\mbb{N},\K)$ as
	\begin{enumerate}
		\item a totally null complex $(m+1)$-form;
		\item an optical structure $\K$ whose screenspace $ \mbb{H}_\K= \K^\perp/\K$ is endowed with a complex structure $J_i{}^j$ compatible with the induced metric $h_{i j}$;\label{item:opt}
		\item   a $1$-form $\kappa_a$ and  a $3$-form $\rho_{abc}$ satisfying 
		$\rho _{ab} \, {}^e \rho _{cde}  = - 4 \kappa_{[a} g_{b][c} \kappa_{d]}$;
		\item  a pure spinor of real index $1$.
	\end{enumerate}
	Using  the characterisation \eqref{item:opt} above, we  determine  the  stabiliser  of a Robinson structure  as a closed Lie subgroup $Q$ of the stabiliser $P$ of $\K$ in $G =\SO^0(2m+1,1)$. We can thus apply the findings of  \cite{Fino2020} to describe in Sections \ref{sec:alg_intors} and \ref{sec:alg_intors_iso} the space $\mbb{G}$ of algebraic intrinsic torsions for $m >1$: the basic idea is that the group $Q$ induces a $Q$-invariant filtration on $\mbb{G}$, and the associated graded $Q$-modules split into further irreducibles linearly isomorphic to $\U(m)$-modules. The main results are collected in Theorems \ref{thm-main-Rob} and \ref{thm:intors-rob}, and while comprehensive, they are also rather technical.

Having all the algebraic machinery at disposal, we proceed to apply it to the geometric setting in Section \ref{sec:geometry}: thus, an \emph{almost Robinson structure} on  an oriented and time-oriented Lorentzian manifold  $(\mc{M},g)$ of dimension $2m+2$  is defined as  a pair $(N,K)$ where $N$ is a complex distribution of rank $m+1$ totally null with respect to  the complexfication ${}^\C g$, and $K$ a real line distribution such that ${}^\C K = N \cap \overline{N}$. The quadruple $(\mc{M},g,N,K)$ is then referred to as an \emph{almost Robinson manifold} or \emph{geometry}. Considering the large number of classes of intrinsic torsions for almost Robinson geometries, we shall split almost Robinson geometries into a number of broad types, and we will focus on the cases most amenable to geometric interpretations.

An  almost Robinson structure $(N,K)$ induces an optical structure on $(\mc{M},g)$ in the sense of \cite{Fino2020}, namely a filtration of vector bundles $K \subset K^\perp \subset T \mc{M}$. The orientation and time-orientation on $\mc{M}$ induce an orientation on $K$, and the screen bundle  $H_K := K^\perp / K$ of $K$ inherits a positive-definite bundle metric $h$ from $g$.  An \emph{optical vector field}, that is, a non-vanishing section of $K$, generates a \emph{congruences of null curves}, and one of the main points is to investigate its geometric properties together with those of its associated leaf space. Dually, we may also consider any \emph{optical $1$-form}, i.e.\ a section of $\Ann(K^\perp)$, which, by virtue of the almost Robinson structure, has an associated \emph{Robinson $3$-form} $\rho$, which encodes an hermitian structure on $H_K$. There are other natural objects that can be used as specified in Proposition \ref {prop:char-Robinson-mfld}, notably a pure spinor field of real index one, defined up to scale.

As it will emerge, the leaf space of the congruence generated by an optical vector field in many cases turns out to be an \emph{almost CR manifold}, that is, a triple $(\ul{\mc{M}},\ul{H},\ul{J})$, where $\ul{\mc{M}}$ is a smooth manifold of dimension $2m+1$, $\ul{H}$ a rank-$2m$ distribution and $\ul{J}$ a bundle complex structure on $\ul{H}$. When the $\pm \i$-eigenbundles of $\ul{J}$ are involutive, we refer to $(\ul{\mc{M}},\ul{H},\ul{J})$ as a \emph{CR manifold}. Section \ref{sec:CR} is devoted to the subject, which plays an important part in this article, and one of the aims of the subsequent sections is to relate the classes of intrinsic torsions to the geometric property of the underlying (almost) CR structure.

This is in fact the main focus of Section \ref{sec:lift} regarding so-called \emph{nearly Robinson geometries}, that is, almost Robinson geometries for which $[K,N] \subset N$. This condition alone tells us that $N$ induces an almost CR structure on the leaf space of the null geodesic congruence tangent to $K$. They include as a subclass the so-called \emph{Robinson geometries} for which $[N,N] \subset N$, and in fact, generalise the notion of \emph{non-shearing congruence of null geodesics}, central object of mathematical relativity: these are generated by a null vector field $k$ that satisfies $\mathsterling_k g (v,w) \propto g (v,w)$ for any vector field $v$ and $w$ orthogonal to $k$. Among the most striking results of this section are Propositions \ref{prop:lift} and \ref{prop:Rob->CR}, which state that any almost CR structure $(\ul{\mc{M}},\ul{H},\ul{J})$ can be `lifted' to a nearly Robinson manifold on the trivial line bundle $\ul{\mc{M}} \times \R$, and conversely, any nearly Robinson manifold arises in this way. A normal form for the Robinson metric is provided therein, and we discuss its various consequences. For instance, if the congruence is \emph{maximally twisting}, in the sense that any optical $1$-form $\kappa$ satisfies $\kappa \wedge (\d \kappa)^m \neq 0$, then the underlying almost CR structure is \emph{contact}. Proposition \ref{prop:Rob->pi_cont_CR} characterises the existence of a so-called \emph{partially integrable almost CR structure} $(\ul{\mc{M}},\ul{H},\ul{J})$ and an auxiliary subconformal structure on $\ul{H}$ in terms of the intrinsic torsion of its nearly Robinson lift.

Section \ref{sec:tw-ind_al_Rob} shifts the focus to another particularly interesting class, which consists of almost Robinson structures that are \emph{twist-induced}, meaning that if one starts merely from an optical geometry and choose any optical $1$-form $\kappa$, then $\kappa \wedge \d \kappa$ is proportional to a Robinson $3$-form. In other words, the optical geometry has a canonically associated almost Robinson structure determined by the \emph{twist} of its null geodesic congruence. The most remarkable aspect of such a configuration is that such an optical, or almost Robinson, geometry admits a unique distinguished optical vector field, as pointed out in Proposition \ref{prop:twist-ind_Rob}. Twist-induced \emph{nearly} Robinson geometries are also natural generalisation of \emph{twisting} non-shearing congruence of null geodesics from four to higher even dimensions, and one obtains further characterisations of their intrinsic torsion in Propositions \ref{prop:tw-ind_Rob_shear} and \ref{prop:tw-ind_Rob_no_shear}.

In dimension four, the use of spinors provides another potent approach to the study of geometric structures on Lorentzian manifolds, and Section \ref{sec:spinor_des} explores this theme further. Theorem \ref{thm:spinor_intors} notably characterises a number of classes of intrinsic torsions in terms of irreducible equations on a pure spinor field, which had already been in obtained in \cite{Taghavi-Chabert2016}. Proposition \ref{prop:fol_spinor} gives a description of the intrinsic torsion of any Robinson structure in terms of a non-linear spinorial differential equation, which generalises Penrose's well-known equation $\nu^{\mbf{A}'} \nu^{\mbf{B}'} \nabla_{\mbf{A} \mbf{A}'} \nu_{\mbf{B}'} = 0$.

After a brief review of the Gray--Hervella classication of almost Hermitian structures and its relation to the present article in Section \ref{sec:Rob-GH}, we move on to the study of almost (and in fact nearly) Robinson geometries for which the associated congruence of null geodesics is non-twisting and non-shearing. They fall into two classes: the  \emph{Kundt type} in Section \ref{sec:alRob_Kundt}, where the congruence is also \emph{non-expanding}, and the \emph{Robinson--Trautman type} in Section \ref{sec:alRob_RT} for which the congruence is \emph{expanding}. In both cases, the idea here is that, since the congruence is non-twisting, its leaf space admits a Riemannian foliation, each leaf of which is in fact an almost Hermitian manifold. We can then associate the class of intrinsic torsion of the nearly Robinson manifold to the Gray--Hervella class of this almost Hermitian foliation. as described in Table \ref{tab-intors-GH}.

The very brief Section \ref{sec:lin_conn} concludes our exploration of almost Robinson geometries in the metric setting by considering compatible linear connections, described by Proposition \ref{prop:lin_conn}.

In Section \ref{sec:conf-aRstr}, our definition of almost Robinson manifold is extended to the conformal setting in the obvious way by simply replacing a Lorentzian metric structure by an equivalence class of conformally related Lorentzian metrics. Many of the properties investigated in Section \ref{sec:geometry} carry over, and the only aspect that really need to be taken care of is which classes of intrinsic torsion are conformally invariant, and the answer is given by Theorem \ref{thm:conf-Rob}. Just as in the metric case, one can `lift' a given almost CR structure as a \emph{conformal} nearly Robinson manifolds on the line bundle. This construction is particularly interesting when the almost CR structure is contact and partially integrable, in which case Proposition \ref{prop-adapted_frame_max_tw-nsh-nexp_conf} show that changes of contact forms induce conformal changes of the nearly Robinson lift, not unlike the classical Fefferman construction --- see Example \ref{exa:Feff}.

Sections \ref{sec:Robinson_theorem} and \ref{sec:Kerr_theorem} review two theorems of importance stemming from mathematical relativity, namely the Mariot--Robinson theorem and the Kerr theorem respectively, and how they generalise to higher dimensions. The former is concerned with solutions to an appropriate generalisation of the vacuum Maxwell field equations, while the latter provides a geometric construction of Robinson structures in twistor space.

Finally, in  Section \ref{sec:gen_Rob}   we consider    \emph{generalised almost Robinson structures}, which can be viewed as  an extension to higher dimensions of the  notion   of optical structure    in dimension four  presented in \cite{Trautman1984,Trautman1985,Robinson1985,Robinson1986,Robinson1989,Musso1992,Trautman1999}).  A  generalised almost Robinson structure on a smooth manifold  $(\mc{M},g)$ of dimension $2m +2$,  is defined as a  triple $(N,K,\mbf{o})$,  where $N$ is a complex $(m+1)$-plane distribution, $K := N \cap T \mc{M}$ is a real line distribution on $\mc{M}$, and $\mbf{o}$ is an  equivalence class of Lorentzian metrics such that,  for every $g \in \mbf{o}$, $N$ is null with respect to the complex linear extension of $g$, and  any two metrics $g, \wh{g}\in \mbf{o}$ are related by the relation
$ \wh{g}  = \e^{2 \varphi} \left( g + 2 \, \kappa \, \alpha \right)$,  for some smooth function $\varphi$, a $1$-form $\alpha$ on $\mc{M}$, and $\kappa = g(k ,\cdot)$, with $k$ some non-vanishing section of $K$. In particular, for each choice of metric $g$, $(N,K)$ is an almost Robinson structure in the sense of Section \ref{sec:geometry}. In Theorem \ref{thm:gen_Rob_prop} we  determine which subbundles of the bundle of intrinsic torsions do not depend on the choice of metric $g$ in $\mbf{o}$. We can therefore start from a given almost CR structure, and construct a family of nearly Robinson metrics on a trivial line bundle parametrised by a $1$-form and a conformal factor. This is particularly useful in application to general relativity since we may wish to add the requirement that one of these metrics has prescribed Ricci tensor.  In Theorem  \ref{thm:integrable_Rob}  we extend the results  in \cite{Robinson1985,Fino2020}, obtaining  a characterisation of   the integrability  of   generalised optical structures  as $G$-structures.

Section \ref{sec:other_sign} discusses the possible generalisations to other metric signatures. We have relegated to Appendices \ref{app:proj} and \ref{app:gen_al_Rob} a number of technical formulae that are used in the main text.

As pointed out earlier, the notion of nearly Robinson structure provides a generalisation of non-shearing congruences of null geodesics from four to higher even dimensions. These are intrinsically connected to algebraically special Einstein four-manifolds, and one of the current and future applications of nearly Robinson structures
is the construction of higher-dimensional solutions to Einstein's field equations --- see e.g.\ \cite{Mason2010,Alekseevsky2018,Alekseevsky2021,Taghavi-Chabert2021}. We have scattered a number of relevant examples throughout the article to illustrate the point: the Kerr--NUT--(A)dS metrics, the Taub--NUT--(A)dS metrics in Examples \ref{exa:KerrNUTAdS} and \ref{exa:F-E_Taub-NUT} respectively, Kundt and Robinson--Trautman metrics as in Example \ref{exa:RT}, and the Myers-Perry metric in Example \ref{exa:MP}, to name but a few.

We also provide examples to illustrate some of the algebraic conditions that the intrinsic torsion of an almost Robinson structure can satisfy, focussing essentially on dimensions greater than four. Considering the rich range of classes of almost Robinson structures, this article does not aim to cover every possible case, but it leaves the construction of almost Robinson structures with prescribed intrinsic torsion as open problems --- see for instance Remark \ref{rem:non-geod}. We do not touch on questions related to the curvature of almost Robinson manifolds, these being dealt with in \cite{Taghavi-Chabert2014}.

\section{Algebraic description}\label{sec:algebra}
\subsection{Notation and conventions}\label{sec:Notation}

We set up the notation and conventions used throughout this article by recalling some basic notions of algebra -- see e.g.\ \cite{Salamon1989,Chern1995} for further details. The fields of real numbers and complex numbers will be denoted $\R$ and $\C$ respectively, the imaginary unit  by $\i$, i.e.\ $\i^2 = -1$.

Let $\V$ and $\mbb{W}$ be two real or complex vector spaces with respective duals $\V^*$ and $\W^*$. The annihilator of a vector subspace $\mbb{U}$ of $\V$ will be abbreviated to $\mathrm{Ann}(\mbb{U})$. The tensor product of  $\V$ and $\mbb{W}$ will be denoted $\V \otimes \mbb{W}$, the $p$-th exterior power of $\V$ by $\bigwedge^p \V$, its $p$-th symmetric power by $\bigodot^p \V$.

If $g$ is a non-degenerate symmetric bilinear form on $\V$,  the orthogonal complement of a subspace $\mbb{U}$ of $\mbb{V}$ with respect to $g$ will be denoted $\mbb{U}^\perp$. The subspace of $\bigodot^p \V$ consisting of elements that are tracefree with respect to $g$ will be denoted by $\bigodot^p_\circ \V$.

Let us assume that $\V$ is complex and of dimension $2m$. Under the Hodge duality operator $\star : \bigwedge^p \V^* \rightarrow \bigwedge^{2m-p} \V^*$, for $p=0,\ldots,2m$, the space $\bigwedge^{m} \V^*$ splits into the space of self-dual $m$-forms $\bigwedge^{m}_+  \V^*$ and the space of anti-self-dual $m$-forms $\bigwedge^{m}_-  \V^*$, i.e.\
\begin{align*}
\bigwedge^{m} \V^* = \bigwedge^{m}_+  \V^* \oplus \bigwedge^{m}_-  \V^* \, ,
\end{align*}
where $\star \alpha = \pm (\i) \alpha$ for any $\alpha \in \bigwedge^{m}_{\pm}  \V^*$.

Suppose now that $\V$ is real. The complexification $\C \otimes \V \cong \V \oplus \i \V$ of $\V$ will be denoted ${}^\C \V$. There is an induced reality structure, $\bar{}: {}^\C \V \rightarrow {}^\C \V$ on ${}^\C \V$, which preserves the elements of $\V$, i.e.\ for $v \in {}^\C \V$, we have that $v \in \V$ if and only if $\bar{v} = v$. If $\mbb{A}$ is a vector subspace of ${}^\C \V$, its complex conjugate is defined by $\overline{\mbb{A}} := \{ v \in {}^\C \V : \overline{v} \in \mbb{A} \}$. We say that $\mbb{A}$ is (totally) real if $\overline{\mbb{A}} = \mbb{A}$.

Suppose now that $\V$ has dimension $2m$ and is equipped with a complex structure $J$, that is an endomorphism of $\V$ that squares to minus the identity on $\V$, i.e.\ $J \circ J = - \mathrm{Id}$. Then
\begin{align*}
{}^\C \V & = \V^{(1,0)} \oplus \V^{(0,1)} \, , 
\end{align*}
where $\V^{(1,0)}$ and $\V^{(0,1)}$ are the $+\i$- and $-\i$-eigenspaces of $J$ respectively. These $m$-dimensional complex vector subspaces are complex conjugate to each other, i.e. $\V^{(1,0)} \cong \overline{\V^{(0,1)}}$. Similarly, we have a splitting of the dual space
\begin{align*}
{}^\C \V^* & = (\V^{(1,0)})^* \oplus (\V^{(0,1)})^* \, ,
\end{align*}
and $(\V^{(1,0)})^* = \Ann(\V^{(0,1)})$ and $(\V^{(0,1)})^* = \Ann(\V^{(1,0)})$.  For any non-negative integer $p$, $q$, the space of all $(p,q)$-forms on $\V$ is defined to be
\begin{align*}
\bigwedge^{(p,q)} \V^*  & :=  \bigwedge^p ( \V^{(1,0)})^* \otimes  \bigwedge^q ( \V^{(0,1)})^* \, .
\end{align*}
Similarly, we define the spaces
\begin{align}
\bigodot{}^{(p,q)} \V^*  & :=  \bigodot{}^p ( \V^{(1,0)})^* \otimes  \bigodot{}^q ( \V^{(0,1)})^* \, , \nonumber \\
\ydhook(  \V^* ) & := \left\{ \alpha \in \bigwedge^{(1,0)} \V^*   \otimes  \bigwedge^{(2,0)}  \V^* : \pi^{(3,0)} (\alpha) = 0 \right\} \, . \label{eq:ydhook}
\end{align}
where $\pi^{(3,0)}$ is the natural projection from $\bigwedge^{(1,0)} \V^*   \otimes  \bigwedge^{(2,0)}  \V^*$ to $\bigwedge^{(3,0)} \V^*$. This notation reflects the Young diagram symmetries of this irreducible $\mbf{GL}(m,\C)$-module, where $\mbf{GL}(m,\C)$ is the complex general linear group acting on $\V^{(1,0)} \cong \C^m$.

Since we are interested in real vector spaces, we also define, following the notation in \cite{Salamon1989},
\begin{align}
\begin{aligned}\label{eq:Salamonotation}
 \dbl \bigwedge^{(p,q)} \V^* \dbr \otimes_\R \C & := \bigwedge^{(p,q)} \V^* \oplus \bigwedge^{(q,p)} \V^* \, , & & p \neq q  \, , \\
    [ \bigwedge^{(p,p)} \V^* ] \otimes_\R \C & := \bigwedge^{(p,p)} \V^* \, , \\
\end{aligned}
\end{align}
This notation will be extended in the obvious way to $\bigodot{}^{(p,q)} \V^*$ and $\ydhook( \V^* )$.

Finally, we shall consider a Hermitian vector space $(\V, J, h)$ where $J$ is a complex structure compatible with a positive-definite symmetric bilinear form $h$, i.e.\ $J \circ h = - h \circ J$. Then $\V^{(1,0)} \cong (\V^{(0,1)})^*$ and $\V^{(0,1)} \cong (\V^{(1,0)})^*$ so that $\V^{(1,0)}$ and $\V^{(0,1)}$ are totally null with respect to $h$.  The Hermitian $2$-form on $\V$ is defined by $\omega = h \circ J$. For $pq \neq 0$, the subspace of $\bigwedge^{(p,q)} \V^*$ and $\bigodot{}^{(p,q)} \V^*$ consisting of all $(p,q)$-forms that are tracefree with respect to $\omega^{-1}$ or $h^{-1}$ will be denoted $\bigwedge^{(p,q)}_\circ \V^*$ and $\bigodot{}^{(p,q)}_\circ \V^*$ respectively. Note that $\bigwedge^{(1,1)} \V^*  \cong \u(m)$ and $\bigwedge^{(1,1)}_\circ \V^* \cong \su(m)$, where $\u(m)$ and $\su(m)$ are the Lie algebras of the unitary group $\U(m)$ and special unitary group $\SU(m)$ respectively.

\subsection{Linear algebra}
\subsubsection{Null structures}
Let $\wt{\V}$ be a $(2m+2)$-dimensional oriented complex vector space equipped with a non-degenerate symmetric bilinear form $\wt{g}$.  We introduce abstract indices following the convention of \cite{Fino2020}: minuscule Roman indices starting with the beginnning of the alphabet $a, b, c,\ldots$ will refer to elements of $\wt{\V}$ and its dual, and tensor products thereof, e.g.\ $v^a \in \wt{\V}$ and $\alpha_a{}^b \in \wt{\V}^* \otimes \wt{\V}$. Round brackets and squared brackets enclosing a group of indices will denote symmetrisation and skew-symmetrisation respectively, e.g.\
\begin{align*}
T^{(a b)} & = \frac{1}{2} \left( T^{a b} + T^{b a} \right) \, , & \beta_{[a b c]} & = \frac{1}{3!} \left( \beta_{a b c} - \beta_{a c b} + \beta_{b c a}  - \beta_{b a c} + \beta_{c a b} - \beta_{c b a} \right) \, .
\end{align*}
In particular, the symmetric bilinear form satisfies $\wt{g}_{a b} = \wt{g}_{(a b)}$, and together with its inverse $\wt{g}^{a b}$, will be used to raise and lower indices. The tracefree (symmetric) part of a tensor with respect to $\wt{g}$ will be adorned with a small circle, e.g.\ either as $T_{(a b)_\circ}$ or $\left( T_{a b} \right)_\circ$.

\begin{defn}\cite{Budinich1989,Kopczy'nski1992,Nurowski2002,Taghavi-Chabert2016}
A \emph{null structure} on $(\wt{\V}, \wt{g})$ is a maximal totally null (MTN) vector subspace of $\wt{\V}$, i.e.\ $\mbb{N} = \mbb{N}^\perp$. In other words,  $\wt{g}(v,w)=0$ for any $v, w \in \mbb{N}$, and $\mbb{N}$ has dimension $m+1$.
\end{defn}

A null structure $\mbb{N}$ on $(\wt{\V}, \wt{g})$ singles out the one-dimensional vector subspace $\bigwedge^{m+1} \Ann(\mbb{N})$ of $\bigwedge^{m+1} \wt{\V}^*$. Any element $\nu$ of $\bigwedge^{m+1} \Ann(\mbb{N})$ is then totally null, i.e.\ $\nu_{a a_1 \ldots a_m} \nu^{a}{}_{b_1 \ldots b_m}  = 0$. In particular, $\nu$ satisfies the following properties:
\begin{enumerate}
\item $\nu$ is simple, i.e.\ $\nu_{a_1 \ldots a_m [a_{m+1}} \nu_{b_1 \ldots b_{m+1}]}  = 0$.
\item $\nu$ is either self-dual or anti-self-dual, i.e.\ either $\star \nu = (\i) \nu$ or $\star \nu = -(\i) \nu$.
\end{enumerate}
Conversely, any self-dual or anti-self-dual simple $(m+1)$-form $\nu$ must be totally null, and thus defines the MTN vector subspace
\begin{align*}
\mbb{N} & = \left\{ v \in \wt{\V} : v \hook \nu = 0 \right\} \, .
\end{align*}
We shall therefore refer to a null structure $\mbb{N}$ as either self-dual or anti-self-dual depending on whether $\bigwedge^{m+1} \Ann(\mbb{N}) \subset \bigwedge^{m+1}_+  \wt{\V}^*$ or $\bigwedge^{m+1} \Ann(\mbb{N}) \subset \bigwedge^{m+1}_-  \wt{\V}^*$.

The space of all MTN vector subspaces of $\wt{\V}$, i.e.\ null structures, is a complex homogeneous space of complex dimension $\frac{1}{2}m(m+1)$, referred to as the \emph{isotropic Grassmannian} $\mathrm{Gr}_{m+1}(\wt{\V}, \wt{g})$ of $(\wt{\V}, \wt{g})$. This space splits into two disconnected components $\mathrm{Gr}^+_{m+1}(\wt{\V}, \wt{g})$ and $\mathrm{Gr}^-_{m+1}(\wt{\V}, \wt{g})$ according to whether their elements are self-dual or anti-self-dual. The complex Lie group $\SO(2m+2,\C)$ acts transitively on each of these components. 

\begin{rem}
Any complement of $\mbb{N}$ in $\wt{\V}$ must be dual to $\mbb{N}$ and totally null with respect to $\wt{g}$, and we shall write $\wt{\V} = \mbb{N}^* \oplus \mbb{N}$, 
bearing in mind that in general such a splitting is not canonical. For consistency with the notation introduced subsequently, we shall assume with no loss of generality that $\mbb{N}$ is self-dual.  In abstract index notation, elements of $\Ann(\mbb{N})$ will be adorned with lower Roman majuscule indices, and elements of $\mbb{N}^*$ with upper Roman majuscule indices, e.g.\ $\alpha_A \in \Ann(\mbb{N}) \cong \mbb{N}$ and $v^A \in \mbb{N}^* \cong \Ann(\mbb{N}^*)$. These indices will be immovable. We also introduce splitting operators $(\delta^a_A ,\delta^{a A})$, that is, projections $\delta^{a A} : \wt{\V}^* \rightarrow \mbb{N}^*$ and $\delta^a_A : \wt{\V}^* \rightarrow \mbb{N}$ that satisfy $\delta^a_A \delta_a^B = \delta_A^B$. These will also be used to inject elements of $\mbb{N}$ and $\mbb{N}^*$ into $\wt{\V}$.
\end{rem}

\subsubsection{Robinson structures}
Let $\V$ be a $(2m+2)$-dimensional real vector space equipped with a non-degenerate symmetric bilinear form $g$ of (Lorentzian) signature $(2m+1,1)$, i.e.\ $(+,+, \ldots, +, -)$. As is customary, we call $(\V, g)$ Minkowski space. The  abstract index notation introduced in the previous section will equally apply to $(\V,g)$.

Denote by ${}^\C \V$ the complexification of $\V$, and extend $g$ to a non-degenerate complex-valued symmetric bilinear form ${}^\C g$ on ${}^\C \V$. By abuse of notation, we shall often denote ${}^\C g$ by $g$. The complexification $({}^\C \V,{}^\C g)$ of $(\V,g)$ thus gives rise to the complex space $(\wt{\V},\wt{g})$ considered in the previous section, together with a reality condition. By extension, there is a well-defined notion of null structure on $(\V,g)$ via $({}^\C \V,{}^\C g)$. To make this idea more precise, we note that the complex conjugate $\overline{\mbb{N}}$ of a MTN vector subspace $\mbb{N}$ on $({}^\C \V, {}^\C g)$ is also MTN.
\begin{defn}[\cite{Kopczy'nski1992}]\label{def:real_index}
The \emph{real index} of a null structure $\mbb{N}$ on $({}^\C \V,{}^\C g)$ is the complex dimension of the intersection of $\mbb{N}$ and $\overline{\mbb{N}}$.
\end{defn}

\begin{defn}
A \emph{Robinson structure} on Minkowski space $(\V,g)$ of dimension $2m+2$ is a null structure $\mbb{N}$ of real index one on $({}^\C \V , {}^\C g)$. We shall denote it by the pair $(\mbb{N}, \K)$ where
\begin{enumerate}
\item $\mbb{N}$ is an MTN vector subspace of ${}^\C \V$ of real index one,
\item $\K$ is the real null line $\mbb{N} \cap \V$.
\end{enumerate}
\end{defn}
With this second condition, we have that  ${}^\C \K = \mbb{N} \cap \overline{\mbb{N}}$ and $\mbb{N} + \overline{\mbb{N}} = {}^\C \K^\perp$.

It turns out that the real Lie group $\SO(2m+1,1)$ also acts transitively on each of the connected spaces of MTN vector spaces of $({}^\C \V, {}^\C g)$. In other words:
\begin{lem}[\cite{Kopczy'nski1992}]\label{lem:real_index}
Let $(\V,g)$ be Minkowski space of dimension $2m+2$. A null structure on $({}^\C \V, {}^\C g)$ always has real index one, and hence is a Robinson structure.
\end{lem}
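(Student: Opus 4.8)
The plan is to compute the real index $r := \dim_\C(\mbb{N} \cap \overline{\mbb{N}})$ directly and pin it down to $1$ by sandwiching it between an upper and a lower bound, each reflecting a different feature of the Lorentzian signature $(2m+1,1)$. Both bounds rest on the elementary structural fact that $\mbb{N} \cap \overline{\mbb{N}}$ is stable under complex conjugation, hence is the complexification ${}^\C U$ of the real subspace $U := \V \cap \mbb{N} = \V \cap (\mbb{N} \cap \overline{\mbb{N}})$, with $\dim_\R U = r$.

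For the upper bound, I would observe that since $U \subseteq \mbb{N}$ and $\mbb{N}$ is totally null for ${}^\C g$, the \emph{real} subspace $U$ is totally null for $g$. As the maximal dimension of a totally isotropic subspace of a real quadratic space of signature $(p,q)$ is $\min(p,q)$, and here $\min(2m+1,1)=1$, we obtain $r \le 1$ immediately. Note that this also settles the first clause of the lemma once $r \ge 1$ is known: a null structure of real index $0$ simply cannot occur, so every null structure is automatically a Robinson structure.

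The substantive step, and the one I expect to be the main obstacle, is the lower bound $r \ge 1$, i.e.\ ruling out $r = 0$. A naive dimension count is useless here, since $\dim_\C(\mbb{N} + \overline{\mbb{N}}) = 2(m+1) - r \le 2m+2$ only yields $r \ge 0$. Instead I would argue by a parity obstruction. Suppose $r = 0$; then ${}^\C\V = \mbb{N} \oplus \overline{\mbb{N}}$ with the two summands interchanged by conjugation, so every real vector writes uniquely as $v = n + \overline{n}$ with $n \in \mbb{N}$, giving a real-linear identification $\V \cong \mbb{N}$. Setting $h(n,n') := {}^\C g(n, \overline{n'})$ defines a nondegenerate Hermitian form on the complex space $\mbb{N}$, and because $\mbb{N}$ and $\overline{\mbb{N}}$ are totally null the diagonal terms vanish and one finds $g(v,v) = 2\,{}^\C g(n,\overline{n}) = 2\,h(n,n)$. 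Thus $g$ is, up to scale, the real part of a Hermitian form, so its signature is of the even type $(2p,2q)$ with $p+q = m+1$. This contradicts the Lorentzian signature $(2m+1,1)$, whose first entry is odd; hence $r \neq 0$.

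Combining the two bounds gives $r = 1$, so $\mbb{N}$ has real index one and $(\mbb{N}, \V \cap \mbb{N})$ is a Robinson structure. The only routine verifications I would leave aside are that $h$ is genuinely Hermitian (sesquilinearity together with $\overline{h(n',n)} = h(n,n')$, both consequences of the symmetry of $g$ and the compatibility ${}^\C g(\overline{x},\overline{y}) = \overline{{}^\C g(x,y)}$) and the bookkeeping that the underlying real form of a Hermitian form of complex signature $(p,q)$ has real signature $(2p,2q)$. Neither is difficult; the genuine content is the parity mismatch between the even signatures produced by a Hermitian structure and the Lorentzian signature.
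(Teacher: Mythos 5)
Your proof is correct. The paper itself offers no argument for this lemma --- it is stated with a citation to Kopczy\'nski--Trautman --- so there is no in-text proof to compare against; what you have written is a clean, self-contained derivation of the special case needed here. Your two bounds are exactly the specialisation to signature $(2m+1,1)$ of the two general facts proved in that reference and quoted later in Section~\ref{sec:other_sign} of the paper: the real index $r$ of a null structure in signature $(p,q)$ satisfies $r \le \min(p,q)$ (your Witt-index upper bound on the real totally isotropic subspace $U = \V \cap \mbb{N}$) and $r \equiv \min(p,q) \pmod 2$ (your parity obstruction: if $r=0$ then ${}^\C\V = \mbb{N} \oplus \overline{\mbb{N}}$ realises $g$ as twice the real part of a nondegenerate Hermitian form on $\mbb{N}$, forcing an even-even signature $(2p,2q)$, incompatible with $(2m+1,1)$). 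The supporting details you defer --- that $\mbb{N}\cap\overline{\mbb{N}} = {}^\C U$ with $\dim_\R U = r$, that $h(n,n') = {}^\C g(n,\overline{n'})$ is Hermitian and nondegenerate, and that the underlying real form of a Hermitian form of signature $(p,q)$ has signature $(2p,2q)$ --- are all routine and hold as you state them. No gaps.
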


\begin{rem}\label{rem:Hodge_duality}
Note that $(\mbb{N}, \K)$ and $(\overline{\mbb{N}}, \K)$ define the same Robinson structure. Their Hodge duality is the same when $m$ is even, but opposite when $m$ is odd. We shall say that a Robinson structure $(\mbb{N},\K)$ is (anti-)self-dual if $\mbb{N}$ is (anti-)self-dual. This entails of course a preference of $\mbb{N}$ over $\overline{\mbb{N}}$ when $m$ is odd, but there is no ambiguity when $m$ is even.

Any element $\nu$ of $\bigwedge^{m+1} \Ann(\mbb{N})$ will be referred to as a \emph{complex Robinson $(m+1)$-form}. When $m$ is even, if $\nu$ is a self-dual, so are its complex conjugate $\bar{\nu}$ and the real $(m+1)$-form $\nu + \overline{\nu}$.
\end{rem}

\subsubsection{Robinson structures and optical structures}\label{sec:Robinson-optical}
It is clear that a Robinson structure $(\mbb{N},\K)$ on $(\V, g)$ determines in particular an optical structure, namely $\K$, in the sense of \cite{Fino2020}. We therefore have a filtration of vector subspaces
\begin{align}\label{eq:sim-filtration}
 \{ 0 \}  \subset \K \subset \K^\perp \subset \V \, ,
\end{align}
and the screen space $\mbb{H}_\K = \K^\perp/\K$  inherits a positive-definite symmetric bilinear form $h$ given by
\begin{align*}
h( v + \K , w + \K ) & := g( v,w ) \, , & \mbox{for any $v, w \in \Gamma(\K^\perp)$.}
\end{align*}
Any element of $\K$ will be referred to as an \emph{optical vector}, and any element of $\Ann(\K)$ as an \emph{optical $1$-form}.

Now, define an endomorphism $J$ of $\mbb{H}_\K$ and its complexification ${}^\C \mbb{H}_\K$ by
\begin{align*}
J(v + {}^\C \K)&  = - \i v + {}^\C \K \, , & \mbox{for any $v \in \mbb{N}$,} \\
J(v + {}^\C \K) & =  \i v + {}^\C \K \, , & \mbox{for any $v \in \overline{\mbb{N}}$.} 
\end{align*}
Then $J$ is a complex structure on $\mbb{H}_\K$, and ${}^\C \mbb{H}_\K$ splits into the eigenspaces of $J$, i.e.\
\begin{align}\label{eq:splitH_K}
{}^\C \mbb{H}_\K = \mbb{H}_\K^{(1,0)} \oplus \mbb{H}_\K^{(0,1)} \, ,
\end{align}
where $\mbb{H}_\K^{(1,0)} := \overline{\mbb{N}} / {}^\C \K$ and $\mbb{H}_\K^{(0,1)} := \mbb{N} / {}^\C \K$. These can be shown to be maximal totally null with respect to the bilinear form $h$ on ${}^\C \mbb{H}_\K = (\mbb{N} + \overline{\mbb{N}})/(\mbb{N} \cap \overline{\mbb{N}})$, and thus $J$ is compatible with $h$, i.e.\ $J$ is Hermitian \cite{Nurowski2002}.

Conversely, suppose that $(\V,g)$ is equipped with an optical structure $\K$ together with a complex structure $J$ on the screen space $\mbb{H}_\K$ compatible with $h$. Define
\begin{align*}
\mbb{N} = \left\{ v \in {}^\C \K^\perp : J(v + {}^\C \K) = - \i v + {}^\C \K  \right\} \, .
\end{align*}
Then $\mbb{N}$ has dimension $m+1$, and is totally null. Indeed, for any $v,w \in \mbb{N}$, we have
\begin{align*}
 g(v,w)	& = h(v + {}^\C \K, w + {}^\C \K) \\
		& = h( \i J(v + {}^\C \K) , \i J( w + {}^\C \K)) \\
		&= - h ( v + {}^\C \K , w + {}^\C \K) \\
		& = - g (v, w) \, ,
\end{align*}
since $J \circ h = - h \circ J$, and thus $g(v,w)=0$. The complex conjugate $\overline{\mbb{N}}$ is defined analogously.

In abstract index notation, elements of $\mbb{H}_\K$ and of its dual, and tensor product thereof, will be adorned with minuscule Roman indices starting from the middle of the alphabet $i,j,k, \ldots$. In particular, the screen space inner product and its inverse will be expressed as $h_{i j}$ and $h^{i j}$ respectively, and will be used to lower and raise this type of indices. The complex structure and the Hermitian $2$-form will take the form $J_i{}^j$ and $\omega_{i j} := J_i{}^k h_{k j}$ respectively. As before, symmetrisation and skew-symmetrisation will be denoted by round and squared brackets around indices respectively, and the tracefree part of a tensor with respect to $h_{i j}$ will be adorned by a small circle.

We shall use upper and lower Greek indices to denote elements of $\mbb{H}_{\K}^{(1,0)}$ and $(\mbb{H}_{\K}^{(1,0)})^*$ respectively, and upper and lower overlined Greek indices to denote elements of $\mbb{H}_{\K}^{(0,1)}$ and $(\mbb{H}_{\K}^{(0,1)})^*$ respectively, e.g.\ $v^{\alpha} \in \mbb{H}_{\K}^{(1,0)}$ and $\alpha_{\bar{\beta}} \in (\mbb{H}_{\K}^{(0,1)})^*$. As usual, symmetrisation and skew-symmetrisation will be denoted by round brackets and squared brackets respectively. The Hermitian form on ${}^\C \mbb{H}_{\K}$ will then be expressed as $h_{\alpha \bar{\beta}}$ and its inverse by $h^{\alpha \bar{\beta}}$, which will be used to convert indices, i.e.\ $v_{\bar{\beta}} = v^{\alpha} h_{\alpha \bar{\beta}}$ for any $v^{\alpha} \in \mbb{H}_{\K}^{(1,0)}$. The totally tracefree part of a mixed tensor $T_{\alpha \beta \bar{\gamma}}$, say, with respect to $h^{\alpha \bar{\beta}}$ will be denoted $\left( T_{\alpha \beta \bar{\gamma}} \right)_\circ$.

We shall also introduce for convenience splitting operators $(\delta^i_\alpha, \delta^i_{\bar{\alpha}})$ on ${}^\C \mbb{H}_{\K}$, that is projections $\delta^i_\alpha : {}^\C \mbb{H}_{\K}^* \rightarrow (\mbb{H}_{\K}^{(1,0)})^*$ and $\delta^i_{\bar{\alpha}} :  {}^\C \mbb{H}_{\K}^* \rightarrow (\mbb{H}_{\K}^{(0,1)})^*$ that satisfy
\begin{align*}
\delta^i_\alpha J_i{}^j & = \i \delta^j_\alpha \, , & \delta^i_{\bar{\alpha}} J_i{}^j & = - \i \delta^j_{\bar{\alpha}} \, , \\
h_{i j} \delta^i_{\alpha} \delta^i_{\bar{\beta}} & = h_{\alpha \bar{\beta}} \, , & h_{i j} \delta^i_{\alpha} \delta^i_{\beta} & = 0 \, .
\end{align*}
These will also be used to inject $\mbb{H}_{\K}^{(1,0)}$ into ${}^\C \mbb{H}_{\K}$, and so on. Their dual versions $(\delta_i^\alpha, \delta_i^{\bar{\alpha}})$ can be obtained by raising and lowering the indices with $h_{i j}$ and $h^{\alpha \bar{\beta}}$. Thus, in particular, we can express $h_{i j}$ and $\omega_{i j} = J_{i}{}^{k} h_{k j}$ as
\begin{align*}
\omega_{i j} & = 2 \i h_{\alpha \bar{\beta}} \delta_{[i}^{\alpha} \delta_{j]}^{\bar{\beta}} \, , & h_{i j} & = 2 h_{\alpha \bar{\beta}} \delta_{(i}^{\alpha} \delta_{j)}^{\bar{\beta}} \, .
\end{align*}

\begin{rem}
From the discussion above, it is also conceptually useful to start with an optical structure $\K$ on $(\V, g)$, and declare a null or Robinson structure on $(\V,g)$ be \emph{compatible} with $\K$ if $\K = \V \cap \mbb{N}$. In dimension four, there is a single Robinson structure (up to complex conjugation) compatible with an optical structure, but this is not true in higher dimensions -- see Remark \ref{rem:sp_Rob}.
\end{rem}

\subsubsection{Splitting}
As before let $(\mbb{N}, \K)$ be a Robinson structure on Minkowski space $(\V,g)$. Any choice of splitting
\begin{align}\label{eq:MTN_split}
{}^\C \V & = \mbb{N}^* \oplus \mbb{N} \, ,
\end{align}
for some choice of dual $\mbb{N}^*$ induces a splitting of the filtration \eqref{eq:sim-filtration} as
\begin{align}\label{eq:V_opt_split}
\V & = \mbb{L} \oplus \mbb{H}_{\K,\mbb{L}} \oplus \K \, ,
\end{align}
where
\begin{align*}
\K & = \mbb{N} \cap \V = \overline{\mbb{N}} \cap \V \, , & \mbb{L} & := \mbb{N}^* \cap \V = \overline{\mbb{N}}^* \cap \V \, , \\
\mbb{H}_{\K,\mbb{L}} & := \K^\perp \cap \mbb{L}^\perp \, .
\end{align*}
In particular, $\mbb{L}$ is a null line dual to $\K$. Note that $\mbb{H}_{\K,\mbb{L}}$ is isomorphic to the screen space $\mbb{H}_\K$, but this isomorphism depends on the choice of $\mbb{L}$. Further, in complete analogy with \eqref{eq:splitH_K}, we obtain the splitting
\begin{align}\label{eq:splitH_KL}
{}^\C \mbb{H}_{\K , \mbb{L}} = \mbb{H}_{\K , \mbb{L}}^{(1,0)} \oplus \mbb{H}_{\K , \mbb{L}}^{(0,1)} \, ,
\end{align}
where
\begin{align*}
\mbb{H}_{\K , \mbb{L}}^{(1,0)} & :=  \overline{\mbb{N}} \cap {}^\C \mbb{H}_{\K , \mbb{L}}  = \overline{\mbb{N}} \cap \mbb{N}^* \, , & 
\mbb{H}_{\K , \mbb{L}}^{(0,1)} & := \mbb{N} \cap {}^\C \mbb{H}_{\K , \mbb{L}} = \mbb{N} \cap \overline{\mbb{N}}^* \, .
\end{align*}
We note that $\mbb{H}_\K^{(1,0)} \cong \mbb{H}_{\K , \mbb{L}}^{(1,0)}$ and $\mbb{H}_\K^{(0,1)} \cong \mbb{H}_{\K , \mbb{L}}^{(0,1)}$, but these isomorphisms depend on the choice of $\mbb{N}^*$

We introduce splitting operators $(\ell^a, \delta^a_i, k^a)$ with dual $(\kappa_a, \delta_a^i, \lambda_a)$  adapted to \eqref{eq:V_opt_split}, where $k^a$ and $\ell^a$ are elements  in $\K$ and in $\mbb{L}$ respectively such that $g_{a b} k^a \ell^b = 1$, and $\delta^a_i$ projects from $\V^*$ to $\mbb{H}^*_{\K,\mbb{L}}$ and satisfies $g_{a b} \delta^a_i \delta^b_j = h_{i j}$. Any change of splitting which preserves $k^a$ induces the transformations
\begin{align}\label{eq:basis-change}
 k^a & \mapsto k^a \, , & \delta^a_i & \mapsto \delta^a_i + \phi_i k^a \, , & \ell^a & \mapsto \ell^a - \phi^i \delta^a_i - \frac{1}{2} \phi^i \phi_i k^a \, ,
\end{align}
for some $\phi_i$ in $(\R^{2m})^*$, and similarly for their duals.

Finally, combining the splitting operators $(\ell^a, \delta^a_i, k^a)$ and $(\delta^i_\alpha , \delta^i_{\bar{\alpha}})$ yield new ones $(\ell^a, \delta^a_\alpha, \delta^a_{\bar{\alpha}}, k^a)$ where $\delta^a_{\alpha} := \delta^a_i \delta^i_\alpha$ projects from ${}^\C \V^*$ to $(\mbb{H}_{\K, \mbb{L}}^{(1,0)})^*$. We naturally obtain dual splitting operators $(\kappa_a, \delta_a^\alpha, \delta_a^{\bar{\alpha}}, \lambda_a)$. In terms of the splitting operators $(\delta^a_A, \delta_a^A)$, we have $\lambda_a \delta^a_A = \delta_a^{\bar{\alpha}} \delta^a_A = 0$ and $k^a \delta_a^A = \delta^a_{\bar{\alpha}} \delta_a^A = 0$, and we may define further splitting operators $\delta^A_{\alpha} := \delta^A_a \delta^a_{\alpha}$ projecting from $\mbb{N}$ to $(\mbb{H}_{\K, \mbb{L}}^{(1,0)})^*$, and $\delta_{A}^{\alpha}:= \delta_A^a \delta_{a}^{\alpha}$ from $\mbb{N}^*$ to $(\mbb{H}_{\K, \mbb{L}}^{(1,0)})$. One may similarly define $\delta^{A \bar{\alpha}}$ and $\delta_{A \bar{\alpha}}$.

\subsubsection{Robinson $3$-forms}\label{sec:Robinson-3}
For any choice of splitting operators $(\kappa_a, \delta^i_a, \ell_a)$, and recalling that $\omega_{i j}$ is the Hermitian form on $\mbb{H}_{\K}$, we set $\omega_{a b} = \omega_{i j}  \delta^i_a \delta^j_b$ and define
\begin{align}\label{eq:Rob3}
\rho_{a b c} & := 3 \, \kappa_{[a} \omega_{b c]}\, .
\end{align}
By Lemma 3.1 of \cite{Fino2020}, the definition of $\rho_{a b c}$ depends only on the choice of optical $1$-form $\kappa_a$, and not on the choice of $\delta^a_i$ and $\lambda_a$. One can check that the $3$-form $\rho_{abc}$ satisfies
\begin{align}\label{eq:3-form-sq2}
\rho _{ab} \, {}^e \rho _{cde} & = - 4 \, \kappa_{[a} g_{b][c} \kappa_{d]} \, .
\end{align}
We shall refer to such a $3$-form as a \emph{Robinson $3$-form (associated to the optical $1$-form $\kappa_a$)}.

Conversely, let $\rho_{a b c}$ be a $3$-form that satisfies the algebraic property \eqref{eq:3-form-sq2} for some null $1$-form $\kappa_a$. Then, one can check that $\kappa_{[a} \rho_{b c d]} =0$ and $k^a \rho_{a b c} =0$. To prove $\kappa_{[a} \rho_{b c d]} =0$, we skew \eqref{eq:3-form-sq2} with $\kappa_a$ to get $\kappa_{[a} \rho _{b c]} \, {}^f \rho _{d e f} =  0$. Now contracting with $\rho^{e}{}_{g h}$ and using $\eqref{eq:3-form-sq2}$ again yields $\kappa_{[a} \rho _{b c] [d} \kappa_{e]} =  0$, and the result follows by skewing over the first four indices. That $k^a \rho_{a b c} =0$ can be proved in a similar fashion. Hence, we can apply Lemma 3.1 of \cite{Fino2020}, and setting $\omega_{i j} = \ell^a \delta^b_i \delta^c_j \rho_{a b c}$ for any choice of splitting operators $(\ell^a, \delta^a_i, k^a)$ where $\kappa_a \ell^b = 1$, we see that $\omega_{i j}$ is the required Hermitian form on $\mbb{H}_\K$ up to sign. This sign can be fixed so that if $k^a =g^{a b} \kappa_b$,
\begin{align*}
v^c \rho_{c}{}^{a b} & = - 2 \i v^{[a} k^{b]} \, , & \mbox{for any $v^a \in \mbb{N}$,} \\
 w^c \rho_{c}{}^{a b} & = 2 \i w^{[a} k^{b]} \, , & \mbox{$w^a \in \overline{\mbb{N}}$.}
\end{align*}

\begin{rem}
In low dimensions, we note the following:
\begin{itemize}
\item In dimension four, $\rho {_{bcd}}$ is the Hodge dual of $\kappa_a$, i.e.\ $\kappa = \star \rho$. This reflects the fact that an optical structure is equivalent to a Robinson structure.
\item In dimension six, the $3$-form $\rho_{a b c}$ defined above can be either self-dual or anti-self-dual under Hodge duality, consistent with the fact that the complex conjugate $\overline{\mbb{N}}$ of a MTN vector space $\mbb{N}$ has the same Hodge duality as $\mbb{N}$ in this case -- see Remark \ref{rem:Hodge_duality}.
\end{itemize}
\end{rem}

\subsubsection{Robinson spinors}\label{sec-spinors}
We proceed to describe a Robinson structure in terms of spinors following the treatment of \cite{Cartan1967,Penrose1986,Budinich1988,Budinich1989,Kopczy'nski1992,Lawson1989,Taghavi-Chabert2016}.

We first consider a $(2m+2)$-dimensional complex vector space $(\wt{\V}, \wt{g})$ equipped with a non-degenerate symmetric bilinear form. The double cover $\Spin(2m+2,\C)$ of $\SO(2m+2,\C)$ allows us to define the spinor representation $\mbb{S}$ of $(\wt{\V}, \wt{g})$, which splits into a direct sum of two $2^{m}$-dimensional irreducible chiral spin spaces $\mbb{S}_+$ and $\mbb{S}_-$, the spaces of spinors of positive and negative chiralities respectively. Following the notation of \cite{TaghaviChabert2017}, elements of $\mbb{S}_+$ and $\mbb{S}_-$ will be adorned with bold primed and unprimed majuscule Roman indices respectively, e.g. $\alpha^{\mbf{A}'} \in \mbb{S}_+$ and $\beta^{\mbf{A}} \in \mbb{S}_-$, and similarly for the dual spin spaces $\mbb{S}_\pm^*$ with lower indices. The spin space $\mbb{S}$ is also equipped with $\Spin(2m+2,\C)$-invariant bilinear forms, which allow the following identifications:
\begin{align}\label{arr:spin-inner-prod}
\begin{array}{|c|c|}
\hline
\mbox{$m$ odd} &  \mbox{$m$ even} \\
\hline
\hline
\mbb{S}_\pm \cong \mbb{S}_\pm^*  &  \mbb{S}_\pm \cong \mbb{S}_\mp^* \\
\hline
\end{array}
\end{align}

The \emph{Clifford action} of $\wt{\V}$ on $\mbb{S}$ is effected by means of the \emph{van der Waerden symbols} $\gamma_{a \mbf{A}}{}^{\mbf{B}'}$ and $\gamma_{a \mbf{A}'}{}^{\mbf{B}}$, injective maps from $\V$ to the space of homomorphisms $\Hom(\mbb{S}_\pm , \mbb{S}_\mp)$. These satisfy the Clifford property
\begin{align}\label{eq:Clifford_id}
 \gamma_{(a \mbf{A}'}{}^{\mbf{C}} \gamma_{b) \mbf{C}}{}^{\mbf{B}'} & = g_{a b} \delta_{\mbf{A}'}^{\mbf{B}'} \, , & \gamma_{(a _\mbf{A}}{}^{\mbf{C}'} \gamma_{b) \mbf{C}'}{}^\mbf{B} & = g_{a b} \delta_{\mbf{A}}^{\mbf{B}} \, ,
\end{align}
where $\delta_{\mbf{A}'}^{\mbf{B}'}$ and $\delta_{\mbf{A}}^{\mbf{B}}$ denote the identity elements on $\mbb{S}_+$ and $\mbb{S}_-$ respectively. Let $\nu^{\mbf{A}'}$ be a spinor, and consider the linear map $\nu_a^{\mbf{A}} := \nu^{\mbf{B}'} \gamma_{a \mbf{B}'}{}^{\mbf{A}} : \wt{\V} \rightarrow \mbb{S}_-$. Denote by $\mbb{N}$ the kernel of $\nu_a^{\mbf{A}}$. By \eqref{eq:Clifford_id}, $\mbb{N}$ must be totally null. We say that $\nu^{\mbf{A}'}$ is \emph{pure} if $\mbb{N}$ has maximal dimension $m+1$, i.e.\ $\mbb{N}$ is a null structure on $(\wt{\V}, \wt{g})$. Any spinor proportional to $\nu^{\mbf{A}'}$ defines the same null structure. More generally, Cartan showed \cite{Cartan1967} that there is a one-to-one correspondence between  null structures on $(\wt{\V}, \wt{g})$ and pure spinors up to scale. Further, self-dual null structures correspond to pure spinors of positive chirality, and anti-self-dual null structures to pure spinors of negative chirality. The components of a pure spinor are algebraically constrained \cite{Cartan1967}. Indeed, a spinor $\nu^{\mbf{A}'}$ is pure if and only if it satisfies the \emph{purity condition} \cite{Penrose1986,Hughston1988,Taghavi-Chabert2012}
\begin{align}\label{eq:pure}
 \nu_a^{\mbf{A}}  \nu^{a \mbf{B}} = 0 \, .
\end{align}
When $m=0,1,2$, conditions \eqref{eq:pure} is vacuous, i.e.\ all spinors are pure. A self-dual null structure $\mbb{N}$ thus singles out a one-dimensional vector subspace $\mbb{S}_+^\mbb{N}$ of $\mbb{S}_+$, any element $\nu^{\mbf{A}'}$ of which satisfies \eqref{eq:pure}.

Note that the image of $\nu_a^{\mbf{A}}$ is isomorphic to $\wt{\V}/\mbb{N}$, and thus to any choice of complement $\mbb{N}^*$ of $\mbb{N}$ in $\wt{\V}$. More precisely, we have injective linear maps $\delta^{\mbf{A}}_{A} := \delta^{a}_{A} \nu^{\mbf{A}}_{a}$ from $\mbb{N}^*$ to $\mbb{S}_-$, and $\delta^{A}_{\mbf{A}}$ from $\mbb{N}$ to $\mbb{S}_-^*$ such that $\delta^{\mbf{C}}_{A} \delta_{\mbf{C}}^{B} = \delta_{A}^{B}$. Hence, by means of these, we can express a $1$-form $\alpha_a$ in $\Ann(\mbb{N})$ in the form $\alpha_{a} = \nu_{a}^{\mbf{A}} \alpha_{\mbf{A}}$ where $\alpha_{\mbf{A}} = \delta^{B}_{\mbf{A}} \delta^{c}_{B} \alpha_{c}$.

The van der Waerden symbols generate the \emph{Clifford algebra} $\mc{C}\ell(\wt{\V},  \wt{g})$ of $(\wt{\V},  \wt{g})$, which, by virtue of \eqref{eq:Clifford_id}, is isomorphic to the exterior algebra $\bigwedge^\bullet \wt{\V} \cong \bigwedge^\bullet \wt{\V}^*$ as a vector space. The Clifford algebra is also a matrix algebra isomorphic to the space of endomorphisms of $\mbb{S}$. These two properties allow us to construct invariant bilinear forms on $\mbb{S}$ with values in $\bigwedge^k \wt{\V}^*$ for $k=0, \ldots ,m+1$. The case $k=0$ yields the spin inner products implicitly used in \eqref{arr:spin-inner-prod}. Depending on the values of $m$ and $k$, these forms restrict to non-degenerate forms on either $\mbb{S}_\pm \times \mbb{S}_\mp$ or $\mbb{S}_\pm \times \mbb{S}_\pm$. Of relevance to the present article are the cases $k=1,3, m+1$. For $k = 1,3$, we have
\begin{align}\label{arr:bi-form}
\begin{array}{|c|c|}
\hline
\mbox{$m$ odd} &  \mbox{$m$ even} \\
\hline
\hline
\gamma_{a \mbf{A}' \mbf{B}} : \mbb{S}_+ \times \mbb{S}_- \rightarrow  \wt{\V}^*  &  \gamma_{a \mbf{A} \mbf{B}}  :  \mbb{S}_+ \times \mbb{S}_+ \rightarrow  \wt{\V}^* \\
\gamma_{a \mbf{A} \mbf{B}'} : \mbb{S}_- \times \mbb{S}_+ \rightarrow  \wt{\V}^*  &  \gamma_{a \mbf{A}' \mbf{B}'}  :  \mbb{S}_- \times \mbb{S}_- \rightarrow  \wt{\V}^* \\
\hline
\gamma_{a b c \mbf{A}' \mbf{B}} : \mbb{S}_+ \times \mbb{S}_- \rightarrow \bigwedge^3  \wt{\V}^* & \gamma_{a b c \mbf{A}' \mbf{B}'} : \mbb{S}_+ \times \mbb{S}_+ \rightarrow \bigwedge^3  \wt{\V}^* \\
\gamma_{a b c \mbf{A} \mbf{B}'} : \mbb{S}_+ \times \mbb{S}_- \rightarrow \bigwedge^3  \wt{\V}^* & \gamma_{a b c \mbf{A} \mbf{B}} : \mbb{S}_+ \times \mbb{S}_+ \rightarrow \bigwedge^3  \wt{\V}^* \\
\hline
\end{array}
\end{align}
For $k=m+1$, regardless of whether $m$ is odd or even, we have the following two bilinear forms
\begin{align}\label{eq:Clm+1HD}
\begin{aligned}
\gamma_{a_0 \ldots  a_{m} \mbf{A}' \mbf{B}'} & : \mbb{S}_+ \times \mbb{S}_+ \rightarrow \bigwedge^{m+1}_+  \wt{\V}^*  \, , \\
\gamma_{a_0 \ldots  a_{m} \mbf{A} \mbf{B}} & : \mbb{S}_- \times \mbb{S}_- \rightarrow \bigwedge^{m+1}_-  \wt{\V}^* \, ,
\end{aligned}
\end{align}
where we recall $\bigwedge^{m+1}_{\pm}  \wt{\V}^*$ are the self-dual and anti-self-dual parts of $\bigwedge^{m+1}  \wt{\V}^*$.

For specificity, we assume that $\mbb{N}$ is a self-dual null structure, the argument being the same for an anti-self-dual one. Then the restriction of the first display of \eqref{eq:Clm+1HD} to $\mbb{S}_+^\mbb{N}$ yields the isomorphism $\mbb{S}_+^\mbb{N} \otimes \mbb{S}_+^\mbb{N} \cong \bigwedge^{m+1} \Ann(\mbb{N})$. We can thus think of $\mbb{S}_+^\mbb{N}$ as a square root of $\bigwedge^{m+1} \Ann(\mbb{N})$.

At this stage, we return to the real picture by consider $(2m+2)$-dimensional Minkowski space $(\V, g)$. We can then apply all the facts outlined above to the complexification $({}^\C \V , {}^\C  g)$ of $(\V, g)$. In addition,  the real structure $\bar{}$ on ${}^\C \V$ preserving $\V$ induces an antilinear map on $\mbb{S}$, which interchanges the chiralities of spinors when $m$ is odd, and preserve them when $m$ is even. The image of a spinor under this antilinear map is referred to as the \emph{charge conjugate} of that spinor. Thus, the charge conjugate of a spinor $\nu^{\mbf{A}'} \in \mbb{S}_+$ will be denoted  $\overline{\nu}^{\mbf{A}}$ when $m$ is odd, and  $\overline{\nu}^{\mbf{A}'} \in \mbb{S}_+$ when $m$ is even.\footnote{Using the isomorphisms \eqref{arr:spin-inner-prod}, the charge conjugate of a spinor $\nu^{\mbf{A}'} \in \mbb{S}_+$ can always be identified with $\overline{\nu}_{\mbf{A}} \in \mbb{S}^*_-$ regardless of the parity of $m$.} Moreover, if $\nu^{\mbf{A}'}$ is pure so is its charge conjugate. We define the \emph{real index} of a pure spinor to be the real index of its associated null structure \cite{Kopczy'nski1992}. In particular, if $g$ has Lorentzian signature, all pure spinors have real index one. A Robinson structure $(\mbb{N},\K)$ can thus be defined by a pure spinor up to scale. We shall refer to any such spinor as a \emph{Robinson spinor}.

The pure spinor $\nu^{\mbf{A}'}$ and its charge conjugate can be paired using the spinor bilinear forms to obtain invariants of the Robinson structure as shown in \cite{Cartan1967,Kopczy'nski1992}. These are listed below:
\begin{align}\label{arr:rob_spin_inv}
\begin{array}{|c|c|}
\hline
\mbox{$m$ odd} &  \mbox{$m$ even} \\
\hline
\hline
\kappa_a = \gamma_{a \mbf{A}' \mbf{B}} \nu^{\mbf{A}'} \overline{\nu}^{\mbf{B}} & \kappa_a = \gamma_{a \mbf{A}' \mbf{B}'}\nu^{\mbf{A}'} \overline{\nu}^{\mbf{B}'} \\
 \rho_{a b c} =  \i \gamma_{a b c \mbf{A}' \mbf{B}} \nu^{\mbf{A}'} \overline{\nu}^{\mbf{B}} & \rho_{a b c} = \i \gamma_{a b c \mbf{A}' \mbf{B}'} \nu^{\mbf{A}'} \overline{\nu}^{\mbf{B}'} \\
 \hline
\nu_{a_0 \ldots a_m} = \gamma_{a_0 \ldots a_m \mbf{A}' \mbf{B}'} \nu^{\mbf{A}'} \nu^{\mbf{B}'} & \nu_{a_0 \ldots a_m} = \gamma_{a_0 \ldots a_m \mbf{A}' \mbf{B}'} \nu^{\mbf{A}'} \nu^{\mbf{B}'} \\
\overline{\nu}_{a_0 \ldots a_m}= \gamma_{a_0 \ldots a_m \mbf{A} \mbf{B}} \overline{\nu}^{\mbf{A}} \overline{\nu}^{\mbf{B}} & \overline{\nu}_{a_0 \ldots a_m} = \gamma_{a_0 \ldots a_m \mbf{A}' \mbf{B}'} \overline{\nu}^{\mbf{A}'} \overline{\nu}^{\mbf{B}'} \\
\hline
\end{array}
\end{align}
With these definitions, $\rho_{a b c}$ is the Robinson $3$-form associated to the optical $1$-form $\kappa_a$, i.e.\ these satisfy \eqref{eq:3-form-sq2}, and $\nu_{a_0 \ldots a_m}$ is a complex Robinson $(m+1)$-form. Forms of odd degrees can be constructed in a similar way. Details can be found in \cite{Kopczy'nski1992}.

\begin{rem}\label{rem:Robinson_scale}
Clearly, any given optical $1$-form or Robinson $3$-form is defined by a Robinson spinor up to a phase, and a given complex Robinson $(m+1)$-form by a Robinson spinor up to a sign. To see this we note that, under the transformation $\nu^{\mbf{A}'} \mapsto z \nu^{\mbf{A}'}$, for any non-zero complex number $z$, the charge conjugate of $\nu^{\mbf{A}'}$ gets multiplied by $\bar{z}$, and the forms defined in \eqref{arr:rob_spin_inv} transform as
\begin{align*}
\kappa_a & \mapsto r \kappa_a \, , &
\rho_{a b c} & \mapsto r \rho_{a b c} \, , &
\nu_{a_0 \ldots a_m} & \mapsto z^2 \nu_{a_0 \ldots a_m} \, , &
\end{align*}
where $|z| = r \in \R_{>0}$.
\end{rem}

\begin{rem}
Any choice of splitting \eqref{eq:MTN_split} of ${}^\C \V$ is equivalent to choosing a one-dimensional subspace of $\mbb{S}_+^*$ consisting of pure spinors dual to $\mbb{S}_+^\mbb{N}$. Elements thereof annihilate $\mbb{N}^*$.
\end{rem}

\subsubsection{Characterisations of Robinson structures}
We summarise the findings of the previous section in the following proposition.
\begin{prop}\label{prop:char-Robinson}
Let $(\V,g)$ be Minkowski space of dimension $2m+2$. The following statements are equivalent.
\begin{enumerate}
\item $(\V,g)$ is equipped with a Robinson structure $(\mbb{N},\K)$.
\item $(\V,g)$ is equipped with a totally null complex $(m+1)$-form.
\item $(\V,g)$ is equipped with an optical structure $\K$ whose screenspace $ \mbb{H}_\K= \K^\perp/\K$ is endowed with a complex structure $J_i{}^j$ compatible with the induced metric $h_{i j}$.\
\item $(\V,g)$ admits a $1$-form $\kappa_a$ and $3$-form $\rho_{abc}$ satisfying
\begin{align*}
\rho _{ab} \, {}^e \rho _{cde} & = - 4 \kappa_{[a} g_{b][c} \kappa_{d]} \, .
\end{align*}
\item $(\V,g)$ admits a pure spinor of real index $1$.
\end{enumerate}
\end{prop}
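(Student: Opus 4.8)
The plan is to prove the proposition as a hub-and-spoke argument centred on statement~(1), since each of~(2)--(5) has already been matched to a Robinson structure in the preceding subsections; what remains is to assemble these correspondences and check that the constructions are mutually inverse. The single most useful ingredient is Lemma~\ref{lem:real_index}, which guarantees that in Lorentzian signature \emph{every} null structure on $({}^\C\V,{}^\C g)$ automatically has real index one, so that ``Robinson structure'' and ``null structure'' are interchangeable throughout.

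First I would establish $(1)\Leftrightarrow(2)$. A null structure $\mbb{N}$ singles out the line $\bigwedge^{m+1}\Ann(\mbb{N})$, any generator $\nu$ of which is a simple, (anti-)self-dual, hence totally null, complex $(m+1)$-form; conversely such a $\nu$ recovers $\mbb{N}=\{v\in{}^\C\V : v\hook\nu=0\}$, and by Lemma~\ref{lem:real_index} this $\mbb{N}$ is a Robinson structure. For $(1)\Leftrightarrow(3)$ I would invoke the construction of Section~\ref{sec:Robinson-optical} verbatim: from $(\mbb{N},\K)$ one sets $\K=\mbb{N}\cap\V$ and defines $J$ on $\mbb{H}_\K$ by taking $\mbb{N}/{}^\C\K$ and $\overline{\mbb{N}}/{}^\C\K$ as its $\mp\i$-eigenspaces, giving the splitting~\eqref{eq:splitH_K} and the compatibility $J\circ h=-h\circ J$; conversely, given $\K$ and a compatible $J$, the subspace $\mbb{N}=\{v\in{}^\C\K^\perp : J(v+{}^\C\K)=-\i v+{}^\C\K\}$ has dimension $m+1$ and is totally null, precisely because compatibility forces $g(v,w)=-g(v,w)$ for $v,w\in\mbb{N}$.

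The equivalence $(3)\Leftrightarrow(4)$ I would read off from Section~\ref{sec:Robinson-3}: a choice of optical $1$-form $\kappa_a$ together with the Hermitian form $\omega_{ij}$ produces $\rho_{abc}=3\,\kappa_{[a}\omega_{bc]}$ as in~\eqref{eq:Rob3}, which satisfies~\eqref{eq:3-form-sq2}; for the converse one starts from $(\kappa_a,\rho_{abc})$ obeying~\eqref{eq:3-form-sq2}, derives $\kappa_{[a}\rho_{bcd]}=0$ and $k^a\rho_{abc}=0$ by the skew-and-contract computation already given, and reconstructs $\omega_{ij}=\ell^a\delta^b_i\delta^c_j\rho_{abc}$ on $\mbb{H}_\K$. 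Finally $(1)\Leftrightarrow(5)$ is Cartan's correspondence recalled in Section~\ref{sec-spinors}: null structures correspond bijectively to pure spinors up to scale via the purity condition~\eqref{eq:pure}, and since all pure spinors have real index one in Lorentzian signature, a pure spinor of real index one carries exactly the datum of a Robinson structure.

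The main obstacle I anticipate is the converse half of $(3)\Leftrightarrow(4)$, namely extracting the full Hermitian structure from the single quadratic identity~\eqref{eq:3-form-sq2}. One must check that~\eqref{eq:3-form-sq2} genuinely forces $\rho_{abc}$ to annihilate $\K$ and to be $\kappa$-decomposable, and then that the reconstructed $\omega_{ij}$ raises to an endomorphism $\omega_i{}^j=\omega_{ik}h^{kj}$ squaring to $-\delta_i^j$, so that $J_i{}^j=\omega_i{}^j$ is an honest complex structure compatible with $h_{ij}$. The residual sign ambiguity in $\omega_{ij}$ corresponds exactly to the choice of $\mbb{N}$ versus $\overline{\mbb{N}}$ and must be pinned down as in Section~\ref{sec:Robinson-3}.
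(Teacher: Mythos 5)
Your proposal is correct and follows essentially the same route as the paper: the proposition is explicitly presented there as a summary of the preceding subsections, and your hub-and-spoke assembly via Lemma~\ref{lem:real_index}, the construction of $J$ in Section~\ref{sec:Robinson-optical}, the Robinson $3$-form computation of Section~\ref{sec:Robinson-3}, and Cartan's pure-spinor correspondence in Section~\ref{sec-spinors} is exactly the intended argument. The ``obstacle'' you flag for $(3)\Leftrightarrow(4)$ is precisely the skew-and-contract computation already carried out in Section~\ref{sec:Robinson-3}, so nothing further is needed.
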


\begin{rem}\label{rem:Robinson_scale2}
In Proposition \ref{prop:char-Robinson}, the $1$-form $\kappa_a$ and the $3$-form $\rho_{a b c}$ are defined up to an overall real factor, while the pure spinor is defined up to an overall complex factor as explained in Remark \ref{rem:Robinson_scale}.
\end{rem}

\subsection{The stabiliser of a Robinson structure}
There are two approaches to describe the stabiliser of a Robinson structure $(\mbb{N},\K)$:
\begin{enumerate}
\item From its definition, it suffices to consider the respective stabilisers $R$ and $\overline{R}$ of $\mbb{N}$ and $\overline{\mbb{N}}$ in $\SO(2m+2,\C)$. The stabiliser of a Robinson structure is then the intersection $R \cap \overline{R} \cap \SO^0(2m+1,1)$.
\item We characterise a Robinson structure as an optical structure together with a Hermitian structure on the screen space to derive its stabiliser $Q$ as a closed Lie subgroup of the stabiliser $P$ of $\K$ in $G =\SO^0(2m+1,1)$.
\end{enumerate}
For the present purpose, it will be more useful to use the second approach. We shall assume that $\K$ is oriented so that the stabiliser of $\K$ together with its orientation is 
\begin{align*}
P & =\Sim^0(2m) = \CO^0(2m)\ltimes (\R^{2m})^* \ =\  (\R_{>0} \times \SO(2m))\ltimes (\R^{2m})^* \, .
\end{align*}
Note that $\CO^0(2m)$ acts on the screen space $\mbb{H}_\K = \K^\perp/\K$ as $\SO(2m)$ does, that is, $\R_{>0}$ acts trivially on $\mbb{H}_\K$. The nilpotent part of $P$ will be denoted $P_+$. Choose a semi-null frame $(e_0 , e_1, \ldots , e_n, e_{n+1})$ and dual coframe $(\theta^0, \theta^1, \ldots, \theta^n, \theta^{n+1})$ where by convention $K = \mathrm{span}(e_{n+1})$ and $K^\perp = \mathrm{Ann}(\theta^0)$. Then, since $Q \subset P$ is required to stabilise in addition a Hermitian structure on $\mbb{H}_\K$, we obtain that
\begin{align*}
Q & = (\R_{>0}\times \U(m))\ltimes (\R^{2m})^* \, , \\
&=
\left\{(\e^\varphi,\psi,\phi)
:=
\begin{pmatrix}
\e^{\varphi} & 0 & 0 \\
 -\e^\varphi \iota(\psi)\phi^\top & \iota(\psi) & 0\\
-\tfrac{\e^\varphi}{2} \phi \phi^\top  & \phi  & \e^{-\varphi}
\end{pmatrix}\mid \begin{array}{l} \varphi \in \R \, ,\\
\psi \in \U(m) \, ,\\
\phi \in (\R^{2m})^*\end{array}
 \right\} \, ,
\end{align*}
where we have used the standard embedding $\iota : \U(m) \rightarrow \SO(2m)$. The reductive part $\R_{>0} \times \U(m)$ of $Q$ will be denoted $Q_0$. Clearly, $P_+$ is also the nilpotent part of $Q$.

To describe the Lie algebra $\mf{q}$ of $Q$, we shall refer to the notation already introduced in \cite{Fino2020}.
Setting $\V^1=\V_1=\K$, $\V^0=\K^\perp$, we have a filtration of $P$-modules
\begin{align}\label{eq:sim-filtration2}
 \{ 0 \} =: \V^2 \subset \V^1 \subset \V^0 \subset \V^{-1} := \V \, ,
\end{align}
which we shall conveniently split into a direct sum of $P_0$-modules,
\begin{align}\label{eq:sim-grading}
 \V & = \V_{-1} \oplus \V_0 \oplus \V_{1} \, ,
\end{align}
where $P_0=\CO^0(2m)$ is the reductive part of $P$. For each $i=-1,0 ,1$, we have $\V_i \cong \V^i / \V^{i+1}$ as vector spaces. In terms of our earlier notation $\V_{-1} = \mbb{L}$ and $\V_0 = \mbb{H}_{\K, \mbb{L}}$.

Recall from \cite{Fino2020} that the Lie algebra $\g \cong \bigwedge^2 \V^*$ of $G =\SO^0(n+1,1)$ can then be expressed as a direct sum of $P_0$-modules
\begin{align}\label{eq:g_G0split}
\g & = \g_{-1} \oplus \g_0 \oplus \g_1 \, , 
\end{align}
where $\g_{\pm1} \cong  \V_{\pm1}^* \otimes \V_0^*$ and $\g_0 \cong \left( \V_{-1}^* \otimes \V_1^* \right) \oplus \bigwedge^2 \V_0^*$. Note that $ \V_{-1}^* \otimes \V_1^*$ is the one-dimensional centre $\mf{z}_0$ of $\g_0$.

Now, the complex structure splits ${}^\C \V_0 $ and its dual as
\begin{align}\label{eq:V0-split-even}
{}^\C \V_0 & = \V^{(1,0)}_0 \oplus \V^{(0,1)}_0 \, , &  {}^\C \V_0^* & = (\V^{(1,0)}_0)^* \oplus (\V^{(0,1)}_0)^* \, ,
\end{align}
where $\V_0^{(1,0)} :=  \overline{\mbb{N}} \cap {}^\C \V_0$ and $\V_0^{(0,1)} :=\mbb{N} \cap {}^\C \V_0$. Writing
\begin{align}\label{eq:VC_split}
\V^* & = \V_{-1}^* \oplus  \dbl \bigwedge^{(1,0)} \V_0^* \dbr \oplus \V_1^* \, ,
\end{align}
we find that the summands in the $P_0$-invariant decomposition of $\g$ given in \eqref{eq:g_G0split} split further into irreducible $Q_0$-modules:
\begin{align}
\begin{aligned}\label{eq:so-grad-even}
 \g_{\pm 1} & = \V_{\pm 1}^* \otimes \dbl \bigwedge^{(1,0)} \V_0^* \dbr \, , \\
  \g_0 & =  \mf{z}_0 \oplus  \left( \mrm{span}(\omega) \oplus [  \bigwedge^{(1,1)}_\circ \V_0^*  ] \oplus \dbl \bigwedge^{(2,0)} \V_0^*  \dbr \right)  \, ,
  \end{aligned}
\end{align}
where we recall that $\omega$ is the Hermitian $2$-form on $\V_0$. We identify the Lie algebra $\mf{q}_0$ of $Q_0$ as the Lie subalgebra
\begin{align*}
\mf{q}_0 = \mf{z}_0 \oplus \mrm{span}( \omega) \oplus [  \bigwedge^{(1,1)}_\circ \V_0^*  ] \, ,
\end{align*}
which can be seen to be isomorphic to $\R \oplus \u(m)$. In addition, the Lie algebra $\mf{q}$ of the stabiliser $Q$ of the Robinson structure is given by
\begin{align}\label{eq:rob-grad-even}
\mf{q} & := \mf{z}_0 \oplus \left( \mrm{span}( \omega) \oplus [  \bigwedge^{(1,1)}_\circ \V_0^*  ] \right) \oplus \left( \V_1^* \otimes \dbl \bigwedge^{(1,0)} \V_0^* \dbr \right) \, .
\end{align}
As expected, $\mf{q}_0  = \mf{g}_0 \cap \mf{q}$.

\begin{rem}
In low dimensions, we note the following points:
\begin{itemize}
 \item In dimension four, we have that $\p \cong \mf{q}$, i.e.\ an optical structure is a Robinson structure.
 \item In dimension six, the semi-simple part $\bigwedge^2 \V_0^*$ of $\g_0$ splits into a self-dual part and an anti-self-dual part. More explicitly, $\su^+(2) = \mrm{span}(\omega) \oplus \dbl \bigwedge^{(2,0)} \V_0^*  \dbr $ and $\su^-(2) =[  \bigwedge^{(1,1)}_\circ \V_0^*  ]$, where $\su^\pm(2)$ are isomorphic to two copies of $\su(2)$. 
\end{itemize}
\end{rem}

\begin{rem}\label{rem:sp_Rob}
Clearly, the space of all (oriented) self-dual Robinson structures on $(\V, g)$ is isomorphic to $\SO^0(2m+1,1)/Q$. It corresponds to the isotropic Grassmannian $\mrm{Gr}_{m+1}^+(\mbb{V},g)$ of self-dual MTN planes in $(\V, g)$ and has real dimension $m(m+1)$.

In addition, for a given optical structure $\K$ on $(\V,g)$, the space of all (oriented) self-dual Robinson structures compatible with $\K$ is isomorphic to $P/Q \cong \SO(2m)/\U(m)$, which has real dimension $m(m-1)$. It will be denoted $\mrm{Gr}_{m+1}^+ (\V, g, \mbb{K})$.
\end{rem}

\subsection{One-dimensional representations of $Q$}\label{sec:one-dim_rep}
For any $w \in \R$, we define the one-dimensional representations $\R(w)$ and $\C(w,0)$ of $Q$ on $\R$ of weight $w$ and on $\C$ of weight $(w,0)$ by
\begin{align*}
(\e^\varphi , \psi, \phi) \cdot  r & = \e^{w \varphi} r \, , & \mbox{for any $r \in \R$,} \\
(\e^\varphi , \psi, \phi) \cdot  z & = (\e^{\varphi} \det A)^w z \, , & \mbox{for any $z \in \C$.} 
\end{align*}
We also define $\C(0,w) := \overline{\C(w,0)}$. One can check that
\begin{align*}
\R(-1) & \cong \K \, , &
\C(-1,0) & \cong \bigwedge^{m+1} \Ann( \mbb{N}) \, , &
\C(0,-1) & \cong \bigwedge^{m+1} \Ann(\overline{\mbb{N}}) \, .
\end{align*}
These leads to the one-dimensional representations $\C(w,w') := \C(w,0) \otimes \C(0,w')$ for any real $w$, $w'$. We also note $\C \otimes \R(w) \cong \C(w,w)$ for any real $w$, and $\R(w)\otimes\R(w') \cong \R(w+w')$ for any real $w,w'$, and similarly for the complex representations.

\subsection{Space $\mbb{G}$ of algebraic intrinsic torsions}\label{sec:alg_intors} 
Let us now consider the $Q$-module $\mbb{G}=\V^* \otimes \left( \g / \mf{q} \right)$. We treat only the case $m>1$ since for $m=1$, we have $Q=P$, which is already dealt with \cite{Fino2020}.
\begin{thm}\label{thm-main-Rob} 
Assume $m>1$. The $Q$-module $\mbb{G} = \V^* \otimes \left( \g / \mf{q} \right)$ admits a filtration
\begin{align}\label{eq:filt-q-G}
 \mbb{G}^1 \subset \mbb{G}^0 \subset \mbb{G}^{-1} \subset \mbb{G}^{-2}
\end{align}
of $Q$-modules $\mbb{G}^i := (\V^*)^{i+1} \otimes \left( \g / \mf{q} \right)$ for $i=-2,-1,0$. The summands of its associated graded $Q$-module
\begin{align*}
\gr(\mbb{G}) = \gr_{-2}(\mbb{G}) \oplus \gr_{-1}(\mbb{G}) \oplus \gr_0(\mbb{G}) \oplus \gr_1(\mbb{G}) \, ,
\end{align*}
where, for each $i=-2,-1,0$, $\gr_i (\mbb{G})  = \mbb{G}^i / \mbb{G}^{i+1}$, and $\gr_1(\mbb{G}) = \mbb{G}^1$, decompose into direct sums of irreducible $Q$-modules $\gr_i^{j,k}(\mbb{G})$ as follows:
\begin{align*}
\gr_{-2}(\mbb{G}) & = \gr_{-2}^{0,0} ( \mbb{G} )\, , \\
\gr_{-1}(\mbb{G}) & = \gr_{-1}^{0,0} ( \mbb{G} ) \oplus \left( \gr_{-1}^{1,0} (\mbb{G}) \oplus \gr_{-1}^{1,1} (\mbb{G})  \oplus \gr_{-1}^{1,2} (\mbb{G}) \right) \oplus \left( \gr_{-1}^{2,0} (\mbb{G}) \oplus \gr_{-1}^{2,1} (\mbb{G}) \right) \oplus \gr_{-1}^{3,0} \, , \\
\gr_0(\mbb{G}) & = \gr_0^{0,0} ( \mbb{G} ) \oplus \left( \gr_0^{1,0} ( \mbb{G} ) \oplus \gr_0^{1,1} ( \mbb{G} ) \oplus \gr_0^{1,2} ( \mbb{G} ) \oplus \gr_0^{1,3} ( \mbb{G} ) \right) \, , \\
\gr_1(\mbb{G})& = \gr_1^{0,0} (\mbb{G}) \, ,
\end{align*}
where, for each $i,j,k$, the $Q$-module $\gr_{i}^{j,k} ( \mbb{G} )$ is isomorphic to the $Q_0$-module $\mbb{G}_i^{j,k}$ as given in Table \ref{tab:table-irred-mod-G}. Note that $\gr_0^{1,1} ( \mbb{G} )$ and $\gr_0^{1,3} ( \mbb{G} )$ do not occur when $m=2$.
\begin{center}
\begin{table}%[H]
\begin{displaymath}
{\renewcommand{\arraystretch}{1.5}
\begin{array}{|c|c|c|c|}
\hline
\text{$Q_0$-module} & \text{Description} & \text{Dimension} \\
\hline
\hline
  \mbb{G}_{-2}^{0,0} & \V_0^* \otimes \R (2) & 2m \\
  \hline
  \hline
  \mbb{G}_{-1}^{0,0} & \R(1) & 1  \\
    \hline
  \mbb{G}_{-1}^{1,0} & \R(1) & 1  \\
  \mbb{G}_{-1}^{1,1} & \dbl \bigwedge^{(2,0)} \V_0^* \dbr \otimes \R  (1) & m(m-1) \\ 
  \mbb{G}_{-1}^{1,2} & [ \bigwedge^{(1,1)}_\circ \V_0^* ]  \otimes \R  (1)  & (m+1)(m-1) \\
  \hline 
    \mbb{G}_{-1}^{2,0} & [ \bigwedge^{(1,1)}_\circ \V_0^* ]  \otimes \R  (1)  & (m+1)(m-1) \\
  \mbb{G}_{-1}^{2,1} & \dbl \bigodot^{(2,0)} \V_0^* \dbr \otimes \R  (1) & m(m+1) \\
     \hline
  \mbb{G}_{-1}^{3,0} & \dbl \bigwedge^{(2,0)} \V_0^* \dbr \otimes \R  (1) & m(m-1) \\
  \hline
  \hline
  \mbb{G}_0^{0,0} &  \V_0^*  & 2m \\
  \hline
  \mbb{G}_0^{1,0} & \dbl \bigwedge^{(1,0)} \V_0^*   \dbr & 2m \\
  \mbb{G}_0^{1,1} & \dbl \bigwedge^{(3,0)}  \V_0^* \dbr & \scriptstyle{\frac{1}{3}m(m-1)(m-2)}  \\
  \mbb{G}_0^{1,2} & \dbl \ydhook \, \V_0^*  \dbr & \scriptstyle{\frac{2}{3}m(m+1)(m-1)} \\
  \mbb{G}_0^{1,3} & \dbl  \bigwedge^{(1,2)}_\circ \V_0^*     \dbr & \scriptstyle{m(m+1)(m-2)} \\
  \hline
  \hline
  \mbb{G}_1^{0,0} & \dbl \bigwedge^{(2,0)} \V_0^*  \dbr \otimes \R(-1) & m(m-1) \\
\hline
\end{array}}
\end{displaymath}
\caption{\label{tab:table-irred-mod-G}Irreducible $Q_0$-submodules of $\mbb{G}$}
\end{table}
\end{center}
\end{thm}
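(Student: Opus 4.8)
The plan is to reduce everything to the branching of $\U(m)$-representations, starting from the gradings of $\g$ and $\mf{q}$ already recorded in \eqref{eq:so-grad-even} and \eqref{eq:rob-grad-even}. First I would compute $\g/\mf{q}$ as a graded $Q_0$-module. Comparing \eqref{eq:so-grad-even} with \eqref{eq:rob-grad-even} shows that $\mf{q}$ contains the whole nilradical $\g_1$, the reductive part $\mf{q}_0 = \mf{z}_0\oplus\mrm{span}(\omega)\oplus[\bigwedge^{(1,1)}_\circ\V_0^*]$ of $\g_0$, and none of $\g_{-1}$, so that
\begin{align*}
\g/\mf{q} \cong \g_{-1}\oplus\dbl\bigwedge^{(2,0)}\V_0^*\dbr,
\end{align*}
with $\g_{-1} = \V_{-1}^*\otimes\dbl\bigwedge^{(1,0)}\V_0^*\dbr$ in degree $-1$ and $\dbl\bigwedge^{(2,0)}\V_0^*\dbr$ (the semisimple complement of $\mf{q}_0$ in $\g_0$) in degree $0$.

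Next I would filter $\mbb{G} = \V^*\otimes(\g/\mf{q})$ by total degree, $\V^*$ itself being filtered with graded pieces $\V_1^*,\V_0^*,\V_{-1}^*$ in degrees $-1,0,1$. Because the nilradical $P_+$ strictly raises the degree, the subspaces $\mbb{G}^i$ of \eqref{eq:filt-q-G} are genuine $Q$-submodules, and $\gr(\mbb{G})\cong\gr(\V^*)\otimes\gr(\g/\mf{q})$ as $Q_0$-modules. Sorting the six products $\V_a^*\otimes(\text{summand of }\g/\mf{q})$ by total degree then yields $\gr_{-2}(\mbb{G}) = \V_1^*\otimes\g_{-1}$; $\gr_{-1}(\mbb{G}) = (\V_1^*\otimes\dbl\bigwedge^{(2,0)}\V_0^*\dbr)\oplus(\V_0^*\otimes\g_{-1})$; $\gr_0(\mbb{G}) = (\V_0^*\otimes\dbl\bigwedge^{(2,0)}\V_0^*\dbr)\oplus(\V_{-1}^*\otimes\g_{-1})$; and $\gr_1(\mbb{G}) = \V_{-1}^*\otimes\dbl\bigwedge^{(2,0)}\V_0^*\dbr$. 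The $\R_{>0}$-weight $\R(-i)$ on $\gr_i(\mbb{G})$ is then forced by the action of the grading element in $\mf{z}_0$ (equivalently by $\K\cong\R(-1)$), which accounts for every $\R(w)$-factor in Table \ref{tab:table-irred-mod-G}.

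The substance is the $\U(m)$-decomposition of the genuinely two-index factors, the lines $\V_{\pm1}^*$ merely carrying weights. In $\gr_{-1}(\mbb{G})$ the summand $\V_1^*\otimes\dbl\bigwedge^{(2,0)}\V_0^*\dbr$ is already irreducible, while $\V_0^*\otimes\g_{-1}\cong\V_0^*\otimes\V_0^*$ I would split into its real symmetric and skew parts and complexify via ${}^\C\V_0^* = \bigwedge^{(1,0)}\V_0^*\oplus\bigwedge^{(0,1)}\V_0^*$; removing the $h$- and $\omega$-traces yields $\dbl\bigwedge^{(2,0)}\V_0^*\dbr$, $\dbl\bigodot^{(2,0)}\V_0^*\dbr$, the trace lines $\R h$ and $\R\omega$, and two distinct copies of $[\bigwedge^{(1,1)}_\circ\V_0^*]$ (one skew, one symmetric), reproducing all seven entries of $\gr_{-1}$. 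For $\gr_0(\mbb{G})$, the factor $\V_{-1}^*\otimes\g_{-1}\cong\V_0^*$ supplies $\mbb{G}_0^{0,0}$, and $\V_0^*\otimes\dbl\bigwedge^{(2,0)}\V_0^*\dbr$ I would treat by tensoring with the defining representation and its conjugate: the purely holomorphic parts give the totally skew $\dbl\bigwedge^{(3,0)}\V_0^*\dbr$ and the mixed-symmetry hook $\dbl\ydhook\,\V_0^*\dbr$ of \eqref{eq:ydhook}, while the type $(1,2)$ (and $(2,1)$) parts split after $\omega$-trace removal into $\dbl\bigwedge^{(1,2)}_\circ\V_0^*\dbr$ and a single trace term $\dbl\bigwedge^{(1,0)}\V_0^*\dbr$.

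I expect the main obstacle to be precisely this last branching: establishing that $\V_0^*\otimes\dbl\bigwedge^{(2,0)}\V_0^*\dbr$ decomposes into exactly these four $\U(m)$-irreducibles with the correct multiplicities, i.e.\ recognising the hook summand as automatically $\U(m)$-irreducible (being purely holomorphic, it admits no $\omega$-trace) and isolating a unique trace copy of $\dbl\bigwedge^{(1,0)}\V_0^*\dbr$ inside the mixed-type part. A dimension count against $2m$, $\tfrac13 m(m-1)(m-2)$, $\tfrac23 m(m+1)(m-1)$ and $m(m+1)(m-2)$ closes this step and simultaneously exhibits the vanishing of $\gr_0^{1,1}(\mbb{G})$ and $\gr_0^{1,3}(\mbb{G})$ when $m=2$; matching the remaining summands against Table \ref{tab:table-irred-mod-G} is then routine bookkeeping.
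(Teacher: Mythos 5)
Your proposal is correct and follows essentially the same route as the paper's proof: read off $\g/\mf{q} \cong \g_{-1} \oplus \dbl\bigwedge^{(2,0)}\V_0^*\dbr$ by comparing \eqref{eq:so-grad-even} with \eqref{eq:rob-grad-even}, tensor with the graded pieces of $\V^*$ from \eqref{eq:VC_split}, sort by filtration degree, and decompose each summand into $\U(m)$-irreducibles (trace/traceless and type decomposition for $\V_0^*\otimes\V_0^*$, and $\bigwedge^{(3,0)}\oplus\ydhook\oplus\bigwedge^{(1,2)}_\circ\oplus\bigwedge^{(1,0)}$ for $\V_0^*\otimes\dbl\bigwedge^{(2,0)}\V_0^*\dbr$), with the weights fixed by the grading element. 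The only point to reconcile in a write-up is that your degree labels for the graded pieces of $\V^*$ are opposite to the paper's convention in \eqref{eq:VC_split} (where $\V_{\pm1}^*\cong\R(\mp1)$ sits in degree $\pm1$); this is purely notational and does not affect the result.
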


\begin{proof}
We use the same strategy as in Proposition 3.3 of \cite{Fino2020}: the filtration \eqref{eq:sim-filtration2} induces the filtration \eqref{eq:filt-q-G} of $Q$-modules on $\mbb{G} \subset \V^* \otimes \left( \g^{-1} / \g^1 \right)$. We proceed as before using the decompositions \eqref{eq:VC_split}, \eqref{eq:so-grad-even} and \eqref{eq:rob-grad-even} to find
\begin{align*}
 \mbb{G} & \cong \left( \V^*_{-1} \oplus \dbl \bigwedge^{(1,0)} \V_0^* \dbr \oplus \V^*_1 \right) \otimes \left(  \left( \V_{-1}^* \otimes \dbl \bigwedge^{(1,0)} \V_0^* \dbr \right) \oplus \dbl \bigwedge^{(2,0)} \V_0^*  \dbr \right) \, .
\end{align*}
The result follows by distributing this expression and splitting each summand into irreducibles:
\begin{align*}
 \V^*_{-1} \otimes   \V_{-1}^* \otimes \dbl \bigwedge^{(1,0)} \V_0^* \dbr  & = \mbb{G}_{-2}^{0,0} \, ,\\
 \V^*_{-1} \otimes \dbl \bigwedge^{(2,0)} \V_0^*  \dbr & = \mbb{G}_{-1}^{3,0}  \, , \\
 \dbl \bigwedge^{(1,0)} \V_0^* \dbr \otimes   \V_{-1}^* \otimes \dbl \bigwedge^{(1,0)} \V_0^* \dbr & = \mbb{G}_{-1}^{1,0} \oplus \left(\mbb{G}_{-1}^{1,0} \oplus \mbb{G}_{-1}^{1,1} \oplus \mbb{G}_{-1}^{1,2} \right) \oplus \left( \mbb{G}_{-1}^{2,0} \oplus \mbb{G}_{-1}^{2,1} \right) \, , \\
  \dbl \bigwedge^{(1,0)} \V_0^* \dbr \otimes \dbl \bigwedge^{(2,0)} \V_0^*  \dbr  & = \mbb{G}_0^{1,0} \oplus \mbb{G}_0^{1,1} \oplus \mbb{G}_0^{1,2} \oplus \mbb{G}_0^{1,3}  \, , \\
  \V^*_1  \otimes   \V_{-1}^* \otimes \dbl \bigwedge^{(1,0)} \V_0^* \dbr  & = \mbb{G}_0^{0,0}\, , \\
    \V^*_1  \otimes \dbl \bigwedge^{(2,0)} \V_0^*  \dbr & =  \mbb{G}_1^{0,0} \, ,
\end{align*}
where, recalling that $\V_{\pm 1}^* \cong \R(\mp1)$, the modules are described in Table \ref{tab:table-irred-mod-G}.
\end{proof}

\begin{rem}
Observe that the $P$-module $\V^* \otimes \left( \g / \mf{p} \right)$ represents the space of intrinsic torsions of the underlying optical structure. This can be viewed as a $Q$-submodule of $\mbb{G}$. In the next proposition, which is a direct consequence of Theorem \ref{thm-main-Rob}, we single out the irreducible $P$-submodules of $\gr( \mbb{G})$, thereby making contact with the intrinsic torsion of an optical structure described in \cite{Fino2020}.
\end{rem}

\begin{prop}\label{prop-Q2P}
Assume $m>1$. Let $\mbb{G} = \V^* \otimes \left( \g / \mf{q} \right)$ and consider the graded module $\gr(\mbb{G})$ given in Theorem \ref{thm-main-Rob}. Define
\begin{align*}
\gr_{-2}^0 ( \mbb{G} ) & := \gr_{-2}^{0,0}  ( \mbb{G} ) \, ,&
\gr_{-1}^0 ( \mbb{G} ) & := \gr_{-1}^{0,0} ( \mbb{G} ) \, , \\
\gr_{-1}^1 ( \mbb{G} ) & := \gr_{-1}^{1,0} ( \mbb{G} ) \oplus \gr_{-1}^{1,1} ( \mbb{G} ) \oplus \gr_{-1}^{1,2}  ( \mbb{G} ) \, , &
\gr_{-1}^2 ( \mbb{G} ) & := \gr_{-1}^{2,0} ( \mbb{G} ) \oplus \gr_{-1}^{2,1}( \mbb{G} ) \, , \\
\gr_0^0  ( \mbb{G} ) & := \gr_0^{0,0} ( \mbb{G} )\, .
\end{align*}
and in dimension six,
\begin{align*}
\gr_{-1}^{1,+} (\mbb{G}) & := \gr_{-1}^{1,0} ( \mbb{G} ) \oplus \gr_{-1}^{1,1} (\mbb{G} ) \, , &
\gr_{-1}^{1,-} (\mbb{G}) & := \gr_{-1}^{1,2} ( \mbb{G} ) \, .
\end{align*}
Then, for each $i,j$, $\gr_i^j (\mbb{G})$ is an irreducible $P$-module except in dimension six, where $\gr_{-1}^{1} (\mbb{G})$ is not irreducible, but $\gr_{-1}^{1,\pm} (\mbb{G})$ are.

Moreover, we define
\begin{align*}
\mbb{G}_{-2}^0 & := \mbb{G}_{-2}^{0,0} \, , &
\mbb{G}_{-1}^0 & := \mbb{G}_{-1}^{0,0} \, , \\
\mbb{G}_{-1}^1 & := \mbb{G}_{-1}^{1,0} \oplus \mbb{G}_{-1}^{1,1} \oplus \mbb{G}_{-1}^{1,2} \, , \\
\mbb{G}_{-1}^2 & := \mbb{G}_{-1}^{2,0} \oplus \mbb{G}_{-1}^{2,1}  \, , &
 \mbb{G}_0^0 & := \mbb{G}_0^{0,0} \, ,
\end{align*}
and in dimension six,
\begin{align*}
\mbb{G}_{-1}^{1,+} & := \mbb{G}_{-1}^{1,0} \oplus \mbb{G}_{-1}^{1,1} \, , &
\mbb{G}_{-1}^{1,-} & := \mbb{G}_{-1}^{1,2} \, .
\end{align*}
Then, for each $i,j$, $\mbb{G}_i^j$ is an irreducible $P_0$-module except in dimension six, where $\mbb{G}_{-1}^{1}$ is not irreducible, but $\mbb{G}_{-1}^{1,\pm}$ are.
\end{prop}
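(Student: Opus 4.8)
The plan is to locate, within the irreducible $Q$-decomposition of $\gr(\mbb G)$ furnished by Theorem \ref{thm-main-Rob}, the constituents coming from the underlying optical structure, and to show that these reassemble into the irreducible $P$-modules recorded in \cite{Fino2020}. First I would make precise the relation between the two stabilisers at the Lie algebra level: since $\mf q\subset\mf p$ share the same nilpotent part $\g_1$, and $\mf p_0=\g_0$ while $\mf q_0=\mf z_0\oplus\mrm{span}(\omega)\oplus[\bigwedge^{(1,1)}_\circ\V_0^*]$, one has short exact sequences of $Q_0$-modules which, by reductivity of $Q_0$, split as $\g/\mf q\cong(\g/\mf p)\oplus(\mf p/\mf q)$, with $\g/\mf p\cong\g_{-1}$ and $\mf p/\mf q\cong\dbl\bigwedge^{(2,0)}\V_0^*\dbr$. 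Tensoring with $\V^*$ decomposes $\mbb G$ accordingly. Reading off the proof of Theorem \ref{thm-main-Rob}, the constituents of $\V^*\otimes\g_{-1}$ are exactly $\gr_{-2}^0$, $\gr_{-1}^0$, $\gr_{-1}^1$, $\gr_{-1}^2$ and $\gr_0^0$, while the remaining constituents $\gr_{-1}^{3,0}$, $\gr_0^{1,0},\dots,\gr_0^{1,3}$ and $\gr_1^{0,0}$ make up $\V^*\otimes\dbl\bigwedge^{(2,0)}\V_0^*\dbr$; this is precisely why the latter are not grouped, being the genuinely Robinson constituents absent from the optical intrinsic torsion.

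Next I would reduce the $P$-module claim to a $P_0$-module claim. Each $\gr_i^j$ lies in a single filtration degree $i$, and the unipotent radical $P_+=Q_+$, with Lie algebra $\g_1$, strictly raises the filtration degree and hence acts trivially on every graded piece $\gr_i(\mbb G)$. Since $P_+$ is the unipotent radical of $P$, an irreducible $P$-module is the same thing as an irreducible $P_0$-module carrying the trivial $P_+$-action; thus it suffices to prove that each $\mbb G_i^j$ is an irreducible $\CO^0(2m)$-module, which is the second assertion. The first then follows verbatim, after restoring the appropriate $\R(w)$-twist.

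For the $P_0$-module statement I would branch the irreducible $\SO(2m)$-modules occurring in $\V^*\otimes\g_{-1}$ — namely the (suitably twisted) standard module $\V_0^*$, the trivial module, the $2$-forms $\bigwedge^2\V_0^*$ and the tracefree symmetric tensors $\bigodot^2_\circ\V_0^*$ — down to $Q_0=\R_{>0}\times\U(m)$. Using $\bigwedge^2\V_0^*=\mrm{span}(\omega)\oplus[\bigwedge^{(1,1)}_\circ\V_0^*]\oplus\dbl\bigwedge^{(2,0)}\V_0^*\dbr$ and $\bigodot^2_\circ\V_0^*=[\bigodot{}^{(1,1)}_\circ\V_0^*]\oplus\dbl\bigodot{}^{(2,0)}\V_0^*\dbr$, together with the $J$-induced isomorphism $[\bigodot{}^{(1,1)}_\circ\V_0^*]\cong[\bigwedge^{(1,1)}_\circ\V_0^*]$, a comparison of descriptions and dimensions with Table \ref{tab:table-irred-mod-G} matches $\bigwedge^2\V_0^*$ with $\mbb G_{-1}^1=\mbb G_{-1}^{1,0}\oplus\mbb G_{-1}^{1,1}\oplus\mbb G_{-1}^{1,2}$ and $\bigodot^2_\circ\V_0^*$ with $\mbb G_{-1}^2=\mbb G_{-1}^{2,0}\oplus\mbb G_{-1}^{2,1}$; the standard and trivial modules are single constituents, already irreducible over both groups. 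Irreducibility over $\CO^0(2m)$ is now classical: the standard, trivial and tracefree symmetric modules are irreducible for every $m\geq2$, while $\bigwedge^2\V_0^*$, the adjoint module of $\so(2m)$, is irreducible precisely when $2m\neq4$.

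The sole exception is $2m=4$, i.e.\ $m=2$, where $\so(4)$ is not simple and $\bigwedge^2\V_0^*=\bigwedge^2_+\V_0^*\oplus\bigwedge^2_-\V_0^*$ splits into its self-dual and anti-self-dual $3$-dimensional summands. Matching with the splitting into $\su^\pm(2)$ recorded in the remark following \eqref{eq:rob-grad-even} then gives $\mbb G_{-1}^{1,+}=\mbb G_{-1}^{1,0}\oplus\mbb G_{-1}^{1,1}$ (self-dual, $\mrm{span}(\omega)\oplus\dbl\bigwedge^{(2,0)}\V_0^*\dbr$) and $\mbb G_{-1}^{1,-}=\mbb G_{-1}^{1,2}$ (anti-self-dual, $[\bigwedge^{(1,1)}_\circ\V_0^*]$), each irreducible, and likewise for $\gr_{-1}^{1,\pm}(\mbb G)$. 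I expect the only delicate point to be the branching bookkeeping: keeping the two trivial $\U(m)$-summands apart — the metric trace, which belongs to $\gr_{-1}^0$, as against the Hermitian form $\omega$, which belongs to $\gr_{-1}^1$ — and assigning the self-dual versus anti-self-dual halves correctly when $m=2$.
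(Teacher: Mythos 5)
Your argument is correct and is exactly the route the paper intends: the paper offers no explicit proof, stating the proposition as a direct consequence of Theorem \ref{thm-main-Rob}, and your write-up simply fills in the implicit steps — the $Q_0$-splitting $\g/\mf q\cong\g_{-1}\oplus\dbl\bigwedge^{(2,0)}\V_0^*\dbr$ isolating the optical constituents, the reduction of $P$-irreducibility to $P_0$-irreducibility via the trivial action of $P_+=Q_+$ on graded pieces, and the standard branching of the $\SO(2m)$-modules $\bigwedge^2\V_0^*$ and $\bigodot^2_\circ\V_0^*$ to $\U(m)$, with the $m=2$ exception matching the $\su^\pm(2)$ splitting noted after \eqref{eq:rob-grad-even}. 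No gaps.
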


\subsection{Isotypic $Q_0$-submodules of $\mbb{G}$}\label{sec:alg_intors_iso} 
Let us fix a splitting of $\mbb{G}$ into $Q_0$-modules. Observe that the modules in each of the pairs $(\mbb{G}_{-1}^{0,0} ,  \mbb{G}_{-1}^{1,0})$, $(\mbb{G}_{-1}^{1,2} , \mbb{G}_{-1}^{2,0})$, $(\mbb{G}_{-1}^{1,1} ,  \mbb{G}_{-1}^{3,0})$ and $(\mbb{G}_{-1}^{1,2} ,  \mbb{G}_{-1}^{2,0})$ are isotypic, i.e.\ they have the same dimensions. This means that one can construct further irreducible $Q_0$-modules by assigning some algebraic relations among these in terms of parameters. To this end, we need to describe them by means of the projections
\begin{align}
\begin{aligned}\label{eq:extra_proj}
(\Pi_{-1}^{0 \times 1})_{[x:y]} &:  \V^* \otimes \g \rightarrow \mbb{G}_{-1}^{0,0} \oplus  \mbb{G}_{-1}^{1,0} \, , & [x:y] & \in \RP^1 \, ,\\
(\Pi_{-1}^{1 \times 2})_{[x:y]} &:  \V^* \otimes \g \rightarrow \mbb{G}_{-1}^{1,2} \oplus  \mbb{G}_{-1}^{2,0} \, , & [x:y] & \in \RP^1 \, ,\\
(\Pi_{-1}^{1 \times 3})_{[z:w]} &:  \V^* \otimes \g \rightarrow \mbb{G}_{-1}^{1,1} \oplus  \mbb{G}_{-1}^{3,0}  \, , & [z:w] & \in \CP^1 \, , \\
(\Pi_{0}^{0 \times 1})_{[z:w]} &:  \V^* \otimes \g \rightarrow \mbb{G}_{0}^{0,0} \oplus  \mbb{G}_{0}^{1,0}  \, , & [z:w] & \in \CP^1 \, ,
\end{aligned}
\end{align}
whose precise definitions have been relegated to Appendix \ref{app:proj} for convenience. Here, $\RP^1$ and $\CP^1$ are real and complex projective lines, respectively. We can then define the following additional $Q_0$-submodules of $\mbb{G}$:
\begin{align}
\begin{aligned}\label{eq:extra_irr_G}
(\mbb{G}_{-1}^{0 \times 1})_{[x:y]}  & := \mathrm{im} (\Pi_{-1}^{0 \times 1})_{[x:y]} \subset \mbb{G}_{-1}^{0,0} \oplus  \mbb{G}_{-1}^{1,0} \, , & \mbox{for $[x:y] \in \RP^1$.} \\
(\mbb{G}_{-1}^{1 \times 2})_{[x:y]} & := \mathrm{im} (\Pi_{-1}^{1 \times 2})_{[x:y]} \subset \mbb{G}_{-1}^{1,2} \oplus  \mbb{G}_{-1}^{2,0} \, , & \mbox{for $[x:y] \in \RP^1$.}\\
(\mbb{G}_{-1}^{1 \times 3})_{[z:w]} & := \mathrm{im} (\Pi_{-1}^{1 \times 3})_{[z:w]} \subset \mbb{G}_{-1}^{1,1} \oplus  \mbb{G}_{-1}^{3,0} \, , & \mbox{for $[z:w] \in \CP^1$.} \\
(\mbb{G}_{0}^{0 \times 1})_{[z:w]} & := \mathrm{im} (\Pi_{-1}^{1 \times 2})_{[z:w]} \subset \mbb{G}_{-1}^{1,2} \oplus  \mbb{G}_{-1}^{2,0} \, , & \mbox{for $[z:w] \in \CP^1$.}
\end{aligned}
\end{align}
Their descriptions and dimensions are given in Table \ref{tab:table-isotypic}.
\begin{center}
\begin{table}%[H]
\begin{displaymath}
{\renewcommand{\arraystretch}{1.5}
\begin{array}{|c|c|c|c|}
\hline
\text{$Q_0$-module} & \text{Description} & \text{Dimension} \\
\hline
\hline
(\mbb{G}_{-1}^{0 \times 1})_{[x:y]}  &  \R(1) & 1  \\
(\mbb{G}_{-1}^{1 \times 2})_{[x:y]} & [ \bigwedge^{(1,1)}_\circ \V_0^*  ]  \otimes \R(1) & (m-1)(m+1) \\ 
(\mbb{G}_{-1}^{1 \times 3})_{[z:w]} & \dbl \bigwedge^{(2,0)} \V_0^*  \dbr \otimes \R(1)  &  m(m-1) \\
  \hline 
  \hline
(\mbb{G}_{0}^{0 \times 1})_{[z:w]} & \V_0^*  & 2m \\
\hline
\end{array}}
\end{displaymath}
\caption{\label{tab:table-isotypic} $Q_0$-submodules of $\mbb{G}$ --- here $[x:y] \in \RP^1$ and $[z:w] \in \CP^1$}
\end{table}
\end{center}
Note that by definition,
\begin{align*}
(\mbb{G}_{-1}^{0 \times 1})_{[1,0]} & =  \mbb{G}_{-1}^{0,0} \, , &
(\mbb{G}_{-1}^{1 \times 2})_{[1,0]} & = \mbb{G}_{-1}^{1,2} \, ,  &
(\mbb{G}_{-1}^{1 \times 3})_{[1,0]} & =  \mbb{G}_{-1}^{1,1}  \, , & 
(\mbb{G}_{0}^{0 \times 1})_{[1,0]} & =  \mbb{G}_{-1}^{1,2} \, , \\
 (\mbb{G}_{-1}^{0 \times 1})_{[0,1]} & =   \mbb{G}_{-1}^{1,0} \, ,  &
 (\mbb{G}_{-1}^{1 \times 2})_{[0,1]} & = \mbb{G}_{-1}^{2,0} \, , &
(\mbb{G}_{-1}^{1 \times 3})_{[0,1]} & =  \mbb{G}_{-1}^{3,0} \, , &
 (\mbb{G}_{0}^{0 \times 1})_{[0,1]} & =\mbb{G}_{-1}^{2,0} \, .
\end{align*}

\subsection{The $Q$-submodules of $\mbb{G}$}
We are now in the position of determining all the $Q$-submodules of $\mbb{G}$. For this purpose, we shall appeal to the $Q_0$-module epimorphisms $\Pi_i^{j,k} : \V^* \otimes \g \rightarrow \mbb{G}_i^{j,k}$, $\Pi_i^{j} : \V^* \otimes \g \rightarrow \mbb{G}_i^{j}$ and \eqref{eq:extra_proj}, all of which are described in Appendix \ref{app:proj} with respect to some chosen splitting of $\mbb{G}$. Any $Q$-submodule of $\mbb{G}$ must be a sum of the irreducible $Q_0$-submodules $\mbb{G}_i^{j,k}$ given in Table \ref{tab:table-irred-mod-G}, together with the families of irreducibles $Q_0$-modules given in Table \ref{tab:table-isotypic}.  Not every such sum is a $Q$-module. To determine which $Q_0$-submodules of $\mbb{G}$ are also $Q$-submodules, we compute how a change of splitting \eqref{eq:basis-change} transforms the maps $\Pi_i^{j,k}$. This will tell us how the various modules $\mbb{G}_i^{j,k}$ are related under the action of $P_+$, the nilpotent part of $Q$. We will then be able to determine the $Q$-submodules of $\mbb{G}$ accordingly.

To facilitate the readability, we contract the projections \eqref{eq:proj_Qmod} with suitable combinations of $\delta^i_{\alpha}$, $\delta^i_{\bar{\alpha}}$ to define
\begin{align}\label{eq:S-H2}
\begin{aligned}
\gamma_\alpha & := \Pi_{-2}^{0,0}(\Gamma)_\alpha \, , \\
\epsilon & := \Pi_{-1}^{0,0}(\Gamma) \, , \\
\tau^\omega  & := \Pi_{-1}^{1,0}(\Gamma) \, , \\
\tau_{\alpha \beta} & := \Pi_{-1}^{1,1}(\Gamma)_{\alpha \beta} = {\Pi'}_{-1}^{1,1}(\Gamma)_{\alpha \beta} \, , &
\tau^\circ_{\alpha \bar{\beta}} & := \Pi_{-1}^{1,2}(\Gamma)_{\alpha \bar{\beta}} \, , \\
\sigma_{\alpha \bar{\beta}} & := \Pi_{-1}^{2,0}(\Gamma)_{\alpha \bar{\beta}} \, , &
\sigma_{\alpha \beta} & := \Pi_{-1}^{2,1}(\Gamma)_{\alpha \beta} \, , \\
\zeta_{\alpha \beta} & := \Pi_{-1}^{3,0} (\Gamma)_{\alpha \beta}  \, , \\  
E_\alpha & := \Pi_0^{0,0}(\Gamma)_{\alpha} = {\Pi'}_0^{0,0}(\Gamma)_{\alpha} \, ,& \\
G_{\alpha} & := \Pi_0^{1,0} (\Gamma)_{\alpha}  \, ,  &
G^{\tiny{\ydskew}}_{\alpha \beta \gamma} & := \Pi_0^{1,1} (\Gamma)_{\alpha \beta \gamma}  \, , \\
G^{\tiny{\ydhook}}_{\alpha \beta \gamma} & := \Pi_0^{1,2} (\Gamma)_{\alpha \beta \gamma}  \, , &
G^{\circ}_{\bar{\alpha} \beta \gamma} & := \Pi_0^{1,3} (\Gamma)_{\bar{\alpha} \beta \gamma}  \, , \\
B_{\alpha \beta} & := \Pi_1^{0,0} (\Gamma)_{\alpha \beta}  \, .
\end{aligned}
\end{align}
Their complex conjugates are defined similarly. Note that with these definitions,
\begin{align*}
\sigma_{\alpha \beta} & = \sigma_{(\alpha \beta)} \, , & \tau_{\alpha \beta} & =  \tau_{[\alpha \beta]} \, ,  \\
\sigma_{\alpha \bar{\beta}} h^{\alpha \bar{\beta}} & = 0 \, , & \tau^\circ_{\alpha \bar{\beta}} h^{\alpha \bar{\beta}} & = 0 \, , \\
\overline{\sigma_{\alpha \bar{\beta}}} & = \sigma_{\bar{\alpha} \beta} = \sigma_{\beta \bar{\alpha}} \, , & \overline{\tau^\circ_{\alpha \bar{\beta}}}& = \tau^\circ_{\bar{\alpha} \beta} = - \tau^\circ_{\beta \bar{\alpha} } \, , \\
\zeta_{\alpha \beta} & = - \zeta_{\beta \alpha} \, , & B_{\alpha \beta} & = - B_{\beta \alpha} \, , \\
G^{\tiny{\ydskew}}_{(\alpha \beta) \gamma} & = 0 \, , &
G^{\tiny{\ydhook}}_{[\alpha \beta \gamma]} & = 0  \, , &
G^{\circ}_{\bar{\beta} \alpha \gamma} h^{\alpha \bar{\beta}} & = 0\, .
\end{align*}

\begin{thm}\label{thm:intors-rob}
Define
\begin{flalign*}
\slashed{\mbb{G}}_{-2}^{0,0}  & := \{ \Gamma \in \V^* \otimes \g : \Pi_{-2}^{0,0}(\Gamma) = 0 \} / \left( \V^* \otimes \mf{q} \right)  \, , &
%%%%%%%%%%%%%%%
\end{flalign*}
%\vspace{-14.0mm}
\begin{flalign*}
\slashed{\mbb{G}}_{-1}^{0,0}  & := \{ \Gamma \in \V^* \otimes \g : \Pi_{-1}^{0,0}(\Gamma) = \Pi_{-2}^{0,0}(\Gamma) = 0 \} / \left( \V^* \otimes \mf{q} \right) \, , & \\
%%%%%%%%%%%%%%%
\slashed{\mbb{G}}_{-1}^{1,i} & := \{ \Gamma \in \V^* \otimes \g : \Pi_{-1}^{1,i}(\Gamma) = \Pi_{-2}^{0,0}(\Gamma) = 0 \} / \left( \V^* \otimes \mf{q} \right) \, ,  \qquad  i=0,1 & \\
%%%%%%%%%%%%%%%
\slashed{\mbb{G}}_{-1}^{2,i} & := \{ \Gamma \in \V^* \otimes \g : \Pi_{-1}^{2,i}(\Gamma) = \Pi_{-2}^{0,0}(\Gamma) = 0 \} / \left( \V^* \otimes \mf{q} \right) \, ,  \qquad i=0,1,2 &\\
%%%%%%%%%%%%%%%
\slashed{\mbb{G}}_{-1}^{3,0} & := \{ \Gamma \in \V^* \otimes \g : \Pi_{-1}^{3,0}(\Gamma) = \Pi_{-2}^{0,0}(\Gamma) = 0 \} / \left( \V^* \otimes \mf{q} \right)  \, , &
%%%%%%%%%%%%%%%
\end{flalign*}
%\vspace{-10.0mm}
\begin{flalign*}
\slashed{\mbb{G}}_0^{0,0} & := \{ \Gamma \in \V^* \otimes \g : \Pi_0^{0,0} (\Gamma) = \Pi_{-1}^0(\Gamma) = \Pi_{-1}^1 (\Gamma) = \Pi_{-1}^2 (\Gamma) = \Pi_{-2}^0 (\Gamma) =  0 \} / \left( \V^* \otimes \mf{q} \right)  \, , & \\
%%%%%%%%%%%%%%%
\slashed{\mbb{G}}_0^{1,0} & := \{ \Gamma \in \V^* \otimes \g : \Pi_0^{1,0}(\Gamma) = \Pi_{-1}^{0,0}(\Gamma) = \Pi_{-1}^{1,0}(\Gamma)  = \Pi_{-1}^{1,2}(\Gamma)  & \\
& \qquad \qquad \qquad \qquad   = \Pi_{-1}^{2,0}(\Gamma) = \Pi_{-1}^{3,0}(\Gamma) = \Pi_{-2}^{0,0}(\Gamma) =  0 \} /\left( \V^* \otimes \mf{q} \right) \, , & \\
%%%%%%%%%%%%%%%
\slashed{\mbb{G}}_0^{1,1} & := \{ \Gamma \in \V^* \otimes \g : \Pi_0^{1,1}(\Gamma) = (\Pi_{-1}^{1\times3})_{[-4\i,1]}(\Gamma)  =  0 \} / \left( \V^* \otimes \mf{q} \right) \, , & \\
%%%%%%%%%%%%%%%
\slashed{\mbb{G}}_0^{1,2} & := \{ \Gamma \in \V^* \otimes \g :  \Pi_0^{1,2}(\Gamma) = (\Pi_{-1}^{1\times3})_{[2\i,1]}(\Gamma)  = \Pi_{-1}^{2,1}(\Gamma) = \Pi_{-2}^{0,0}(\Gamma) =  0 \} / \left( \V^* \otimes \mf{q} \right)  \, , & \\
%%%%%%%%%%%%%%%
\slashed{\mbb{G}}_0^{1,3} & := \{ \Gamma \in \V^* \otimes \g :  \Pi_0^{1,3}(\Gamma) =  \Pi_{-1}^{1,2}(\Gamma) = \Pi_{-1}^{2,0}(\Gamma) = \Pi_{-1}^{3,0}(\Gamma) =\Pi_{-2}^{0,0}(\Gamma) = 0 \} / \left( \V^* \otimes \mf{q} \right) \, , &
%%%%%%%%%%%%%%%
\end{flalign*}
and,
\begin{flalign*}
\slashed{\mbb{G}}_1^{0,0} & := \{ \Gamma \in \V^* \otimes \g : \Pi_1^{0,0}(\Gamma) =   (\Pi_0^{0\times1})_{[2(m-1)\i,-1]}(\Gamma)  = \Pi_0^{1,1}(\Gamma) = \Pi_0^{1,2}(\Gamma) = \Pi_0^{1,3}(\Gamma) & \\
%%%%%%%%%%%%%%%
& \qquad \qquad  = \Pi_{-1}^{1,1}(\Gamma) = \Pi_{-1}^{1,2}(\Gamma) = \Pi_{-1}^2 (\Gamma) & \\
& \qquad \qquad  \qquad \qquad  = \Pi_{-1}^{3,0}(\Gamma) = \Pi_{-2}^{0,0}(\Gamma) = 0 \} / \left( \V^* \otimes \mf{q} \right) \, , \qquad \mbox{when $m>2$} \, , &
\shortintertext{while}
\slashed{\mbb{G}}_1^{0,0} & := \{ \Gamma \in \V^* \otimes \g : \Pi_1^{0,0}(\Gamma) =   (\Pi_0^{0\times1})_{[2\i,-1]}(\Gamma)  = \Pi_0^{1,1}(\Gamma) = \Pi_0^{1,2}(\Gamma) &\\
%%%%%%%%%%%%%%%
& \qquad \qquad  = (\Pi_{-1}^{1\times 3})_{[2\i, 1]}(\Gamma) = \Pi_{-1}^{1,2}(\Gamma) = \Pi_{-1}^2 (\Gamma) \\
& \qquad \qquad  \qquad \qquad  = \Pi_{-1}^{3,0}(\Gamma) = \Pi_{-2}^{0,0}(\Gamma) = 0 \} / \left( \V^* \otimes \mf{q} \right) \, , \qquad  \mbox{when $m=2$} \, . &
\end{flalign*}
In dimension six, i.e.\ $m=2$, we always have $\slashed{\mbb{G}}_0^{1,1} = \slashed{\mbb{G}}_0^{1,3} = \mbb{G}$.

Define further, for any $[x:y] \in \RP^1$, $[z:w] \in \CP^1$,
\begin{flalign*}
 (\slashed{\mbb{G}}_{-1}^{0 \times 1})_{[x:y]}  & := \{ \Gamma \in \V^* \otimes \g : (\Pi_{-1}^{0 \times 1})_{[x:y]} (\Gamma) = \Pi_{-2}^{0,0}(\Gamma)  = 0\} / \left( \V^* \otimes \mf{q} \right) \, , &  \\
%%%%%%%%%%%%%%%
 (\slashed{\mbb{G}}_{-1}^{1 \times 2})_{[x:y]} & := \{ \Gamma \in \V^* \otimes \g :  (\Pi_{-1}^{1 \times 2})_{[x:y]} (\Gamma) = \Pi_{-2}^{0,0}(\Gamma) =  0 \} / \left( \V^* \otimes \mf{q} \right)  \, , & \\
%%%%%%%%%%%%%%%
 (\slashed{\mbb{G}}_{-1}^{1 \times 3})_{[z:w]} & := \{ \Gamma \in \V^* \otimes \g : (\Pi_{-1}^{1 \times 3})_{[z:w]} (\Gamma) = (z + 4 \i w)  \Pi_{-2}^{0,0} (\Gamma)  = 0 \} / \left( \V^* \otimes \mf{q} \right)  \, , & \\
%%%%%%%%%%%%%%%
 (\slashed{\mbb{G}}_{0}^{0 \times 1})_{[z:w]} & := \{ \Gamma \in \V^* \otimes \g :  (\Pi_{0}^{0 \times 1})_{[z:w]} (\Gamma) = (2 (m-1) \i w + z) \Pi_{-1}^{0,0}(\Gamma)  & \\
& \qquad  \qquad \qquad =  (2 (m-1) \i w + z) \Pi_{-1}^{1,0}(\Gamma) = (\Pi_{-1}^{1 \times 3})_{[z:w]} (\Gamma) =  ( 2 w \,  \i - z) \Pi_{-1}^{1,2}(\Gamma) \\
& \qquad  \qquad \qquad \qquad =  ( 2 w \,  \i - z) \Pi_{-1}^{2,0}(\Gamma) = z \Pi_{-1}^{2,1}(\Gamma) = \Pi_{-2}^{0,0}(\Gamma) =  0 \} / \left( \V^* \otimes \mf{q} \right)  \, . &
\end{flalign*}

Then, for each $i,j,k$, $\slashed{\mbb{G}}_{i}^{j , k}$ is the largest $Q$-submodule of $\mbb{G}$ that does not contain $\mbb{G}_{i}^{j , k}$, and similarly for $(\slashed{\mbb{G}}_{-1}^{0 \times 1})_{[x:y]}$,  $(\slashed{\mbb{G}}_{-1}^{1 \times 2})_{[x:y]}$, $(\slashed{\mbb{G}}_{-1}^{1 \times 3})_{[z:w]}$ and $ (\slashed{\mbb{G}}_{0}^{0 \times 1})_{[z:w]}$.
In particular, any $Q$-submodule of $\mbb{G}$ arises as the intersection of any of the ones above.

In addition, there are inclusions of $Q$-submodules, which are denoted by arrows in the diagrams below.
\newpage
For $m>2$:
\begin{align*}
\xy
(-40,10)*+{\slashed{\mbb{G}}_1^{0,0}}="s0",
(-20,-70)*+{\slashed{\mbb{G}}_0^{1,3}}="s1",
(-20,-40)*+{\slashed{\mbb{G}}_0^{1,2}}="s2",
(-20,-10)*+{\slashed{\mbb{G}}_0^{1,1}}="s3",
(-20,20)*+{\slashed{\mbb{G}}_0^{1,0}}="s4",
(-20,50)*+{\underset{[-2(m-1)\i:1]}{(\slashed{\mbb{G}}_0^{0\times 1})}}="s5",
(-20,80)*+{\slashed{\mbb{G}}_0^{0,0}}="s6",
(30,-60)*+{\underset{[-4\i:1]}{(\slashed{\mbb{G}}_{-1}^{1\times 3})}}="s7",
(30,-40)*+{\underset{[2\i:1]}{(\slashed{\mbb{G}}_{-1}^{1\times 3})}}="s8",
(30,-20)*+{\underset{[2(m-1)\i:1]}{(\slashed{\mbb{G}}_{-1}^{1\times 3})}}="s17",
(30,0)*+{\slashed{\mbb{G}}_{-1}^{3,0}}="s9",
(30,20)*+{\slashed{\mbb{G}}_{-1}^{2,1}}="s10",
(30,40)*+{\slashed{\mbb{G}}_{-1}^{2,0}}="s11",
(30,60)*+{\slashed{\mbb{G}}_{-1}^{1,2}}="s12",
(30,80)*+{\slashed{\mbb{G}}_{-1}^{1,1}}="s13",
(30,100)*+{\slashed{\mbb{G}}_{-1}^{1,0}}="s14",
(30,120)*+{\slashed{\mbb{G}}_{-1}^{0,0}}="s15",
(70,0)*+{\slashed{\mbb{G}}_{-2}^{0,0}}="s16",
(90,0)*+{\mbb{G}}="s18",
"s1"; "s0" ; **@{-} ?>*\dir{>};
"s2"; "s0" ; **@{-} ?>*\dir{>};
"s3"; "s0" ; **@{-} ?>*\dir{>};
"s5"; "s0" ; **@{-} ?>*\dir{>};
"s9"; "s1" ; **@{-} ?>*\dir{>};
"s11"; "s1" ; **@{-} ?>*\dir{>};
"s12"; "s1" ; **@{-} ?>*\dir{>};
"s10"; "s2" ; **@{-} ?>*\dir{>};
"s8"; "s2" ; **@{-} ?>*\dir{>};
"s7"; "s3" ; **@{-} ?>*\dir{>};
"s9"; "s4" ; **@{-} ?>*\dir{>};
"s11"; "s4" ; **@{-} ?>*\dir{>};
"s12"; "s4" ; **@{-} ?>*\dir{>};
"s14"; "s4" ; **@{-} ?>*\dir{>};
"s15"; "s4" ; **@{-} ?>*\dir{>};
"s15"; "s6" ; **@{-} ?>*\dir{>};
"s14"; "s6" ; **@{-} ?>*\dir{>};
"s13"; "s6" ; **@{-} ?>*\dir{>};
"s12"; "s6" ; **@{-} ?>*\dir{>};
"s11"; "s6" ; **@{-} ?>*\dir{>};
"s10"; "s6" ; **@{-} ?>*\dir{>};
"s16"; "s8" ; **@{-} ?>*\dir{>};
"s16"; "s9" ; **@{-} ?>*\dir{>};
"s16"; "s10" ; **@{-} ?>*\dir{>};
"s16"; "s11" ; **@{-} ?>*\dir{>};
"s16"; "s12" ; **@{-} ?>*\dir{>};
"s16"; "s13" ; **@{-} ?>*\dir{>};
"s16"; "s14" ; **@{-} ?>*\dir{>};
"s16"; "s15" ; **@{-} ?>*\dir{>};
"s17"; "s5" ; **@{-} ?>*\dir{>};
"s10"; "s5" ; **@{-} ?>*\dir{>};
"s11"; "s5" ; **@{-} ?>*\dir{>};
"s12"; "s5" ; **@{-} ?>*\dir{>};
"s16"; "s17" ; **@{-} ?>*\dir{>};
"s18"; "s16" ; **@{-} ?>*\dir{>};
"s18"; "s7" ; **@{-} ?>*\dir{>};
\endxy
\end{align*}
\newpage
For $m=2$:
\begin{align*}
\xy
(-40,40)*+{\slashed{\mbb{G}}_1^{0,0}}="s0",
(-20,-5)*+{\slashed{\mbb{G}}_0^{1,2}}="s2",
(-20,25)*+{\slashed{\mbb{G}}_0^{1,0}}="s4",
(-20,55)*+{\underset{[-2\i:1]}{(\slashed{\mbb{G}}_0^{0\times 1})}}="s5",
(-20,85)*+{\slashed{\mbb{G}}_0^{0,0}}="s6",
(30,-20)*+{\underset{[-4\i:1]}{(\slashed{\mbb{G}}_{-1}^{1\times 3})}}="s7",
(30,-5)*+{\underset{[2\i:1]}{(\slashed{\mbb{G}}_{-1}^{1\times 3})}}="s17",
(30,10)*+{\slashed{\mbb{G}}_{-1}^{3,0}}="s9",
(30,25)*+{\slashed{\mbb{G}}_{-1}^{2,1}}="s10",
(30,40)*+{\slashed{\mbb{G}}_{-1}^{2,0}}="s11",
(30,55)*+{\slashed{\mbb{G}}_{-1}^{1,2}}="s12",
(30,70)*+{\slashed{\mbb{G}}_{-1}^{1,1}}="s13",
(30,85)*+{\slashed{\mbb{G}}_{-1}^{1,0}}="s14",
(30,100)*+{\slashed{\mbb{G}}_{-1}^{0,0}}="s15",
(70,40)*+{\slashed{\mbb{G}}_{-2}^{0,0}}="s16",
(90,40)*+{\mbb{G}}="s18",
"s2"; "s0" ; **@{-} ?>*\dir{>};
"s5"; "s0" ; **@{-} ?>*\dir{>};
"s10"; "s2" ; **@{-} ?>*\dir{>};
"s17"; "s2" ; **@{-} ?>*\dir{>};
"s9"; "s4" ; **@{-} ?>*\dir{>};
"s11"; "s4" ; **@{-} ?>*\dir{>};
"s12"; "s4" ; **@{-} ?>*\dir{>};
"s14"; "s4" ; **@{-} ?>*\dir{>};
"s15"; "s4" ; **@{-} ?>*\dir{>};
"s15"; "s6" ; **@{-} ?>*\dir{>};
"s14"; "s6" ; **@{-} ?>*\dir{>};
"s13"; "s6" ; **@{-} ?>*\dir{>};
"s12"; "s6" ; **@{-} ?>*\dir{>};
"s11"; "s6" ; **@{-} ?>*\dir{>};
"s10"; "s6" ; **@{-} ?>*\dir{>};
"s16"; "s17" ; **@{-} ?>*\dir{>};
"s16"; "s9" ; **@{-} ?>*\dir{>};
"s16"; "s10" ; **@{-} ?>*\dir{>};
"s16"; "s11" ; **@{-} ?>*\dir{>};
"s16"; "s12" ; **@{-} ?>*\dir{>};
"s16"; "s13" ; **@{-} ?>*\dir{>};
"s16"; "s14" ; **@{-} ?>*\dir{>};
"s16"; "s15" ; **@{-} ?>*\dir{>};
"s17"; "s5" ; **@{-} ?>*\dir{>};
"s10"; "s5" ; **@{-} ?>*\dir{>};
"s11"; "s5" ; **@{-} ?>*\dir{>};
"s12"; "s5" ; **@{-} ?>*\dir{>};
"s16"; "s17" ; **@{-} ?>*\dir{>};
"s18"; "s16" ; **@{-} ?>*\dir{>};
"s18"; "s7" ; **@{-} ?>*\dir{>};
\endxy
\end{align*}
For any $m>1$,
\begin{align*}
\xy
(-30,0)*+{\underset{[2 \i:1]}{(\slashed{\mbb{G}}_0^{0\times 1})}}="s5",
(0,-15)*+{\underset{[-2 \i:1]}{(\slashed{\mbb{G}}_{-1}^{1\times 3})}}="s7",
(0,-5)*+{\slashed{\mbb{G}}_{-1}^{2,1}}="s10",
(0,5)*+{\slashed{\mbb{G}}_{-1}^{1,0}}="s14",
(0,15)*+{\slashed{\mbb{G}}_{-1}^{0,0}}="s15",
(30,0)*+{\slashed{\mbb{G}}_{-2}^{0,0}}="s16",
"s16"; "s7" ; **@{-} ?>*\dir{>};
"s16"; "s10" ; **@{-} ?>*\dir{>};
"s16"; "s14" ; **@{-} ?>*\dir{>};
"s16"; "s15" ; **@{-} ?>*\dir{>};
"s10"; "s5" ; **@{-} ?>*\dir{>};
"s14"; "s5" ; **@{-} ?>*\dir{>};
"s15"; "s5" ; **@{-} ?>*\dir{>};
"s7"; "s5" ; **@{-} ?>*\dir{>};
\endxy
\end{align*}
For any $m>1$, any $[z:w] \in \CP^1 \setminus \{ [-2(m-1)\i,1], [2 \i, 1] \}$:
\begin{align*}
\xy
(0,0)*+{\underset{[z:w]}{(\slashed{\mbb{G}}_0^{0\times 1})}}="s5",
(30,-25)*+{\underset{[-z:w]}{(\slashed{\mbb{G}}_{-1}^{1\times 3})}}="s7",
(30,-15)*+{\slashed{\mbb{G}}_{-1}^{2,1}}="s10",
(30,-5)*+{\slashed{\mbb{G}}_{-1}^{2,0}}="s11",
(30,5)*+{\slashed{\mbb{G}}_{-1}^{1,2}}="s12",
(30,15)*+{\slashed{\mbb{G}}_{-1}^{1,0}}="s14",
(30,25)*+{\slashed{\mbb{G}}_{-1}^{0,0}}="s15",
(60,0)*+{\slashed{\mbb{G}}_{-2}^{0,0}}="s16",
"s16"; "s7" ; **@{-} ?>*\dir{>};
"s16"; "s10" ; **@{-} ?>*\dir{>};
"s16"; "s11" ; **@{-} ?>*\dir{>};
"s16"; "s12" ; **@{-} ?>*\dir{>};
"s16"; "s14" ; **@{-} ?>*\dir{>};
"s16"; "s15" ; **@{-} ?>*\dir{>};
"s10"; "s5" ; **@{-} ?>*\dir{>};
"s11"; "s5" ; **@{-} ?>*\dir{>};
"s12"; "s5" ; **@{-} ?>*\dir{>};
"s14"; "s5" ; **@{-} ?>*\dir{>};
"s15"; "s5" ; **@{-} ?>*\dir{>};
"s7"; "s5" ; **@{-} ?>*\dir{>};
\endxy
\end{align*}
For $m>1$, any $[-4\i:1] \neq [z:w] \in \CP^1$, and any $[x:y] \in \RP^1$:
\begin{align*}
\xy
(30,15)*+{\underset{[x:y]}{(\slashed{\mbb{G}}_{-1}^{0\times 1})}}="s1",
(30,0)*+{\underset{[x:y]}{(\slashed{\mbb{G}}_{-1}^{0\times 2})}}="s2",
(30,-15)*+{\underset{[z:w]}{(\slashed{\mbb{G}}_{-1}^{1\times 3})}}="s3",
(60,0)*+{\slashed{\mbb{G}}_{-2}^{0,0}}="s4",
"s4"; "s1" ; **@{-} ?>*\dir{>};
"s4"; "s2" ; **@{-} ?>*\dir{>};
"s4"; "s3" ; **@{-} ?>*\dir{>};
\endxy
\end{align*}
\end{thm}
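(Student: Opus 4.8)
The plan is to exploit the semidirect-product structure $Q = Q_0 \ltimes P_+$, where $Q_0 \cong \R_{>0} \times \U(m)$ is reductive and $P_+ \cong (\R^{2m})^*$ is the abelian nilpotent radical. Since the grading of $\mbb{G}$ is a $Q_0$-module grading (its degree operator lies in the centre $\mf{z}_0$ of $\mf{q}_0$), every $Q_0$-submodule is automatically graded; and because $P_+$ is connected, such a submodule $W \subseteq \mbb{G}$ is a $Q$-submodule if and only if it is stable under the infinitesimal action of $\mf{p}_+ = \mrm{Lie}(P_+) \cong \g_1$. As the full $Q_0$-decomposition of $\mbb{G}$ is given by Theorem~\ref{thm-main-Rob} and Tables~\ref{tab:table-irred-mod-G} and~\ref{tab:table-isotypic}, the entire problem reduces to computing the single $Q_0$-equivariant action $\mf{p}_+ \otimes \mbb{G} \to \mbb{G}$ and reading off which sums of irreducibles it preserves. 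First I would record the irreducible summands $\mbb{G}_i^{j,k}$ together with the isotypic families of \eqref{eq:extra_irr_G}, noting precisely which pairs are isomorphic as $Q_0$-modules, since these are exactly where one-parameter families of submodules can arise.

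Next I would compute the $P_+$-action explicitly. The group $P_+$ acts by the change of splitting \eqref{eq:basis-change}, parametrised by $\phi_i \in (\R^{2m})^*$; differentiating at $\phi = 0$ gives the infinitesimal action of $\mf{p}_+$ on $\V^* \otimes \g$, and this action lowers the grading by one step, so that on $\gr(\mbb{G})$ it maps $\gr_i(\mbb{G}) \to \gr_{i-1}(\mbb{G})$, terminating at $\gr_{-2}(\mbb{G}) = \mbb{G}_{-2}^{0,0}$. Composing with the projections $\Pi_i^{j,k}$ of Appendix~\ref{app:proj} yields the transformation rule expressing the lower-degree projection $\Pi_{i-1}^{j',k'}(\phi \cdot \Gamma)$ in terms of $\phi_i$ and the projections $\Pi_i^{j,k}(\Gamma)$ one filtration step higher. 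By $Q_0$-equivariance and Schur's lemma, each resulting component $\mbb{G}_i^{j,k} \to \mbb{G}_{i-1}^{j',k'}$ is a fixed scalar multiple of the unique (up to scale) $Q_0$-morphism between the two isomorphism types, concretely a contraction or wedging of $\phi_i$ against the relevant tensor, possibly combined with the Hermitian form $h_{\alpha \bar{\beta}}$. The task here is to pin down these scalars, using the trace and symmetry constraints recorded after \eqref{eq:S-H2}; the imaginary units and integer coefficients such as $(m-1)$ emerge from these contractions.

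With the scalars in hand, I would assemble a directed graph on the vertex set $\{ \mbb{G}_i^{j,k} \}$ whose arrows are the nonzero components of the $\mf{p}_+$-action, all flowing toward $\mbb{G}_{-2}^{0,0}$. A graded $Q_0$-submodule is $P_+$-stable precisely when it is closed under following these arrows, so $Q$-submodules correspond to the unions of vertices closed in this sense; isotypic pairs joined by parallel arrows with prescribed coefficients then produce the projective families $(\mbb{G}_{-1}^{0\times1})_{[x:y]}$, $(\mbb{G}_{-1}^{1\times2})_{[x:y]}$, $(\mbb{G}_{-1}^{1\times3})_{[z:w]}$ and $(\mbb{G}_0^{0\times1})_{[z:w]}$. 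The exceptional parameter values $[-4\i:1]$, $[2\i:1]$ and $[2(m-1)\i:1]$ are exactly the directions in these families for which the relevant outgoing arrow vanishes; for instance the component landing in $\mbb{G}_{-2}^{0,0}$ vanishes along $[-4\i:1]$, which is why it enters through the condition $(z + 4\i w)\,\Pi_{-2}^{0,0}(\Gamma) = 0$. This same resonance phenomenon accounts for the modified behaviour, and the collapse of $\gr_0^{1,1}$ and $\gr_0^{1,3}$, when $m = 2$. Each $\slashed{\mbb{G}}_i^{j,k}$, defined as the common kernel of the listed projections modulo $\V^* \otimes \mf{q}$, is then verified to be $P_+$-stable and to be the largest graded $Q_0$-submodule on which $\Pi_i^{j,k}$ vanishes, hence the largest $Q$-submodule not containing $\mbb{G}_i^{j,k}$.

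Finally, since $\mbb{G}$ has finite length, its lattice of $Q$-submodules is finite. By maximality, each quotient $\mbb{G}/\slashed{\mbb{G}}_i^{j,k}$ has simple socle $\mbb{G}_i^{j,k}$, so the $\slashed{\mbb{G}}_i^{j,k}$ and the $\slashed{}$-analogues of the projective families are exactly the meet-irreducible elements of this lattice; consequently every $Q$-submodule is an intersection of them, and the inclusion arrows of the displayed Hasse diagrams are read directly off the containments among these common kernels. The main obstacle is the second step: obtaining the precise scalar coefficients of the $\mf{p}_+$-action, since it is these, and not merely the $Q_0$-types, that fix the admissible projective parameters, single out the exceptional directions, and force the dimension-dependent degeneracies at $m = 2$. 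Everything else is formal bookkeeping once this computation is carried out carefully.
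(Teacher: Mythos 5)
Your proposal is correct and follows essentially the same route as the paper: the paper's proof consists precisely of computing how the components \eqref{eq:S-H2} transform under the change of splitting \eqref{eq:basis-change} (the $P_+$-action) and reading off which sums of $Q_0$-irreducibles, including the resonant combinations such as $z\tau_{\alpha\beta}+w\zeta_{\alpha\beta}\mapsto z\tau_{\alpha\beta}+w\zeta_{\alpha\beta}-(z+4\i w)\gamma_{[\alpha}\phi_{\beta]}$, are preserved. The only cosmetic difference is that you linearise the action at $\phi=0$ while the paper records the full finite transformation (with its quadratic and cubic terms in $\phi$), which is equivalent for determining submodules since $P_+$ is connected; as you anticipate, the substantive work in both cases is the explicit computation of the scalar coefficients.
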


\begin{rem}
We note that
\begin{enumerate}
\item There is some redundancy in the inclusions: e.g.\
\begin{align*}
\slashed{\mbb{G}}_0^{1,1} \cap \slashed{\mbb{G}}_{-1}^{1,1} \cap  \slashed{\mbb{G}}_{-1}^{3,0} = \slashed{\mbb{G}}_0^{1,1} \cap \slashed{\mbb{G}}_{-1}^{1,1} = \slashed{\mbb{G}}_0^{1,1} \cap \slashed{\mbb{G}}_{-1}^{3,0}  \, ,
\end{align*}
and, for $m>2$,
\begin{align*}
\slashed{\mbb{G}}_1^{0,0} \subset (\slashed{\mbb{G}}_{0}^{0\times1})_{[-2(m-1) \i :1]} \cap  \slashed{\mbb{G}}_{0}^{1,2} \subset (\slashed{\mbb{G}}_{-1}^{1\times3})_{[2\i:1]} \cap  (\slashed{\mbb{G}}_{-1}^{1\times3})_{[2(m-1)\i:1]} = \slashed{\mbb{G}}_{-1}^{1,0} \cap \slashed{\mbb{G}}_{-1}^{3,0}  \, , 
\end{align*}
\item The only $Q$-submodules of $\mbb{G}$ that are not contained in $\slashed{\mbb{G}}_{-2}^{0,0}$ are $\slashed{\mbb{G}}_0^{1,1}  \subset  (\slashed{\mbb{G}}_{-1}^{1 \times 3})_{[4\i:-1]}$.
\end{enumerate}
\end{rem}

\begin{proof}
The theorem is a direct consequence of the following transformation rules of the quantities defined by \eqref{eq:S-H2} under the change \eqref{eq:basis-change}:
\begin{flalign*}
\gamma_\alpha & \mapsto \gamma_\alpha \, , & \\
%%%%%%%%%%%%%%%%%%%%%%
\epsilon & \mapsto \epsilon +  \gamma_\alpha \phi^\alpha + \gamma^\alpha \phi_\alpha  \, , & \\
%%%%%%%%%%%%%%%%%%%%%%
\tau^\omega  & \mapsto \tau^\omega  - \i \left( \gamma_{\alpha} \phi^{\alpha} - \gamma^{\alpha} \phi_{\alpha} \right) \, , & \\
%%%%%%%%%%%%%%%%%%%%%%
\tau_{\alpha \beta} & \mapsto \tau_{\alpha \beta} - \gamma_{[\alpha} \phi_{\beta]} \, , & \\
%%%%%%%%%%%%%%%%%%%%%%
 \tau^\circ_{\alpha \bar{\beta}} & \mapsto  \tau^\circ_{\alpha \bar{\beta}}  + \left( - \frac{1}{2} \gamma_{\alpha} \phi_{\bar{\beta}} + \frac{1}{2} \gamma_{\bar{\beta}} \phi_{\alpha} \right)_\circ
\, , & \\
%%%%%%%%%%%%%%%%%%%%%%
\sigma_{\alpha \bar{\beta}} & \mapsto \sigma_{\alpha \bar{\beta}} + \left( \frac{1}{2} \gamma_{\alpha} \phi_{\bar{\beta}} + \frac{1}{2} \gamma_{\bar{\beta}} \phi_{\alpha} \right)_\circ 
\, , & \\
%%%%%%%%%%%%%%%%%%%%%%
\sigma_{\alpha \beta} & \mapsto \sigma_{\alpha \beta} + \gamma_{(\alpha} \phi_{\beta)} \, , & \\
%%%%%%%%%%%%%%%%%%%%%%
\zeta_{\alpha \beta} & \mapsto \zeta_{\alpha \beta} - 4 \, \i  \gamma_{[\alpha} \phi_{\beta]}  \, , &
 %%%%%%%%%%%%%%%%%%%%%%
\end{flalign*}
\begin{flalign*}
E_\alpha & \mapsto E_\alpha + \tau_{\alpha \beta} \phi^{\beta} -  \sigma_{\alpha \beta} \phi^\beta +  \tau^\circ_{\alpha \bar{\beta}}   \phi^{\bar{\beta}} - \sigma_{\alpha \bar{\beta}} \phi^{\bar{\beta}}  - \frac{\i}{2m} \tau^\omega  \phi_\alpha - \frac{1}{2m} \epsilon \phi_\alpha  -  \gamma_\alpha \phi^\beta \phi_\beta \, , & \\
%%%%%%%%%%%%%%%%%%%%%%
G_{\alpha} 
& \mapsto G_{\alpha}  - 2 \,  \i  \tau^\circ_{\alpha \bar{\beta}} \phi^{\bar{\beta}}  + 2 \, \i \sigma_{\alpha \bar{\beta}} \phi^{\bar{\beta}}  + \frac{m-1}{m} \tau^\omega  \phi_{\alpha} - \frac{m-1}{m} \, \i \epsilon \, \phi_{\alpha} & \\
& \qquad \qquad + \phi^{\beta} \zeta_{\beta \alpha }  - 2 \, \i \phi^\beta \gamma_{\beta} \phi_{\alpha} + 2 \, \i \phi^\beta \phi_{\beta}  \gamma_\alpha \, , & \\
%%%%%%%%%%%%%%%%%%%%%%
G^{\tiny{\ydskew}}_{\alpha \beta \gamma} & \mapsto G^{\tiny{\ydskew}}_{\alpha \beta \gamma} + \left( - 4 \,  \i \tau_{[\alpha \beta } + \zeta_{[\alpha \beta } \right) \phi_{\gamma]} \, , & \\
%%%%%%%%%%%%%%%%%%%%%
G^{\tiny{\ydhook}}_{(\alpha \beta) \gamma} & \mapsto G^{\tiny{\ydhook}}_{(\alpha \beta) \gamma } - \left( 2 \,  \i \tau_{\gamma (\alpha} 
 + \zeta_{\gamma (\alpha} \right) \phi_{\beta)}  
 + 2 \, \i \sigma_{\gamma (\alpha} \phi_{\beta)} 
- 2 \, \i \sigma_{\alpha \beta } \phi_{\gamma} 
 - 2 \, \i \phi_{(\alpha} \gamma_{\beta)} \phi_{\gamma} 
 + 2 \, \i \phi_{\alpha} \phi_{\beta} \gamma_{\gamma}  \, , & \\
%%%%%%%%%%%%%%%%%%%%%%%
%G_{\bar{\alpha} \beta \gamma } 
%& \mapsto   G_{\bar{\alpha} \beta \gamma } - 4 \,  \i \tau_{\bar{\alpha} [\beta } \phi_{\gamma ]}  - 4 \, \i \sigma_{\bar{\alpha} [\beta } \phi_{\gamma ]}  - \frac{2}{m} \, \i \epsilon \, h_{\bar{\alpha} [\beta } \phi_{\gamma ]}  + \zeta_{\beta \gamma } \phi_{\bar{\alpha}}  - 4 \, \i \gamma_{[\beta } \phi_{\gamma ]} \phi_{\bar{\alpha}} \, , \\
%%%%%%%%%%%%%%%%%%%%%%
 G^{\circ}_{\bar{\alpha} \beta \gamma }
& \mapsto  G^{\circ}_{\bar{\alpha} \beta \gamma } + \left( 4 \,  \i  \tau^\circ_{ [\beta|\bar{\alpha} }  \phi_{|\gamma ]}  - 4 \, \i  \sigma_{[\beta |\bar{\alpha} }  \phi_{|\gamma ]}  + \phi_{\bar{\alpha}} \zeta_{\beta \gamma }  - 4 \, \i \phi_{\bar{\alpha}} \gamma_{[\beta } \phi_{\gamma ]}  \right)_\circ \, , & \\
%%%%%%%%%%%%%%%%%%%%%%
B_{\alpha \beta} & \mapsto B_{\alpha \beta} + \left( \frac{2}{m-1} G_{[\alpha}  - 4 \, \i E_{[\alpha} \right) \phi_{\beta]}    - \phi^\gamma G^{\tiny{\ydskew}}_{\gamma \alpha \beta} - \phi^\gamma G^{\tiny{\ydhook}}_{\gamma \alpha \beta} - \phi^{\bar{\gamma}} G^{\circ}_{\bar{\gamma} \alpha \beta}  \\
& \qquad \qquad  + 4 \i  \phi^\gamma \tau_{\gamma [\alpha} \phi_{\beta]} - \phi^\gamma \phi_\gamma \zeta_{\alpha \beta} 
+ 4 \, \i  \phi^\gamma \sigma_{\gamma [\alpha} \phi_{\beta]}  \\
& \qquad\qquad \qquad \qquad - 4 \i  \phi^{\bar{\gamma}}  \tau^\circ_{[\alpha|\bar{\gamma} }  \phi_{|\beta]} + 4 \, \i  \phi^{\bar{\gamma}} \sigma_{[\alpha|\bar{\gamma} } \phi_{|\beta]}   + 4 \i \phi^\gamma \phi_\gamma  \gamma_{[\alpha} \phi_{\beta]}  \, . &
\end{flalign*}
From these, we immediately deduce, for any $[x:y] \in \RP^1$, $[z:w] \in \CP^1$,
\begin{flalign*}
x \epsilon + y  \tau^\omega  & \mapsto x \epsilon + y  \tau^\omega  + (x - \i y ) \gamma_\alpha \phi^\alpha + (x + \i y )\gamma^\alpha \phi_\alpha   \, , & \\
 x \sigma_{\alpha \bar{\beta}} - y \i \tau^\circ_{\alpha \bar{\beta}} & \mapsto x \sigma_{\alpha \bar{\beta}}  - y \i \tau^\circ_{\alpha \bar{\beta}} + \left( - (x + y \i) \frac{1}{2} \gamma_{\alpha} \phi_{\bar{\beta}} +  (x - y \i ) \frac{1}{2} \gamma_{\bar{\beta}} \phi_{\alpha} \right)_\circ \, ,  &
\\
z \tau_{\alpha \beta} + w \zeta_{\alpha \beta} & \mapsto z \tau_{\alpha \beta} + w \zeta_{\alpha \beta} - ( z + 4 \i w) \gamma_{[\alpha} \phi_{\beta]} \, , & \\
z \,  E_\alpha + w \, G_{\alpha} & \mapsto
z \, E_\alpha + w \, G_{\alpha} -  z \, \phi^\beta \sigma_{\beta \alpha}  + \left( -  z \,  \tau_{\beta \alpha}  + w \, \zeta_{\beta \alpha } \right) \phi^{\beta} & \\
& \qquad - ( 2 w \,  \i - z)  \tau^\circ_{\alpha \bar{\beta}} \phi^{\bar{\beta}}  + (2 w \, \i - z) \sigma_{\alpha \bar{\beta}} \phi^{\bar{\beta}} & \\
& \qquad \qquad - \frac{\i}{2m}  (2 (m-1) \i w + z)  \tau^\omega  \phi_{\alpha}   -  \frac{1}{2m}  (2 (m-1) \i w + z) \epsilon \, \phi_{\alpha}  &  \\
& \qquad \qquad \qquad - 2 w \, \i \phi^\beta \gamma_{\beta} \phi_{\alpha} + ( 2 w \, \i - z) \phi^\beta \phi_{\beta}  \gamma_\alpha \, . &
\end{flalign*}
The result follows as in the proof of Proposition 3.4 of \cite{Fino2020}.

In dimension six, one has
\begin{align*}
\phi^\gamma \phi_\gamma \tau_{\alpha \beta} &= - 2 \phi^\gamma \tau_{\gamma [\alpha}\phi_{\beta]} \, , &
G^{\tiny{\ydskew}}_{\gamma \alpha \beta} & = 0 \, , &
G^{\circ}_{\bar{\gamma} \alpha \beta} & = 0 \, ,
\end{align*}
so that
\begin{align*}
B_{\alpha \beta} & \mapsto B_{\alpha \beta} + \left( 2 G_{[\alpha}  - 4 \, \i E_{[\alpha} \right) \phi_{\beta]}    - \phi^\gamma G^{\tiny{\ydhook}}_{\gamma \alpha \beta} - \left(  2 \i  \tau_{\alpha \beta}  +  \zeta_{\alpha \beta} \right) \phi^\gamma \phi_\gamma \\
& \qquad  
+ 4 \i  \phi^\gamma \sigma_{\gamma [\alpha} \phi_{\beta]}   - 4 \i  \phi^{\bar{\gamma}}  \tau^\circ_{[\alpha|\bar{\gamma} } \phi_{|\beta]} + 4 \i  \phi^{\bar{\gamma}} \sigma_{[\alpha|\bar{\gamma} } \phi_{|\beta]}   + 4 \i \phi^\gamma \phi_\gamma  \gamma_{[\alpha} \phi_{\beta]}  
\end{align*}
This completes the proof. Some of these computations were verified using the symbolic computer algebra system \texttt{cadabra} \cite{Peeters2007a,Peeters2018}.
\end{proof}

Finally, for future use, and to make contact with the intrinsic torsion of an optical structure given in \cite{Fino2020}, we also define
\begin{align}
\begin{aligned}\label{eq:G_sl_optical}
\slashed{\mbb{G}}_{-2}^0& := \slashed{\mbb{G}}_{-2}^{0,0} \, , \\
\slashed{\mbb{G}}_{-1}^0& := \slashed{\mbb{G}}_{-1}^{0,0} \, , &
\slashed{\mbb{G}}_{-1}^1& := \slashed{\mbb{G}}_{-1}^{1,0} \cap \slashed{\mbb{G}}_{-1}^{1,1} \cap \slashed{\mbb{G}}_{-1}^{1,2} \, , & 
\slashed{\mbb{G}}_{-1}^2 & := \slashed{\mbb{G}}_{-1}^{2,0} \cap \slashed{\mbb{G}}_{-1}^{2,1}  \, , \\
\slashed{\mbb{G}}_{0}^0& := \slashed{\mbb{G}}_{0}^{0,0} \, .
\end{aligned}
\end{align}

%%%%%%%%%%%%%%%%%%%%%%%%%%%%%%%

% GEOMETRY

%%%%%%%%%%%%%%%%%%%%%%%%%%%%%%%

\section{Almost Robinson manifolds}\label{sec:geometry}
\subsection{Almost Robinson structures}
Throughout we shall follow the notation and conventions of Section \ref{sec:algebra} now translated into the bundle setting.

\begin{defn}\cite{Nurowski2002,Taghavi-Chabert2016}
Let $(\mc{M},g)$ be an oriented pseudo-Riemannian manifold of dimension $2m+2$. An \emph{almost null structure} on $(\mc{M},g)$ is a complex distribution $N$ of rank $(m+1)$ and totally null with respect to ${}^\C g$. When $N$ is involutive, i.e.\ $[N,N] \subset N$, we call $N$ a \emph{null structure}.
\end{defn}
In other words, an almost null structure is a smooth assignement of a null structure to the tangent space at each point, and according to Definition \ref{def:real_index}, one may talk of the real index of an almost null structure at a point. When $g$ is of Lorentzian signature, we make the following definition.

\begin{defn}
Let $(\mc{M},g)$ be an oriented and time-oriented Lorentzian manifold of dimension $2m+2$. An \emph{almost Robinson structure} on $(\mc{M},g)$ consists of a pair $(N,K)$ where $N$ is a complex distribution of rank $m+1$ totally null with respect to ${}^\C g$, and $K$ a real line distribution such that ${}^\C K = N \cap \overline{N}$.  We shall refer to the quadruple $(\mc{M},g,N, K)$ as an \emph{almost Robinson manifold} or \emph{almost Robinson geometry}.

In addition, we call $(N,K)$
\begin{itemize}
\item a \emph{nearly Robinson structure} when $[K,N] \subset N$, and
\item a \emph{Robinson structure} when $[N,N] \subset N$, i.e.\ $N$ is involutive.
\end{itemize}
We shall accordingly refer to $(\mc{M},g,N, K)$ as a \emph{nearly Robinson manifold} or as a \emph{Robinson manifold}.
\end{defn}
Clearly, a Robinson manifold is a nearly Robinson manifold. Definitions involving weaker assumptions on orientability are possible.

\begin{rem}
Equivalently put, an almost Robinson structure is an almost null structure of real index one. By Lemma \ref{lem:real_index}, any almost null structure on a Lorentzian manifold defines an almost Robinson manifold. The terminology `almost null structure' will nevertheless be preferred in the case when we wish to emphasize the geometric aspects of the almost Robinson structure not particularly tied to the geometry of the real null line distribution $K$, as will be done in Section \ref{sec:spinor_des}.
\end{rem}

An almost Robinson structure $(N,K)$ induces an optical structure on $(\mc{M},g)$ in the sense of \cite{Fino2020}, namely the filtration of vector bundles
\begin{align}\label{eq:K_Kperp_TM}
 K \subset K^\perp \subset T \mc{M} \, .
\end{align}
The orientation and time-orientation on $\mc{M}$ induce an orientation on $K$, and the screen bundle  $H_K := K^\perp / K$ of $K$ inherits a positive-definite bundle metric $h$ from $g$.  Any section of $K$ will be referred to as an \emph{optical vector field}, while any section of $\Ann(K^\perp)$ will be referred to as an \emph{optical $1$-form}.

In addition, there is a bundle complex structure $J$ on the screen bundle $H_K$ compatible with $h$, which induces a splitting of its complexification
\begin{align*}
{}^\C H_K & = H^{(1,0)}_K \oplus H^{(0,1)}_K \, , & {}^\C H_K^* & = (H^{(1,0)}_K)^* \oplus (H^{(0,1)}_K)^* \, , &
\end{align*}
where $H^{(1,0)}_K$ and $H^{(0,1)}_K$ denote the $+\i$- and $-\i$-eigenbundles of $J$ respectively. In abstract index notation, we shall denote the bundle complex structure and the bundle Hermitian structure on $H_K$  by $J_i{}^j$  and  $\omega_{i j}$ respectively, so that $\omega_{i j} = J_{i}{}^{k} h_{k j}$. Following the notation of Section \ref{sec:Notation}, we also define the bundles, for any non-negative integer $p$, $q$,
\begin{align*}
\bigwedge^{(p,q)} H_{K}^*  & :=  \bigwedge^p (H^{(1,0)}_{K})^* \otimes  \bigwedge^q (H^{(0,1)}_{K})^* \, , \\
\bigodot{}^{(p,q)} H_{K}^*  & :=  \bigodot{}^p (H^{(1,0)}_{K})^* \otimes  \bigodot{}^q (H^{(0,1)}_{K})^* \, .
\end{align*}
For $pq \neq0$, the subbundles of elements of $\bigwedge^{(p,q)} H_{K}^*$ and $\bigodot{}^{(p,q)} H_{K}^*$ that are tracefree with respect to the bundle Hermitian structure will be denoted by $\bigwedge^{(p,q)}_\circ H_{K}^*$ and $\bigodot{}^{(p,q)}_\circ H_{K}^*$ respectively. Similarly, we introduce the subbundle $\ydhook H_{K}^*$ as a bundle analogue of \eqref{eq:ydhook}. The corresponding real spans of these complex bundles will be enclosed between $\dbl \cdot \dbr$ or $[ \cdot ]$ as described by \eqref{eq:Salamonotation}.

As in Section \ref{sec:algebra}, we split the complexified tangent bundle as
\begin{align*}
{}^\C T \mc{M} = N^* \oplus N
\end{align*}
for some chosen complement $N^*$ of $N$ in ${}^\C T \mc{M}$, dual to $N$ via ${}^\C g$. This splitting is not canonical in general. This induces a splitting of the filtration \eqref{eq:K_Kperp_TM}
\begin{align}\label{eq:splitTM}
T \mc{M} & = L \oplus H_{K,L} \oplus K \, ,
\end{align}
where
\begin{align*}
 L & := N^* \cap T \mc{M} \, , &  H_{K,L} & := K^\perp \cap L^\perp \, .
\end{align*}
Note that $N^*$ defines the almost Robinson structure $(N^*, L)$ on $(\mc{M}, g)$, where $L$ is the real span of $N^* \cap \overline{N^*}$ and is dual to $K$.
In addition, 
\begin{align*}
{}^\C H_{K,L} & = H^{(1,0)}_{K,L} \oplus H^{(0,1)}_{K,L} \, , & {}^\C H^*_{K,L} & = (H^{(1,0)}_{K,L})^* \oplus (H^{(0,1)}_{K,L})^* \, .
\end{align*}
where
\begin{align*}
 H_{K,L}^{(1,0)} & =  \overline{N} \cap N^* \, , &   H_{K,L}^{(0,1)} & =  N \cap \overline{N^*}  \, .
\end{align*}
We also have isomorphisms of vector bundles $H_{K,L} \cong H_K$, $H^{(1,0)}_{K,L} =  H^{(1,0)}_{K}$, and so on, which  depend on the choice of $N^*$.

The splitting operators and their duals, introduced in Section \ref{sec:algebra}, that is,
\begin{align*}
(\delta^a_A, \delta^{a A}) \, , &&  (\ell^a, \delta^a_i , k^a) \, , && ( \delta^i_\alpha , \delta^i_{\bar{\alpha}} ) \, , && (\ell^a, \delta^a_\alpha , \delta^a_{\bar{\alpha}} , k^a) \, , \\
(\delta_a^A, \delta_{a A}) \, , && (\kappa_a, \delta_a^i , \lambda_a) \, , && ( \delta_i^\alpha , \delta_i^{\bar{\alpha}} ) \, , && (\kappa_a, \delta_a^\alpha , \delta_a^{\bar{\alpha}} , \lambda_a) \, ,
\end{align*}
will be used throughout the article to convert index types, with the convention that $k^a \alpha_a = \kappa_a \alpha^a = \alpha^0$ and $\ell^a \alpha_a = \lambda_a \alpha^a = \alpha_0$ for any $1$-form $\alpha_a$.

In order to avoid ambiguity when taking components of the covariant derivative of some tensor $\alpha_{b \ldots d}$, we shall often write
\begin{align*}
(\nabla \alpha)_{a b \ldots d} & := \nabla_a \alpha_{b \ldots d} \, .
\end{align*}
For instance,
\begin{align*}
(\nabla \alpha)^0{}_{\alpha 0 i} & = k^a  \delta^b_\alpha \ell^c \delta^d_i \left( \nabla_a \alpha_{b c d} \right) \, .
\end{align*}

The splitting operators will also be used as injectors. Thus, if $\omega_{i j}$ is the Hermitian $2$-form on $H_K$, we can set $\omega_{a b} = \omega_{i j} \delta^i_a \delta^j_b$ for some chosen splitting operators, and construct the \emph{Robinson $3$-form} $\rho_{a b c} := 3 \kappa_{[a} \omega_{b c]}$ associated to the optical $1$-form $\kappa_a$.

\subsection{Pure spinors}
Whenever $(\mc{M},g)$ is assumed to be spin, we introduce a spin bundle $S$ and translate the theory of spinors summarised in Section \ref{sec-spinors} to the language of bundles. We shall denote the spinor bundle by $S$ and its irreducible parts by $S_+$ and $S_-$. We shall not distinguish notationally between the Levi-Civita connection $\nabla$ and the induced spin connection. We now view the van der Waerden symbols $\gamma_{a \mbf{A}'}{}^{\mbf{B}}$ and $\gamma_{a \mbf{A}}{}^{\mbf{B}'}$, the bilinear forms \eqref{eq:Clm+1HD} as fields on $\mc{M}$ compatible with $\nabla$. There is also an antilinear map on each fiber of $S$, induced from the reality structure on $({}^\C T \mc{M} , {}^\C g)$, and thus, a notion of charge conjugate of a spinor field.

An almost Robinson structure $(N, K)$, where we assume for specificity that $N$ is self-dual, can therefore be expressed by a non-vanishing section $\nu^{\mbf{A}'}$ of $S_+$ that is pure at every point, i.e.\ the kernel of the map
\begin{align*}
\nu_a^{\mbf{A}}  & := \gamma_{a \mbf{B}'}{}^{\mbf{A}} \nu^{\mbf{B}'} : \Gamma(T \mc{M}) \rightarrow \Gamma(S^-)
\end{align*}
is precisely $N$. Since the kernel $N$ of the map $\nu_a^{\mbf{A}}$ is invariant under rescaling of the spinor $\nu^{\mbf{A}'}$, the almost Robinson structure is in fact equivalent to the existence of a complex line subbundle $S_+^N$ of $S_+$, which is spanned by $\nu^{\mbf{A}'}$. The bundle $S_+^N$ can be viewed as a square root of the line bundle $\bigwedge^{m+1} \Ann(N)$. Indeed, we have an isomorphism of bundles
\begin{align}\label{eq:sq_pure_spin}
\bigwedge^{m+1} \Ann(N) \cong S_+^N \otimes S_+^N \, .
\end{align}
Any spinor $\nu^{\mbf{A}'}$ annihilating the almost null structure $N$ will be referred to as a \emph{Robinson spinor}, and any section of $\bigwedge^{m+1} \Ann(N)$ a \emph{complex Robinson $(m+1)$-form}.
 
A completely parallel analysis can be carried out starting with the charge conjugate of $\nu^{\mbf{A}'}$, which spans the complex conjugate bundle $\overline{S_+^N}$.  The invariants $1$-form $\kappa_a$, $3$-form $\rho_{a b c}$, $(m+1)$-forms $\nu_{a_1 \ldots a_{m+1}}$ and $\overline{\nu}_{a_1 \ldots a_{m+1}}$ of the almost Robinson structure can then be recovered from $\nu^{\mbf{A}'}$ and its charge conjugate using \eqref{arr:rob_spin_inv}.

We are now in the position of stating the direct translation of Proposition \ref{prop:char-Robinson} into the language of manifolds:
\begin{prop}\label{prop:char-Robinson-mfld}
Let $(\mc{M},g)$ be an oriented and time-oriented smooth Lorentzian manifold of dimension $2m+2$. The following statements are equivalent.
\begin{enumerate}
\item $(\mc{M},g)$ is endowed with an almost Robinson structure $(N,K)$.
\item $(\mc{M},g)$ admits a simple totally null complex $(m+1)$-form.
\item $(\mc{M},g)$ is endowed with an optical structure $K$ whose screen bundle $(H_K , h)$ is equipped with a bundle complex structure compatible with the bundle metric $h$.
\item $(\mc{M},g)$ admits a null $1$-form $\kappa_a$ and a $3$-form $\rho_{a b c}$ such that
\begin{align}\label{eq:rho2->kap}
\rho _{ab} \, {}^e \rho _{cde} & = - 4 \kappa_{[a} g_{b][c} \kappa_{d]} \, .
\end{align}
\item when $(\mc{M},g)$ is spin, it admits a pure spinor  (of real index one).
\end{enumerate}
\end{prop}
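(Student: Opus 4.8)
The plan is to obtain this proposition as the bundle-theoretic counterpart of the purely linear-algebraic Proposition \ref{prop:char-Robinson}, applied fiberwise and supplemented by a check of smoothness. At each point $p \in \mc{M}$, the tangent space $(T_p\mc{M}, g_p)$ is a copy of Minkowski space of dimension $2m+2$, so Proposition \ref{prop:char-Robinson} already gives the equivalence of the pointwise versions of (1)--(5) on $(T_p\mc{M}, g_p)$. Moreover, Lemma \ref{lem:real_index} guarantees that any maximal totally null $(m+1)$-plane in ${}^\C T_p\mc{M}$ automatically has real index one, so a pointwise almost null structure is automatically a pointwise Robinson structure, with $K_p$ recovered as the real span of $N_p \cap \overline{N_p}$. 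Thus the entire content reduces to promoting these pointwise bijections to bijections between smooth global fields, i.e.\ to verifying that each construction appearing in Proposition \ref{prop:char-Robinson} is given by a smooth --- indeed algebraic --- recipe in local frames.

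First I would dispatch (1)$\Leftrightarrow$(2)$\Leftrightarrow$(3)$\Leftrightarrow$(4). The passage from an almost null structure $N$ to its complex Robinson $(m+1)$-form is the assignment $p \mapsto \bigwedge^{m+1}\Ann(N_p)$, a smooth line subbundle of $\bigwedge^{m+1}{}^\C T^*\mc{M}$ whose local sections are simple and totally null; conversely, a simple totally null $(m+1)$-form $\nu$ has $N = \{ v : v \hook \nu = 0 \}$ as a smooth distribution, since the kernel of a linear system with smoothly varying coefficients and locally constant rank $m+1$ is a smooth subbundle. For (3), the bundle complex structure $J$ is built from $N$ exactly as in Section \ref{sec:Robinson-optical}, by declaring $H_K^{(0,1)} = N/{}^\C K$ and $H_K^{(1,0)} = \overline{N}/{}^\C K$; its compatibility with $h$ is the fiberwise statement already established, and smoothness is inherited from that of $N$ and $K$. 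For (4), I would set $\omega_{ij} = J_i{}^k h_{kj}$ and $\rho_{abc} = 3\kappa_{[a}\omega_{bc]}$ for a local optical $1$-form $\kappa_a$, following the Robinson $3$-form construction of Section \ref{sec:Robinson-3}; the identity \eqref{eq:rho2->kap} holds pointwise by \eqref{eq:3-form-sq2}, while Lemma 3.1 of \cite{Fino2020} recovers $\omega$, hence $J$, smoothly from the pair $(\kappa_a,\rho_{abc})$.

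Finally, for (5) I would invoke the spinorial correspondence of the preceding subsection, which is where I expect the only genuine subtlety to lie. Given a spin structure and a nowhere-vanishing pure spinor field $\nu^{\mbf{A}'} \in \Gamma(S_+)$, the distribution $N = \ker \nu_a^{\mbf{A}}$ is a smooth almost null structure (the kernel of a smooth homomorphism of locally constant rank), which by Lemma \ref{lem:real_index} has real index one; this direction is unconditional. The converse, producing a \emph{global} pure spinor field from $(N,K)$, is the delicate point: an almost null structure determines only the complex line subbundle $S_+^N \subset S_+$, and a global nowhere-vanishing section of $S_+^N$ amounts, under the isomorphism \eqref{eq:sq_pure_spin}, to a global square root of the Robinson line bundle $\bigwedge^{m+1}\Ann(N)$. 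Existence of such a section is a mild topological condition, so the equivalence with (5) is best understood at the level of the reduction $S_+^N$, or after restriction to an open set or double cover on which the square root exists. This is the step I would flag most carefully when writing up the details.
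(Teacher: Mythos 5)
Your proposal is correct and follows essentially the same route as the paper, which offers no separate argument for this proposition but presents it as the direct fiberwise translation of Proposition \ref{prop:char-Robinson} (together with Lemma \ref{lem:real_index}) into the bundle setting, exactly as you do. The subtlety you flag in item (5) is real but is also how the paper implicitly treats it: the almost Robinson structure is encoded by the line subbundle $S_+^N$, a square root of $\bigwedge^{m+1}\Ann(N)$ via \eqref{eq:sq_pure_spin}, so a genuinely global nowhere-vanishing pure spinor field exists only when that line bundle is trivial, and otherwise the equivalence should be read locally or at the level of $S_+^N$.
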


\begin{rem}
In general, if one assumes that $(\mc{M},g)$ is spin, any geometric statement on a complex Robinson $(m+1)$-form can equivalently be expressed in terms of a Robinson spinor.
\end{rem}

\subsection{Almost Robinson structures as $G$-structures}
From the discussion of \cite{Fino2020}, we shall also view an almost Robinson structure as a reduction of the frame bundle to the $Q$-bundle $\mc{F}^Q$ where $Q = (\R_{>0} \times \U(m))\ltimes (\R^{2m})^*$, and given any $Q$-module $\mbb{A}$, we will construct associated vector bundles $\mc{F}^Q (\mbb{A}) :=\mc{F}^Q \times_Q \mbb{A}$. Similarly, a choice of splitting gives rise to the $Q_0$-invariant vector bundles where $Q_0 = \R_{>0} \times \U(m)$. It will often be more convenient to deal with the reduced coframe bundle ${\mc{F}^*}^Q$. A section of ${\mc{F}^*}^Q$ will then consist of a null complex coframe $(\kappa, \theta^{\alpha}, \overline{\theta}{}^{\bar{\alpha}}, \lambda)$ such that
\begin{enumerate}
\item $\kappa$ annihilates $K^\perp$, or equivalently, \ $\kappa = g(k, \cdot)$ for some section $k$ of $K$;
\item $(\kappa, \theta^\alpha )$  annililate $N$, or equivalently, $(\kappa, \overline{\theta}{}^{\bar{\alpha}})$ annililate $\overline{N}$;
\item $(\theta^{\alpha})$ are unitary with respect to the screen bundle metric;
\item the metric takes the form
\begin{align*}
g & = 2 \kappa \lambda + 2 h_{\alpha \bar{\beta}} \theta^{\alpha} \overline{\theta}{}^{\bar{\beta}} \, .
\end{align*}
\end{enumerate}
We shall refer to $(\kappa, \theta^{\alpha}, \overline{\theta}{}^{\bar{\alpha}}, \lambda)$ as a \emph{Robinson coframe}.

Any two Robinson coframes $(\kappa, \theta^{\alpha}, \overline{\theta}{}^{\bar{\alpha}}, \lambda)$ and $(\wh{\kappa}, \wh{\theta}^{\alpha}, \overline{\wh{\theta}}{}^{\bar{\alpha}}, \wh{\lambda})$ are related by a transformation of the form
\begin{align}\label{eq:Rob_cof_transf}
\wh{\kappa} & = \e^{\varphi} \kappa \, , & \wh{\theta}^{\alpha} & = \psi_{\beta}{}^{\alpha}\theta^{\beta} + \phi^\alpha \kappa  \, , &\wh{\lambda} & = \e^{-\varphi} \left(\lambda - \psi_{\alpha}{}^{\beta} \phi_{\beta}\theta^{\alpha} - \psi_{\bar{\alpha}}{}^{\bar{\beta}} \phi_{\bar{\beta}} \theta^{\bar{\alpha}} - \phi_{\alpha}  \phi^{\alpha} \kappa\right) \, , 
\end{align}
where $\varphi$ is a smooth real-valued function, and $\phi^{\alpha}$, $\psi_{\beta}{}^{\alpha}$ are smooth complex-valued functions on $\mc{M}$ with $\psi_{\alpha}{}^{\beta}$ being a $\U(m)$-transformation at any point, i.e.\ $h_{\alpha \bar{\beta}} = h_{\gamma \bar{\delta}} \psi_{\alpha}{}^{\gamma} \psi_{\bar{\beta}}{}^{\bar{\delta}}$, and $\phi_{\alpha} \phi^{\alpha} = h_{\alpha \bar{\beta}} \phi^{\alpha} \phi^{\bar{\beta}}$.

Associated to the representations $\R(w)$ and $\C(w,w')$ defined in Section \ref{sec:one-dim_rep}, where $w$ and $w'$ are real, we define the bundle $\mc{E}(w)$ of \emph{boost densities} of weight $w$, already introduced in \cite{Fino2020}, and the bundle $\mc{E}(w,w')$ of \emph{boost-spin densities} of weight $(w,w')$. In particular, we have the identifications
\begin{align}
K & \cong \mc{E}(-1) \, , & L & \cong \mc{E}(1) \, , \nonumber \\
\mc{E}(-1,0) & : = \bigwedge^{m+1} \Ann(N)  \, , &
 \mc{E}(0,-1) & := \overline{\mc{E}(-1,0)} = \bigwedge^{m+1} \Ann(\overline{N}) \, . \label{eq:complex_boost}
\end{align}

If $(\mc{M},g)$ is assumed to be spin, we define the smooth complex line bundles
\begin{align}
\begin{aligned}\label{eq:spin_complex_boost}
\mc{E}(\tfrac{1}{2},0) & := \left(S_+^N \right)^{-1} \, , & \mc{E}(0,\tfrac{1}{2}) & := \left(\overline{S_+^N}\right)^{-1} \, , \\
\mc{E}(-\tfrac{1}{2},0) & := \left(\mc{E}(\tfrac{1}{2},0)\right)^* \, , & \mc{E}(0,-\tfrac{1}{2}) & := \left(\mc{E}(0,\tfrac{1}{2})\right)^* \, .
\end{aligned}
\end{align}
In particular, we recover \eqref{eq:complex_boost} by virtue of \eqref{eq:sq_pure_spin}.
Our definitions are consistent with those of the real line bundles $\mc{E}(w)$ in the sense that
\begin{align*}
\mc{E}(w)& = \mc{E}(\tfrac{w}{2},\tfrac{w}{2}) / S^1 \, , & \mbox{for any real $w$.} 
\end{align*}
This can be readily be checked using \eqref{arr:rob_spin_inv}.

\subsection{Intrinsic torsion}\label{sec:Intrinsic_Torsion}
As explained in Section 2 of \cite{Fino2020}, the intrinsic torsion of an almost Robinson structure is given by a section 
$T^*\mc{M} \otimes \mc{F}^{Q}(\g/\mf{q})$, which we identify with the $Q$-invariant subbundle $\mc{G} := \mc{F}^Q \times_Q \mbb{G}$ where $\mbb{G} := \V^* \otimes \g / \mf{q}$. We shall accordingly call $\mc{G}$ the bundle of intrinsic torsions of $(N,K)$. Its $Q_0$-invariant subbundles and $Q$-invariant subbundles will presently be defined with reference to Section \ref{sec:algebra}, and in particular Theorem \ref{thm-main-Rob}.

The filtration \eqref{eq:filt-q-G} on $\mbb{G}$ induces a filtration of $Q$-invariant subbundles
\begin{align*}
\mc{G} & =: \mc{G}^{-2} \supset \mc{G}^{-1} \supset \mc{G}^0 \supset \mc{G}^{-1} \, ,
\end{align*}
where $\mc{G}^i := \mc{F}^Q \times_Q \mbb{G}^i$. Correspondingly, the associated graded vector bundle
\begin{align*}
\gr(\mc{G}) = \gr_{-2} (\mc{G}) \oplus  \gr_{-1} (\mc{G}) \oplus \gr_0 (\mc{G}) \oplus  \gr_1 (\mc{G}) \, ,
\end{align*}
where $\gr_i (\mc{G}) := \mc{F}^Q \times_Q \gr_i(\mbb{G})$, splits into irreducible $Q$-invariant subbundles $\gr_i^{j,k} (\mc{G}) := \mc{F}^Q \times_{Q_0} \gr_i^{j,k} (\mbb{G})$. For each choice of splitting, these are isomorphic to the $Q_0$-invariant subbundles
\begin{align*}
\mc{G}_i^{j,k} & := \mc{F}^Q \times_{Q_0} \mbb{G}_i^{j,k} \, ,
\end{align*}
and we introduce further
\begin{align*}
(\mc{G}_{-1}^{0 \times 1})_{[x:y]} & := \mc{F}^Q \times_{Q_0} (\mbb{G}_{-1}^{0 \times 1})_{[x:y]} \, , & \mbox{for each $[x : y] \in \RP^1$,} \\
(\mc{G}_{-1}^{1 \times 2})_{[x:y]}  & := \mc{F}^Q \times_{Q_0} (\mbb{G}_{-1}^{1 \times 2})_{[x:y]} \, , & \mbox{for each $[x : y] \in \RP^1$,} \\
(\mc{G}_{-1}^{1 \times 3})_{[z:w]}  & := \mc{F}^Q \times_{Q_0} (\mbb{G}_{-1}^{1 \times 3})_{[z:w]} \, , & \mbox{for each $[z:w] \in \CP^1$,}  \\
(\mc{G}_{0}^{0 \times 1})_{[z:w]} &  := \mc{F}^Q \times_{Q_0} (\mbb{G}_{0}^{0 \times 1})_{[z:w]} \, , & \mbox{for each $[z:w] \in \CP^1$,} 
\end{align*}
with reference to  Table \ref{tab:table-irred-mod-G} and  \eqref{eq:extra_irr_G}.

We correspondingly define the $Q$-invariant subbundles
\begin{align*}
\slashed{\mc{G}}_i^{j,k}& := \mc{F}^Q \times_Q \slashed{\mbb{G}}_i^{j,k} \, , \\
(\slashed{\mc{G}}_{-1}^{0 \times 1})_{[x:y]} & := \mc{F}^Q \times_Q (\slashed{\mbb{G}}_{-1}^{0 \times 1})_{[x:y]} \, , & \mbox{for each $[x : y] \in \RP^1$,} \\
(\slashed{\mc{G}}_{-1}^{1 \times 2})_{[x:y]}  & := \mc{F}^Q \times_Q (\slashed{\mbb{G}}_{-1}^{1 \times 2})_{[x:y]} \, , & \mbox{for each $[x : y] \in \RP^1$,} \\
(\slashed{\mc{G}}_{-1}^{1 \times 3})_{[z:w]}  & := \mc{F}^Q \times_Q (\slashed{\mbb{G}}_{-1}^{1 \times 3})_{[z:w]} \, , & \mbox{for each $[z:w] \in \CP^1$,}  \\
(\slashed{\mc{G}}_{0}^{0 \times 1})_{[z:w]} &  := \mc{F}^Q \times_Q (\slashed{\mbb{G}}_{0}^{0 \times 1})_{[z:w]} \, , & \mbox{for each $[z:w] \in \CP^1$,} 
\end{align*}
where the $Q$-modules are all defined in Theorem \ref{thm:intors-rob}.

Finally, in order to make contact with the intrinsic torsion of an optical geometry described in  \cite{Fino2020}, we define, for each $i, j$,
\begin{align*}
\mc{G}_i^{j} & := \mc{F}^P \times_{P_0} \mbb{G}_i^{j} \, , & 
\slashed{\mc{G}}_i^{j} & := \mc{F}^P \times_P \slashed{\mbb{G}}_i^{j} \, ,
\end{align*}
where $\mbb{G}_i^{j}$ are defined in Proposition \ref{prop-Q2P}, and $\slashed{\mbb{G}}_i^{j}$ are defined by \eqref{eq:G_sl_optical}.

To determine the algebraic properties of the intrinsic torsion of an almost Robinson structure, we choose an optical $1$-form $\kappa_a$ and its associated Robinson $3$-form $\rho_{a b c}$, compute their respective covariant derivatives $\nabla_a \kappa_b$ and $\nabla_a \rho_{b c d}$, and project these tensors onto the various $Q_0$-invariant subbundles of $\mc{G}$ by means of splitting operators $(\ell^a , \delta^a_{\alpha}, \delta^a_{\bar{\alpha}}, k^a)$. This is achieved by extending the projections $\Pi_i^{j,k}$ defined in Appendix \ref{app:proj} to bundle projections, where we identify the tensor $\Gamma_{a b}{}^c$ as a connection $1$-form of $\nabla_a$ adapted to the splitting operators. We shall make use of the notation already introduced in Section \ref{sec:algebra}, mirroring the definition \eqref{eq:S-H2}. Let us set
\begin{align}
\begin{aligned}\label{eq:S-H1_b}
&& \gamma_i & := (\nabla \kappa)^{0}{}_i \, , \\
\epsilon & :=  (\nabla \kappa )_{i j} h^{i j} \, , &
\tau_{ij} & := (\nabla \kappa )_{[i j]} \, , &
\sigma_{ij} & := (\nabla \kappa )_{(i j)_\circ} \, , \\
&& E_i & := ( \nabla \kappa )_{0 i} \, , 
\end{aligned}
\end{align}
These components split further into irreducible $Q_0$-components. In particular, the component $\tau_{\alpha \bar{\beta}}$ splits as
\begin{align*}
\tau_{\alpha \bar{\beta}} & =  \frac{\i }{2 m} \tau^\omega  h_{\alpha \bar{\beta}}  + \tau^\circ_{\alpha \bar{\beta}} \, ,
\end{align*}
where
\begin{align*}
\tau^\omega  & := \omega^{i j} \tau_{ij} = - 2 \i \tau_{\alpha \bar{\beta}} h^{\alpha \bar{\beta}} \, , &
\tau^\circ_{\alpha \bar{\beta}} & := \left( \tau_{\alpha \bar{\beta}} \right)_\circ\, , 
\end{align*}
In addition, the components of the covariant derivative of the Robinson $3$-form $\rho_{a b c}$ of interest are
\begin{align}\label{eq:S-H2_b}
\zeta_{\alpha \beta} & :=  ( \nabla \rho )^{0}{}_{\alpha \beta 0}  \, , \\
G_{i \beta \gamma} & :=  (\nabla \rho )_{i \beta \gamma 0}   \, , &
B_{\alpha \beta} & :=  (\nabla \rho )_{0 \alpha \beta 0}  \, .
\end{align}
the second of which splits further into the irreducible components
\begin{align*}
G^{\tiny{\ydskew}}_{\alpha \beta \gamma} & :=  G_{[\alpha \beta \gamma]} \, , &
G^{\tiny{\ydhook}}_{\alpha \beta \gamma} & :=  \frac{2}{3} \left( G_{(\alpha \beta) \gamma} - G_{(\alpha \gamma) \beta} \right) \, , \\
G^{\circ}_{\bar{\gamma} \alpha \beta}  & := \left( G_{\bar{\gamma} \alpha \beta}  \right)_\circ  \, , &
G_{\alpha} & := h^{\beta \bar{\gamma}} G_{\bar{\gamma} \beta \alpha}    \, , 
\end{align*}
so that
\begin{align*}
G_{\alpha \beta \gamma} & = G^{\tiny{\ydskew}}_{\alpha \beta \gamma} + G^{\tiny{\ydhook}}_{\alpha \beta \gamma} \, , &
G_{\bar{\alpha} \beta \gamma} & = G^{\circ}_{\bar{\alpha} \beta \gamma} - \frac{2}{m-1} G_{[\beta} h_{\gamma] \bar{\alpha} } \, , \\
G_{i \beta \gamma} & = \delta^{\alpha}_i G_{\alpha \beta \gamma} + \delta^{\bar{\alpha}}_i G_{\bar{\alpha} \beta \gamma} \, .
\end{align*}
The other relevant components of $\nabla_a \rho_{b c d}$ can be found below:
\begin{flalign*}
  (\nabla \rho)_{a}{}^{0}{}_{j k} & = 0 \, , & 
 (\nabla \rho)_{a \alpha \beta \gamma} & =  0 \, , &
\end{flalign*}
\begin{flalign*}
(\nabla \rho)^{0 0}{}_{0 \alpha} & = - \i \gamma_{\alpha} \, , & 
(\nabla \rho)^0{}_{\alpha \beta \bar{\gamma}} & =  2 \i \gamma_{[\alpha} h_{\beta] \bar{\gamma}} \, , & \\
(\nabla \rho)_{\alpha \beta}{}^{0}{}_{0} & = - \i \tau_{\alpha \beta}  - \i \sigma_{\alpha \beta}  \, , & 
(\nabla \rho)_{\alpha \beta \gamma \bar{\delta}} & =  2 \i \left( \tau_{\alpha [\beta} + \sigma_{\alpha [\beta} \right) h_{\gamma] \bar{\delta}}\, , & \\
(\nabla \rho)_{\bar{\alpha} \beta}{}^{0}{}_{0} & = - \i \frac{1}{2m} \epsilon h_{\beta \bar{\alpha}} + \i \tau_{\beta \bar{\alpha}} - \i \sigma_{\beta \bar{\alpha}}  \, , & 
(\nabla \rho)_{\bar{\alpha} \beta \gamma \bar{\delta}}  & =  2 \i \left( \frac{1}{2m} \epsilon h_{[\beta| \bar{\alpha} } - \tau_{[\beta| \bar{\alpha}} + \sigma_{[\beta| \bar{\alpha}} \right) h_{|\gamma] \bar{\delta}}\, , & \\
 (\nabla \rho)_{0 \alpha}{}^{0}{}_{0} & = - \i E_{\alpha} \, , &
 (\nabla \rho)_{0 \alpha \beta \bar{\gamma}} & =  2 \i E_{[\alpha} h_{\beta] \bar{\gamma}}  \, , &
 \end{flalign*}
\begin{flalign*}
(\nabla \rho)^0{}_{0 \beta \bar{\gamma}} & =  \i (\nabla \kappa)^{0}{}_{0}  h_{\beta \bar{\gamma}} \, , &
(\nabla \rho)_{i 0 \beta \bar{\gamma}}  & = \i (\nabla \kappa)_{i 0}  h_{\beta \bar{\gamma}} \, , & 
(\nabla \rho)_{0 0 \beta \bar{\gamma}}  & = \i (\nabla \kappa)_{0 0} h_{\beta \bar{\gamma}} \, . &
\end{flalign*}
The elements defined above should be regarded as trivialisations of sections of $\mc{G}_{i}^{j,k}$. Such sections generally carry a boost weight. These will be adorned with a breve accent. To be clear, we have collected them in Tables \ref{tab:table-irred-mod-P_bdl} and \ref{tab:table-irred-mod-G_bdl} --- the bracket notation $\dbl \cdot \dbr$ and $[ \cdot ]$ therein should be understood as taking the real span of the quantities enclosed therebetween. For instance,
\begin{align*}
\dbl \breve{G}^{\tiny{\ydskew}}_{\alpha \beta \gamma} \dbr & \sim \breve{G}^{\tiny{\ydskew}}_{\alpha \beta \gamma} \delta^{\alpha}_i \delta^{\beta}_j \delta^{\gamma}_k + \breve{G}^{\tiny{\ydskew}}_{\bar{\alpha} \bar{\beta} \bar{\gamma}} \delta^{\bar{\alpha}}_i \delta^{\bar{\beta}}_j \delta^{\bar{\gamma}}_k \, ,  
& \mbox{and} & & 
[ \breve{\tau}^\circ_{\alpha \bar{\beta}} ] & \sim 2 \breve{\tau}^\circ_{\alpha \bar{\beta}} \delta^{\alpha}_{[i} \delta^{\bar{\beta}}_{j]} \, , 
\end{align*}
and so on.

\begin{center}
\begin{table}%[H]
\begin{displaymath}
{\renewcommand{\arraystretch}{1.5}
\begin{array}{|c|c|c|c|}
\hline
\text{$P_0$-bundle} & \text{Description} & \text{Tensor} \\
\hline
\hline
  \mc{G}_{-2}^{0} & H^*_{K,L} \otimes \mc{E} (2) &\breve{\gamma}_i  \\
  \hline
  \hline
  \mc{G}_{-1}^{0} & \mc{E}(1) & \breve{\epsilon}  \\
    \hline
  \mc{G}_{-1}^{1} & \bigwedge^{2} H^*_{K,L} \otimes \mc{E} (1)  & \breve{\tau}_{ij} \\
  \hline 
    \mc{G}_{-1}^{2} & \bigodot^{2}_\circ H^*_{K,L} \otimes \mc{E}  (1) & \breve{\sigma}_{ij}  \\
  \hline
  \hline
  \mc{G}_0^{0} & H^*_{K,L}  & \breve{E}_i \\
\hline
\end{array}}
\end{displaymath}
\caption{\label{tab:table-irred-mod-P_bdl}Irreducible $P_0$-submodules of $\mc{G}$}
\end{table}
\end{center}

\begin{center}
\begin{table}[H]
\begin{displaymath}
{\renewcommand{\arraystretch}{1.5}
\begin{array}{|c|c|c|c|}
\hline
\text{$Q_0$-bundle} & \text{Description} & \text{Tensor} \\
\hline
\hline
  \mc{G}_{-2}^{0,0} &H^*_{K,L} \otimes \mc{E} (2) & \breve{\gamma}_i \\
  \hline
  \hline
  \mc{G}_{-1}^{0,0} & \mc{E}(1) & \breve{\epsilon}   \\
    \hline
  \mc{G}_{-1}^{1,0} & \mc{E}(1) & \breve{\tau}^\omega   \\
  \mc{G}_{-1}^{1,1} & \dbl \bigwedge^{(2,0)} H^*_{K,L}  \dbr \otimes \mc{E}  (1) & \dbl \breve{\tau}_{\alpha \beta} \dbr \\ 
  \mc{G}_{-1}^{1,2} & [ \bigwedge^{(1,1)}_\circ H^*_{K,L} ]  \otimes \mc{E}  (1)  & [ \breve{\tau}^\circ_{\alpha \bar{\beta}} ] \\
  \hline 
    \mc{G}_{-1}^{2,0} & [ \bigwedge^{(1,1)}_\circ H^*_{K,L} ]  \otimes \mc{E} (1)  & [ \breve{\sigma}_{\alpha \bar{\beta}} ]\\
  \mc{G}_{-1}^{2,1} & \dbl \bigodot^{(2,0)} H^*_{K,L} \dbr \otimes \mc{E} (1) & \dbl \breve{\sigma}_{\alpha \beta} \dbr \\
     \hline
  \mc{G}_{-1}^{3,0} & \dbl \bigwedge^{(2,0)} H^*_{K,L} \dbr \otimes \mc{E} (1) & \dbl \breve{\zeta}_{\alpha \beta} \dbr \\
  \hline
  \hline
  \mc{G}_0^{0,0} & H^*_{K,L}  & \breve{E}_i \\
  \hline
  \mc{G}_0^{1,0} & \dbl \bigwedge^{(1,0)} H^*_{K,L} \dbr & \dbl \breve{G}_{\alpha} \dbr \\
  \mc{G}_0^{1,1} & \dbl \bigwedge^{(3,0)} H^*_{K,L} \dbr & \dbl \breve{G}^{\tiny{\ydskew}}_{\alpha \beta \gamma} \dbr \\
  \mc{G}_0^{1,2} & \dbl \ydhook H^*_{K,L} \dbr &  \dbl \breve{G}^{\tiny{\ydhook}}_{\alpha \beta \gamma} \dbr \\
  \mc{G}_0^{1,3} & \dbl  \bigwedge^{(1,2)}_\circ H^*_{K,L}  \dbr & \dbl \breve{G}^{\circ}_{\bar{\gamma} \alpha \beta} \dbr \\
  \hline
  \hline
  \mc{G}_1^{0,0} & \dbl \bigwedge^{(2,0)} H^*_{K,L} \dbr \otimes \mc{E}(-1) & \dbl \breve{B}_{\alpha \beta} \dbr \\
\hline
\end{array}}
\end{displaymath}
\caption{\label{tab:table-irred-mod-G_bdl}Irreducible $Q_0$-submodules of $\mc{G}$}
\end{table}
\end{center}
We shall also introduce the quantity
\begin{align*}
\breve{\tau}_{\alpha \bar{\beta}} & =  \frac{\i}{2 m} \breve{\tau}^\omega h_{\alpha \bar{\beta}}  + \breve{\tau}^\circ_{\alpha \bar{\beta}} \, ,
\end{align*}
which we identify as a section of  $\mc{G}_{-1}^{1,0} \oplus  \mc{G}_{-1}^{1,2}$.

Finally, to characterise sections of the bundles $\slashed{\mc{G}}_i^{j,k}$, we simply apply the results of Theorem \ref{thm:intors-rob}. For instance, the intrinsic torsion is a section of $\slashed{\mc{G}}_0^{1,2}$ if and only if its weighted components in any splitting satisfy
\begin{align*}
\breve{\gamma}_i = \breve{\sigma}_{\alpha \beta} = 2 \i \breve{\tau}^\circ_{\alpha \beta} + \breve{\zeta}_{\alpha \beta} = \breve{G}^{\tiny{\ydhook}}_{\alpha \beta \gamma} = 0 \, .
\end{align*} 

\begin{rem}
We can also express some of the properties of the almost Robinson structure in terms of a Robinson spinor --- this will be done in Section \ref{sec:spinor_des}.
\end{rem}

\subsection{Congruences of null geodesics}
Since an almost Robinson manifold $(\mc{M},g,N, K)$ defines in particular an optical geometry, there is a congruence of null curves $\mc{K}$ associated to it. The algebraic properties of the intrinsic torsion  of the optical geometry $(\mc{M},g,K)$ can be related to the geometric properties of $\mc{K}$ as reviewed in \cite{Fino2020}. We summarise the correspondence in Table \ref{tab-geom-intors}.
\begin{center}
\begin{table}[!htbp]
\begin{displaymath}
{\renewcommand{\arraystretch}{1.5}
\begin{array}{|c|c|}
\hline
\text{Intrinsic torsion} & \text{Congruence} \\
\hline
\hline
\slashed{\mc{G}}_{-2}^0  = \mc{G}^{-1} & \text{geodesic} \\
\hline
\slashed{\mc{G}}_{-1}^0 & \text{non-expanding} \\
\slashed{\mc{G}}_{-1}^{1} & \text{non-twisting} \\
\slashed{\mc{G}}_{-1}^2 & \text{non-shearing} \\
\hline
\slashed{\mc{G}}_0^0 = \{ 0 \} & \text{parallel} \\
\hline
\end{array}}
\end{displaymath}
\caption{\label{tab-geom-intors} Geometric properties of $\mc{K}$ and intrinsic torsion $\mathring{T}$}
\end{table}
\end{center}
 We also record the following lemma, whose proof is straightforward.
 \begin{lem}\label{lem:opt_J}
 Let $(\mc{M},g,N,K)$ be an almost Robinson manifold with congruence of null curves $\mc{K}$.  Denote by $J_i{}^j$ the complex structure on the screen bundle $H_K$. Suppose that the intrinsic torsion $\mathring{T}$ of $(N,K)$ is a section of $\slashed{\mc{G}}_{-2}^0$ so that $\mc{K}$ is geodesic with twist $\breve{\tau}_{i j}$ and shear $\breve{\sigma}_{i j}$. Then  $\mathring{T}$ is a section of
 \begin{itemize}
 \item  $\slashed{\mc{G}}_{-1}^{1,1}$, i.e.\ $\breve{\tau}_{\alpha \beta} = 0$, if and only if $\breve{\tau}_{i j}$ and $J_i{}^j$ commute, i.e.\ $J_{[i}{}^k \breve{\tau}_{j] k} = 0$;
  \item $\slashed{\mc{G}}_{-1}^{2,1}$, i.e.\  $\breve{\sigma}_{\alpha \beta} = 0$, if and only if $\breve{\sigma}_{i j}$ and $J_i{}^j$ commute, i.e.\  $J_{(i}{}^k \breve{\sigma}_{j) k} = 0$;
   \item $\slashed{\mc{G}}_{-1}^{1,0} \cap \slashed{\mc{G}}_{-1}^{1,2}$, i.e.\  $\breve{\tau}_{\alpha \bar{\beta}} = 0$, if and only if $\breve{\tau}_{i j}$ and $J_i{}^j$ anti-commute,  i.e.\  $J_{(i}{}^k \breve{\tau}_{j) k} = 0$;
  \item $\slashed{\mc{G}}_{-1}^{2,0}$, i.e.\  $\breve{\sigma}_{\alpha \bar{\beta}} = 0$, if and only if $\breve{\sigma}_{i j}$ and $J_i{}^j$ anti-commute,  i.e.\  $J_{[i}{}^k \breve{\sigma}_{j] k} = 0$.
 \end{itemize}
 \end{lem}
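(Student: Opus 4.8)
The plan is to reduce the four equivalences to a single algebraic observation about how a real $2$-tensor on the Hermitian screen bundle $(H_K,h,J)$ decomposes under the complex structure, and then to match the pieces with the $Q_0$-components recorded in \eqref{eq:S-H1_b} and Theorem \ref{thm:intors-rob}. First I would note that the hypothesis $\mathring{T} \in \Gamma(\slashed{\mc{G}}_{-2}^0)$ means precisely $\breve{\gamma}_i = (\nabla \kappa)^0{}_i = 0$, i.e.\ $\mc{K}$ is geodesic. With $\breve{\gamma}_i = 0$, the defining conditions in Theorem \ref{thm:intors-rob} degenerate, and membership of $\mathring{T}$ in $\slashed{\mc{G}}_{-1}^{1,1}$, in $\slashed{\mc{G}}_{-1}^{2,1}$, in $\slashed{\mc{G}}_{-1}^{1,0} \cap \slashed{\mc{G}}_{-1}^{1,2}$, and in $\slashed{\mc{G}}_{-1}^{2,0}$ becomes equivalent, respectively, to the vanishing of the complex components $\breve{\tau}_{\alpha \beta}$, $\breve{\sigma}_{\alpha \beta}$, $\breve{\tau}_{\alpha \bar{\beta}}$ and $\breve{\sigma}_{\alpha \bar{\beta}}$ in the irreducible splitting of the twist $\breve{\tau}_{i j} = (\nabla \kappa)_{[i j]}$ and the shear $\breve{\sigma}_{i j} = (\nabla \kappa)_{(i j)_\circ}$.

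Everything then comes down to expressing the contractions $J_{[i}{}^k \breve{\tau}_{j] k}$, $J_{(i}{}^k \breve{\tau}_{j) k}$, $J_{(i}{}^k \breve{\sigma}_{j) k}$ and $J_{[i}{}^k \breve{\sigma}_{j] k}$ in terms of these components. The key tool is the eigenvalue property of the splitting operators, $\delta^i_\alpha J_i{}^j = \i\, \delta^j_\alpha$ and $\delta^i_{\bar{\alpha}} J_i{}^j = -\i\, \delta^j_{\bar{\alpha}}$. Contracting $J_{[i}{}^k \breve{\tau}_{j] k}$ with $\delta^i_\alpha \delta^j_\beta$ and with $\delta^i_\alpha \delta^j_{\bar{\beta}}$, and using the skew-symmetry of $\breve{\tau}_{i j}$, I would find that its $(1,1)$ projection vanishes identically while its $(2,0)$ projection is a nonzero multiple of $\breve{\tau}_{\alpha \beta}$; hence $J_{[i}{}^k \breve{\tau}_{j] k} = 0$ if and only if $\breve{\tau}_{\alpha \beta} = 0$. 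The same two projections applied to the symmetrised combination $J_{(i}{}^k \breve{\tau}_{j) k}$ give the complementary result: the $(2,0)$ projection vanishes identically and the $(1,1)$ projection is a nonzero multiple of $\breve{\tau}_{\alpha \bar{\beta}}$ (which carries both the trace $\breve{\tau}^\omega$ and the tracefree part $\breve{\tau}^\circ_{\alpha \bar{\beta}}$), so $J_{(i}{}^k \breve{\tau}_{j) k} = 0$ iff $\breve{\tau}_{\alpha \bar{\beta}} = 0$. This establishes the first and third bullets.

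For the shear I would run the identical computation, the only change being that $\breve{\sigma}_{i j}$ is symmetric rather than skew; this interchanges the roles of the symmetrised and antisymmetrised contractions. Concretely, $J_{(i}{}^k \breve{\sigma}_{j) k}$ comes out of pure type $(2,0) + (0,2)$ with $(2,0)$ part a nonzero multiple of $\breve{\sigma}_{\alpha \beta}$, whereas $J_{[i}{}^k \breve{\sigma}_{j] k}$ is of pure type $(1,1)$ with component a nonzero multiple of $\breve{\sigma}_{\alpha \bar{\beta}}$ (automatically tracefree here), yielding the second and fourth bullets. Conceptually, raising an index of $\breve{\tau}$ or $\breve{\sigma}$ with $h$ turns it into an endomorphism, and the symmetrised respectively skew-symmetrised contraction with $J$ is, up to the sign forced by $h$-(anti)symmetry, its commutator respectively anticommutator with $J$; this vanishes exactly when the endomorphism preserves respectively swaps the $\pm\i$-eigenbundles of $J$, which is the type-$(1,1)$ respectively type-$(2,0)+(0,2)$ condition, explaining the terminology \emph{commute} and \emph{anti-commute}. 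There is no substantial obstacle here: the computation is routine, and the only point requiring care is the bookkeeping with symmetry types, namely that for the skew twist the commuting condition is the antisymmetrised contraction while for the symmetric shear it is the symmetrised one (and dually for anti-commuting), so that each bullet must be paired with the correct parity before projecting.
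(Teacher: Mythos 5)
Your argument is correct and is precisely the straightforward type-decomposition the paper has in mind (the paper omits the proof, declaring it straightforward): projecting with $\delta^i_\alpha$, $\delta^i_{\bar\alpha}$ and using $\delta^i_\alpha J_i{}^j = \i\,\delta^j_\alpha$, $\delta^i_{\bar\alpha} J_i{}^j = -\i\,\delta^j_{\bar\alpha}$ shows that for the skew $\breve\tau_{ij}$ the antisymmetrised contraction isolates the $(2,0)+(0,2)$ part and the symmetrised one the $(1,1)$ part, with the roles exchanged for the symmetric $\breve\sigma_{ij}$, which is exactly the commutator/anticommutator dichotomy stated in the lemma. Your remark on the parity bookkeeping is the only subtle point, and you handle it correctly.
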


Locally, we shall identify an almost Robinson manifold $(\mc{M},g, N, K)$ of dimension $2m+2$ as a surjective submersion over a $(2m+1)$-dimensional smooth manifold $\ul{\mc{M}}$, namely, the local leaf space of $\mc{K}$. This leaf space  will be endowed with various geometric structures depending on the geometric properties of $\mc{K}$. In the next sections, we shall examine the relation between the intrinsic torsion of $(N,K)$ with the induced geometric structures on the leaf space. As a notational rule followed in this article, tensor fields on $\ul{\mc{M}}$ will be underlined to distinguish them from tensor fields on $\mc{M}$.

\begin{rem}\label{rem:ind_str_opt}
We briefly recall the results of \cite{Fino2020} (see also \cite{Robinson1983}) that pertain to the optical structure $K$ associated to an almost Robinson manifold $(\mc{M}, g, N, K)$. In the following, we shall denote the conformal class of $g$ by $[g]$. If the congruence of null curves $\mc{K}$ tangent to $K$ is geodesic, there is a subclass $[g]_{n.e.}$ of metrics in $[g]$ for which $\mc{K}$ is also non-expanding. By extension, $[g]$ and $[g]_{n.e.}$ determine conformal subclasses of bundle metrics $[h]$ and $[h]_{n.e.}$ respectively on the screen bundle $H_K$. We then have the following relations between the geometric properties of $\mc{K}$ and its leaf space:
\begin{itemize}
\item If $\mc{K}$ is geodesic, $H_K$ descends to a rank-$2m$ distribution $\ul{H}$ on $\ul{\mc{M}}$. In this case, there is a one-to-one correspondence between optical vector fields $k$ such that $\mathsterling_k \kappa = 0$, where $\kappa = g(k,\cdot)$, and sections of $\Ann(\ul{H})$.
\item If $\mc{K}$ is geodesic and non-twisting, the induced distribution $\ul{H}$ on $\ul{\mc{M}}$ is involutive.
\item If $\mc{K}$ is geodesic and non-shearing,  the screen bundle metric $H_K$ induces a bundle conformal structure $\ul{\mbf{c}}_{\ul{H}}$ on $(\ul{\mc{M}}, \ul{H})$. There is a one-to-one correspondence between metrics in $[g]_{n.e.}$ (or equivalently, screen bundle metrics in $[h]_{n.e.}$) and metrics in $\ul{\mbf{c}}_{\ul{H}}$. It follows that if $\mc{K}$ is already non-expanding for $g$, then $h$ descends to a distinguished bundle metric $\ul{h}$ on $\ul{H}$.
\end{itemize}
\end{rem}

Dually, we thus have that the bracket condition $[K,K^\perp] \subset K^\perp$ is equivalent to $\mc{K}$ being geodesic, and the condition $[K^\perp, K^\perp] \subset K^\perp$ to $\mc{K}$ being geodesic and non-twisting. The latter can split into the two following obvious subcases.
\begin{prop}\label{prop:NNbKp}
	Let $(\mc{M}, g, N, K)$ be a $(2m+2)$-dimensional almost Robinson manifold with congruence of null curves $\mc{K}$ with leaf space $\ul{\mc{M}}$. The following statements are equivalent:
	\begin{enumerate}
		\item $[N, \overline{N}] \subset {}^\C K^\perp$; \label{item:NNbKp1}
		\item the intrinsic torsion is a section of $\slashed{\mc{G}}_{-1}^{1,0} \cap \slashed{\mc{G}}_{-1}^{1,2}$, i.e.\  \label{item:NNbKp2}
		\begin{align*}
			\breve{\gamma}_i =  \breve{\tau}^\omega = \breve{\tau}^\circ_{\alpha \bar{\beta}} = 0 \, ;
		\end{align*}
		\item $\mc{K}$ is geodesic and its twist anti-commutes with the screen bundle complex structure.  \label{item:NNbKp3}
	\end{enumerate}
\end{prop}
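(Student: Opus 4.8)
The plan is to establish the cycle $\eqref{item:NNbKp1} \Leftrightarrow \eqref{item:NNbKp2} \Leftrightarrow \eqref{item:NNbKp3}$, where the first equivalence carries the computational weight and the second is a direct application of the dictionary already set up.

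For $\eqref{item:NNbKp1} \Leftrightarrow \eqref{item:NNbKp2}$, I would first exploit that $K^\perp$ has corank one in $T\mc{M}$, so its complexified annihilator is spanned by any optical $1$-form $\kappa_a$; hence $[N,\overline{N}] \subset {}^\C K^\perp$ holds if and only if $\kappa_c [X,Y]^c = 0$ for all sections $X$ of $N$ and $Y$ of $\overline{N}$. The key reduction is to convert this bracket condition into one on $\nabla \kappa$. Since $\nabla$ is torsion-free and $\kappa_c X^c = \kappa_c Y^c = 0$ (as $N, \overline{N} \subset {}^\C K^\perp$), differentiating these two identities gives
\begin{align*}
\kappa_c [X,Y]^c = \kappa_c\left( X^a \nabla_a Y^c - Y^a \nabla_a X^c \right) = -2\, X^a Y^c (\nabla \kappa)_{[ac]} \, ,
\end{align*}
so the entire obstruction is governed by the skew part of $\nabla_a \kappa_b$.

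Next I would decompose $N = {}^\C K \oplus H_{K,L}^{(0,1)}$ and $\overline{N} = {}^\C K \oplus H_{K,L}^{(1,0)}$ with respect to a chosen splitting and evaluate the pairing on the resulting basis vectors. When $X$ and $Y$ lie in the screen summands (indices $\bar\alpha$ and $\beta$), the pairing produces the mixed twist component $\tau_{\bar\alpha\beta} = \overline{\tau_{\alpha\bar\beta}}$, which vanishes precisely when $\tau^\omega = \tau^\circ_{\alpha\bar\beta} = 0$ in view of the trace decomposition $\tau_{\alpha\bar\beta} = \tfrac{\i}{2m}\tau^\omega h_{\alpha\bar\beta} + \tau^\circ_{\alpha\bar\beta}$. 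When $X = k$ (a section of $K$ inside ${}^\C K \subset N$) is paired against a screen section of $\overline{N}$, the expression reduces to
\begin{align*}
k^a \delta^c_\beta (\nabla\kappa)_{[ac]} = \tfrac{1}{2}\left( \gamma_\beta - \delta^c_\beta\, k^a \nabla_c \kappa_a \right) = \tfrac{1}{2}\gamma_\beta \, ,
\end{align*}
where the second term is killed by nullity of $k$, since $k^a \nabla_c \kappa_a = \tfrac{1}{2}\nabla_c( g(k,k) ) = 0$; the conjugate case yields $\gamma_{\bar\alpha}$, and $X = Y = k$ contributes nothing. Collecting all cases, $\kappa_c[X,Y]^c$ vanishes identically if and only if $\gamma_i = \tau^\omega = \tau^\circ_{\alpha\bar\beta} = 0$, which is exactly statement \eqref{item:NNbKp2}. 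Finally, for $\eqref{item:NNbKp2} \Leftrightarrow \eqref{item:NNbKp3}$ I would simply read off the geometry: by Table \ref{tab-geom-intors} the vanishing $\breve\gamma_i = 0$ (intrinsic torsion in $\slashed{\mc{G}}_{-2}^0$) is geodesy of $\mc{K}$, and under this hypothesis the third bullet of Lemma \ref{lem:opt_J} identifies the remaining conditions $\breve\tau^\omega = \breve\tau^\circ_{\alpha\bar\beta} = 0$, i.e.\ $\breve\tau_{\alpha\bar\beta} = 0$, with the statement that the twist $\breve\tau_{ij}$ anti-commutes with the screen complex structure $J_i{}^j$.

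I expect the main obstacle to be the bookkeeping in the decomposition step: correctly distributing the single scalar obstruction $X^a Y^c (\nabla\kappa)_{[ac]}$ across the $(1,0)$/$(0,1)$ screen components and the $k$-direction, and in particular verifying that the $k$-direction contributions collapse to $\gamma_i$ alone -- with the potentially interfering cross-term eliminated by the nullity of $k$ -- rather than mixing in expansion or shear. Once this is pinned down, both equivalences follow from the tabulated correspondence between intrinsic-torsion strata and the properties of $\mc{K}$ without further work.
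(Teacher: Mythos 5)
Your proof is correct and follows exactly the route the paper intends: the paper presents this proposition as an ``obvious subcase'' of the equivalence between $[K^\perp,K^\perp]\subset K^\perp$ and geodesy plus non-twisting, and your computation $\kappa_c[X,Y]^c=-2X^aY^c(\nabla\kappa)_{[ac]}$, restricted to $X\in\Gamma(N)$, $Y\in\Gamma(\overline{N})$, is precisely the omitted verification that only $\gamma_i$ and the $(1,1)$-part $\tau_{\alpha\bar\beta}$ survive. The final step via Table \ref{tab-geom-intors} and the third bullet of Lemma \ref{lem:opt_J} is exactly the paper's dictionary, so nothing is missing.
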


\begin{prop}\label{prop:NNKp}
	Let $(\mc{M}, g, N, K)$ be a $(2m+2)$-dimensional almost Robinson manifold with congruence of null curves $\mc{K}$ with leaf space $\ul{\mc{M}}$. The following statements are equivalent:
	\begin{enumerate}
		\item $[N, N] \subset {}^\C K^\perp$; \label{item:NNKp1}
		\item the intrinsic torsion is a section of $\slashed{\mc{G}}_{-1}^{1,1}$, i.e.\  \label{item:NNKp2}
		\begin{align*}
			\breve{\gamma}_i =  \breve{\tau}_{\alpha \beta} = 0 \, ;
		\end{align*}
		\item $\mc{K}$ is geodesic and its twist commutes with the screen bundle complex structure.  \label{item:NNKp3}
	\end{enumerate}
\end{prop}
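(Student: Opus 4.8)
The plan is to prove the chain of equivalences by establishing $(1)\Leftrightarrow(2)$ through a direct Lie-bracket computation, and then reading off $(2)\Leftrightarrow(3)$ as an immediate consequence of Lemma \ref{lem:opt_J}. The whole argument runs in close parallel to the proof of Proposition \ref{prop:NNbKp}, the sole difference being that one now tests the bracket of two sections of $N$ rather than of $N$ and $\overline{N}$; this exchanges the mixed twist components $\breve{\tau}^\omega, \breve{\tau}^\circ_{\alpha\bar\beta}$ for the pure component $\breve{\tau}_{\alpha\beta}$, and correspondingly replaces ``anti-commutes'' by ``commutes''.

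First I would fix a Robinson coframe $(\kappa, \theta^{\alpha}, \overline{\theta}{}^{\bar{\alpha}}, \lambda)$ with dual frame $(\ell, e_{\alpha}, e_{\bar{\alpha}}, k)$. Since $(\kappa, \theta^{\alpha})$ annihilate $N$, the distribution $N$ is spanned by $k$ and the $e_{\bar{\alpha}}$, i.e.\ $N = {}^\C K \oplus H^{(0,1)}_{K,L}$. Because $N \subset {}^\C K^\perp$ and $\kappa$ generates $\Ann(K^\perp)$, the condition $[N,N] \subset {}^\C K^\perp$ is equivalent to $\kappa([X,Y]) = 0$ for all sections $X,Y$ of $N$; by the Leibniz rule this is a tensorial (pointwise) condition, so it suffices to test it on the spanning pairs $(k, e_{\bar{\alpha}})$ and $(e_{\bar{\alpha}}, e_{\bar{\beta}})$. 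As $\kappa$ annihilates every section of $N$, the torsion-free identity gives $\kappa([X,Y]) = -\d\kappa(X,Y)$, so the problem reduces to evaluating the two-form $(\d\kappa)_{ab} = 2\nabla_{[a}\kappa_{b]}$ on these pairs.

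The core computation then splits into two cases. Evaluation on $(e_{\bar{\alpha}}, e_{\bar{\beta}})$ produces $(\nabla\kappa)_{[\bar\alpha\bar\beta]} = \breve{\tau}_{\bar\alpha\bar\beta} = \overline{\breve{\tau}_{\alpha\beta}}$, the $(0,2)$-part of the twist, while evaluation on $(k, e_{\bar{\alpha}})$ produces $(\nabla\kappa)^0{}_{\bar\alpha} = \breve{\gamma}_{\bar\alpha}$, the $(0,1)$-part of the geodesic obstruction. For the latter I would invoke the auxiliary identity $\kappa_c\nabla_a k^c = 0$, obtained by differentiating $\kappa_c k^c = 0$ and using metricity to identify $(\nabla_a\kappa_c)k^c$ with $\kappa_c\nabla_a k^c$; this annihilates the spurious term $(\nabla\kappa)_{\bar\alpha}{}^0$. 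Since $\breve{\gamma}_i$ is real, $\breve{\gamma}_{\bar\alpha}=0 \Leftrightarrow \breve{\gamma}_i = 0$, and likewise $\breve{\tau}_{\bar\alpha\bar\beta}=0 \Leftrightarrow \breve{\tau}_{\alpha\beta}=0$. Hence $[N,N]\subset {}^\C K^\perp$ holds precisely when $\breve{\gamma}_i = \breve{\tau}_{\alpha\beta} = 0$, which by Theorem \ref{thm:intors-rob} is exactly the condition that the intrinsic torsion be a section of $\slashed{\mc{G}}_{-1}^{1,1}$ --- note that this submodule automatically enforces $\breve{\gamma}_i = 0$, the geodesic condition, consistently with the transformation rule $\breve\tau_{\alpha\beta}\mapsto \breve\tau_{\alpha\beta} - \breve\gamma_{[\alpha}\phi_{\beta]}$ under a change of splitting. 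This proves $(1)\Leftrightarrow(2)$.

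Finally, $(2)\Leftrightarrow(3)$ is immediate from Lemma \ref{lem:opt_J}: the vanishing $\breve{\gamma}_i = 0$ says $\mc{K}$ is geodesic, and, given this, the lemma identifies $\breve{\tau}_{\alpha\beta} = 0$ with the commuting condition $J_{[i}{}^k\breve{\tau}_{j]k} = 0$ on the twist. I expect the only delicate point to be the bookkeeping in the $(k, e_{\bar{\alpha}})$ bracket, namely verifying that the $(\nabla\kappa)_{\bar\alpha}{}^0$ contribution genuinely drops out via $\kappa_c\nabla_a k^c = 0$; everything else is a transcription of the index conventions of Section \ref{sec:algebra} together with an appeal to the already-established representation theory of Theorem \ref{thm:intors-rob} and Lemma \ref{lem:opt_J}.
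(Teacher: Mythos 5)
Your proof is correct and follows exactly the route the paper intends: the paper gives no explicit argument for this proposition, presenting it (together with Proposition \ref{prop:NNbKp}) as an ``obvious subcase'' of the equivalence between $[K^\perp,K^\perp]\subset K^\perp$ and the geodesic, non-twisting condition, with the splitting into $(2,0)$- and $(1,1)$-parts of the twist handled by Lemma \ref{lem:opt_J}. Your bracket computation via $\kappa([X,Y])=-\d\kappa(X,Y)$, the elimination of $(\nabla\kappa)_{\bar\alpha}{}^0$ using $\kappa_b\nabla_a k^b=0$, and the appeal to Theorem \ref{thm:intors-rob} and Lemma \ref{lem:opt_J} are all sound and consistent with the paper's conventions.
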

We shall deal with the remaining bracket conditions, namely $[K,N] \subset N$ and $[N,N] \subset N$ in Propositions \ref{prop:N_pres} and \ref{prop:fol_spinor} respectively.

\begin{rem}
	In the context of almost Robinson geometry, there are many candidates generalising the notion of non-shearing congruence of null geodesics from four to higher even dimensions. All of them should be almost Robinson structures whose intrinsic torsion is a section of $\slashed{\mc{G}}_{-1}^{2,1}$, i.e.\ $\breve{\gamma}_i = \ \breve{\sigma}_{\alpha \beta} = 0$,
\end{rem}

\begin{rem}\label{rem:non-geod}
It is clear from Theorem \ref{thm:intors-rob} that the only proper subbundles of $\mc{G}$ that are not contained in $\mc{G}^{-1}$ are
\begin{align}\label{eq:non_geod}
 \slashed{\mc{G}}_0^{1,1}  \subset  (\slashed{\mc{G}}_{-1}^{1 \times 3})_{[4\i:-1]} \, ,
\end{align}
i.e.\ $- 4 \,  \i \tau_{\alpha \beta } + \zeta_{\alpha \beta }=0$ and $G^{\tiny{\ydskew}}_{\alpha \beta \gamma} =  - 4 \,  \i \tau_{\alpha \beta } + \zeta_{\alpha \beta } = 0$. This means that the congruence of null curves associated to an almost Robinson structure whose intrinsic torsion lies in any proper subbundles of  $\mc{G}$ except for \eqref{eq:non_geod} must be geodesic. We leave it as a conjecture whether one can construct almost Robinson manifolds whose intrinsic torsion lies in \eqref{eq:non_geod} but whose congruence is not geodesic.
\end{rem}

\subsection{Almost CR geometry}\label{sec:CR}
The geometric structure that an almost Robinson structure may induce on the leaf space of its associated congruence of null curves is an almost CR structure, of which we now recall some notions. For a friendly introduction, see \cite{Jacobowitz1990}.

\subsubsection{General definitions}
An \emph{almost CR structure} on a $(2m + 1)$-dimensional smooth manifold $\ul{\mc{M}}$ consists of a pair $(\ul{H},\ul{J})$, where $\ul{H}$ is a rank-$2m$ distribution equipped with a bundle complex structure $\ul{J}$.  A \emph{CR structure} is an almost CR structure for which the $-\i$-eigenbundle $\ul{H}^{(0,1)}$ of $\ul{J}$, or equivalently its $\i$-eigenbundle $\ul{H}^{(1,0)}$, is in involution. We call an almost CR structure $(\ul{H},\ul{J})$ together with a choice of $1$-form $\ul{\theta}{}^0$ annihilating $\ul{H}$ an \emph{almost pseudo-Hermitian structure}.

There are two notions of \emph{Levi forms} that we can associate to $(\ul{H},\ul{J}$):
\begin{itemize}
	\item Following \cite{Eastwood2016}, we define the \emph{Levi form of the distribution $\ul{H}$} to be the bundle homomorphism $\ul{\bm{\Lv}} : \Ann(\ul{H}) \rightarrow \bigwedge^2 \left(T^* \ul{\mc{M}}/\Ann(\ul{H})\right)$ given by the composition
	\begin{align*}
		\Ann(\ul{H}) \longrightarrow T^* \ul{\mc{M}} \stackrel{\d}{\longrightarrow} \bigwedge^2 T^* \ul{\mc{M}} \longrightarrow \bigwedge^2 \left( T^* \ul{\mc{M}}/\Ann(\ul{H}) \right) \, .
	\end{align*}
	Concretely, for any section $\ul{\theta}{}^0$ of $\Ann(\ul{H})$, $\ul{\theta}{}^0 \circ \ul{\bm{\Lv}} (v , w) = \d \ul{\theta}{}^0( v , w)$ for any sections $v$ and $w$ of $\ul{H}$. We shall refer to  $\ul{\Lv} := \ul{\theta}{}^0 \circ \ul{\bm{\Lv}}$ as the \emph{Levi form of $(\ul{H},\ul{\theta}^0)$}.
	\item Given a $1$-form $\ul{\theta}^0$ annihilating $\ul{H}$, the \emph{Levi form of  the almost pseudo-Hermitian structure} $(\ul{H}, \ul{J}, \ul{\theta}^0)$ is the Hermitian form $\ul{\msf{h}}$ on $\ul{H}^{(1,0)}$ defined by
	\begin{align*}
		\ul{\msf{h}} (v, w) & := - 2 \i \d \ul{\theta}^0 (v, \ol{w} ) \,  ,  & \mbox{for any $v, w \in \Gamma(\ul{H}^{(1,0)})$ .}
	\end{align*}
	By extension, the \emph{Levi form} $\ul{\bm{\msf{h}}}$ of $(\ul{H}, \ul{J})$ is the Hermitian form  taking values in $\C \otimes \left(T \ul{\mc{M}}/\ul{H}\right)$ defined by $\ul{\msf{h}} := \ul{\theta}{}^0 \circ \ul{\bm{\msf{h}}}$.
\end{itemize}
	Note that the definition of $\ul{\bm{\msf{h}}}$ depends on both $\ul{H}$ and $\ul{J}$, but that of $\ul{\bm{\Lv}}$ depends on $\ul{H}$ alone. One can identify $\ul{\bm{\msf{h}}}$ in a suitable way with the $(1,1)$-part of $\ul{\bm{\Lv}}$ with respect to $\ul{J}$ --- see below.

A coframe $(\ul{\theta}^0 , \ul{\theta}^{\alpha} ,  \overline{\ul{\theta}}{}^{\bar{\alpha}})$ adapted to the almost CR structure  $(\ul{H},\ul{J})$ on $\ul{\mc{M}}$ consists of a real $1$-form $\ul{\theta}{}^0$ and $m$ complex $1$-forms $\ul{\theta}{}^{\alpha}$ with $\overline{\ul{\theta}}{}^{\bar{\alpha}} = \overline{\ul{\theta}{}^{\alpha}}$, such that $\ul{H} = \Ann(\ul{\theta}^0)$, and $\ul{H}^{(0,1)} = \Ann( \ul{\theta}{}^0, \ul{\theta}{}^\alpha)$. For simplicity, we shall assume that $\Ann(\ul{H})$ is oriented, although this assumption can easily be dropped. Any other coframe $(\wh{\ul{\theta}}{}^0, \wh{\ul{\theta}}{}^\alpha, \overline{\wh{\ul{\theta}}}{}^{\bar{\alpha}})$ adapted to $(\ul{H},\ul{J})$ is related to $(\ul{\theta}{}^0, \ul{\theta}{}^\alpha , \overline{\ul{\theta}}{}^{\bar{\alpha}})$ by
\begin{align}\label{eq:CR_transf}
\wh{\ul{\theta}}{}^0 & = \e^{\ul{\varphi}} \ul{\theta}{}^0 \, , &
\wh{\ul{\theta}}{}^\alpha & = \ul{\psi}_\beta{}^\alpha \ul{\theta}{}^\beta + \ul{\phi}^\alpha \ul{\theta}{}^0 \, ,
\end{align}
where $\ul{\varphi}$, $\ul{\psi}_\beta{}^\alpha$, and  $\ul{\phi}^\alpha$ are smooth functions on $\ul{\mc{M}}$, with the requirement that the determinant of $\ul{\psi}_\beta{}^{\alpha}$ be non-vanishing.\footnote{One could also include negative rescalings of $\ul{\theta}^0$ if one drops the assumption of orientability of $\Ann(\ul{H})$.} Even with a choice of an almost pseudo-Hermitian structure $\ul{\theta}^0$ on $(\ul{\mc{M}},\ul{H},\ul{J})$, there is no canonical choice for $( \ul{\theta}{}^\alpha )$ in general. Any choice of vector field $\ul{e}_0$ dual to $\ul{\theta}^0$ splits $T \ul{\mc{M}}$ as
\begin{align}\label{eq:CR_splitting}
T \ul{\mc{M}} = \ul{H} \oplus \mathrm{span}( \ul{e}_0 ) \, .
\end{align}

The structure equations for a given CR coframe $(\ul{\theta}^0 , \ul{\theta}^{\alpha} ,  \overline{\ul{\theta}}{}^{\bar{\alpha}})$ can be expressed as
\begin{align}
\begin{aligned}\label{eq:al_CR_structure}
\d \ul{\theta}^0 & = \i \ul{\msf{h}}{}_{\alpha \bar{\beta}} \, \ul{\theta}^\alpha \wedge \overline{\ul{\theta}}{}^{\bar{\beta}} + \ul{\Lv}{}_{\alpha \beta} \, \ul{\theta}^\alpha \wedge \ul{\theta}^\beta +  \ul{\Lv}{}_{\bar{\alpha} \bar{\beta}} \, \overline{\ul{\theta}}{}^{\bar{\alpha}} \wedge \overline{\ul{\theta}}{}^{\bar{\beta}} + \ul{\alpha} \wedge \ul{\theta}^{0} \, ,  \\
\d \ul{\theta}^{\alpha} & = \ul{\theta}{}^{\beta} \wedge \ul{\Gamma}{}_{\beta}{}^{\alpha} + \ul{A}{}^{\alpha}{}_{\bar{\beta}} \, \ul{\theta}^0 \wedge \overline{\ul{\theta}}{}^{\bar{\beta}} - \frac{1}{2} \ul{\Nh}_{\bar{\beta} \bar{\gamma}}{}^{\alpha} \, \overline{\ul{\theta}}{}^{\bar{\beta}} \wedge \overline{\ul{\theta}}{}^{\bar{\gamma}} \, , \\
\d \overline{\ul{\theta}}{}^{\bar{\alpha}} & = \overline{\ul{\theta}}{}^{\bar{\beta}} \wedge \overline{\ul{\Gamma}}{}_{\bar{\beta}}{}^{\bar{\alpha}} + \ul{A}{}^{\bar{\alpha}}{}_{\beta} \, \ul{\theta}^0 \wedge \ul{\theta}{}^{\beta} - \frac{1}{2} \ul{\Nh}_{\beta \gamma}{}^{\bar{\alpha}} \, \ul{\theta}{}^{\beta} \wedge \ul{\theta}{}^{\gamma} \, ,
\end{aligned}
\end{align}
for some complex functions $\ul{\msf{h}}{}_{\alpha \bar{\beta}}$, $\ul{\Lv}{}_{\alpha \beta}$, $\ul{A}{}^{\bar{\alpha}}{}_{\beta}$, $\ul{\Nh}_{\beta \gamma}{}^{\bar{\alpha}}$ and $1$-forms $\ul{\Gamma}{}_{\beta}{}^{\alpha}$ and $\ul{\alpha}$ on $\ul{\mc{M}}$, where $\ul{\msf{h}}{}_{\alpha \bar{\beta}}$ is Hermitian, $\ul{\alpha}$ is real, and the remaining quantities are defined by complex conjugation. Here, we identify $\ul{\msf{h}}_{\alpha \bar{\beta}}$ as the components of the Levi form of the pseudo-Hermitian structure $(\ul{H},\ul{J},\ul{\theta}^0)$.

There are two important invariants of $(\ul{H}, \ul{J})$ at a point of $\ul{\mc{M}}$, namely
\begin{itemize}
\item the \emph{rank} of $\ul{\bm{\Lv}}$, that is, the largest integer $r$ for which $\ul{\theta}^0 \wedge \left( \d \ul{\theta}^0 \right)^r$ does not vanish;
\item the \emph{signature} of $\ul{\bm{\msf{h}}}$, i.e.\ the signature of the Hermitian matrix $\ul{\msf{h}}_{\alpha \bar{\beta}}$.
\end{itemize}
Clearly, these do not depend on the choice of CR coframe. We shall assume regularity of the rank and signature throughout the article, i.e.\ these will be constant everywhere. We say that the almost CR structure $(\ul{H}, \ul{J})$ is
\begin{itemize}
	\item \emph{contact} or \emph{non-degenerate} if $\ul{\bm{\Lv}}$ has maximal rank, i.e.\ $\ul{\theta}^0 \wedge \left( \d \ul{\theta}^0 \right)^m$ does not vanish at any point,  i.e.\ $\ul{H}$ is a contact distribution;
	\item \emph{totally degenerate} if $\ul{\bm{\Lv}}$ vanishes identically, i.e.\ $\ul{\Lv}{}_{\alpha \beta} = \ul{\msf{h}}{}_{\alpha \bar{\beta}} = 0$, i.e.\ $\ul{\theta}^0 \wedge  \d \ul{\theta}^0  = 0$ everywhere, i.e.\ $\ul{H}$ is involutive;
	\item \emph{partially integrable} if $\ul{\bm{\Lv}}$ is of type $(1,1)$, i.e.\ $\ul{\Lv}{}_{\alpha \beta} = 0$ --- in this case, $\ul{\bm{\Lv}}$ can be identified with the imaginary part of $\ul{\bm{\msf{h}}}$;
	\item \emph{integrable} or \emph{involutive} if $\ul{H}^{(0,1)}$ is involutive, i.e.\ $\ul{\Lv}{}_{\bar{\alpha} \bar{\beta}} = \ul{\Nh}_{\bar{\beta} \bar{\gamma}}{}^{\alpha} = 0$.
\end{itemize}
The first two properties pertain to $\ul{H}$ alone while the latter two depend on the pair $(\ul{H},\ul{J})$.

\begin{exa}
The model for a contact CR manifold is the CR sphere $S^{2m+1}$ viewed as a hypersurface in $\C^{2m+2}$. More generally, any real hypersurface in $\C^{2m+2}$ is a CR manifold.
\end{exa}

\begin{rem}\label{rem:CR_bundle}
As a special case of almost CR manifold, consider an almost complex manifold $(\ut{\mc{M}},\ut{J})$. Then, one can take $\ul{\mc{M}}$ to be a bundle over $\ut{\mc{M}}$ with one-dimensional fibers such as $\R \times \ut{\mc{M}}$, $\R_{>0} \times \ut{\mc{M}}$ and $S^1 \times \ut{\mc{M}}$, and choose $(\ut{\theta}^{\alpha})$ to be a frame of $(1,0)$-forms for $\ut{\mc{M}}$, extend it to a coframe $(\ul{\theta}^0, \ul{\theta}^\alpha, \ol{\ul{\theta}}{}^{\bar{\alpha}})$ on $\ul{\mc{M}} \rightarrow \ut{\mc{M}}$ by adjoining a vertical $1$-form $\ul{\theta}^0$. Then clearly $(\ul{\mc{M}}, \ul{H}, \ul{J})$ is an almost CR manifold, where $\ul{H} = \Ann(\ul{\theta}^0)$, and the $-\i$-eigenbundle of $\ul{J}$ is $\ul{H}^{(0,1)} = \Ann(\ul{\theta}^0, \ul{\theta}^\alpha)$.
\end{rem}

\subsubsection{Partially integrable contact almost CR manifolds}
Suppose that $(\ul{H}, \ul{J})$ is both contact and partially integrable. Then $(\ul{H}, \ul{J})$ is equipped with a subconformal contact structure $\ul{\mbf{c}}_{\ul{H},\ul{J}}$ compatible with the bundle complex structure $\ul{J}$. Indeed,  $\ul{\bm{\Lv}}$ is now a $(1,1)$-form, which we may identify with $\ul{\bm{\msf{h}}}$, and with a slight abuse of notation, $\ul{\bm{\msf{h}}} = 2 \, \ul{\bm{\Lv}} \circ \ul{J}$ is a \emph{subconformal metric} on $(\ul{\mc{M}},\ul{H},\ul{J})$. Note that  $\ul{H}^{(1,0)}$ and $\ul{H}^{(0,1)}$ are totally null with respect to $\ul{\mbf{c}}_{\ul{H},\ul{J}}$. In particular, we have a one-to-one correspondence between contact forms in $\Ann(\ul{H})$ and metrics in $\ul{\mbf{c}}_{\ul{H},\ul{J}}$, each metric being given by $\ul{\msf{h}} = 2 \, \ul{\Lv} \circ \ul{J}$ where $\ul{\Lv}= \ul{\theta}^0 \circ \ul{\bm{\Lv}}$ for some contact form $\ul{\theta}^0$.

Furthermore, each choice of contact $1$-form $\ul{\theta}^0$ determines a unique vector field $\ul{e}_0$, the \emph{Reeb} vector field, satisfying $\ul{\theta}^0 (\ul{e}_0) = 1$ and $\d \ul{\theta}^0 (\ul{e}_0 , \cdot) = 0$, which induces a canonical splitting \eqref{eq:CR_splitting}, and one can choose an \emph{adapted coframe} $(\ul{\theta}^{\alpha} , \overline{\ul{\theta}}{}^{\bar{\alpha}})$ for $\ul{H}$ such that
\begin{align*}
\d \ul{\theta}^0 & = \i \ul{\msf{h}}{}_{\alpha \bar{\beta}} \ul{\theta}^\alpha \wedge \overline{\ul{\theta}}{}^{\bar{\beta}} \, .
\end{align*}
Any two such coframes must be related by a change \eqref{eq:CR_transf} where $\ul{\phi}^\alpha = \i\ul{\msf{h}}^{\alpha \bar{\beta}} ( \d \ul{\varphi} )_{\bar{\beta}}$. With no loss, one can always choose $(\ul{\theta}^{\alpha} )$ to be unitary with respect to $\ul{\msf{h}}_{\alpha \bar{\beta}}$.

Finally, to each choice of contact $1$-form $\ul{\theta}^0$, there exists a unique connection $\ul{\nabla}$, namely the \emph{Webster--Tanaka connection}, that preserves $\ul{\theta}^0$ and $\d \ul{\theta}^0$, with prescribed torsion \cite{Tanaka1975,Webster1978}: with reference to the structure equations \eqref{eq:al_CR_structure}, we identity $\ul{\Gamma}_{\alpha}{}^{\beta}$ with the connection $1$-form of $\ul{\nabla}$, $\ul{A}_{\alpha \beta} = \ul{h}_{\alpha \bar{\gamma}} \ul{A}^{\bar{\gamma}}{}_{\beta}$ with the so-called \emph{pseudo-Hermitian torsion tensor}, and $\ul{\Nh}_{\beta \gamma \alpha} = \ul{\Nh}_{\beta \gamma}{}^{\bar{\delta}} \ul{h}_{\alpha \bar{\delta}}$ with the so-called \emph{Nijenhuis torsion tensor}. By virtue of the Bianchi identities, these satisfy $\ul{A}_{[\alpha \beta]} = \ul{\Nh}_{[\beta \gamma \alpha]} = 0$. While $\ul{\Nh}_{\beta \gamma \alpha}$ is a CR invariant -- it is the obstruction to the involutivity of $\ul{H}^{(1,0)}$ -- the torsion tensor $\ul{A}_{\alpha \beta}$ depends on the choice of contact form, and is an invariant of the almost pseudo-Hermitian structure only. It may be interpreted as the obstruction to the Reeb vector field being a transverse symmetry of the CR structure. If $\ul{A}_{\alpha \beta}=0$ vanishes, we call $(\ul{\mc{M}}, \ul{H}, \ul{J}, \ul{\theta}^0)$ a \emph{Sasaki} almost pseudo-Hermitian contact manifold.

Further invariants of the partially integrable contact structure $(\ul{H}, \ul{J})$ can be obtained from the curvature of $\ul{\nabla}$. Of interest are the \emph{Chern--Moser tensor} when $m>1$, and the \emph{Cartan tensor} when $m=1$. If $(\ul{H}, \ul{J})$ is integrable, then the vanishing of these tensors is equivalent to the CR manifold being locally diffeomorphic to the CR sphere  -- see e.g.\ \cite{Case2020}.

\begin{rem}\label{rem:CR-Kahler-Einstein}
There are close analogies between partially integrable contact almost CR geometry and conformal geometry by virtue of the existence of the subconformal structure $\ul{\mbf{c}}_{\ul{H},\ul{J}}$: here, an almost pseudo-Hermitian structure can be seen as a choice of a bundle metric in $\ul{\mbf{c}}_{\ul{H},\ul{J}}$, and one may define an almost pseudo-Hermitian analogue of the Einstein condition in terms of the $\ul{\theta}^0$-dependent \emph{Webster--Ricci tensor} and the pseudo-Hermitian torsion tensor $\ul{A}_{\alpha \beta}$. This condition was introduced in the integrable case in \cite{Leitner2007,Cap2008}, and generalised to the non-integrable case in \cite{Taghavi-Chabert2021} where it is referred to as an \emph{almost CR--Einstein structure}.\footnote{It is rather unfortunate that the terminology ``almost CR--Einstein structure'' is used in different ways in \cite{Cap2008} and \cite{Taghavi-Chabert2021}: in the former reference, ``almost" refers to the Einstein condition, meaning that the manifold is a CR manifold that is CR--Einstein off the zero set of some density. In the latter reference, ``almost" refers to an almost CR structure that is not necessarily integrable.} As shown in \cites{Leitner2007,Taghavi-Chabert2021}, such a structure can be constructed on the anti-canonical bundle of an almost K\"{a}hler--Einstein manifold. Conversely, any almost CR--Einstein manifold locally arises in this way.
\end{rem}

We shall leave aside analytical questions related to CR manifolds, especially in connection with embeddability, and we refer the reader to \cites{Jacobowitz1990,Trautman2002} and references therein for further details.

\subsection{Nearly Robinson structures and almost CR structures}\label{sec:lift}
In this section, we restrict our attention to nearly Robinson structures. These lie at the junction between Lorentzian geometry and almost CR geometry, as the next proposition makes clear.
\begin{prop}\label{prop:N_pres}
Let $(\mc{M}, g, N, K)$ be a $(2m+2)$-dimensional almost Robinson manifold with congruence of null curves $\mc{K}$ with leaf space $\ul{\mc{M}}$. The following statements are equivalent:
\begin{enumerate}
\item $(N,K)$ is a nearly Robinson structure, i.e\ $[K,N] \subset N$, i.e.\ for any optical vector field $k$, and any $v \in \Gamma(N)$, $\mathsterling_k v \in \Gamma(N)$; \label{item:N_pres1}
\item any complex Robinson $(m+1)$-form $\nu$ is preserved along $K$,  i.e.\  for any optical vector field $k$, $\mathsterling_k \nu = f \nu$ for some smooth function $f$; \label{item:N_pres2}
\item the intrinsic torsion is a section of $(\slashed{\mc{G}}_{-1}^{1 \times 3})_{[-2\i:1]} \cap \slashed{\mc{G}}_{-1}^{2,0}$, i.e.\  \label{item:N_pres3}
\begin{align*}
\breve{\gamma}_i = \breve{\sigma}_{\alpha \beta} & = 0 \, , & \breve{\zeta}_{\alpha \beta} & =  2\i \breve{\tau}_{\alpha \beta} \, ;
\end{align*}

\item $(N,K)$ induces an almost CR structure $(\ul{H}, \ul{J})$ on $\ul{\mc{M}}$.  \label{item:N_pres4}
\end{enumerate}
If any of these conditions holds, $\mc{K}$ is geodesic and its shear commutes with the screen bundle complex structure.
\end{prop}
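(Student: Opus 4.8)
The plan is to prove the three equivalences $(1)\Leftrightarrow(2)$, $(1)\Leftrightarrow(4)$ and $(1)\Leftrightarrow(3)$ in turn, and then to read the final assertion off condition $(3)$. For $(1)\Leftrightarrow(2)$ I would use that a complex Robinson $(m+1)$-form $\nu$ is a local generator of the line bundle $\bigwedge^{m+1}\Ann(N)$ and recovers $N$ as $N=\{v: v\hook\nu=0\}$, together with the identity $\mathsterling_k(v\hook\nu)=(\mathsterling_k v)\hook\nu+v\hook(\mathsterling_k\nu)$. If $\mathsterling_k\nu=f\nu$, then for $v\in\Gamma(N)$ we have $v\hook\nu=0$, hence $(\mathsterling_k v)\hook\nu=-v\hook(f\nu)=0$, so $\mathsterling_k v\in\Gamma(N)$, giving $(2)\Rightarrow(1)$. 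Conversely, if $[K,N]\subset N$, the same identity yields $v\hook(\mathsterling_k\nu)=-(\mathsterling_k v)\hook\nu=0$ for every $v\in\Gamma(N)$, so $\mathsterling_k\nu$ is annihilated by the maximal totally null $N$; since the space of $(m+1)$-forms with this property is the one-dimensional $\bigwedge^{m+1}\Ann(N)$, we conclude $\mathsterling_k\nu=f\nu$, i.e.\ $(1)\Rightarrow(2)$.

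For $(1)\Leftrightarrow(4)$ I would first note that, because $N,\overline N\subset{}^\C K^\perp$ and $K$ is real, $[K,N]\subset N$ forces $[K,{}^\C K^\perp]=[K,N]+[K,\overline N]\subset N+\overline N={}^\C K^\perp$, so $\mc K$ is geodesic and, by Remark \ref{rem:ind_str_opt}, $H_K$ descends to a rank-$2m$ distribution $\ul H$ on the leaf space $\ul{\mc M}$. Since ${}^\C K\subset N$ is exactly the complexified vertical distribution of $\mc M\to\ul{\mc M}$, the flow-invariance $[K,N]\subset N$ is precisely the condition for $N/{}^\C K=H_K^{(0,1)}$ to descend to a complex rank-$m$ subbundle $\ul H^{(0,1)}$ of ${}^\C\ul H$; by conjugation $\overline N$ descends as well. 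As $\ul H^{(0,1)}$ is then the $-\i$-eigenbundle of an almost complex structure $\ul J$ on $\ul H$, this descent is exactly the statement that $(N,K)$ induces the almost CR structure $(\ul H,\ul J)$, which gives $(1)\Leftrightarrow(4)$.

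The computational core is $(1)\Leftrightarrow(3)$. Working in a Robinson coframe $(\kappa,\theta^\alpha,\overline\theta^{\bar\alpha},\lambda)$, the distribution $N$ is spanned by the optical vector $k$ together with the $(0,1)$ screen frame vectors, and a complement $N^*$ by $\ell$ and the $(1,0)$ screen vectors. Since the Levi-Civita connection is torsion-free, $[k,v]=\nabla_k v-\nabla_v k$, so $[K,N]\subset N$ is equivalent to the vanishing of the $N^*$-components $\lambda([k,v])$ and $\theta^\alpha([k,v])$ as $v$ ranges over the $(0,1)$ frame. Expressing these through the components of $\nabla_a\kappa_b$ and $\nabla_a\rho_{bcd}$ collected in \eqref{eq:S-H1_b}--\eqref{eq:S-H2_b} (and the $\nabla\rho$-identities listed thereafter), one finds the obstruction is carried exactly by $\breve\gamma_i$, by $\breve\sigma_{\alpha\beta}$, and by the combination $\breve\zeta_{\alpha\beta}-2\i\breve\tau_{\alpha\beta}$, i.e.\ by the projections whose vanishing defines condition $(3)$. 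I expect the main obstacle to be this last identification: confirming that the bracket condition selects precisely $\breve\zeta_{\alpha\beta}=2\i\breve\tau_{\alpha\beta}$, and not some other linear combination, requires careful bookkeeping with the splitting operators $\delta^a_\alpha$, since $\breve\zeta$ comes from $\nabla\rho$ while $\breve\tau$ comes from $\nabla\kappa$ and the two must be compared in the same weighted bundle. The coefficient $2\i$ is exactly the one making $(\slashed{\mc G}_{-1}^{1\times3})_{[-2\i:1]}$ a genuine $Q$-module in Theorem \ref{thm:intors-rob}, where $z\tau_{\alpha\beta}+w\zeta_{\alpha\beta}$ transforms by $-(z+4\i w)\gamma_{[\alpha}\phi_{\beta]}$ under \eqref{eq:basis-change}.

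The closing assertion is then immediate from $(3)$: the vanishing $\breve\gamma_i=0$ places the intrinsic torsion in $\slashed{\mc G}_{-2}^0$, so $\mc K$ is geodesic by Table \ref{tab-geom-intors}, while $\breve\sigma_{\alpha\beta}=0$ places it in $\slashed{\mc G}_{-1}^{2,1}$, so by Lemma \ref{lem:opt_J} the shear $\breve\sigma_{ij}$ commutes with the screen bundle complex structure $J_i{}^j$.
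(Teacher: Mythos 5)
Your proposal is correct and follows essentially the same route as the paper: the frame computation of $\kappa(\mathsterling_k e_\alpha)$ and $g(\mathsterling_k e_\alpha,e_\beta)$ yielding $\breve\gamma_i=\breve\sigma_{\alpha\beta}=0$ and $\breve\zeta_{\alpha\beta}=2\i\breve\tau_{\alpha\beta}$, the descent of the structure to the leaf space for the equivalence with (4) (the paper routes this through the Robinson $(m+1)$-form rather than the distribution, but the content is the same), and Lemma \ref{lem:opt_J} for the closing claim. Your explicit interior-product argument for $(1)\Leftrightarrow(2)$ fills in a step the paper dismisses as clear, and is valid.
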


\begin{rem}
Condition \eqref{item:N_pres1} tells us that the splitting of ${}^\C H_K$ into $H^{(1,0)}_K$ and $H^{(0,1)}_K$ is preserved along the integral curves of $\mc{K}$. In particular, the distribution $\ul{H}$ on $\ul{\mc{M}}$ inherits this splitting, which is equivalent to an almost CR structure, as claimed by \eqref{item:N_pres4}.
\end{rem}

\begin{proof}
The equivalence between \eqref{item:N_pres1} and \eqref{item:N_pres2} is clear. For the almost Robinson structure to be preserved along $K$, i.e.\ $\mathsterling_k v \in \Gamma(\ol{N})$ for any $k \in \Gamma(K)$, $v \in \Gamma(\ol{N})$, we must have
\begin{align*}
0 & = \kappa \left( \mathsterling_k e_{\alpha} \right)  =  - \nabla \kappa \left( k, e_{\alpha} \right)  =  - \gamma_\alpha \, ,\\
0 & = g \left( \mathsterling_k e_{\alpha} , e_{\beta} \right) = g \left( \nabla_k e_{\alpha} , e_{\beta} \right) -   \nabla \kappa \left( e_{\alpha}, e_{\beta} \right) = \frac{1}{2\i} \zeta_{\alpha \beta} - \tau_{\alpha \beta} - \sigma_{\alpha \beta} \, .
\end{align*}
for any adapted frame $(\ell, e_\alpha, e_{\bar{\alpha}}  , k)$ and $\kappa = g(k,\cdot)$. It now follows that $\gamma_{\alpha} = 0$, and taking the symmetric and skew-symmetric parts yields $- 2\i \tau_{\alpha \beta} + \zeta_{\alpha \beta} = \sigma_{\alpha \beta} = 0$. This computation shows that conditions \eqref{item:N_pres1} and \eqref{item:N_pres3} are equivalent.

The equivalence between \eqref{item:N_pres2} and \eqref{item:N_pres4} can be proved following \cite{Nurowski2002}. One can always find a complex Robinson $(m+1)$-form $\nu$ such that $\mathsterling_k \nu = 0$ for some optical vector field $k$. So $\nu$ is the pullback of a complex $(m+1)$-form $\ul{\nu}$ on $\ul{\mc{M}}$. This $\ul{\nu}$ clearly shares the same algebraic properties of $\nu$. In particular, it is simple, and $\ul{\theta}^0 \wedge \ul{\nu} = 0$, where $\ul{\theta}^0$ is a real $1$-form that pulls back to an optical $1$-form on $\mc{M}$, and annihilates a rank-$2m$ distribution $\ul{H}$ on $\ul{\mc{M}}$. This means that $\mathrm{span} ( \ul{\nu} ) = \bigwedge^{m+1} \Ann(\ul{H}^{(0,1)})$ for some complex rank-$m$ vector subbundle $\ul{H}^{(0,1)}$ of ${}^\C \ul{H}$. The story for the complex conjugate of $\nu$ is entirely analogous, and yields 
a complex rank-$m$ vector subbundle $\ul{H}^{(1,0)}$ of ${}^\C \ul{H}$. It is then straightforward to check $\ul{H}^{(1,0)} \cap \ul{H}^{(0,1)} = \{ 0 \}$ at any point, i.e.\ ${}^\C \ul{H} = \ul{H}^{(1,0)} \oplus \ul{H}^{(0,1)}$, which defines the almost CR structure on $\ul{\mc{M}}$ as required.

The last claim of the proposition follows from Lemma \ref{lem:opt_J}.
\end{proof}

In \cite{Nurowski2002}, the authors show how to construct Robinson manifolds as trivial lines bundles over CR manifolds. Here, we generalise the construction to nearly Robinson manifolds. The proof of the following result is self-evident.
\begin{prop}\label{prop:lift}
Let $(\ul{\mc{M}}, \ul{H}, \ul{J})$ be a $(2m+1)$-dimensional oriented almost CR manifold, and $\mc{M} := \R \times \ul{\mc{M}} \stackrel{\varpi}{\longrightarrow} \ul{\mc{M}}$ be a trivial line bundle over  $\ul{\mc{M}}$. Fix a triplet $\left( (\ul{\theta}^0, \ul{\theta}^\alpha, \overline{\ul{\theta}}{}^{\bar{\alpha}}) , h_{\alpha \bar{\beta}} , \lambda \right)$ where
\begin{itemize}
\item $(\ul{\theta}^0, \ul{\theta}^\alpha, \overline{\ul{\theta}}{}^{\bar{\alpha}})$ is a CR-coframe  on $(\ul{\mc{M}}, \ul{H}, \ul{J})$,
\item $h_{\alpha \bar{\beta}}$ is a positive-definite Hermitian matrix depending smoothly on $\mc{M}$, and
\item $\lambda$ is a $1$-form  on $\mc{M}$ such that $\lambda \wedge \varpi^*\ul{\varepsilon}$ does not vanish at any point for any non-vanishing $(2m+1)$-form $\ul{\varepsilon}$ on $\ul{\mc{M}}$.
\end{itemize}
Then $(\mc{M},g)$ is an oriented and time-oriented Lorentzian manifold with metric
\begin{align}\label{eq:al_Rob_met}
g & = 4 \,  \varpi^*\ul{\theta}^0 \, \lambda + 2 \, h_{\alpha \bar{\beta}}  \varpi^*\ul{\theta}^\alpha \,  \varpi^* \overline{\ul{\theta}}{}^{\bar{\beta}} \, ,
\end{align}
and $(N,K)$, where $N = \Ann (\varpi^* \ul{\theta}^0 , \varpi^* \ul{\theta}^\alpha)$ and $K = \Ann ( \varpi^* \ul{\theta}^0)^\perp$, defines a nearly Robinson structure on $(\mc{M},g)$. In particular, $\ul{\mc{M}}$ is the leaf space of the congruence of null geodesics tangent to $K$.

Any two such triplets $\left( (\ul{\theta}^0, \ul{\theta}^\alpha, \overline{\ul{\theta}}{}^{\bar{\alpha}}) , h_{\alpha \bar{\beta}} , \lambda \right)$ and $\left( (\wh{\ul{\theta}}{}^0, \wh{\ul{\theta}}{}^{\alpha}, \overline{\wh{\ul{\theta}}}{}^{\bar{\alpha}}) , \wh{h}_{\alpha \bar{\beta}} , \wh{\lambda} \right)$ where $(\ul{\theta}^0, \ul{\theta}{}^\alpha, \overline{\ul{\theta}}{}^{\bar{\alpha}})$ and $(\wh{\ul{\theta}}{}^0, \wh{\ul{\theta}}{}^\alpha, \overline{\wh{\ul{\theta}}}{}^{\bar{\alpha}})$ are related by \eqref{eq:CR_transf} define the same metric \eqref{eq:al_Rob_met} if and only if $\ul{\psi}{}_{\alpha}{}^{\gamma}$ is an element of $\U (m)$ at every point, i.e.\
\begin{align*}
\wh{h}_{\alpha \bar{\beta}} & = h_{\gamma \bar{\delta}} (\ul{\psi}^{-1}){}_{\alpha}{}^{\gamma}  (\ul{\psi}^{-1}){}_{\bar{\beta}}{}^{\bar{\delta}} \, , \\
\intertext{and $\lambda$ transforms as}
\wh{\lambda} & = \e^{-\ul{\varphi}} \lambda - \frac{1}{2} \ul{\psi}{}_{\alpha}{}^{\beta} \ul{\phi}{}_{\beta} \ul{\theta}^{\alpha} - \frac{1}{2} \ul{\psi}{}_{\bar{\alpha}}{}^{\bar{\beta}} \ul{\phi}{}_{\bar{\beta}} \overline{\ul{\theta}}^{\bar{\alpha}} - \frac{1}{2} \ul{\phi}{}_{\beta} \ul{\phi}{}^{\beta} \ul{\theta}^0 \, .
\end{align*}
\end{prop}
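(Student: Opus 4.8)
The plan is to work in the global coframe $(\kappa,\theta^\alpha,\overline\theta{}^{\bar\alpha},\lambda)$ on $\mc M$, where $\kappa=\varpi^*\ul\theta^0$ and $\theta^\alpha=\varpi^*\ul\theta^\alpha$; the transversality hypothesis $\lambda\wedge\varpi^*\ul\varepsilon\neq 0$ guarantees this is indeed a coframe, with dual frame $(\mbf{k},\mbf{e}_\alpha,\mbf{e}_{\bar\alpha},\mbf{\ell})$. Reading off \eqref{eq:al_Rob_met} in this frame, the only non-zero pairings are $g(\mbf{k},\mbf{\ell})$ and $g(\mbf{e}_\alpha,\mbf{e}_{\bar\beta})=h_{\alpha\bar\beta}$, so $g$ restricts to a hyperbolic plane on $\mathrm{span}(\mbf{k},\mbf{\ell})$ and to the positive-definite form $h$ on the screen, giving Lorentzian signature $(2m+1,1)$; the orientation of $\ul{\mc M}$ together with the $\R$-factor orients $\mc M$, while the nowhere-vanishing null field dual to $\lambda$ time-orients it. First I would then check that $N=\Ann(\kappa,\theta^\alpha)$ is totally null of rank $m+1$ — it is cut out by $m+1$ independent forms, and the dual vectors spanning it pair trivially under $g$ — and that $\overline N=\Ann(\kappa,\overline\theta{}^{\bar\alpha})$, whence $N\cap\overline N=\Ann(\kappa,\theta^\alpha,\overline\theta{}^{\bar\alpha})$ is the real null line dual to $\lambda$. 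Since $\Ann(\kappa)^\perp$ is exactly this line, $(N,K)$ satisfies ${}^\C K=N\cap\overline N$ and is an almost Robinson structure by Proposition \ref{prop:char-Robinson-mfld}.

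The crux is the bracket condition $[K,N]\subset N$. The optical vector $k$ spanning $K$ is dual to $\lambda$, hence \emph{vertical} for $\varpi$: it is annihilated by $\kappa$ and by every $\theta^\alpha$, $\overline\theta{}^{\bar\alpha}$. For $v\in\Gamma(N)$ one has $\kappa(v)=\theta^\alpha(v)=0$ and $\kappa(k)=\theta^\alpha(k)=0$, so Cartan's formula collapses to $\kappa([k,v])=-\d\kappa(k,v)$ and $\theta^\alpha([k,v])=-\d\theta^\alpha(k,v)$. Now $\d\kappa=\varpi^*\d\ul\theta^0$ and $\d\theta^\alpha=\varpi^*\d\ul\theta^\alpha$ are pullbacks of the structure forms \eqref{eq:al_CR_structure} on $\ul{\mc M}$, and contracting a pullback with the vertical field $k$ annihilates it; hence $[k,v]\in\Gamma(N)$, proving the nearly Robinson condition. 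The identical contraction with $K^\perp=\Ann(\kappa)$ in place of $N$ shows $[K,K^\perp]\subset K^\perp$, so $\mc K$ is geodesic; as $K$ is the vertical distribution, its leaves are the fibres of $\varpi$ and the leaf space is $\ul{\mc M}$. The geodesic and shear statements also follow a posteriori from Proposition \ref{prop:N_pres}.

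For the final, uniqueness assertion I would substitute the coframe change \eqref{eq:CR_transf} into \eqref{eq:al_Rob_met}, expand, and write $\wh\lambda$ in the coframe $(\kappa,\theta^\alpha,\overline\theta{}^{\bar\alpha},\lambda)$ with undetermined components. Matching the coefficient of $\theta^\gamma\overline\theta{}^{\bar\delta}$ forces $\wh h_{\alpha\bar\beta}\,\ul\psi_\gamma{}^\alpha\ul\psi_{\bar\delta}{}^{\bar\beta}=h_{\gamma\bar\delta}$, i.e. the stated tensorial relation for $\wh h$; since the screen Hermitian form is an invariant of $(\mc M,g,N,K)$, demanding $\wh h=h$ is precisely the requirement that $\ul\psi_\alpha{}^\gamma$ be unitary at each point. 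Matching the coefficients of $\kappa\lambda$, $\kappa\theta^\gamma$ and $\kappa\kappa$ then pins down the $\lambda$-, $\theta^\gamma$- and $\kappa$-components of $\wh\lambda$ in turn, reproducing the displayed transformation, the factors of $\tfrac12$ arising from the normalisation $4\,\varpi^*\ul\theta^0\lambda$ of the null pairing; conversely these formulas give $\wh g=g$ by the same expansion.

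The computations are all routine, so there is no serious obstacle; the only points needing care are the verification that $k$ is vertical — which is what makes the bracket conditions collapse to contractions of pulled-back structure forms — and the correct bookkeeping of index contractions and $\tfrac12$ factors in matching the metric coefficients for the uniqueness clause.
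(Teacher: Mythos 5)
Your argument is correct; the paper itself gives no proof of this proposition (it is declared ``self-evident''), and what you supply is precisely the direct verification that is implicitly intended: the signature count in the adapted coframe, the collapse of the bracket conditions $[K,N]\subset N$ and $[K,K^\perp]\subset K^\perp$ to contractions of pulled-back structure forms with the vertical optical field, and the coefficient-matching for the uniqueness clause. The only blemish is notational — in your opening list of dual frame vectors the symbol $\mbf{k}$ appears paired with $\kappa$, whereas two paragraphs later (correctly, and consistently with the paper's conventions) $k$ is the vector dual to $\lambda$ satisfying $g(k,\cdot)\propto\kappa$ — but this does not affect the substance of the proof.
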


\begin{rem}
Variations of the above construction are possible by replacing the $\R$-factor of $\mc{M}$ by $\R_{>0}$ or $S^1$ for instance.
\end{rem}

\begin{defn}
We shall refer to any nearly Robinson structure constructed on $\mc{M} := \R \times \ul{\mc{M}} \longrightarrow \ul{\mc{M}}$ as in Proposition \ref{prop:lift}, as a \emph{lift} of the almost CR manifold $(\ul{\mc{M}}, \ul{H}, \ul{J})$.  The pullbacks of the $1$-forms $(\ul{\theta}^0, \ul{\theta}^{\alpha}, \ol{\ul{\theta}}{}^{\bar{\alpha}})$ will be referred to as \emph{horizontal}, and the $1$-form $\lambda$ as \emph{vertical} (with respect to the fibration $\mc{M} \rightarrow \ul{\mc{M}}$). 
\end{defn}

\begin{rem}
We emphasise that the metric constructed in Proposition \ref{prop:lift} is \emph{not} canonical in general. To do away with the choice of CR coframe and $1$-form $\lambda$, while fixing the conformal class $[ h_{\alpha \bar{\beta}}]$, one needs to introduce the notion of \emph{generalised almost Robinson geometry}, which is dealt with in Section \ref{sec:gen_Rob} -- see Proposition \ref{prop:lift_gen}.
\end{rem}

Before we proceed, we give the converse of Proposition \ref{prop:lift} -- see  \cite{Nurowski2002} for the involutive case.
\begin{prop}\label{prop:Rob->CR}
Let $(\mc{M}, g, N, K)$ be a nearly Robinson manifold with congruence of null geodesics $\mc{K}$. Then $\mc{M}$ is locally diffeomorphic to the trivial line bundle $\R \times \ul{\mc{M}} \stackrel{\varpi}{\longrightarrow} \ul{\mc{M}}$, where $\ul{\mc{M}}$ is  the local leaf space of $\mc{K}$ and is equipped with an almost CR structure $(\ul{H}, \ul{J})$. Further, locally, $g$ takes the form \eqref{eq:al_Rob_met} for some CR coframe $(\ul{\theta}^0, \ul{\theta}^\alpha, \overline{\ul{\theta}}{}^{\bar{\alpha}})$, Hermitian matrix $h_{\alpha \bar{\beta}}$ depending smoothly on $\mc{M}$, and  $1$-form $\lambda$  on $\mc{M}$ that never vanishes on $K$.
\end{prop}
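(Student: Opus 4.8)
The plan is to deduce the statement almost entirely from Proposition~\ref{prop:N_pres} and the descent properties of optical geometries recalled in Remark~\ref{rem:ind_str_opt}; the only genuinely computational ingredient is the reconstruction of $g$ in a suitable horizontal coframe. First I would invoke the last assertion of Proposition~\ref{prop:N_pres}, by which the nearly Robinson hypothesis forces $\mc{K}$ to be geodesic. Then $K$ is an integrable (rank-one) line field whose leaves are the curves of $\mc{K}$, so by a flow-box argument the local leaf space $\ul{\mc{M}}$ is a smooth $(2m+1)$-dimensional manifold and $\mc{M}$ is locally diffeomorphic to the trivial bundle $\R \times \ul{\mc{M}} \stackrel{\varpi}{\longrightarrow} \ul{\mc{M}}$, the $\R$-factor parametrising the flow along $K$. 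Since $\mc{K}$ is geodesic, $H_K = K^\perp/K$ descends to a rank-$2m$ distribution $\ul{H}$ on $\ul{\mc{M}}$ by Remark~\ref{rem:ind_str_opt}, and the equivalence of conditions (1) and (4) of Proposition~\ref{prop:N_pres} endows $\ul{\mc{M}}$ with the announced almost CR structure $(\ul{H}, \ul{J})$.

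Next I would build a \emph{horizontal} adapted coframe. Using the equivalence of conditions (1) and (2) of Proposition~\ref{prop:N_pres}, I would fix an optical vector field $k$ and a complex Robinson $(m+1)$-form $\nu$ with $\mathsterling_k \nu = 0$; then $\nu = \varpi^* \ul{\nu}$ for a simple form $\ul{\nu}$ on $\ul{\mc{M}}$, and, exactly as in the proof of Proposition~\ref{prop:N_pres}, writing $\ul{\nu} = \ul{\theta}^0 \wedge \ul{\theta}^1 \wedge \cdots \wedge \ul{\theta}^m$ up to scale yields a CR coframe $(\ul{\theta}^0, \ul{\theta}^\alpha, \overline{\ul{\theta}}{}^{\bar{\alpha}})$ adapted to $(\ul{H}, \ul{J})$. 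I then set $\kappa := \varpi^* \ul{\theta}^0$ and $\theta^\alpha := \varpi^* \ul{\theta}^\alpha$. As $\ul{\theta}^0$ annihilates $\ul{H}$, its pullback $\kappa$ annihilates $K^\perp$, so it is an optical $1$-form; and since $\varpi^*\ul{\nu}$ spans $\bigwedge^{m+1}\Ann(N)$, the forms $(\kappa, \theta^\alpha)$ annihilate $N$. All of $\kappa, \theta^\alpha, \overline{\theta}{}^{\bar{\alpha}}$ are pullbacks, and hence vanish on $K$.

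Finally I would complete this to a full coframe on $\mc{M}$ by adjoining any $1$-form $\lambda$ nowhere vanishing on $K$ --- for instance $\lambda := g(\ell, \cdot)$, where $\ell$ is a null field dual to $k$ with $g(k,\ell)=1$, so that $\lambda(k)=1$ everywhere --- and then read off $g$ in the dual frame $(\ell, e_\alpha, e_{\bar{\alpha}}, k)$. Here $N = \mathrm{span}(k, e_{\bar{\alpha}})$ and $\overline{N} = \mathrm{span}(k, e_\alpha)$ are totally null, while $e_\alpha, e_{\bar{\alpha}} \in H_{K,L} = K^\perp \cap L^\perp$ with $L = \mathrm{span}(\ell)$; these orthogonality relations make every component of $g$ vanish except $g(k,\ell) = 1$ and the block $h_{\alpha\bar{\beta}} := g(e_\alpha, e_{\bar{\beta}})$, which is the positive-definite Hermitian screen metric, hence a positive-definite Hermitian matrix field on $\mc{M}$. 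Thus $g = 2\kappa\lambda + 2 h_{\alpha\bar{\beta}}\theta^\alpha\overline{\theta}{}^{\bar{\beta}}$, which is precisely \eqref{eq:al_Rob_met} after absorbing the conventional numerical factors into $\lambda$.

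The one point where the \emph{nearly} Robinson hypothesis (rather than mere geodesy) is essential, and which I expect to be the only real subtlety, is the passage from $\nu$ to the \emph{basic} $(1,0)$-forms $\theta^\alpha$: it is condition (2) of Proposition~\ref{prop:N_pres} that guarantees a Robinson $(m+1)$-form invariant along $K$, and hence a $(1,0)$-coframe descending to $\ul{\mc{M}}$. Everything else reduces to the standard local existence of the leaf space and the bookkeeping of a totally null frame; note in particular that $h_{\alpha\bar{\beta}}$ is permitted to vary along the fibre, so that no unitary normalisation of the $\theta^\alpha$ is required.
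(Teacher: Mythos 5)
Your proof is correct and follows essentially the same route as the paper's: local triviality of $\mc{M}$ over the leaf space of the geodesic congruence, descent of the almost CR structure via Proposition \ref{prop:N_pres}, pullback of a CR coframe, and reading off $g$ from the totally null adapted frame. You simply spell out more of the details (the flow-box argument, the explicit choice of $\lambda = g(\ell,\cdot)$, and the normalisation $\kappa = 2\varpi^*\ul{\theta}^0$ implicit in \eqref{eq:al_Rob_met}) that the paper leaves to the reader.
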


\begin{proof}
Note that $\mc{M}$ is locally diffeomorphic to the line bundle $\R \times \ul{\mc{M}} \stackrel{\varpi}{\longrightarrow} \ul{\mc{M}}$, where each fiber is a null curve of $\mc{K}$. By Proposition \ref{prop:N_pres}, $(N,K)$ descends to an almost CR structure $(\ul{H},\ul{J})$ on the local leaf space $\ul{\mc{M}}$ of $\mc{K}$: following our convention, $N/{}^\C K$ and $\overline{N}/{}^\C K$ descend to the eigenbundles $\ul{H}^{(0,1)}$ and $\ul{H}^{(1,0)}$ of $\ul{J}$ respectively. Let $(\ul{\theta}^0 , \ul{\theta}^\alpha , \overline{\ul{\theta}}{}^{\bar{\alpha}})$ be a coframe on $\ul{\mc{M}}$ where $\mathrm{span} (\ul{\theta}^0) = \Ann(\ul{H})$ and $\mathrm{span} (\ul{\theta}^0 , \ul{\theta}^\alpha ) = \Ann(\ul{H}^{(0,1)})$. Then $\mathrm{span} (\varpi^* \ul{\theta}^0) = \Ann(\ul{H})$ and $\mathrm{span} (\varpi^* \ul{\theta}^0 , \varpi^* \ul{\theta}^\alpha) = \Ann(N)$ , and it follows immediate that $g$ must be of the form  \eqref{eq:al_Rob_met} where  $h_{\alpha \bar{\beta}}$ and $\lambda$ have the required properties.
\end{proof}

Propositions \ref{prop:lift} and \ref{prop:Rob->CR} thus allow us to relate the geometric properties of a nearly Robinson manifold $(\mc{M}, g, N, K)$ with those of the almost CR leaf space $(\ul{\mc{M}}, \ul{H}, \ul{J})$ of its associated congruence of null geodesics $\mc{K}$. This can be seen at four levels:
\begin{itemize}
\item the involutivity of $N$ is equivalent to that of $\ul{H}^{(0,1)}$ --- see Section \ref{sec:spinor_des};
\item the twist $\breve{\tau}_{i j}$ of $\mc{K}$ encodes the geometric and algebraic properties of the Levi form $\ul{\bm{\Lv}}$ of $\ul{H}$;
\item what remains of the shear, that is, its $(1,1)$-part $\breve{\sigma}_{\alpha \bar{\beta}}$, and the expansion $\breve{\epsilon}$ of $\mc{K}$ obstruct the existence of a conformal or metric structure on $\ul{H}$;
\item further denegeracy conditions of the intrinsic torsion of $(N, K)$ will also depend on the choice of the $1$-form $\lambda$ --- see Lemma \ref{lem:lambda} for instance.
\end{itemize}
Unlike $N$ and $\breve{\tau}_{i j}$, which are tied to the properties of $(\ul{H},\ul{J})$, the shear and expansion depend only on the screen bundle metric of \eqref{eq:al_Rob_met} --- see also Remark \ref{rem:lift-choice} below.

To delve into this matter further, we must bear in mind that the coframe $(\ul{\theta}^0 , \ul{\theta}^\alpha , \overline{\ul{\theta}}{}^{\bar{\alpha}})$ on $\ul{\mc{M}}$ does \emph{not} in general pull back to a Robinson coframe on $\mc{M}$ by simply adjoining the $1$-form $\lambda$, in the sense that $(\ul{\theta}^\alpha)$ do \emph{not} form a unitary coframe with respect to $h_{\alpha \bar{\beta}}$. This essentially depends on the choice of $h_{\alpha \bar{\beta}}$. To be precise,  a Robinson coframe $(\kappa , \theta^\alpha , \overline{\theta}{}^{\bar{\alpha}} , \lambda)$ for the metric \eqref{eq:al_Rob_met}, so that $(\theta^{\alpha})$ is unitary for $h_{\alpha \bar{\beta}}$, is related to $(\ul{\theta}^0 , \ul{\theta}^\alpha , \overline{\ul{\theta}}{}^{\bar{\alpha}})$ via
\begin{align}\label{eq:CR->Rob_cof}
\kappa & = 2 \varpi^* \ul{\theta}^0 \, , &
\theta^\alpha & = \psi{}_{\beta}{}^{\alpha} \left( \varpi^* \ul{\theta}^\beta \right) + \phi^{\alpha} \ul{\theta}^0 \, ,
\end{align}
for some smooth functions $\psi^{\alpha}$ and $\psi{}_{\beta}{}^{\alpha}$ on $\mc{M}$, where $\psi{}_{\beta}{}^{\alpha}$ takes values in $\mbf{GL}(m,\C)$. Note that one can always choose our Robinson coframe such that $\psi^{\alpha} = 0$ in \eqref{eq:CR->Rob_cof}. When $m=1$, we have the decomposition $\mbf{GL}(1,\C) \cong \C^* \cong \R_{>0} \cdot \U(1)$, in which case we always have $\psi{}_{\alpha}{}^{\beta} = r \e^{\i \phi} \delta_{\alpha}^{\beta}$ where $r, \phi$ are real with $r>0$. Now, the space of all Hermitian forms on $\C^m$ is isomorphic to the homogeneous space $\mbf{GL}(m,\C)/\U(m)$ of real dimension $m^2$. Thus, the failure of $(\ul{\theta}^\alpha)$ to be unitary with respect to some scalar multiple of $h_{\alpha \bar{\beta}}$ at any point is measured by an element of $\mbf{GL}(m,\C) \mod \U(m)$.

At present, let us relate the twist of $\mc{K}$ to the Levi forms of $\ul{H}$ and $(\ul{H}, \ul{J})$. Using \eqref{eq:CR->Rob_cof}, we find
\begin{align*}
\ul{\msf{h}}{}_{\alpha \bar{\beta}} & =  \tau{}_{\gamma \bar{\delta}} \psi{}_{\alpha}{}^{\gamma} \overline{\psi}{}_{\bar{\beta}}{}^{\bar{\delta}} \, , &
2 \, \ul{\Lv}{}_{\alpha \beta} & =  \tau{}_{\gamma \delta} \psi{}_{\alpha}{}^{\gamma} \psi{}_{\beta}{}^{\delta} \, .
\end{align*}
From the first of these equations, we conclude that the $\mc{G}_{-1}^{1,0} \oplus \mc{G}_{-1}^{1,2}$-component $\breve{\tau}_{\alpha \bar{\beta}}$ of the intrinsic torsion encodes the signature of the Levi form of $(\ul{H}, \ul{J})$, while the $\mc{G}_{-1}^{1,1}$-component $\breve{\tau}_{\alpha \beta}$ of the intrinsic torsion encodes the partial integrability of $(\ul{H}, \ul{J})$. As a direct consequence, we obtain the following three propositions:

\begin{prop}\label{prop:twist_al_CR4}
Let $(\mc{M}, g, N, K)$ be a nearly Robinson manifold with congruence of null geodesics $\mc{K}$ and almost CR leaf space $(\ul{\mc{M}},\ul{H}, \ul{J})$. The following statements are equivalent:
\begin{enumerate}
\item the intrinsic torsion has non-degenerate $\slashed{\mc{G}}_{-1}^{1}$-component  $\breve{\tau}_{i j}$;
\item $\mc{K}$ is maximally twisting;
\item $(\ul{H},\ul{J})$ is contact.
\end{enumerate}
\end{prop}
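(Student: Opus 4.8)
The plan is to prove the three equivalences by relating the twist $\breve{\tau}_{ij}$ of $\mc{K}$ to the Levi form $\ul{\bm{\Lv}}$ of $\ul{H}$, using the two identities recorded just above the statement. Since $(N,K)$ is nearly Robinson, Proposition \ref{prop:N_pres} guarantees that $\mc{K}$ is geodesic, so by Remark \ref{rem:ind_str_opt} the screen bundle $H_K$ descends to the rank-$2m$ distribution $\ul{H}$ on $\ul{\mc{M}}$, and the $\slashed{\mc{G}}_{-1}^{1}$-component of the intrinsic torsion is precisely the twist $\breve{\tau}_{ij}$ of $\mc{K}$ (cf.\ Table \ref{tab-geom-intors}). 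As $\breve{\tau}_{ij}$ is a skew form on the $2m$-dimensional fibres, its rank is even and bounded by $2m$, and the congruence is maximally twisting exactly when this rank attains $2m$, i.e.\ when $\breve{\tau}_{ij}$ is non-degenerate. This already gives the equivalence of (1) and (2), which is essentially a matter of definition.

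For the equivalence with (3), I would pass to the CR leaf space via Proposition \ref{prop:Rob->CR}. Restricting the structure equation \eqref{eq:al_CR_structure} modulo $\ul{\theta}^0$ identifies the Levi form of the distribution as the skew $2$-form
\begin{align*}
\ul{\bm{\Lv}} = \i \ul{\msf{h}}{}_{\alpha \bar{\beta}} \, \ul{\theta}^\alpha \wedge \overline{\ul{\theta}}{}^{\bar{\beta}} + \ul{\Lv}{}_{\alpha \beta} \, \ul{\theta}^\alpha \wedge \ul{\theta}^\beta + \ul{\Lv}{}_{\bar{\alpha} \bar{\beta}} \, \overline{\ul{\theta}}{}^{\bar{\alpha}} \wedge \overline{\ul{\theta}}{}^{\bar{\beta}} \, ,
\end{align*}
whose rank on $\ul{H}$ is maximal, namely $2m$, precisely when $(\ul{H}, \ul{J})$ is contact. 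The identities $\ul{\msf{h}}{}_{\alpha \bar{\beta}} = \tau{}_{\gamma \bar{\delta}} \psi{}_{\alpha}{}^{\gamma} \overline{\psi}{}_{\bar{\beta}}{}^{\bar{\delta}}$ and $2 \ul{\Lv}{}_{\alpha \beta} = \tau{}_{\gamma \delta} \psi{}_{\alpha}{}^{\gamma} \psi{}_{\beta}{}^{\delta}$ established from \eqref{eq:CR->Rob_cof} then exhibit $\ul{\bm{\Lv}}$ as the image of the descended twist under the coframe change $\theta^\alpha = \psi_\beta{}^\alpha \varpi^* \ul{\theta}^\beta \pmod{\ul{\theta}^0}$, with $\psi_\alpha{}^\beta$ taking values in $\mbf{GL}(m,\C)$. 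Because this change is $\ul{J}$-$J$-linear and invertible at every point, it preserves the rank of the skew $2$-form, so that $\breve{\tau}_{ij}$ is non-degenerate if and only if $\ul{\bm{\Lv}}$ has maximal rank; that is, (2) holds iff (3) holds.

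The one point requiring care is the rank-preservation step, since the two identities carry different normalising factors (an $\i$ on the $(1,1)$-block and a $2$ on the $(2,0)$-block), so that $\ul{\bm{\Lv}}$ is not literally a block-by-block congruence transform of $\breve{\tau}_{ij}$. I would resolve this cleanly by observing that \eqref{eq:CR->Rob_cof} gives $\kappa = 2 \varpi^* \ul{\theta}^0$, whence $\d \kappa = 2 \varpi^* \d \ul{\theta}^0$; as $\mc{K}$ is geodesic, the screen-bundle part of $\d \kappa$ equals twice the twist, so under the identification $H_K \cong \ul{H}$ the two-form $\breve{\tau}_{ij}$ is an overall nonzero constant multiple of $\ul{\bm{\Lv}}$. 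This makes the equality of ranks manifest and shows that the differing factors in the component identities are mere bookkeeping artefacts of the skew-symmetrisation and of the $\i$-convention for the Hermitian Levi form. Combining this with the first paragraph yields (1)$\Leftrightarrow$(2)$\Leftrightarrow$(3).
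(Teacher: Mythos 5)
Your proof is correct and follows essentially the same route as the paper, which derives this proposition as a direct consequence of the identities $\ul{\msf{h}}{}_{\alpha \bar{\beta}} = \tau{}_{\gamma \bar{\delta}} \psi{}_{\alpha}{}^{\gamma} \overline{\psi}{}_{\bar{\beta}}{}^{\bar{\delta}}$ and $2\,\ul{\Lv}{}_{\alpha \beta} = \tau{}_{\gamma \delta} \psi{}_{\alpha}{}^{\gamma} \psi{}_{\beta}{}^{\delta}$ obtained from \eqref{eq:CR->Rob_cof}, together with the observation that an invertible coframe change preserves the rank of the twist. Your extra remark that $\d\kappa = 2\varpi^*\d\ul{\theta}^0$ identifies $\breve{\tau}_{ij}$ with a nonzero constant multiple of the Levi form of $\ul{H}$ is a clean way to dispose of the normalising factors, which the paper leaves implicit.
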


\begin{prop}\label{prop:twist_al_CR1}
Let $(\mc{M}, g, N, K)$ be a nearly Robinson manifold with congruence of null geodesics $\mc{K}$ and almost CR leaf space $(\ul{\mc{M}},\ul{H}, \ul{J})$. The following statements are equivalent:
\begin{enumerate}
\item the intrinsic torsion is also a section of $\slashed{\mc{G}}_{-1}^{1,1}$, i.e.\
\begin{align*}
\breve{\gamma}_i = \breve{\tau}_{\alpha \beta} = \breve{\sigma}_{\alpha \beta} = \breve{\zeta}_{\alpha \beta} & = 0 \, ;
\end{align*}
\item the twist of $\mc{K}$ commutes with the screen bundle complex structure;
\item $(\ul{H},\ul{J})$ is partially integrable.
\end{enumerate}
\end{prop}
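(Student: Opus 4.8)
The plan is to reduce all three conditions to the single tensorial equation $\breve{\tau}_{\alpha\beta}=0$, the vanishing of the $(2,0)$-part of the twist, and then read off the two equivalences from results already in place. Since $(N,K)$ is nearly Robinson, Proposition \ref{prop:N_pres} applies and forces $\breve{\gamma}_i=0$, $\breve{\sigma}_{\alpha\beta}=0$ and $\breve{\zeta}_{\alpha\beta}=2\i\,\breve{\tau}_{\alpha\beta}$; in particular $\mc{K}$ is geodesic. Consequently the four vanishing conditions listed in (1) are \emph{not} independent: under the nearly Robinson hypotheses $\breve{\gamma}_i=\breve{\sigma}_{\alpha\beta}=0$ hold automatically, while $\breve{\zeta}_{\alpha\beta}=0\Leftrightarrow\breve{\tau}_{\alpha\beta}=0$ by the proportionality $\breve{\zeta}_{\alpha\beta}=2\i\,\breve{\tau}_{\alpha\beta}$. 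Hence (1) is equivalent to the assertion that the intrinsic torsion is a section of $\slashed{\mc{G}}_{-1}^{1,1}$, i.e.\ to $\breve{\tau}_{\alpha\beta}=0$ alone. I would make this reduction explicit at the outset, so that only the two implications $(1)\Leftrightarrow(2)$ and $(1)\Leftrightarrow(3)$ remain, the equivalence $(2)\Leftrightarrow(3)$ then following by transitivity.

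For $(1)\Leftrightarrow(2)$ I would simply invoke Lemma \ref{lem:opt_J}. As $\mc{K}$ is geodesic, the intrinsic torsion is a section of $\slashed{\mc{G}}_{-2}^0$, so the hypotheses of that lemma are met; its first item states precisely that $\mathring{T}$ is a section of $\slashed{\mc{G}}_{-1}^{1,1}$ (that is, $\breve{\tau}_{\alpha\beta}=0$) if and only if $J_{[i}{}^k\breve{\tau}_{j]k}=0$, i.e.\ the twist $\breve{\tau}_{ij}$ commutes with the screen bundle complex structure $J_i{}^j$. Together with the reduction above this is exactly $(1)\Leftrightarrow(2)$.

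For $(1)\Leftrightarrow(3)$ I would use the identification, established in the paragraph preceding the proposition, between the twist of $\mc{K}$ and the Levi forms of the leaf space. Expressing a Robinson coframe in terms of the CR coframe via \eqref{eq:CR->Rob_cof} yields
\begin{align*}
2\,\ul{\Lv}{}_{\alpha\beta} & = \tau_{\gamma\delta}\,\psi_\alpha{}^\gamma\psi_\beta{}^\delta \, ,
\end{align*}
where $\tau_{\gamma\delta}$ is the $(2,0)$-part of the twist and $\psi_\alpha{}^\gamma$ takes values in $\mbf{GL}(m,\C)$. Because $\psi_\alpha{}^\gamma$ is invertible at each point, the linear map $\tau_{\gamma\delta}\mapsto\ul{\Lv}{}_{\alpha\beta}$ is an isomorphism, so $\ul{\Lv}{}_{\alpha\beta}=0$ if and only if $\breve{\tau}_{\alpha\beta}=0$. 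By the definitions recalled in Section \ref{sec:CR}, the vanishing of $\ul{\Lv}{}_{\alpha\beta}$ is precisely partial integrability of $(\ul{H},\ul{J})$, which gives $(1)\Leftrightarrow(3)$ and closes the chain of equivalences.

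The argument is essentially bookkeeping, drawing only on Lemma \ref{lem:opt_J} and the displayed twist--Levi-form identity; no genuinely new computation is required. The one delicate point — and the closest thing to an obstacle — is tracking the boost weights and breve decorations when passing between tensors on $\mc{M}$ and their counterparts on $\ul{\mc{M}}$, and checking that the identification of $\tau_{\gamma\delta}$ with $\breve{\tau}_{\alpha\beta}$ (up to the $\mc{E}(1)$-weight carried by the Robinson $3$-form) is consistent, so that invertibility of $\psi_\alpha{}^\gamma$ genuinely transfers the vanishing of $\ul{\Lv}{}_{\alpha\beta}$ to the vanishing of the $(2,0)$-twist component that figures in condition (1).
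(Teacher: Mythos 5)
Your proposal is correct and follows exactly the route the paper intends: the paper derives this proposition "as a direct consequence" of the displayed identity $2\,\ul{\Lv}{}_{\alpha\beta}=\tau_{\gamma\delta}\,\psi_\alpha{}^\gamma\psi_\beta{}^\delta$ together with Lemma \ref{lem:opt_J}, and your reduction of condition (1) to $\breve{\tau}_{\alpha\beta}=0$ via Proposition \ref{prop:N_pres} is precisely the bookkeeping the paper leaves implicit. Nothing is missing.
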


\begin{prop}\label{prop:nRob_pi_ctct}
Let $(\mc{M}, g, N, K)$ be a nearly Robinson manifold with congruence of null geodesics $\mc{K}$ and almost CR leaf space $(\ul{\mc{M}},\ul{H}, \ul{J})$. The following statements are equivalent:
\begin{enumerate}
\item the intrinsic torsion is also a section of $\slashed{\mc{G}}_{-1}^{1,1} \cap \slashed{\mc{G}}_{-1}^{3,0}$ with non-degenerate $\mc{G}_{-1}^{1,0} \oplus \mc{G}_{-1}^{1,2}$-component, i.e.\
\begin{align*}
\breve{\gamma}_i = \breve{\tau}_{\alpha \beta} = \breve{\sigma}_{\alpha \beta} = \breve{\zeta}_{\alpha \beta} & = 0 \, , & &  \mbox{with non-degenerate $\breve{\tau}_{\alpha \bar{\beta}}$} \, ;
\end{align*}
\item $\mc{K}$ is maximally twisting, and the twist commutes with the screen bundle complex structure;
\item $(\ul{H},\ul{J})$ is partially integrable and contact, and is thus endowed with a subconformal contact structure $\mbf{c}_{\ul{H},\ul{J}}$.
\end{enumerate}
\end{prop}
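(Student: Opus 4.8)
The plan is to deduce this proposition by conjoining the two preceding results, Propositions \ref{prop:twist_al_CR4} and \ref{prop:twist_al_CR1}, the only subtlety being to match up the various non-degeneracy conditions. Since we work within the nearly Robinson setting, Proposition \ref{prop:N_pres} already supplies $\breve{\gamma}_i = \breve{\sigma}_{\alpha\beta} = 0$ together with $\breve{\zeta}_{\alpha\beta} = 2\i\,\breve{\tau}_{\alpha\beta}$. Hence membership in $\slashed{\mc{G}}_{-1}^{1,1} \cap \slashed{\mc{G}}_{-1}^{3,0}$, which is the requirement $\breve{\tau}_{\alpha\beta} = \breve{\zeta}_{\alpha\beta} = 0$, collapses to the single condition $\breve{\tau}_{\alpha\beta} = 0$, because $\breve{\zeta}_{\alpha\beta}$ is already a multiple of $\breve{\tau}_{\alpha\beta}$. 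This accounts for the stated form of condition (1).

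First I would establish (1) $\iff$ (2). By Lemma \ref{lem:opt_J} the vanishing $\breve{\tau}_{\alpha\beta} = 0$ says precisely that the twist $\breve{\tau}_{ij}$ commutes with the screen bundle complex structure $J_i{}^j$, i.e.\ is of type $(1,1)$. Under this hypothesis the full twist coincides with its $(1,1)$-part $\breve{\tau}_{\alpha\bar\beta}$, so that the rank of $\breve{\tau}_{ij}$ as a real $2$-form is twice the rank of the Hermitian form $\breve{\tau}_{\alpha\bar\beta}$. Thus non-degeneracy of $\breve{\tau}_{\alpha\bar\beta}$ is equivalent to $\breve{\tau}_{ij}$ having maximal rank $2m$, which by Proposition \ref{prop:twist_al_CR4} is the statement that $\mc{K}$ is maximally twisting; combined with the commuting condition this yields (2).

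For (1) $\iff$ (3), I would invoke the relations derived above,
\[
\ul{\msf{h}}_{\alpha\bar\beta} = \tau_{\gamma\bar\delta}\,\psi_\alpha{}^\gamma\,\overline{\psi}_{\bar\beta}{}^{\bar\delta}, \qquad 2\,\ul{\Lv}_{\alpha\beta} = \tau_{\gamma\delta}\,\psi_\alpha{}^\gamma\,\psi_\beta{}^\delta,
\]
where $\psi_\alpha{}^\beta$ takes values in $\mbf{GL}(m,\C)$ and is thus pointwise invertible. Invertibility of $\psi$ immediately gives $\breve{\tau}_{\alpha\beta} = 0 \iff \ul{\Lv}_{\alpha\beta} = 0$, which by Proposition \ref{prop:twist_al_CR1} is partial integrability of $(\ul{H},\ul{J})$, and it gives that $\breve{\tau}_{\alpha\bar\beta}$ is non-degenerate if and only if the Levi form $\ul{\msf{h}}_{\alpha\bar\beta}$ is. Inserting $\ul{\Lv}_{\alpha\beta}=0$ into the structure equations \eqref{eq:al_CR_structure} reduces $\d\ul{\theta}^0$ to $\i\,\ul{\msf{h}}_{\alpha\bar\beta}\,\ul{\theta}^\alpha\wedge\overline{\ul{\theta}}^{\bar\beta}$ modulo $\ul{\theta}^0$, so that $\ul{\theta}^0\wedge(\d\ul{\theta}^0)^m$ is non-vanishing exactly when $\ul{\msf{h}}_{\alpha\bar\beta}$ is non-degenerate, i.e.\ when $(\ul{H},\ul{J})$ is contact. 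The resulting subconformal contact structure $\ul{\mbf{c}}_{\ul{H},\ul{J}}$ is then furnished directly by the discussion of partially integrable contact almost CR manifolds.

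No genuine analytic obstacle arises; the proof is a matter of assembling the two earlier propositions. The one place demanding care is the bookkeeping of non-degeneracy: one must check that, after $\breve{\tau}_{\alpha\beta}$ (equivalently $\ul{\Lv}_{\alpha\beta}$) has been set to zero, non-degeneracy of the single Hermitian tensor $\breve{\tau}_{\alpha\bar\beta}$ faithfully encodes both the maximal-rank condition on $\breve{\tau}_{ij}$ and the contact condition on $(\ul{H},\ul{J})$, with no degeneracy introduced by the invertible frame change $\psi_\alpha{}^\beta$.
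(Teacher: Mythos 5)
Your proof is correct and follows exactly the route the paper intends: the proposition is stated there as a direct consequence of the displayed relations $\ul{\msf{h}}_{\alpha\bar\beta} = \tau_{\gamma\bar\delta}\psi_\alpha{}^\gamma\overline{\psi}_{\bar\beta}{}^{\bar\delta}$ and $2\,\ul{\Lv}_{\alpha\beta} = \tau_{\gamma\delta}\psi_\alpha{}^\gamma\psi_\beta{}^\delta$ together with Propositions \ref{prop:N_pres}, \ref{prop:twist_al_CR4} and \ref{prop:twist_al_CR1}, and your bookkeeping of the non-degeneracy conditions (including the collapse of $\breve{\zeta}_{\alpha\beta}=0$ onto $\breve{\tau}_{\alpha\beta}=0$ via the nearly Robinson relation) is exactly what is needed.
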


\begin{rem}
 The metrics in the subconformal structure $\ul{\mbf{c}}_{\ul{H},\ul{J}}$ of Proposition \ref{prop:nRob_pi_ctct} are in one-to-one correspondence with optical vector fields $k$ such that $\mathsterling_k \kappa = 0$ where $\kappa = g( k, \cdot)$.
\end{rem}

\begin{rem}
Special cases of the nearly Robinson manifolds given in Propositions \ref{prop:twist_al_CR4}, \ref{prop:twist_al_CR1} and \ref{prop:nRob_pi_ctct} are those for which $\breve{\tau}^\circ_{\alpha \bar{\beta}} = 0$, i.e.\ the twist determines the almost Robinson structure --- see Section \ref{sec:tw-ind_al_Rob}.
\end{rem}

As has already been treated in \cite{Fino2020}, the abscence of shear induces a subconformal structure $\mbf{c}_{\ul{H}}$ on $\ul{H}$, in which metrics are in one-to-one correspondence with metrics in $[g]_{n.e.}$ --- see Remark \ref{rem:ind_str_opt}. This subconformal structure is however \emph{not} compatible with the complex structure $\ul{J}$ in general. Combining this fact with Proposition \ref{prop:nRob_pi_ctct} yields the following result:
\begin{prop}\label{prop:Rob->pi_cont_CR}
Let $(\mc{M}, g, N, K)$ be a nearly Robinson manifold with congruence of null geodesics $\mc{K}$ and almost CR leaf space $(\ul{\mc{M}},\ul{H}, \ul{J})$. The following statements are equivalent:
\begin{enumerate}
\item the intrinsic torsion is also a section of $\slashed{\mc{G}}_{-1}^{1,1} \cap \slashed{\mc{G}}_{-1}^{2,0} \cap \slashed{\mc{G}}_{-1}^{3,0}$   with non-degenerate $\mc{G}_{-1}^{1,0} \oplus \mc{G}_{-1}^{1,2}$-component, i.e.\
\begin{align*}
\breve{\gamma}_i = \breve{\tau}_{\alpha \beta} = \breve{\sigma}_{i j} = \breve{\zeta}_{\alpha \beta} & = 0 \, , & &  \mbox{with non-degenerate $\breve{\tau}_{\alpha \bar{\beta}}$} \, ;
\end{align*}
\item $\mc{K}$ is maximally twisting and non-shearing, and the twist commutes with the screen bundle complex structure;
\item $(\ul{H},\ul{J})$ is partially integrable and contact, and thus equipped with a subconformal structure $\ul{\mbf{c}}_{\ul{H},\ul{J}}$, and also inherits a subconformal structure $\ul{\mbf{c}}_{\ul{H}}$ from $[g]_{n.e.}$.
\end{enumerate}
\end{prop}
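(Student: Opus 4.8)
The plan is to reduce the whole statement to the already-established Proposition~\ref{prop:nRob_pi_ctct} together with the non-shearing dictionary of Remark~\ref{rem:ind_str_opt}, using the nearly Robinson hypothesis to translate freely between the relevant intrinsic-torsion components. First I would unwind the standing conditions: since $(N,K)$ is nearly Robinson, Proposition~\ref{prop:N_pres} already supplies $\breve{\gamma}_i = \breve{\sigma}_{\alpha \beta} = 0$ together with the relation $\breve{\zeta}_{\alpha \beta} = 2\i \breve{\tau}_{\alpha \beta}$. This relation is the crucial simplification, as it shows that under the standing hypothesis the conditions $\breve{\tau}_{\alpha \beta}=0$ (membership in $\slashed{\mc{G}}_{-1}^{1,1}$) and $\breve{\zeta}_{\alpha \beta}=0$ (membership in $\slashed{\mc{G}}_{-1}^{3,0}$) are equivalent. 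Consequently condition~(1) collapses to the three genuine requirements $\breve{\tau}_{\alpha \beta}=0$, $\breve{\sigma}_{\alpha \bar{\beta}}=0$ (membership in $\slashed{\mc{G}}_{-1}^{2,0}$), and non-degeneracy of $\breve{\tau}_{\alpha \bar{\beta}}$.

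Next I would establish the equivalence of~(1) and~(2). Because $\breve{\sigma}_{\alpha \beta}=0$ holds automatically, the vanishing of the full (symmetric tracefree) shear $\breve{\sigma}_{ij}$---that is, the non-shearing condition---is equivalent to the single remaining requirement $\breve{\sigma}_{\alpha \bar{\beta}}=0$, since $\breve{\sigma}_{ij}$ decomposes into the $(2,0)\oplus(0,2)$-piece $\breve{\sigma}_{\alpha \beta}$ and the tracefree $(1,1)$-piece $\breve{\sigma}_{\alpha \bar{\beta}}$. By Lemma~\ref{lem:opt_J}, $\breve{\tau}_{\alpha \beta}=0$ is exactly the statement that the twist $\breve{\tau}_{ij}$ commutes with the screen bundle complex structure $J_i{}^j$; in that case the twist reduces to its $(1,1)$-part $\breve{\tau}_{\alpha \bar{\beta}}$, so that maximal twisting (non-degeneracy of $\breve{\tau}_{ij}$) is precisely non-degeneracy of $\breve{\tau}_{\alpha \bar{\beta}}$, i.e.\ of the $\mc{G}_{-1}^{1,0}\oplus\mc{G}_{-1}^{1,2}$-component. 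Assembling these identifications yields (1)$\Leftrightarrow$(2).

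Finally I would deduce the equivalence with~(3). The partial integrability and contact portion is precisely Proposition~\ref{prop:nRob_pi_ctct}: maximal twisting together with the twist commuting with $J$ is equivalent to $(\ul{H},\ul{J})$ being partially integrable and contact, with its associated subconformal contact structure $\ul{\mbf{c}}_{\ul{H},\ul{J}}$. It remains to fold in the non-shearing condition. Since $\mc{K}$ is geodesic (automatic for a nearly Robinson structure) and now also non-shearing, the third bullet of Remark~\ref{rem:ind_str_opt} produces a bundle conformal structure $\ul{\mbf{c}}_{\ul{H}}$ on $(\ul{\mc{M}}, \ul{H})$ inherited from $[g]_{n.e.}$, with metrics in one-to-one correspondence with metrics in $[g]_{n.e.}$. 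I would emphasise that this $\ul{\mbf{c}}_{\ul{H}}$ is in general distinct from $\ul{\mbf{c}}_{\ul{H},\ul{J}}$, as it need not be compatible with $\ul{J}$.

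There is no genuinely hard step here, which is why the result follows by combining earlier propositions; the only point requiring care is the bookkeeping of the $\U(m)$-irreducible decompositions of the shear and twist, so that the intrinsic-torsion conditions in~(1) match the geometric statements in~(2) and the almost CR statements in~(3) without the redundancy introduced by the nearly Robinson relation $\breve{\zeta}_{\alpha \beta} = 2\i \breve{\tau}_{\alpha \beta}$.
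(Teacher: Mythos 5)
Your proposal is correct and follows essentially the same route as the paper, which derives this proposition by combining Proposition \ref{prop:nRob_pi_ctct} with the fact (from Remark \ref{rem:ind_str_opt}) that the absence of shear lets the screen bundle conformal structure descend to $\ul{\mbf{c}}_{\ul{H}}$ on the leaf space. Your explicit bookkeeping of the $\U(m)$-decomposition of the shear and twist, and of the redundancy coming from the nearly Robinson relation $\breve{\zeta}_{\alpha\beta}=2\i\breve{\tau}_{\alpha\beta}$, simply spells out details the paper leaves implicit.
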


\begin{rem}\label{rem:two_cf_str}
The subconformal structures $\ul{\mbf{c}}_{\ul{H},\ul{J}}$ and $\ul{\mbf{c}}_{\ul{H}}$ are distinct in general. They can however be related in the following way. Since $\mc{K}$ is maximally twisting and non-shearing, we know from \cite{Fino2020} that there exists a unique optical vector field $k^a$ such that the twist of $k^a$ is normalised to $\tau_{i j} \tau^{i j} = 2m$ for any metric $g$ in $[g]_{n.e.}$.  With this normalisation, for each choice of metric $g$ in $[g]_{n.e.}$, the Levi form of $(\ul{H},\ul{J})$ is related to the twist of $k$ by
\begin{align}\label{eq:Levi-conf}
 \ul{\msf{h}}_{\alpha \bar{\beta}} = \ul{h}_{\alpha \bar{\beta}} + \ul{\tau}^\circ_{\alpha \bar{\beta}} \, .
\end{align}
We thus see that $\ul{\msf{h}}_{\alpha \bar{\beta}}$ is a deformation of the metric $\ul{h}_{\alpha \bar{\beta}}$ by the tracefree part of the twist $\ul{\tau}^\circ_{\alpha \bar{\beta}}$. In particular, $\ul{\mbf{c}}_{\ul{H},\ul{J}}$ and $\ul{\mbf{c}}_{\ul{H}}$ coincide if and only if $\ul{\tau}^\circ_{\alpha \bar{\beta}} = 0$, i.e.\ the nearly Robinson structure is twist-induced. In Section \ref{sec:tw-ind_al_Rob}, we shall focus on a special case of the aforementioned results where the only non-vanishing of the twist lies in $\mc{G}_{-1}^{1,0}$.
\end{rem}

\begin{rem}\label{rem:lift-choice}
Proposition \ref{prop:lift} allows us to construct an almost Robinson manifold $(\mc{M}, g, N, K)$ with prescribed intrinsic torsion from a chosen almost CR manifold $(\ul{\mc{M}}, \ul{H}, \ul{J})$. While $(N,K)$, including the twist of the congruence $\mc{K}$, is determined by $(\ul{H}, \ul{J})$, there is more freedom as to the choice of Hermitian matrix $h_{\alpha \bar{\beta}}$, which will impact the $(1,1)$-part of the shear $\breve{\sigma}_{\alpha \bar{\beta}}$ and the expansion $\breve{\epsilon}$ of $\mc{K}$. In fact, using \eqref{eq:CR->Rob_cof}, we compute
\begin{align*}
\frac{\epsilon}{m} h_{\alpha \bar{\beta}} + \sigma_{\alpha \bar{\beta}} = h_{\gamma \bar{\beta}} \dot{\psi}_{\delta}{}^{\gamma} (\psi^{-1})_{\alpha}{}^{\delta} + h_{\alpha \bar{\delta}} \dot{\ol{\psi}}_{\bar{\gamma}}{}^{\bar{\delta}} (\ol{\psi}{}^{-1})_{\bar{\beta}}{}^{\bar{\gamma}} \, ,
\end{align*}
where $\dot{\psi}_{\delta}{}^{\gamma} := \mathsterling_k \psi_{\delta}{}^{\gamma}$ for the optical vector field $k = g^{-1} (\kappa ,\cdot)$. In particular, we interpret the $\mc{G}_{-1}^{2,0}$-component $\breve{\sigma}_{\alpha \bar{\beta}}$ of the intrinsic torsion as the infinitesimal obstruction to  $(\ul{\theta}^{\alpha})$ being unitary with respect to $h_{\alpha \bar{\beta}}$.

Various considerations may dictate the choice of screen bundle Hermitian form $h_{\alpha \bar{\beta}}$. For $\mc{K}$ to be non-shearing, we may set $h_{\alpha\bar{\beta}} = \e^{2 \varphi} \ul{h}_{\alpha\bar{\beta}}$ for some Hermitian form $\ul{h}_{\alpha \bar{\beta}}$ on $(\ul{\mc{M}}, \ul{H}, \ul{J})$ and smooth function $\varphi$ on $\mc{M}$. If $\varphi$ is a function on $\ul{\mc{M}}$, $\mc{K}$ is non-expanding too. There are two extreme cases to consider:
\begin{itemize}
\item If $\ul{\msf{h}}_{\alpha \bar{\beta}}$ is positive-definite, we \emph{may} take $\ul{h}_{\alpha\bar{\beta}} = \ul{\msf{h}}_{\alpha \bar{\beta}}$. This is the case if $(\ul{H}, \ul{J})$ is partially integrable and contact: the resulting $(N,K)$ is then said to be \emph{twist-induced}, of which we shall say more in Section \ref{sec:tw-ind_al_Rob}.
\item If the almost CR structure is totally degenerate, $\mc{K}$ is also non-twisting, and the resulting nearly Robinson manifold $(\mc{M},g,N,K)$ is either of \emph{Kundt type} or of \emph{Robinson--Trautman type} --- see Sections \ref{sec:alRob_Kundt} and \ref{sec:alRob_RT}.
\end{itemize}
There are further, intermediate, situations where the Levi form of $(\ul{H},\ul{J})$ is degenerate but not identically zero. This allows for screen bundle metrics to be constructed partly from the Levi form.
\end{rem}

\subsection{Conditions on the Robinson $3$-forms}\label{sec:other}
The purpose of this section is to highlight the fact that for a given almost Robinson manifold $(\mc{M}, g, N, K)$, conditions on Robinson $3$-forms do not necessarily entail that $(N, K)$ is nearly Robinson. The next two propositions illustrate the point.
\begin{prop}\label{prop-Rob-3-form-const}
Let $(\mc{M}, g, N, K)$ be an almost Robinson manifold with congruence of null curves $\mc{K}$  with leaf space $\ul{\mc{M}}$. The following statements are equivalent:
\begin{enumerate}
\item any Robinson $3$-form $\rho_{abc}$ is preserved along $K$, i.e.\ $\mathsterling_k \rho_{abc} = f \,  \rho_{abc}$ for some smooth function $f$, and any optical vector field $k$. \label{item:Rob-3-form-const1}
\item the intrinsic torsion is a section of
$(\slashed{\mc{G}}_{-1}^{1 \times 3})_{[2\i:1]} \cap \slashed{\mc{G}}_{-1}^{2,1}$, \label{item:Rob-3-form-const2} i.e.\
\begin{align*}
\breve{\gamma}_i = \breve{\sigma}_{\alpha \bar{\beta}} & = 0 \, , & \breve{\zeta}_{\alpha \beta} & =  - 2 \i \breve{\tau}_{\alpha \beta} \, .
\end{align*}
\item any Robinson $3$-form induces a $3$-form on the leaf space $(\ul{\mc{M}},\ul{H})$.\label{item:Rob-3-form-const3}
\end{enumerate}
If any of these conditions holds, $\mc{K}$ is geodesic and its shear anticommutes with the screen bundle complex structure.
\end{prop}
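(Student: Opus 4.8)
The plan is to follow the template of Proposition \ref{prop:N_pres}, proving the chain $(1)\Leftrightarrow(2)$ by a direct frame computation, $(1)\Leftrightarrow(3)$ by a descent argument, and then reading off the concluding sentence from Lemma \ref{lem:opt_J}. For $(1)\Leftrightarrow(2)$, the key simplification is that $k^a\rho_{abc}=0$, established in Section \ref{sec:Robinson-3}, so Cartan's formula collapses to $\mathsterling_k\rho = k\hook\,\d\rho$, and it suffices to analyse $4\,k^a\nabla_{[a}\rho_{bcd]}$ using the component list for $\nabla_a\rho_{bcd}$ recorded after \eqref{eq:S-H2_b}. Fixing an adapted frame $(\ell,e_\alpha,e_{\bar\alpha},k)$ with $\kappa=g(k,\cdot)$ and writing $\rho_{abc}=3\kappa_{[a}\omega_{bc]}$, I would compute the frame components of $\mathsterling_k\rho$ and demand that the result be proportional to $\rho$, i.e.\ of the shape $f\,\kappa\wedge\omega$. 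Since $\rho$ has only components with a single $\kappa$-leg and two screen legs, the components of $\mathsterling_k\rho$ transverse to $\mathrm{span}(\rho)$ must vanish: the pieces carrying an $\ell$-leg force $\breve\gamma_i=0$; the $(2,0)$- and $(0,2)$-parts of the $\kappa$-wedge term force the relation $\breve\zeta_{\alpha\beta}=-2\i\,\breve\tau_{\alpha\beta}$; and the tracefree $(1,1)$-part forces $\breve\sigma_{\alpha\bar\beta}=0$. These three conditions are exactly membership in $(\slashed{\mc{G}}_{-1}^{1\times3})_{[2\i:1]}\cap\slashed{\mc{G}}_{-1}^{2,0}$, matching condition $(2)$.

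For $(1)\Leftrightarrow(3)$, I would argue as in \cite{Nurowski2002}. Because $k\hook\rho=0$ always holds, $\rho$ is horizontal, and a horizontal form descends to the leaf space $\ul{\mc{M}}$ precisely when some representative in its scaling class is $\mc{K}$-invariant. Rescaling the optical $1$-form $\kappa\mapsto\e^u\kappa$ rescales $\rho\mapsto\e^u\rho$ by Remark \ref{rem:Robinson_scale}, so the hypothesis $\mathsterling_k\rho=f\rho$ can be upgraded to $\mathsterling_k(\e^u\rho)=0$ by solving $k(u)+f=0$ along the flow of $k$. The resulting invariant horizontal $3$-form pushes down to a $3$-form on $(\ul{\mc{M}},\ul{H})$ of the form $\ul{\theta}^0\wedge\ul\omega$, and conversely any such descended form pulls back to a Robinson $3$-form preserved along $K$.

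The concluding sentence is then immediate from the equivalences. Condition $(2)$ contains $\breve\gamma_i=0$, which by Table \ref{tab-geom-intors} is equivalent to $\mc{K}$ being geodesic, and it contains $\breve\sigma_{\alpha\bar\beta}=0$, which by the last bullet of Lemma \ref{lem:opt_J} is equivalent to the shear $\breve\sigma_{ij}$ anticommuting with the screen-bundle complex structure $J_i{}^j$, i.e.\ $J_{[i}{}^k\breve\sigma_{j]k}=0$. Thus whenever any of $(1)$, $(2)$, $(3)$ holds, $\mc{K}$ is geodesic and its shear anticommutes with $J$.

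The main obstacle I anticipate is the bookkeeping in $(1)\Leftrightarrow(2)$: correctly isolating the irreducible pieces of $\mathsterling_k\rho$ and, in particular, pinning down the \emph{sign} in $\breve\zeta_{\alpha\beta}=-2\i\,\breve\tau_{\alpha\beta}$ together with the fact that it is the $(1,1)$-component $\breve\sigma_{\alpha\bar\beta}$, rather than the $(2,0)$-component $\breve\sigma_{\alpha\beta}$, that is forced to vanish. This is precisely what separates the present weaker ``$3$-form'' condition from the nearly Robinson condition of Proposition \ref{prop:N_pres}, where the opposite sign and the complementary shear component appear; a sign slip here would silently yield the wrong proposition, so this step warrants an independent check (e.g.\ via the symbolic verification already used in the proof of Theorem \ref{thm:intors-rob}).
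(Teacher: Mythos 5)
Your proposal is correct and follows essentially the same route as the paper: the paper likewise writes $\mathsterling_k \rho_{abc} = k^d\nabla_d\rho_{abc} - 3k^d\nabla_{[a}\rho_{bc]d}$ (equivalent to your Cartan-formula reduction, since $k\hook\rho=0$), extracts the three conditions by contracting with $\delta^a_\alpha\delta^b_\beta\delta^c_{\bar\gamma}$, $\delta^a_\alpha\delta^b_\beta\ell^c$ and $\delta^a_\alpha\delta^b_{\bar\beta}\ell^c$, obtains (3) from the geometric meaning of (1), and concludes with Lemma \ref{lem:opt_J}. One bookkeeping correction: it is the pure-screen (three-screen-leg) components, $(\mathsterling_k\rho)_{\alpha\beta\bar\gamma} \propto \gamma_{[\alpha}h_{\beta]\bar\gamma}$, that force $\breve\gamma_i=0$, while the $\ell$-contracted ($\kappa$-wedge) components yield both $\breve\zeta_{\alpha\beta}=-2\i\,\breve\tau_{\alpha\beta}$ and $\breve\sigma_{\alpha\bar\beta}=0$, not the attribution you give. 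Your identification of the target bundle as $(\slashed{\mc{G}}_{-1}^{1\times3})_{[2\i:1]}\cap\slashed{\mc{G}}_{-1}^{2,0}$ is the one consistent with the displayed conditions and with the anticommuting-shear conclusion; the label $\slashed{\mc{G}}_{-1}^{2,1}$ appearing in item (2) of the statement is inconsistent with these and appears to be a typo.
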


\begin{proof}
Choose splitting operators $(\ell^a, \delta^a_{\alpha},  \delta^a_{\bar{\alpha}} , k^a)$ and let $\rho_{a b c}$ be the Robinson $3$-form  associated to the optical $1$-form $\kappa_a = g_{a b} k^b$. The condition that $\rho_{a b c}$ be preserved along $K$ gives
\begin{align*}
 k^d \nabla_d \rho_{a b c}   - 3  k^d \nabla_{[a} \rho_{b c] d}  & = f \,  \rho_{a b c} \, .
\end{align*}
Now, contracting with $\delta^a_{\alpha}  \delta^b_{\beta} \delta^c_{\bar{\gamma}}$, $\delta^a_{\alpha}  \delta^b_{\beta} \ell^c$ and $\delta^a_{\alpha}  \delta^b_{\bar{\beta}} \ell^c$ yields $\gamma_\alpha = 0$,  $\zeta_{\alpha \beta} = - 2 \i \tau_{\alpha \beta}$ and $\sigma_{\alpha \bar{\beta}} = 0$ respectively. This proves the equivalence between conditions \eqref{item:Rob-3-form-const1} and \eqref{item:Rob-3-form-const2}. The equivalence with condition \eqref{item:Rob-3-form-const3} follows from the geometric interpretation of condition \eqref{item:Rob-3-form-const1}. An application of Lemma \ref{lem:opt_J} completes the proof.
\end{proof}

\begin{prop}\label{prop:Rob_rec}
Let $(\mc{M}, g, N, K)$ be an almost Robinson manifold with congruence of null curves $\mc{K}$. The following statements are equivalent:
\begin{enumerate}
\item the intrinsic torsion is a section of $\slashed{\mc{G}}_{-1}^{3,0}$,  i.e.\ \label{item:Rob_rec1}
\begin{align*}
\breve{\gamma}_i = \breve{\zeta}_{\alpha \beta} = 0 \, .
\end{align*}
\item any Robinson $3$-form is recurrent along $K$.\label{item:Rob_rec2}
\item $N$ is parallel along $K$.\label{item:Rob_rec3}
\end{enumerate}
If any of these conditions holds, $\mc{K}$ is geodesic.
\end{prop}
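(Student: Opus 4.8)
The plan is to show \eqref{item:Rob_rec1} $\Leftrightarrow$ \eqref{item:Rob_rec3} and \eqref{item:Rob_rec1} $\Leftrightarrow$ \eqref{item:Rob_rec2} by expressing each condition through the intrinsic-torsion components $\breve{\gamma}_i$ and $\breve{\zeta}_{\alpha\beta}$ of \eqref{eq:S-H1_b}--\eqref{eq:S-H2_b}, working in an adapted frame $(\ell, e_\alpha, e_{\bar\alpha}, k)$ with dual coframe $(\kappa, \theta^\alpha, \overline{\theta}{}^{\bar\alpha}, \lambda)$ and recalling that $N = \mathrm{span}(k, e_{\bar\alpha})$ and $\overline{N} = \mathrm{span}(k, e_\alpha)$. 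The final assertion will then drop out of the identification of the geodesic condition with $\breve{\gamma}_i = 0$.

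For \eqref{item:Rob_rec1} $\Leftrightarrow$ \eqref{item:Rob_rec3}, note first that $N$ is parallel along $K$ if and only if $\overline{N}$ is, since $\nabla$ is real; so I would test the tensorial map $v \mapsto [\nabla_k v] \in {}^\C T\mc{M}/\overline{N}$ on the frame $\{k, e_\alpha\}$ of $\overline{N}$. Because $g(k,k)=0$ forces $\kappa(\nabla_k k)=0$, the vector $\nabla_k k$ already lies in $K^\perp$, and its $(0,1)$-components are $g(\nabla_k k, e_\delta) = (\nabla\kappa)^0{}_\delta = \gamma_\delta$; likewise, differentiating $\kappa_a e_\alpha^a = 0$ shows the $\ell$-component of $\nabla_k e_\alpha$ equals $-\gamma_\alpha$, while its $(0,1)$-components are $g(\nabla_k e_\alpha, e_\beta)$, which equal $\tfrac{1}{2\i}\zeta_{\alpha\beta}$ by the computation in the proof of Proposition \ref{prop:N_pres}. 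Hence $\nabla_k k, \nabla_k e_\alpha \in \overline{N}$ for all $\alpha$ precisely when $\gamma_\alpha = \zeta_{\alpha\beta} = 0$, that is, when the intrinsic torsion is a section of $\slashed{\mc{G}}_{-1}^{3,0}$.

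For \eqref{item:Rob_rec1} $\Leftrightarrow$ \eqref{item:Rob_rec2}, I would compute $\nabla_k \rho = (\nabla\rho)^0{}_{abc}$ from the components of $\nabla_a \rho_{bcd}$ listed in Section \ref{sec:Intrinsic_Torsion} and compare with $\rho_{abc}$ itself. Since $\kappa$ annihilates $K^\perp$, $k^a \rho_{abc} = 0$, and $\omega$ is of type $(1,1)$, the only non-vanishing frame components of $\rho$ are $\rho_{0\alpha\bar\beta} = \omega_{\alpha\bar\beta} = \i h_{\alpha\bar\beta}$ (one $\ell$-slot). Recurrence $\nabla_k\rho = f\rho$ therefore requires every component of $(\nabla\rho)^0$ lying over a vanishing component of $\rho$ to vanish; the operative ones are $(\nabla\rho)^{00}{}_{0\alpha} = -\i\gamma_\alpha$ and $(\nabla\rho)^0{}_{\alpha\beta 0} = \zeta_{\alpha\beta}$, forcing $\gamma_\alpha = \zeta_{\alpha\beta} = 0$ together with their conjugates. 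Conversely, if $\gamma_i = \zeta_{\alpha\beta} = 0$ then the only surviving component is $(\nabla\rho)^0{}_{0\beta\bar\gamma} = \i(\nabla\kappa)^0{}_0\, h_{\beta\bar\gamma} = (\nabla\kappa)^0{}_0\,\rho_{0\beta\bar\gamma}$, so $\nabla_k\rho = (\nabla\kappa)^0{}_0\,\rho$ and $\rho$ is recurrent with $f = (\nabla\kappa)^0{}_0$.

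Finally, each of the three conditions entails $\breve{\gamma}_i = 0$, so the intrinsic torsion is a section of $\slashed{\mc{G}}_{-2}^0 = \mc{G}^{-1}$ and $\mc{K}$ is geodesic by Table \ref{tab-geom-intors}. I expect the main obstacle to be the bookkeeping in \eqref{item:Rob_rec1} $\Leftrightarrow$ \eqref{item:Rob_rec2}: one must verify that recurrence pins down exactly $\gamma$ and $\zeta$ -- neither fewer constraints (so that the single scalar $f = (\nabla\kappa)^0{}_0$ is consistent across all non-vanishing components of $\rho$) nor more (so that the remaining components of $(\nabla\rho)^0$, e.g.\ $(\nabla\rho)^0{}_{\alpha\beta\bar\gamma} = 2\i\gamma_{[\alpha}h_{\beta]\bar\gamma}$, are automatically controlled by $\gamma$ and $\zeta$) -- which relies on having the full component list of $\nabla_a\rho_{bcd}$ at hand.
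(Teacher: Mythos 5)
Your proposal is correct and follows essentially the same route as the paper, whose own proof merely declares the equivalence of \eqref{item:Rob_rec1} and \eqref{item:Rob_rec2} "tautological" and derives \eqref{item:Rob_rec3} "from the definition of $\breve{\zeta}_{\alpha\beta}$"; you have simply written out the frame-component computations (the identities $g(\nabla_k k, e_\delta)=\gamma_\delta$, $g(\nabla_k e_\alpha,e_\beta)=\tfrac{1}{2\i}\zeta_{\alpha\beta}$, and the component list of $\nabla_a\rho_{bcd}$) that the authors leave implicit. The only cosmetic difference is that you route the equivalences as \eqref{item:Rob_rec1}$\Leftrightarrow$\eqref{item:Rob_rec3} and \eqref{item:Rob_rec1}$\Leftrightarrow$\eqref{item:Rob_rec2} rather than through \eqref{item:Rob_rec2}$\Leftrightarrow$\eqref{item:Rob_rec3}, which changes nothing of substance.
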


\begin{proof}
The equivalence between \eqref{item:Rob_rec1} and \eqref{item:Rob_rec2} is tautological, while the equivalence between \eqref{item:Rob_rec2} and \eqref{item:Rob_rec3} follows from the definition of $\breve{\zeta}_{\alpha \beta}$. That $\mc{K}$ is geodesic in this case follows from $\breve{\gamma}_i = 0$.
\end{proof}

Returning to nearly Robinson manifolds, we have the following lemma:
\begin{lem}\label{lem:twist_par}
Let $(\mc{M}, g, N, K)$ be a nearly Robinson manifold with congruence of null geodesics $\mc{K}$ and almost CR leaf space $(\ul{\mc{M}},\ul{H}, \ul{J})$. The following statements are equivalent:
\begin{enumerate}
\item any Robinson $3$-form is parallel along $K$;
\item $N$ is parallel along $K$;
\item the twist of $\mc{K}$ commutes with the screen bundle complex structure;
\item $(\ul{H},\ul{J})$ is partially integrable.
\end{enumerate}
\end{lem}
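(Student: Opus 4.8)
The plan is to reduce all four conditions to the single algebraic statement that the intrinsic torsion lies in $\slashed{\mc{G}}_{-1}^{1,1}$, i.e.\ that $\breve{\tau}_{\alpha \beta} = 0$, exploiting throughout the standing nearly Robinson hypothesis. By Proposition~\ref{prop:N_pres}, being nearly Robinson means the intrinsic torsion is a section of $(\slashed{\mc{G}}_{-1}^{1 \times 3})_{[-2\i:1]} \cap \slashed{\mc{G}}_{-1}^{2,0}$, so that one already has $\breve{\gamma}_i = 0$ (geodesy) and $\breve{\sigma}_{\alpha \beta} = 0$, together with the crucial relation $\breve{\zeta}_{\alpha \beta} = 2 \i \breve{\tau}_{\alpha \beta}$. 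This last identity is what ties the $\nabla \rho$-data (the $\breve{\zeta}$'s) to the twist (the $\breve{\tau}$'s), and it is the engine of the whole argument.

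First I would dispose of the equivalence of (3) and (4): these are literally two of the three equivalent conditions of Proposition~\ref{prop:twist_al_CR1}, both being equivalent, in the nearly Robinson setting, to $\breve{\tau}_{\alpha \beta} = 0$; alternatively, the equivalence of "the twist commutes with $J$" and $\breve{\tau}_{\alpha\beta}=0$ is the first bullet of Lemma~\ref{lem:opt_J}. Next, for (2) I would invoke Proposition~\ref{prop:Rob_rec}, according to which $N$ is parallel along $K$ precisely when $\breve{\gamma}_i = \breve{\zeta}_{\alpha \beta} = 0$. Feeding in the nearly Robinson data $\breve{\gamma}_i = 0$ and $\breve{\zeta}_{\alpha\beta} = 2 \i \breve{\tau}_{\alpha\beta}$, this collapses to $\breve{\tau}_{\alpha \beta} = 0$, so (2) matches (3) and (4). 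At this stage (2), (3) and (4) are all identified with membership in $\slashed{\mc{G}}_{-1}^{1,1}$.

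The remaining and genuinely delicate point is the equivalence of (1) and (2), that is, upgrading the recurrence of the Robinson $3$-form supplied by Proposition~\ref{prop:Rob_rec} to honest parallelism along $K$. The plan is to read off $\nabla_k \rho$ componentwise from the trivialisations in Section~\ref{sec:Intrinsic_Torsion}: the components of $(\nabla \rho)^0{}_{bcd}$ are governed by $\breve{\gamma}_i$, by $\breve{\zeta}_{\alpha\beta}$, and by the single trace term $(\nabla \rho)^0{}_{0 \beta \bar{\gamma}} = \i (\nabla \kappa)^0{}_0 \, h_{\beta \bar{\gamma}}$. The first two vanish exactly when $N$ is parallel along $K$, while the last is proportional to $\rho$ itself and therefore represents the recurrence coefficient rather than a transverse obstruction. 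Since $\mc{K}$ is geodesic, one may choose the optical vector field $k$ to be affinely parametrised, whence $\nabla_k \kappa = 0$ and so $(\nabla\kappa)^0{}_0 = 0$; equivalently, one rescales the Robinson $3$-form within its real scaling family, which is permissible by Remark~\ref{rem:Robinson_scale2} and can always be integrated along the one-dimensional leaves of $\mc{K}$. With this normalisation $\nabla_k \rho = 0$, so recurrence and parallelism coincide and (1) is equivalent to (2). I expect this reconciliation of "recurrent" with "parallel" — isolating the boost term $(\nabla\kappa)^0{}_0$ as a pure gauge quantity and removing it using geodesy — to be the main obstacle; everything else is bookkeeping with the dictionary between intrinsic-torsion components and the earlier propositions.
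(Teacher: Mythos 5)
Your proof is correct and follows essentially the same route as the paper's: everything is reduced to $\breve{\tau}_{\alpha\beta}=0$ via the nearly Robinson relation $\breve{\zeta}_{\alpha\beta}=2\i\,\breve{\tau}_{\alpha\beta}$ together with Lemma~\ref{lem:opt_J}, Proposition~\ref{prop:twist_al_CR1} and Proposition~\ref{prop:Rob_rec}, which is exactly the paper's one-line argument. The only divergence is that you explicitly reconcile ``parallel'' in (1) with the ``recurrent'' of Proposition~\ref{prop:Rob_rec} by isolating the boost component $(\nabla\kappa)^{0}{}_{0}$ of $\nabla_k\rho$ as the recurrence coefficient and removing it by an affine parametrisation of $k$; the paper leaves this step implicit, so your version is, if anything, slightly more complete.
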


\begin{proof}
For a nearly Robinson manifold, the intrinsic torsion is a section of $\slashed{\mc{G}}_{-1}^2$, and so in particular, $\breve{\zeta}_{\alpha \beta} =  2\i \breve{\tau}_{\alpha \beta}$. Hence $\breve{\zeta}_{\alpha \beta}$ and $\breve{\tau}_{\alpha \beta} = 0$, and the result follows immediately by Lemma \ref{lem:opt_J}, Proposition \ref{prop:twist_al_CR1} and Proposition \ref{prop:Rob_rec}.
\end{proof}

It can readily be checked that the nearly Robinson manifolds of Propositions \ref{prop:twist_al_CR1}, \ref{prop:nRob_pi_ctct} and \ref{prop:Rob->pi_cont_CR} all satisfy this property.

\subsection{Twist-induced almost Robinson structures}\label{sec:tw-ind_al_Rob}
We shall now present a special case of almost Robinson structures, which arise from the twist of an optical geometry.
\begin{prop}\label{prop:twist-ind_Rob}
Let $(\mc{M},g,K)$ be an optical geometry of dimension $2m+2$ with congruence of null curves $\mc{K}$. Let $\kappa_a$ be an optical $1$-form and set $\tau_{a b c} := 3 \, \kappa_{[a} \nabla_{b} \kappa_{c]}$. The following two conditions are equivalent:
\begin{enumerate}
\item the $3$-form $\tau_{a b c}$ satisfies
\begin{align}\label{eq:Rob3sq}
\tau_{a b}{}^{e} \tau_{e c d} & = - \frac{2}{m} \tau_{[a}{}^{e f} g_{b][c} \tau_{d] e f} \neq 0 \, ;
\end{align}
\item $\mc{K}$ is geodesic and twisting, and there exists a unique optical vector field $k$ whose twist endormorphism $h^{-1} \circ \tau$ is a bundle complex structure $J$ compatible with $h$ on the screen bundle $(H_K,h)$, i.e.\ $J=h^{-1} \circ \tau$. In particular, the twist of $\mc{K}$ induces an almost Robinson structure $(N,K)$ on  $(\mc{M},g,K)$, and $\kappa = g(k, \cdot)$ determines a unique Robinson $3$-form given by
\begin{align}\label{eq:tw-Rob}
\rho_{a b c} & = \kappa_{[a} \nabla_{b} \kappa_{c]} \, .
\end{align}
\end{enumerate}
\end{prop}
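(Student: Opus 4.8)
The plan is to recognise condition \eqref{eq:Rob3sq} as the \emph{scale-invariant} form of the algebraic Robinson-$3$-form identity \eqref{eq:3-form-sq2}, and then to read off the geometry from the structure of $\tau_{abc}$. The starting observation is that $\tau_{abc} = 3\kappa_{[a}\nabla_b\kappa_{c]} = \tfrac12(\kappa\wedge\d\kappa)_{abc}$, so $\tau$ is automatically of the form $\kappa\wedge\beta$; in particular $\kappa\wedge\tau=0$. Contracting with $k=\kappa^\sharp$ and using $k\hook\kappa=0$ together with $\kappa^b\nabla_a\kappa_b=0$ gives
\[
 k\hook\tau = -\tfrac12\,\kappa\wedge\left(\mathsterling_k\kappa\right)\,,
\]
so that $k\hook\tau=0$ precisely when $\mathsterling_k\kappa\propto\kappa$, i.e.\ precisely when $\mc{K}$ is geodesic. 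Relative to a splitting with coframe $(\kappa,\theta^i,\lambda)$ one may write $\tau = \kappa\wedge\Sigma + \kappa\wedge\lambda\wedge\xi$, where $\Sigma_{ij}$ is the screen twist $\breve{\tau}_{ij}$ and $\xi_i\propto(\mathsterling_k\kappa)_i$ records the failure to be geodesic.

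For the direction $(2)\Rightarrow(1)$: assuming $\mc{K}$ geodesic and the twist endomorphism $h^{-1}\circ\breve{\tau}$ a complex structure $J$, the $\lambda$-term drops ($\xi=0$) and, after the normalisation of $k$ forcing $J^2=-\Id$, the screen twist is the Hermitian form $\omega$, so $\tau=\kappa\wedge\omega$ is the Robinson $3$-form of Proposition \ref{prop:char-Robinson-mfld}(4). Hence $\tau_{ab}{}^e\tau_{ecd}=-4\kappa_{[a}g_{b][c}\kappa_{d]}$ by \eqref{eq:3-form-sq2}. A direct contraction, using $\kappa$ null and $k\hook\omega=0$, gives $T_{ad}:=\tau_{aef}\tau_d{}^{ef}=(\omega_{ef}\omega^{ef})\,\kappa_a\kappa_d=2m\,\kappa_a\kappa_d$, whence $\kappa_{[a}g_{b][c}\kappa_{d]}=\tfrac1{2m}\tau_{[a}{}^{ef}g_{b][c}\tau_{d]ef}$. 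Substituting turns the identity into \eqref{eq:Rob3sq}, with nonvanishing left side because $\tau=\kappa\wedge\omega\neq0$. This normalisation is also what fixes the constant in the expression \eqref{eq:tw-Rob} for the associated Robinson $3$-form.

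The substantive direction is $(1)\Rightarrow(2)$. Taking the $g^{bc}$-trace of \eqref{eq:Rob3sq} gives $\tau_{aef}\tau^{aef}=0$, i.e.\ $\mathrm{tr}\,T=0$; the plan is to upgrade this to the statement that $T_{ad}$ is rank one and null, $T_{ad}=\mu\,\hat\kappa_a\hat\kappa_d$ with $\hat\kappa$ null. Once this is known, \eqref{eq:Rob3sq} reads $\tau_{ab}{}^e\tau_{ecd}=\nu\,\hat\kappa_{[a}g_{b][c}\hat\kappa_{d]}$ for some constant $\nu$; the hypothesis $\tau\neq0$ together with the Lorentzian signature forces $\nu<0$, and after rescaling $\hat\kappa$ this is exactly \eqref{eq:rho2->kap}. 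Proposition \ref{prop:char-Robinson-mfld} then yields an almost Robinson structure whose optical line is spanned by $\hat\kappa^\sharp$ and whose Robinson $3$-form is proportional to $\tau$. Since $\kappa\wedge\tau=0$ while $\omega$ is nondegenerate transverse to $\hat\kappa$, one gets $\hat\kappa\propto\kappa$, so the optical structure is $K$ and the screen complex structure is the normalised twist endomorphism. Finally $\hat\kappa^\sharp\hook\tau=0$ for a Robinson $3$-form gives $k\hook\tau=0$, hence $\mc{K}$ geodesic; it is twisting because $\tau\neq0$, and the scalar fixing $J^2=-\Id$ singles out the unique $k$.

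The main obstacle is exactly the algebraic step in $(1)\Rightarrow(2)$: proving that the quadratic identity forces $T_{ad}=\tau_{aef}\tau_d{}^{ef}$ to be rank one and null — equivalently, in the decomposition $\tau=\kappa\wedge\Sigma+\kappa\wedge\lambda\wedge\xi$, that it forces $\xi=0$ (geodesic) and $\Sigma^\sharp$ to square to a negative multiple of the identity on the screen (a complex structure after normalisation). I would attack this pointwise: expand both sides of \eqref{eq:Rob3sq} in this decomposition, using $\kappa$ null, $\kappa\cdot\lambda=1$ and $\xi$ screen, so that the $\kappa\wedge\lambda\wedge\xi$ contributions to $T_{ad}$ produce independent $\lambda_a\lambda_d$, $\lambda_{(a}\xi_{d)}$ and $\xi_a\xi_d$ terms that cannot be cancelled by the remaining pieces; matching coefficients then forces $\xi=0$, and the surviving screen identity for $\Sigma$ is the defining relation of a conformal complex structure, resolved by bringing the skew endomorphism $\Sigma^\sharp$ to normal form. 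Alternatively this may be packaged as a single pointwise linear-algebra lemma classifying nonzero $3$-forms satisfying the scale-invariant identity, which is the honest content of the equivalence.
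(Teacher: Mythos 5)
Your proposal is correct and follows essentially the same route as the paper: the paper also decomposes $\nabla_{[a}\kappa_{b]}$ relative to a splitting $(\ell^a,\delta^a_i,k^a)$ into a geodesic-obstruction part $\lambda_{[a}\gamma_{b]}$, a screen twist, and a $\kappa$-trivial part, and then extracts from \eqref{eq:Rob3sq} exactly the two facts you aim for — contracting with $k^a\ell^bk^c\ell^d$ gives $\gamma_i\gamma^i=0$, hence $\gamma_i=0$ by positive-definiteness of $h$ (so $\mc{K}$ is geodesic), and contracting with $\delta^a_i\ell^b\delta^c_j\ell^d$ gives $\tau_i{}^k\tau_k{}^j=-\tfrac{1}{2m}\|\tau\|^2\delta_i^j$, after which the rescaling $k\mapsto\tfrac{\sqrt{2m}}{\|\tau\|}k$ produces the unique complex structure. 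The only difference is packaging: your ``expand and match coefficients'' step is exactly these two frame contractions, and the point you should make explicit is that the scalar relation one obtains is $\|\gamma\|^2=0$, which needs the positive-definiteness of the screen metric to yield $\gamma=0$, rather than an identically vanishing tensor $\xi_a\xi_d$.
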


\begin{proof}
Choose splitting operators $(\ell^a , \delta^a_i , k^a)$. Then we can write $\nabla_{[a} \kappa_{b]} = \lambda_{[a} \gamma_{b]} + \tau_{a b} + \alpha_{[a} \kappa_{b]}$ for some $\gamma_a$, $\tau_{a b}$ and $\alpha_a$, where $\ell^a \gamma_a =  \ell^a \tau_{a b} = k^a \gamma_a =  k^a \tau_{a b} = 0$. Contracting \eqref{eq:Rob3sq}  with $k^a \ell^b k^c \ell^d$ and $\delta^a_i \ell^b \delta^c_j \ell^d$ yields
\begin{align}
\gamma_i \gamma^i  & = 0 \, , \label{eq:cplx_geod} \\
\tau_{i}{}^{k} \tau_{k}{}^{j} & = - \frac{1}{2m} \tau_{k \ell} \tau^{k \ell} \delta_i^j \neq 0\, , \label{eq:cplx_tw}
\end{align}
respectively, where $\gamma_i = \gamma_a \delta^a_i$ and  $\tau_{i j} = \tau_{a b} \delta^a_i \delta^b_j$. Since $h_{i j}$ is positive-definite, equation \eqref{eq:cplx_geod} tells us that $\gamma_i = 0$, i.e.\ $\mc{K}$ is geodesic. Equation \eqref{eq:cplx_tw} tells us that we can rescale $k$ by $\tfrac{\sqrt{2m}}{ \| \tau \|}$ so that the twist of the rescaled optical vector field satisfies
\begin{align*}
\tau_{i}{}^{k} \tau_{k}{}^{j} & = -  \delta_i^j \, ,
\end{align*}
i.e.\ $h^{-1} \circ \tau$ is a bundle complex structure on $H_K$. The uniqueness of $k$ follows from the assumption that $K$ is oriented.
\end{proof}

\begin{rem}
That equation \eqref{eq:tw-Rob} singles out an optical $1$-form also follows from the fact that LHS has boost weight $2$ and the RHS boost weight $1$.
\end{rem}

\begin{defn}
We shall refer to the almost Robinson structure given in Proposition \ref{prop:twist-ind_Rob} as a \emph{twist-induced almost Robinson structure}.
\end{defn}

\begin{rem}
Let us re-emphasise that by Proposition \ref{prop:twist-ind_Rob}, the congruence associated to a twist-induced almost Robinson structure is always geodesic and maximally twisting.
\end{rem}

It is clear that an almost Robinson structure $(N,K)$ is twist-induced if and only if its intrinsic torsion is a section of $\slashed{\mc{G}}_{-1}^{1,1} \cap \slashed{\mc{G}}_{-1}^{1,2}$ with non-vanishing $\mc{G}_{-1}^{1,0}$-component, i.e.\
\begin{align*}
\breve{\gamma}_i = \breve{\tau}_{\alpha \beta} =  \breve{\tau}^\circ_{\alpha \bar{\beta}} & = 0 \, , & \breve{\tau}^\omega  & \neq 0 \, .
\end{align*}
However, the following proposition tells us that the intrinsic torsion must in fact be a section of a subbundle thereof.

\begin{prop}\label{prop:deg+}
Let $(\mc{M},g,N, K)$ be an almost Robinson manifold with congruence of null curves $\mc{K}$. The following statements are equivalent:
\begin{enumerate}
\item $(N,K)$ is a twist-induced almost Robinson structure;\label{item:deg+1}
\item the intrinsic torsion of $(N,K)$ is a section of $\slashed{\mc{G}}_{-1}^{1,1} \cap \slashed{\mc{G}}_{-1}^{1,2}  \cap \slashed{\mc{G}}_{-1}^{3,0} \cap \slashed{\mc{G}}_0^{1,1}$ with non-vanishing $\mc{G}_{-1}^{1,0}$-component, i.e.\ \label{item:deg+2}
\begin{align*}
\breve{\gamma}_i = \breve{\tau}_{\alpha \beta} =  \breve{\tau}^\circ_{\alpha \bar{\beta}} = \breve{\zeta}_{\alpha \beta} = \breve{G}^{\tiny{\ydskew}}_{\alpha \beta \gamma} & = 0 \, , & \breve{\tau}^\omega  & \neq 0 \, .
\end{align*}
\end{enumerate}
If any of these conditions holds, $N$ is parallel along $K$.
\end{prop}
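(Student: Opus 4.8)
The plan is to prove Proposition \ref{prop:deg+} by establishing the equivalence of the two torsion conditions and then reading off the final claim. The key is Proposition \ref{prop:twist-ind_Rob}, which characterises a twist-induced almost Robinson structure purely in terms of the twist of $\mc{K}$: the congruence is geodesic ($\breve{\gamma}_i = 0$), maximally twisting with twist $\breve{\tau}_{i j}$ equal to the screen bundle Hermitian form $\omega_{i j}$ up to scale, so that the twist endomorphism coincides with $J_i{}^j$. In the complex splitting, this amounts to saying that $\breve{\tau}_{i j}$ has \emph{only} its $\mc{G}_{-1}^{1,0}$-part (the $\omega$-trace $\breve{\tau}^\omega$), with the $(2,0)$-part $\breve{\tau}_{\alpha \beta}$ and the tracefree $(1,1)$-part $\breve{\tau}^\circ_{\alpha \bar{\beta}}$ both vanishing, and $\breve{\tau}^\omega \neq 0$. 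This gives the stated conditions $\breve{\gamma}_i = \breve{\tau}_{\alpha \beta} = \breve{\tau}^\circ_{\alpha \bar{\beta}} = 0$, $\breve{\tau}^\omega \neq 0$, which is the ``naive'' characterisation already recorded just before the proposition.

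First I would observe that the content of the proposition is that this naive characterisation is \emph{not maximal}: the defining equation \eqref{eq:tw-Rob}, $\rho_{a b c} = \kappa_{[a} \nabla_b \kappa_{c]}$, forces additional components of the intrinsic torsion to vanish automatically. The natural route is to start from the canonical Robinson $3$-form produced by Proposition \ref{prop:twist-ind_Rob} and differentiate the structural identity. Concretely, since $(N,K)$ is twist-induced, there is a distinguished optical $1$-form $\kappa_a$ with $\rho_{a b c} = \kappa_{[a} \nabla_b \kappa_{c]}$. The plan is to compute the relevant components of $\nabla_a \rho_{b c d}$ from this expression, using $\breve{\gamma}_i = 0$ and the relation between $\nabla_{[a}\kappa_{b]}$ and $\tau_{i j} = \omega_{i j}$ up to scale. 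Comparing with the definitions in \eqref{eq:S-H2_b} of $\breve{\zeta}_{\alpha\beta} = (\nabla\rho)^0{}_{\alpha\beta 0}$ and $\breve{G}^{\tiny{\ydskew}}_{\alpha\beta\gamma} = G_{[\alpha\beta\gamma]}$ should then show these vanish identically. The point is that $\rho_{a b c}$ is built from $\nabla \kappa$, so derivatives of $\rho$ that one might expect to be independent torsion components are in fact expressible through $\nabla \kappa$ and its second derivatives symmetrised away by the skew structure.

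The cleanest way to organise the converse is as follows. Given the torsion conditions in \eqref{item:deg+2}, the vanishing of $\breve{\gamma}_i$, $\breve{\tau}_{\alpha\beta}$, $\breve{\tau}^\circ_{\alpha\bar\beta}$ together with $\breve{\tau}^\omega \neq 0$ already places us in the naive twist-induced class, hence by Proposition \ref{prop:twist-ind_Rob} we have a twist-induced structure; thus \eqref{item:deg+2} $\Rightarrow$ \eqref{item:deg+1} is immediate, and the extra conditions $\breve{\zeta}_{\alpha\beta} = \breve{G}^{\tiny{\ydskew}}_{\alpha\beta\gamma} = 0$ are redundant in this direction. So the substantive work is all in \eqref{item:deg+1} $\Rightarrow$ \eqref{item:deg+2}: I must show that being twist-induced \emph{implies} $\breve{\zeta}_{\alpha\beta} = 0$ and $\breve{G}^{\tiny{\ydskew}}_{\alpha\beta\gamma} = 0$. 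I expect the main obstacle to be the $\breve{G}^{\tiny{\ydskew}}$ computation: this is the totally skew $(3,0)$-part of $(\nabla\rho)_{i\beta\gamma 0}$, and extracting it from $\rho_{abc} = \kappa_{[a}\nabla_b\kappa_{c]}$ requires care with the second covariant derivative $\nabla_a \nabla_b \kappa_c$ and identifying which symmetry components survive. The key technical input will be that the $(2,0)$-twist vanishes, $\nabla_{[a}\kappa_{b]}$ restricted to $N$ (and to $\overline{N}$) being purely of type $\omega$, which kills the $\bigwedge^{(3,0)}$ projection once one accounts for the Robinson $3$-form identity \eqref{eq:3-form-sq2}. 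The final statement, that $N$ is parallel along $K$, then follows since $\breve{\zeta}_{\alpha\beta} = 0$ together with $\breve{\gamma}_i = 0$ is exactly the condition of Proposition \ref{prop:Rob_rec} characterising $N$ being parallel along $K$.
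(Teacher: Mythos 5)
Your overall strategy coincides with the paper's: the implication \eqref{item:deg+2} $\Rightarrow$ \eqref{item:deg+1} is dismissed as immediate via the characterisation preceding the proposition, the substantive work is isolated in showing that the twist-induced condition forces $\breve{\zeta}_{\alpha\beta}=\breve{G}^{\tiny{\ydskew}}_{\alpha\beta\gamma}=0$, and the closing claim is correctly reduced to Proposition \ref{prop:Rob_rec}. The one place your plan would stall is in the mechanism you name for the hard direction. You propose to control the second derivatives $\nabla_a\nabla_b\kappa_c$ arising from $\rho_{abc}=\kappa_{[a}\nabla_b\kappa_{c]}$ by invoking the type-purity of the twist \emph{together with the algebraic identity \eqref{eq:3-form-sq2}}. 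That identity is purely pointwise -- it holds for every Robinson $3$-form and its associated optical $1$-form -- so it carries no differential information and cannot kill the second-derivative terms. The identity that actually does the work is $\d^2\kappa=0$: the paper writes the twist-induced condition as $(\d\kappa)_{ab}=\omega_{ab}+\kappa_{[a}\alpha_{b]}$, applies $\d$ to obtain $0=(\d\omega)_{abc}+\omega_{[ab}\alpha_{c]}-\kappa_{[a}(\d\alpha)_{bc]}$, and then contracts with $k^a\delta^b_\alpha\delta^c_\beta$ and with $\delta^a_\alpha\delta^b_\beta\delta^c_\gamma$; the unwanted terms drop out precisely because $\omega_{\alpha\beta}=0$ (type $(1,1)$) and $\kappa$ annihilates $N$, yielding $\zeta_{\alpha\beta}=(\nabla\rho)^0{}_{\alpha\beta 0}=0$ and $G^{\tiny{\ydskew}}_{\alpha\beta\gamma}=(\nabla\rho)_{[\alpha\beta\gamma]0}=0$. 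Your phrase about second derivatives being ``symmetrised away by the skew structure'' is gesturing at exactly this, so the fix is only to replace the appeal to \eqref{eq:3-form-sq2} by the explicit use of $\d^2\kappa=0$ applied to the decomposition $\d\kappa=\omega+\kappa\wedge\alpha$; with that substitution your argument goes through and reproduces the paper's proof.
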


\begin{proof}
That \eqref{item:deg+2} implies \eqref{item:deg+1} is immediate since $\slashed{\mc{G}}_{-1}^{1,1} \cap \slashed{\mc{G}}_{-1}^{1,2}  \cap \slashed{\mc{G}}_{-1}^{3,0} \cap \slashed{\mc{G}}_0^{1,1}$ is a subbundle of $\slashed{\mc{G}}_{-1}^{1,1} \cap \slashed{\mc{G}}_{-1}^{1,2}$. For the converse, we note that a twist-induced almost Robinson structure singles out a preferred optical $1$-form $\kappa$ such that $(\d \kappa)_{a b} = \omega_{a b} + \kappa_{[a} \alpha_{b]}$ for some $1$-form $\alpha_a$, where $\omega_{a b}$ is a representative of the screen bundle Hermitian structure. Note that the associated Robinson $3$-form is given by $\rho_{a b c} = 3 \kappa_{[a} \omega_{b c]}$. Taking the exterior derivative of $\d \kappa$ yields
\begin{align}\label{eq:dom}
0 & = (\d \omega)_{a b c} + \omega_{[a b} \alpha_{c]}  - \kappa_{[a} (\d \alpha)_{b c]} \, .
\end{align}
Choose a splitting $(\ell^a , \delta^a_{\alpha} , \delta^a_{\bar{\alpha}} , k^a)$. Contracting \eqref{eq:dom} with $k^a \delta^b_\alpha \delta^c_\beta$ leads to
\begin{align*}
0 & = 3 (\d \omega)^{0}{}_{\alpha \beta} \\
& = ( \nabla \omega )^{0}{}_{\alpha \beta}  - 2\i  \omega_{a b}  \delta^a_\alpha \delta^b_\beta \\
& = ( \nabla \rho )^{0}{}_{\alpha \beta 0} = \zeta_{\alpha \beta} \, ,
\end{align*}
i.e.\ $\mathring{T}$ is a section of $\slashed{\mc{G}}_{-1}^{3,0}$. Now, contract \eqref{eq:dom} with $\delta^a_{\alpha} \delta^b_{\beta} \delta^c_{\gamma}$ yields \begin{align*}
0 & = ( \d \omega )_{\alpha \beta \gamma} \\
& = ( \nabla \rho )_{[\alpha \beta \gamma] 0} = G^{\tiny{\ydskew}}_{\alpha \beta \gamma} \, ,
\end{align*}
i.e.\ $\mathring{T}$ is a section of $\slashed{\mc{G}}_0^{1,1}$, which completes the proof.
\end{proof}

\begin{rem}
Proposition \ref{prop:deg+} tells us that if the intrinsic torsion of a given almost Robinson manifold is a section of $\slashed{\mc{G}}_{-1}^{1,1} \cap \slashed{\mc{G}}_{-1}^{1,2}$ but not of $\slashed{\mc{G}}_{-1}^{1,0}$, then it must be a section of $\slashed{\mc{G}}_{-1}^{1,1} \cap \slashed{\mc{G}}_{-1}^{1,2}  \cap \slashed{\mc{G}}_{-1}^{3,0} \cap \slashed{\mc{G}}_0^{1,1}$. This should be contrasted with the situation regarding the Gray--Hervella classification of almost Hermitian manifolds \cite{Gray1980}, which is briefly reviewed in Section \ref{sec:Rob-GH}: the sixteen classes of almost Hermitian manifolds can be naturally arranged in terms of inclusions, which are shown to be strict in the sense that each class contains an almost Hermitian metric that does not belong to any of the other fifteen classes.
\end{rem}

The next result is a direct consequence of Propositions \ref{prop:nRob_pi_ctct} and \ref{prop:deg+}.
\begin{prop}\label{prop:tw-ind_Rob_shear}
Let $(\mc{M},g,N, K)$ be an almost Robinson manifold with congruence of null curves $\mc{K}$. The following statements are equivalent:
\begin{enumerate}
\item $(N,K)$ is a twist-induced almost Robinson structure and the shear of $\mc{K}$ commutes with the screen bundle complex structure;
\item the intrinsic torsion of $(N,K)$ is a section of $\slashed{\mc{G}}_{-1}^{1,1} \cap \slashed{\mc{G}}_{-1}^{1,2}  \cap \slashed{\mc{G}}_{-1}^{2,1}  \cap \slashed{\mc{G}}_{-1}^{3,0} \cap \slashed{\mc{G}}_0^{1,1} $ with non-vanishing $\mc{G}_{-1}^{1,0}$-component, i.e.\
\begin{align*}
\breve{\gamma}_i = \breve{\tau}_{\alpha \beta} =  \breve{\tau}^\circ_{\alpha \bar{\beta}} = \breve{\sigma}_{\alpha \beta} = \breve{\zeta}_{\alpha \beta} = \breve{G}^{\tiny{\ydskew}}_{\alpha \beta \gamma} & = 0 \, , & \breve{\tau}^\omega  & \neq 0 \, .
\end{align*}
\end{enumerate}
If any of these conditions holds, $(N,K)$ is a nearly Robinson structure and the induced almost CR leaf space $(\ul{\mc{M}},\ul{H},\ul{J})$ of $\mc{K}$ is partially integrable and contact, and is thus equipped with a subconformal structure $\ul{\mbf{c}}_{\ul{H},\ul{J}}$ compatible with $\ul{J}$.
\end{prop}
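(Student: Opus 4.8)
The plan is to prove Proposition \ref{prop:tw-ind_Rob_shear} as a direct corollary of the two results it is built from, namely Proposition \ref{prop:nRob_pi_ctct} and Proposition \ref{prop:deg+}, rather than by any fresh computation. The proof amounts to combining the intrinsic-torsion conditions characterising a twist-induced almost Robinson structure with the extra condition that the shear commute with the screen-bundle complex structure, and then reading off the geometric consequences from the existing propositions.

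First I would establish the equivalence of the two intrinsic-torsion descriptions. By Proposition \ref{prop:deg+}, condition (1) of being twist-induced is equivalent to the intrinsic torsion being a section of $\slashed{\mc{G}}_{-1}^{1,1} \cap \slashed{\mc{G}}_{-1}^{1,2} \cap \slashed{\mc{G}}_{-1}^{3,0} \cap \slashed{\mc{G}}_0^{1,1}$ with non-vanishing $\mc{G}_{-1}^{1,0}$-component, i.e.\ $\breve{\gamma}_i = \breve{\tau}_{\alpha \beta} = \breve{\tau}^\circ_{\alpha \bar{\beta}} = \breve{\zeta}_{\alpha \beta} = \breve{G}^{\tiny{\ydskew}}_{\alpha \beta \gamma} = 0$ with $\breve{\tau}^\omega \neq 0$. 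Separately, by Lemma \ref{lem:opt_J}, the shear of $\mc{K}$ commutes with $J_i{}^j$ precisely when the intrinsic torsion lies additionally in $\slashed{\mc{G}}_{-1}^{2,1}$, i.e.\ $\breve{\sigma}_{\alpha \beta} = 0$. Intersecting these two conditions gives exactly the membership in $\slashed{\mc{G}}_{-1}^{1,1} \cap \slashed{\mc{G}}_{-1}^{1,2} \cap \slashed{\mc{G}}_{-1}^{2,1} \cap \slashed{\mc{G}}_{-1}^{3,0} \cap \slashed{\mc{G}}_0^{1,1}$ with non-vanishing $\mc{G}_{-1}^{1,0}$-component asserted in (2), and the equivalence $(1) \Leftrightarrow (2)$ follows.

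For the final assertion, I would argue as follows. The vanishing $\breve{\gamma}_i = 0$ together with $\breve{\zeta}_{\alpha \beta} = 0 = 2\i \breve{\tau}_{\alpha \beta}$ (both $\breve{\tau}_{\alpha\beta}$ and $\breve{\zeta}_{\alpha\beta}$ vanish) shows the intrinsic torsion lies in $(\slashed{\mc{G}}_{-1}^{1\times 3})_{[-2\i:1]} \cap \slashed{\mc{G}}_{-1}^{2,0}$, since $\breve{\tau}^\circ_{\alpha\bar\beta}=0$ and $\breve\sigma_{\alpha\beta}=0$ force the relevant components to the subbundle required in condition \eqref{item:N_pres3} of Proposition \ref{prop:N_pres}; hence $(N,K)$ is a nearly Robinson structure. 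With nearly Robinson established, Proposition \ref{prop:nRob_pi_ctct} applies: the conditions $\breve{\gamma}_i = \breve{\tau}_{\alpha \beta} = \breve{\sigma}_{\alpha \beta} = \breve{\zeta}_{\alpha \beta} = 0$ with non-degenerate $\breve{\tau}_{\alpha \bar{\beta}}$ (guaranteed here by positive-definiteness of $h$ together with $\breve{\tau}^\circ_{\alpha\bar\beta}=0$ and $\breve{\tau}^\omega \neq 0$, which makes $\breve{\tau}_{\alpha\bar\beta}$ a nonzero multiple of $h_{\alpha\bar\beta}$, hence non-degenerate) imply that the almost CR leaf space $(\ul{\mc{M}}, \ul{H}, \ul{J})$ is partially integrable and contact, carrying the subconformal structure $\ul{\mbf{c}}_{\ul{H},\ul{J}}$ compatible with $\ul{J}$.

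The only step requiring genuine care, and the main potential obstacle, is verifying that the extra vanishing conditions $\breve{\tau}^\circ_{\alpha\bar\beta} = 0$ and $\breve{\sigma}_{\alpha\beta}=0$ do not over-constrain the structure so as to conflict with the non-degeneracy hypothesis needed to invoke Proposition \ref{prop:nRob_pi_ctct}. Here I would observe that in the twist-induced case the $(1,1)$-part of the twist is pure trace, $\breve{\tau}_{\alpha\bar\beta} = \tfrac{\i}{2m}\breve{\tau}^\omega h_{\alpha\bar\beta}$, so the hypothesis $\breve{\tau}^\omega \neq 0$ automatically yields the non-degeneracy of $\breve{\tau}_{\alpha\bar\beta}$, and all the hypotheses of Proposition \ref{prop:nRob_pi_ctct} are met. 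Since both directions and the geometric conclusion follow by direct citation once these identifications are in place, no further computation is needed and the proof is complete.
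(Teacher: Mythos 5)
Your proof is correct and follows essentially the same route as the paper, which states this result without proof as ``a direct consequence of Propositions \ref{prop:nRob_pi_ctct} and \ref{prop:deg+}''; your assembly of Proposition \ref{prop:deg+} with Lemma \ref{lem:opt_J} for the equivalence, and of Propositions \ref{prop:N_pres} and \ref{prop:nRob_pi_ctct} for the geometric conclusion, simply makes that citation explicit. Your observation that $\breve{\tau}^\circ_{\alpha\bar{\beta}}=0$ together with $\breve{\tau}^\omega\neq 0$ forces $\breve{\tau}_{\alpha\bar{\beta}}$ to be a nonzero multiple of $h_{\alpha\bar{\beta}}$, hence non-degenerate, correctly handles the one point the paper leaves implicit.
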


The next proposition, which collects some of the results found in  \cite{Taghavi-Chabert2021}, is a special case of Proposition \ref{prop:Rob->pi_cont_CR} applied to a twist-induced almost Robinson structure --- see also Remark \ref{rem:two_cf_str} where  equation \eqref{eq:Levi-conf} reduces to $\ul{\msf{h}}_{\alpha \bar{\beta}} = \ul{h}_{\alpha \bar{\beta}}$.
\begin{prop}[\cite{Taghavi-Chabert2021}]\label{prop:tw-ind_Rob_no_shear}
Let $(\mc{M},g,N, K)$ be an almost Robinson manifold with congruence of null curves $\mc{K}$. The following statements are equivalent:
\begin{enumerate}
\item $(N,K)$ is a twist-induced almost Robinson structure and $\mc{K}$ is a non-shearing congruence of null geodesics;
\item the intrinsic torsion of $(N,K)$ is a section of $\slashed{\mc{G}}_{-1}^{1,1} \cap \slashed{\mc{G}}_{-1}^{1,2}  \cap \slashed{\mc{G}}_{-1}^{2}  \cap \slashed{\mc{G}}_{-1}^{3,0} \cap \slashed{\mc{G}}_0^{1,1} \cap \slashed{\mc{G}}_0^{1,3}$ with non-zero $\mc{G}_{-1}^{1,0}$-component, i.e.\
\begin{align*}
\breve{\gamma}_i = \breve{\tau}_{\alpha \beta} =  \breve{\tau}^\circ_{\alpha \bar{\beta}} = \breve{\sigma}_{i j} = \breve{\zeta}_{\alpha \beta} = \breve{G}^{\tiny{\ydskew}}_{\alpha \beta \gamma} = \breve{G}^{\circ}_{\bar{\alpha} \beta \gamma} & = 0 \, , & \breve{\tau}^\omega  & \neq 0 \, .
\end{align*}
\end{enumerate}
If any of these conditions holds, $(N,K)$ is a nearly Robinson structure and the almost CR leaf space $(\ul{\mc{M}},\ul{H},\ul{J})$ of $\mc{K}$ is partially integrable and contact.

In particular, $(\ul{\mc{M}},\ul{H},\ul{J})$ is equipped with a positive-definite subconformal structure $\ul{\mbf{c}}_{\ul{H},\ul{J}}$ compatible with $\ul{J}$, which is also induced from $[g]_{n.e.}$.
\end{prop}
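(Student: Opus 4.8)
The plan is to prove Proposition \ref{prop:tw-ind_Rob_no_shear} by recognising it as the conjunction of two results already at our disposal: Proposition \ref{prop:deg+}, which characterises twist-induced almost Robinson structures, and Proposition \ref{prop:Rob->pi_cont_CR}, which characterises nearly Robinson structures that are maximally twisting and non-shearing with twist commuting with $J$. The key observation is that the intrinsic torsion condition in (2) is precisely the intersection of the twist-induced condition of Proposition \ref{prop:deg+}(2) with the non-shearing condition $\breve{\sigma}_{ij}=0$, together with one extra vanishing component $\breve{G}^{\circ}_{\bar{\alpha}\beta\gamma}=0$. First I would spell out why each piece of the list in (2) arises, and then check the two implications $(1)\Rightarrow(2)$ and $(2)\Rightarrow(1)$ separately.

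\emph{Proof of $(1)\Rightarrow(2)$:} Suppose $(N,K)$ is twist-induced and $\mc{K}$ is non-shearing. Since $(N,K)$ is twist-induced, Proposition \ref{prop:deg+} gives $\breve{\gamma}_i = \breve{\tau}_{\alpha\beta} = \breve{\tau}^\circ_{\alpha\bar{\beta}} = \breve{\zeta}_{\alpha\beta} = \breve{G}^{\tiny{\ydskew}}_{\alpha\beta\gamma} = 0$ with $\breve{\tau}^\omega \neq 0$. Non-shearing means $\breve{\sigma}_{ij} = 0$, hence in particular $\breve{\sigma}_{\alpha\beta}=0$ and $\breve{\sigma}_{\alpha\bar{\beta}}=0$. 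It remains to show $\breve{G}^{\circ}_{\bar{\alpha}\beta\gamma}=0$. For this I would invoke the defining differential identity of a twist-induced structure from the proof of Proposition \ref{prop:deg+}: the preferred optical $1$-form satisfies $(\d\kappa)_{ab}=\omega_{ab}+\kappa_{[a}\alpha_{b]}$, so that differentiating again yields \eqref{eq:dom},
\begin{align*}
0 & = (\d \omega)_{a b c} + \omega_{[a b} \alpha_{c]} - \kappa_{[a} (\d \alpha)_{b c]} \, .
\end{align*}
Contracting \eqref{eq:dom} with $\delta^a_{\bar{\alpha}}\delta^b_\beta\delta^c_\gamma$ and extracting the tracefree part will produce $(\nabla\rho)_{\bar{\alpha}\beta\gamma 0}$ whose tracefree piece is exactly $\breve{G}^{\circ}_{\bar{\alpha}\beta\gamma}$; since $\breve{\sigma}_{\alpha\bar{\beta}}=0$ and the other relevant components already vanish, this forces $\breve{G}^{\circ}_{\bar{\alpha}\beta\gamma}=0$. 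The main subtlety here is to confirm that the shear-free hypothesis is genuinely what kills $\breve{G}^{\circ}_{\bar{\alpha}\beta\gamma}$, rather than some independent curvature input, so I would track carefully which of the components in \eqref{eq:dom} contribute to the $\bigwedge^{(1,2)}_\circ$ part.

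\emph{Proof of $(2)\Rightarrow(1)$:} Conversely, assume the full list of vanishing conditions in (2) holds with $\breve{\tau}^\omega\neq 0$. Since the listed conditions include $\breve{\gamma}_i=\breve{\tau}_{\alpha\beta}=\breve{\tau}^\circ_{\alpha\bar{\beta}}=\breve{\zeta}_{\alpha\beta}=\breve{G}^{\tiny{\ydskew}}_{\alpha\beta\gamma}=0$ with $\breve{\tau}^\omega\neq0$, this is exactly the content of Proposition \ref{prop:deg+}(2), whence $(N,K)$ is twist-induced. Moreover $\breve{\sigma}_{ij}=0$ says $\mc{K}$ is non-shearing. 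This is immediate and carries no real difficulty.

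\emph{The final assertions:} The vanishing of $\breve{\sigma}_{\alpha\beta}$ and $\breve{\zeta}_{\alpha\beta}=2\i\breve{\tau}_{\alpha\beta}$ (here both sides are zero) places the intrinsic torsion in $(\slashed{\mc{G}}_{-1}^{1\times3})_{[-2\i:1]}\cap\slashed{\mc{G}}_{-1}^{2,0}$, so Proposition \ref{prop:N_pres} gives that $(N,K)$ is a nearly Robinson structure inducing an almost CR structure $(\ul{H},\ul{J})$ on $\ul{\mc{M}}$. That $(\ul{H},\ul{J})$ is partially integrable and contact then follows from Proposition \ref{prop:nRob_pi_ctct}, since $\breve{\tau}_{\alpha\beta}=\breve{\sigma}_{\alpha\beta}=\breve{\zeta}_{\alpha\beta}=0$ with $\breve{\tau}_{\alpha\bar{\beta}}$ non-degenerate (non-degeneracy being guaranteed by $\breve{\tau}^\omega\neq0$ together with $\breve{\tau}^\circ_{\alpha\bar{\beta}}=0$, which forces $\breve{\tau}_{\alpha\bar{\beta}}=\tfrac{\i}{2m}\breve{\tau}^\omega h_{\alpha\bar{\beta}}$, a non-degenerate multiple of the metric). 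Finally, applying Proposition \ref{prop:Rob->pi_cont_CR} with the additional $\breve{\sigma}_{\alpha\bar{\beta}}=0$ identifies the two subconformal structures $\ul{\mbf{c}}_{\ul{H},\ul{J}}$ and $\ul{\mbf{c}}_{\ul{H}}$, and because $\breve{\tau}^\circ_{\alpha\bar{\beta}}=0$ the Levi form relation \eqref{eq:Levi-conf} collapses to $\ul{\msf{h}}_{\alpha\bar{\beta}}=\ul{h}_{\alpha\bar{\beta}}$, showing the subconformal structure is positive-definite and induced from $[g]_{n.e.}$. The expected main obstacle is the computational verification in $(1)\Rightarrow(2)$ that the non-shearing hypothesis, through \eqref{eq:dom}, exactly accounts for $\breve{G}^{\circ}_{\bar{\alpha}\beta\gamma}=0$; everything else is an assembly of previously established propositions.
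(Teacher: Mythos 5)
Your proposal is correct and follows the same route the paper takes: the paper offers no separate proof of this proposition, presenting it exactly as the conjunction of Proposition \ref{prop:deg+} with the non-shearing condition and as a special case of Proposition \ref{prop:Rob->pi_cont_CR} (with Remark \ref{rem:two_cf_str} supplying the collapse of \eqref{eq:Levi-conf} to $\ul{\msf{h}}_{\alpha\bar{\beta}}=\ul{h}_{\alpha\bar{\beta}}$), so your assembly of those ingredients is precisely the intended argument. On the one step you flag as uncertain: contracting \eqref{eq:dom} with $\delta^a_{\bar{\alpha}}\delta^b_{\beta}\delta^c_{\gamma}$ does yield $(\d\omega)_{\bar{\alpha}\beta\gamma}=-\omega_{[\bar{\alpha}\beta}\alpha_{\gamma]}$, whose tracefree part kills $\breve{G}^{\circ}_{\bar{\alpha}\beta\gamma}$ in the chosen splitting irrespective of the shear; the genuine role of the hypothesis $\breve{\sigma}_{\alpha\bar{\beta}}=0$ is that, by the transformation rule for $G^{\circ}_{\bar{\alpha}\beta\gamma}$ in the proof of Theorem \ref{thm:intors-rob}, it is what makes the vanishing of $\breve{G}^{\circ}_{\bar{\alpha}\beta\gamma}$ a splitting-independent statement, i.e.\ what places the intrinsic torsion in the $Q$-invariant subbundle $\slashed{\mc{G}}_0^{1,3}$, whose very definition includes $\Pi_{-1}^{2,0}=0$.
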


\begin{rem}
In both Propositions \ref{prop:tw-ind_Rob_shear} and \ref{prop:tw-ind_Rob_no_shear}, the $\mc{G}_{0}^{1,2}$-component $\breve{G}^{\tiny{\ydhook}}_{\alpha \beta \gamma}$ of the intrinsic torsion of $(N,K)$ can be identified with the Nijenhuis  tensor of $(\ul{H},\ul{J})$.
\end{rem}

\begin{rem}
Let $(\mc{M},g,N, K)$ be a twist-induced almost Robinson manifold with non-shearing congruence of null geodesics $\mc{K}$.  Applying Proposition 4.35 of \cite{Fino2020}, one can show that for each metric $\wh{g}$ in $[g]_{n.e}$, there exists a unique generator $k$ of $\mc{K}$ and a null vector field $\ell$ such that $\wh{g}(k,\ell)=1$ and $\kappa = \wh{g}(k,\cdot)$ satisfies
\begin{align*}
\d \kappa (k, \cdot) & = 0 \, , & \d \kappa (\ell, \cdot) & = 0 \, .
\end{align*}
We shall elaborate on this result in the conformal setting in Section \ref{sec:conf-aRstr}.
\end{rem}

\subsection{Almost Robinson structures as almost null structures}\label{sec:spinor_des}
To obtain further geometric interpretations of the subbundles of $\mc{G}$, we shall presently regard the almost Robinson structure $(N,K)$ as an almost null structure $N$ on $(\mc{M},{}^\C g)$ in its own right, i.e.\ without any reference to the complex conjugate $\overline{N}$. This perspective is in part motivated by the potential involutivity of $N$. The structure group of $N$ is the stabiliser $R$ of an MTN vector subspace of $\C^{2m+2}$ in $\SO(2m+2,\C)$. As we shall be using a spinorial approach, we shall assume with no loss of generality, at least locally, that $R$ is a subgroup of $\Spin(2m+2,\C)$.  We shall then identify $N$ as the kernel of the map $\nu_a^{\mbf{A}} := \gamma_{a \mbf{B}'}{}^{\mbf{A}} \nu^{\mbf{B}'} : \Gamma(T \mc{M }) \rightarrow \Gamma(S_-)$ for some Robinson spinor $\nu^{\mbf{A}'}$. In particular, with a choice of dual $N^*$, the image of $\nu_a^{\mbf{A}}$ is isomorphic to $N^* \cong T \mc{M}/N$ --- see the discussion in Section \ref{sec-spinors}. The intrinsic torsion of such a structure is already investigated in \cite{Taghavi-Chabert2016}, and we shall appeal to the results contained therein for the subsequent analysis.
\begin{thm}\label{thm:spinor_intors}
Let $(\mc{M}, g, K, N)$ be a $(2m+2)$-dimensional almost Robinson manifold with intrinsic torsion $\mathring{T}$. Let $\nu^{\mbf{A}'}$ be a Robinson spinor and set $\nu_a^{\mbf{A}} := \gamma_{a \mbf{B}'}{}^{\mbf{A}} \nu^{\mbf{B}'}$. Then, for $m>2$,
\begin{align}
\mathring{T} & \in \Gamma ( \slashed{\mc{G}}_0^{1,1} )  & \Longleftrightarrow & &\left( \nu^{a [\mbf{A}} \nabla_a \nu^{b \mbf{B}} \right) \nu_{b}^{\mbf{C}]} & = 0 \, , \label{eq:pure_spinor1} \\
\mathring{T} & \in \Gamma ( \slashed{\mc{G}}_0^{1,2} ) & \Longleftrightarrow & & \left(\nu^{a (\mbf{A}} \nabla_a \nu^{b \mbf{B})} \right) \nu_{b}^{\mbf{C}} & = 0 \, , \label{eq:pure_spinor2}  \\
 \mathring{T} & \in \Gamma ( \slashed{\mc{G}}_1^{0,0} ) & \Longleftrightarrow & & \left( \nabla_a \nu^{b \mbf{B}} \right) \nu_{b}^{\mbf{C}} + \frac{2}{m} \left( \nu_a^{[\mbf{B}} \nabla_b \nu^{b \mbf{C}]} + \nu^{b [\mbf{B}} \nabla_b \nu_{a}^{\mbf{C}]} \right) & = 0  \, , \label{eq:pure_spinor3}\\
  \mathring{T} & \in \Gamma \left( ({\slashed{\mc{G}}_0^{0 \times 1}})_{[2 \i:1]} \right)  & \Longleftrightarrow & & \nu^{\mbf{A}'} \nabla_a \nu^{a \mbf{B}} - \nu^{a \mbf{B}} \nabla_b \nu^{\mbf{A}'} & = 0 \, . \label{eq:pure_spinor0} 
\end{align}
In dimension six, i.e.\ $m=2$, equivalences  \eqref{eq:pure_spinor2} and  \eqref{eq:pure_spinor0} hold, but equivalences \eqref{eq:pure_spinor1} and \eqref{eq:pure_spinor3} are replaced by
\begin{align}
\mathring{T} & \in \Gamma ((\slashed{\mc{G}}_{-1}^{1 \times 3})_{[4\i:-1]} )  & \Longleftrightarrow & &\left(\nu^{a [\mbf{A}} \nabla_a \nu^{b \mbf{B}} \right) \nu_{b}^{\mbf{C}]} = 0 \, , \label{eq:pure_spinor1d6}
\end{align}
and
\begin{multline}
 \mathring{T} \in \Gamma ( \slashed{\mc{G}}_1^{0,0} )  \Longleftrightarrow \\
 \left( \nabla_a \nu^{b \mbf{B}} \right) \nu_{b}^{\mbf{C}} + \left( \nu_a^{[\mbf{B}} \nabla_b \nu^{b \mbf{C}]} + \nu^{b [\mbf{B}} \nabla_b \nu_{a}^{\mbf{C}]} \right) - \frac{2}{3} \nu^{b \mbf{A}} \nabla_b \nu_{\mbf{A}} \gamma_a{}^{\mbf{B} \mbf{C}} = 0  \, , \label{eq:pure_spinor3d6}
\end{multline}
respectively. For the last equation, we have used the bundle isomorphisms $S_\pm \cong S_\mp^*$ and ${}^\C T \mc{M} \cong \bigwedge^2 S_+ \cong \bigwedge^2 S_-$.
\end{thm}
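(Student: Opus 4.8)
The goal is to translate each of the four spinorial equations into conditions on the irreducible $Q_0$-components of the intrinsic torsion listed in \eqref{eq:S-H2} and Table \ref{tab:table-irred-mod-G_bdl}, and match them against the defining conditions of the relevant $\slashed{\mc{G}}_i^{j,k}$ given in Theorem \ref{thm:intors-rob}. The essential dictionary is provided by the splitting operators: recall from Section \ref{sec-spinors} that $\delta^{\mbf{A}}_A = \delta^a_A \nu^{\mbf{A}}_a$ realises an isomorphism $N^* \cong S_-$, under which a $1$-form $\alpha_a \in \Ann(N)$ becomes $\alpha_a = \nu^{\mbf{A}}_a \alpha_{\mbf{A}}$. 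Thus any spinorial bilinear of the schematic form $(\nabla \nu)\,\nu$ can, after contracting with $\nu^{\mbf{A}}_a$ and using the Clifford identity \eqref{eq:Clifford_id} together with the purity condition \eqref{eq:pure}, be re-expressed as a component of $\nabla_a\rho_{bcd}$ or $\nabla_a\kappa_b$, and hence of the quantities in \eqref{eq:S-H2}. The plan is to carry out this translation one equation at a time. I would start from the results of \cite{Taghavi-Chabert2016}, which already classify the intrinsic torsion of the almost null structure $N$ in spinorial terms, and use the fact that $\bigwedge^{m+1}\Ann(N) \cong S_+^N \otimes S_+^N$ from \eqref{eq:sq_pure_spin} to identify $\nabla_a\nu^{\mbf{A}'}$ projected onto the relevant modules with the corresponding projections of $\nabla_a\rho_{bcd}$.

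The most efficient route for each line is to expand $\nu^{a\mbf{A}}\nabla_a\nu^{b\mbf{B}}$ using that $\nu^{\mbf{A}'}$ spans $S_+^N$, so $\nabla_a\nu^{\mbf{A}'}$ decomposes into a piece proportional to $\nu^{\mbf{A}'}$ (the connection part, which drops out of the traceless bilinears) and pieces in the complementary $Spin$-modules. The skew bracket $\nu^{a[\mbf{A}}\nabla_a\nu^{b\mbf{B}}\nu_b^{\mbf{C}]}$ in \eqref{eq:pure_spinor1} picks out the totally skew part, which by the Young-diagram bookkeeping of Section \ref{sec:alg_intors} corresponds to $\breve{G}^{\tiny{\ydskew}}_{\alpha\beta\gamma}$ and $\breve\zeta_{\alpha\beta}$ — exactly the data killed in $\slashed{\mc{G}}_0^{1,1}$ (recall $\slashed{\mbb{G}}_0^{1,1}$ requires $\Pi_0^{1,1}(\Gamma)=(\Pi_{-1}^{1\times3})_{[-4\i,1]}(\Gamma)=0$, i.e.\ $G^{\tiny{\ydskew}}=0$ and $-4\i\tau_{\alpha\beta}+\zeta_{\alpha\beta}=0$). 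The symmetric bracket in \eqref{eq:pure_spinor2} isolates the $\ydhook$-symmetry piece $\breve{G}^{\tiny{\ydhook}}_{\alpha\beta\gamma}$ defining $\slashed{\mc{G}}_0^{1,2}$; the trace-adjusted expression in \eqref{eq:pure_spinor3} isolates the boost-weight $+1$ module $\breve B_{\alpha\beta}$ of $\mc{G}_1^{0,0}$, with the $\tfrac{2}{m}$ coefficient arising precisely from the dimension-$m$ trace correction visible in the transformation rule for $B_{\alpha\beta}$; and \eqref{eq:pure_spinor0} encodes the mixed divergence condition that is the $(\Pi_0^{0\times1})_{[2\i:1]}$ constraint of $({\slashed{\mc{G}}_0^{0\times1}})_{[2\i:1]}$.

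The main obstacle is the dimension-six degeneracy. When $m=2$ the module $\bigwedge^{(3,0)}\V_0^*$ collapses (it has dimension $\tfrac13 m(m-1)(m-2)=0$), so $\breve{G}^{\tiny{\ydskew}}_{\alpha\beta\gamma}$ vanishes identically and $\slashed{\mc{G}}_0^{1,1}=\mbb{G}$ as noted after the statement of Theorem \ref{thm:intors-rob}; moreover the chirality coincidences of \eqref{arr:spin-inner-prod}–\eqref{arr:bi-form} change which van der Waerden symbol pairs the spinors. Consequently the skew bilinear in \eqref{eq:pure_spinor1d6} no longer detects $\slashed{\mc{G}}_0^{1,1}$ but instead the $(\slashed{\mc{G}}_{-1}^{1\times3})_{[4\i:-1]}$ condition (consistent with the second item of the Remark after the theorem, $\slashed{\mbb{G}}_0^{1,1}\subset(\slashed{\mbb{G}}_{-1}^{1\times3})_{[4\i:-1]}$), and the divergence identity acquires the extra trace term $-\tfrac{2}{3}\nu^{b\mbf{A}}\nabla_b\nu_{\mbf{A}}\,\gamma_a{}^{\mbf{BC}}$ in \eqref{eq:pure_spinor3d6}, reflecting the isomorphism ${}^\C T\mc{M}\cong\bigwedge^2 S_\pm$ special to six dimensions. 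The delicate part will be tracking these $m=2$ coefficient shifts and verifying the replacement $\tfrac{2}{m}\mapsto 1$ in \eqref{eq:pure_spinor3} versus \eqref{eq:pure_spinor3d6}; I would handle it by computing the $m=2$ case independently using the six-dimensional spinor–form identities rather than specialising the general-$m$ formulae, and cross-check the numerical factors with \texttt{cadabra} as was done for the transformation rules in the proof of Theorem \ref{thm:intors-rob}.
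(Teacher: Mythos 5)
Your proposal is correct and follows essentially the same route as the paper's proof: both translate the spinorial bilinears into components of the adapted connection (equivalently of $\nabla\kappa$ and $\nabla\rho$) via the splitting operators and the classification of \cite{Taghavi-Chabert2016}, then match these against the defining conditions of the $Q$-invariant subbundles from Theorem \ref{thm:intors-rob}, treating the $m=2$ collapse of $\bigwedge^{(3,0)}\V_0^*$ separately. The paper phrases the intermediate step as vanishing conditions on the components $\Gamma^{ABC}$ of the connection $1$-form, but this is the same computation you describe.
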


\begin{proof}
We choose a splitting ${}^{C} T \mc{M} = N^* \oplus N$. With reference to the notation of Section \ref{sec-spinors}, we may choose a connection $1$-form $\Gamma_{a b}{}^{c}$ with values in $\so(2m+2,\C)$ for $\nabla$, so that $\left( \nabla_a \nu^{b \mbf{B}} \right) \nu_{b}^{\mbf{C}} = \Gamma_{a}{}^{B C} \delta_{B}^{\mbf{B}} \delta_{C}^{\mbf{C}}$ and in particular, $\left(\nu^{a \mbf{A}}  \nabla_a \nu^{b \mbf{B}} \right) \nu_{b}^{\mbf{C}} = \Gamma^{A B C} \delta_{A}^{\mbf{A}} \delta_{B}^{\mbf{B}} \delta_{C}^{\mbf{C}}$.
Then, one can show \cite{Taghavi-Chabert2016}
\begin{align*}
\mbox{RHS of \eqref{eq:pure_spinor1} and \eqref{eq:pure_spinor1d6}} & & \Longleftrightarrow & & \Gamma^{[A B C]} & = 0 \, ,  \\
\mbox{RHS of \eqref{eq:pure_spinor2}} & & \Longleftrightarrow & & \Gamma^{(A B) C} & = 0 \, ,  \\
\mbox{RHS of \eqref{eq:pure_spinor3}} & & \Longleftrightarrow & & \left( \Gamma_{A}{}^{B C} \right)_\circ = \Gamma^{A B C} & = 0  \, , \\
\mbox{RHS of \eqref{eq:pure_spinor0}} & & \Longleftrightarrow & & \Gamma_{B}{}^{B A} = \Gamma^{A B C}& = 0 \, ,
\end{align*}
and in dimension six
\begin{align*}
\mbox{RHS of \eqref{eq:pure_spinor3d6}} & & \Longleftrightarrow & & \left( \Gamma_{A}{}^{B C} \right)_\circ = \Gamma^{(A B) C} & = 0  \, .
\end{align*}
Here $\left( \Gamma_{A}{}^{B C} \right)_\circ = \Gamma_{A}{}^{B C} - \frac{2}{m} \delta_A^{[B|} \Gamma_{D}{}^{D|C]}$. We can immediately deduce that
\begin{align*}
\Gamma^{[A B C]} & = 0   & \Longleftrightarrow & & G^{\tiny{\ydskew}}_{\bar{\alpha} \bar{\beta} \bar{\gamma}} = \zeta_{\bar{\alpha} \bar{\beta}} + 4 \i \tau_{\bar{\alpha} \bar{\beta}} & = 0 \, , \\
\Gamma^{(A B) C} & = 0  & \Longleftrightarrow & & G^{\tiny{\ydhook}}_{\bar{\alpha} \bar{\beta} \bar{\gamma}} = \sigma_{\bar{\alpha} \bar{\beta}} = 2 \i \tau_{\bar{\alpha} \bar{\beta}} - \zeta_{\bar{\alpha} \bar{\beta}} = \gamma_{\bar{\alpha}} & = 0 \, , \\
\left( \Gamma_{A}{}^{B C} \right)_\circ & = 0 & \Longleftrightarrow & & B_{\bar{\alpha} \bar{\beta}} = G^{\circ}_{\alpha \bar{\beta} \bar{\gamma}} = 2 \i (m-1) E_{\bar{\alpha}} +  G_{\bar{\alpha}} =  \tau^\circ_{\alpha \bar{\beta}} =  \sigma_{\alpha \bar{\beta}} & = 0 \, , \\
\Gamma_{B}{}^{B A} & = 0 & \Longleftrightarrow & & - 2 \i E_{\bar{\alpha}} + G_{\bar{\alpha}} = \tau^\omega  = \epsilon & = 0 \, ,
\end{align*}
where $B_{\bar{\alpha} \bar{\beta}}$, $G^{\tiny{\ydskew}}_{\bar{\alpha} \bar{\beta} \bar{\gamma}}$, $G^{\tiny{\ydhook}}_{\bar{\alpha} \bar{\beta} \bar{\gamma}}$, $G^{\circ}_{\alpha \bar{\beta} \bar{\gamma}}$, $\zeta_{\bar{\alpha} \bar{\beta}}$, $\tau_{\bar{\alpha} \bar{\beta}}$, $E_{\bar{\alpha}}$ and $\gamma_{\bar{\alpha}}$ are the complex conjugates of the tensors defined in Section \ref{sec:Intrinsic_Torsion}. The result now follows immediately from the definition of the $Q$-invariant bundles and Theorem \ref{thm:intors-rob} --- this is clearly independent from the choice of dual $N^*$.
\end{proof}

As a consequence, we obtain:
\begin{prop}\label{prop:fol_spinor}
Let $(\mc{M}, g, N, K)$ be an almost Robinson manifold with congruence of null curves $\mc{K}$. The following statements are equivalent:
\begin{enumerate}
\item any Robinson spinor $\nu^{\mbf{A}'}$ satisfies \label{item:fol_spinor1}
\begin{align}\label{eq:foliating_spinor}
\left( \nu^{a \mbf{A}} \nabla_a \nu^{b \mbf{B}} \right) \nu_{b}^{\mbf{C}} & = 0 \, ;
\end{align}
\item any Robinson spinor $\nu^{\mbf{A}'}$ is recurrent along $N$, i.e.\ \label{item:fol_spinor2}
\begin{align}\label{eq:foliating_rec_spinor}
\left( \nu^{a \mbf{A}} \nabla_a \nu^{[\mbf{B}'} \right) \nu^{\mbf{C}']} & = 0 \, ;
\end{align}
\item the intrinsic torsion of $(N,K)$ is a section of $\slashed{\mc{G}}_{0}^{1,1} \cap \slashed{\mc{G}}_{0}^{1,2}$, i.e.\ \label{item:fol_spinor3}
\begin{align*}
\breve{\gamma}_i = \breve{\tau}_{\alpha \beta} = \breve{\sigma}_{\alpha \beta} = \breve{\zeta}_{\alpha \beta} = \breve{G}^{\tiny{\ydskew}}_{\alpha \beta \gamma} = \breve{G}^{\tiny{\ydhook}}_{\alpha \beta \gamma} = 0 \, ;
\end{align*}
\item $N$ is in involution, i.e.\ for any $v, w \in \Gamma(N)$, $[v,w] \in \Gamma(N)$; \label{item:fol_spinor4}
\item $N$ is parallel along itself, i.e.\ for any $v, w \in \Gamma(N)$, $\nabla_v w \in \Gamma(N)$; \label{item:fol_spinor5}
\item $(N,K)$ induces a CR structure on the leaf space of $\mc{K}$. \label{item:fol_spinor6}
\end{enumerate}
If any of these conditions holds, $\mc{K}$ is geodesic, its twist and shear commute  with the screen bundle complex structure, and $N$ is parallel along $K$.
\end{prop}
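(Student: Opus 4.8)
The plan is to reduce the whole chain of equivalences to the algebra of a single object, the $N^*$-valued tensor $\Gamma^{ABC} := \left( \nu^{a \mbf{A}} \nabla_a \nu^{b \mbf{B}} \right) \nu_b^{\mbf{C}}$ appearing in the proof of Theorem \ref{thm:spinor_intors}. This tensor is skew in its last two indices, since $\nu^{b\mbf{B}}\nu_b^{\mbf{C}} = 0$ by the purity condition \eqref{eq:pure} and $\nabla$ is metric, and condition \eqref{item:fol_spinor1} is literally $\Gamma^{ABC}=0$. First I would record two elementary facts about a tensor skew in its last pair: (i) $\Gamma^{ABC}=0$ if and only if both $\Gamma^{[ABC]}=0$ and $\Gamma^{(AB)C}=0$ (if the latter vanishes, $\Gamma$ is also skew in the first pair, hence totally skew, and the former then annihilates it); and (ii) $\Gamma^{ABC}=0$ if and only if $\Gamma^{[AB]C}=0$ (a short symmetry-chase: being symmetric in $(AB)$ and skew in $(BC)$ forces $\Gamma^{ABC}=-\Gamma^{ABC}$). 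Fact (i), together with the spinorial identifications \eqref{eq:pure_spinor1}--\eqref{eq:pure_spinor2} from Theorem \ref{thm:spinor_intors}, yields \eqref{item:fol_spinor1}$\Leftrightarrow$\eqref{item:fol_spinor3}: membership in $\slashed{\mc{G}}_0^{1,1}\cap\slashed{\mc{G}}_0^{1,2}$ means $\Gamma^{[ABC]}=0$ and $\Gamma^{(AB)C}=0$, and combining the resulting incompatible relations $\breve{\zeta}_{\alpha\beta}=4\i\breve{\tau}_{\alpha\beta}$ and $\breve{\zeta}_{\alpha\beta}=-2\i\breve{\tau}_{\alpha\beta}$ collapses the component list to $\breve{\gamma}_i=\breve{\tau}_{\alpha\beta}=\breve{\sigma}_{\alpha\beta}=\breve{\zeta}_{\alpha\beta}=\breve{G}^{\tiny{\ydskew}}_{\alpha\beta\gamma}=\breve{G}^{\tiny{\ydhook}}_{\alpha\beta\gamma}=0$.

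Next I would handle the geometric cluster \eqref{item:fol_spinor2}, \eqref{item:fol_spinor4}, \eqref{item:fol_spinor5}. Because $N$ is maximal totally null, $N=N^\perp$, the Koszul formula gives \eqref{item:fol_spinor4}$\Leftrightarrow$\eqref{item:fol_spinor5}: for $u,v,w\in\Gamma(N)$ all metric terms vanish by isotropy, and under involutivity all bracket terms lie in $N$ and pair trivially with $u,v,w\in N$, so $g(\nabla_v w,u)=0$ for every $u\in N$, whence $\nabla_v w\in N^\perp=N$; the converse is immediate from torsion-freeness. For \eqref{item:fol_spinor2}$\Leftrightarrow$\eqref{item:fol_spinor5} I would use that each $\nu^{a\mbf{A}}$ is itself a section of $N$, again by \eqref{eq:pure}, so that $\nu^{a\mbf{A}}\nabla_a$ ranges over all $N$-directions, and that the pure-spinor line $\langle\nu^{\mbf{A}'}\rangle$ is exactly the set of spinors annihilated by $N$ under Clifford multiplication. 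Differentiating $\nu_b^{\mbf{A}}w^b=0$ along $v\in N$ gives $\nu_b^{\mbf{A}}\nabla_v w^b=-(\nabla_v\nu^{\mbf{B}'})\gamma_{b\mbf{B}'}{}^{\mbf{A}}w^b$, whose right-hand side vanishes for all $w\in N$ precisely when $\nabla_v\nu^{\mbf{A}'}$ is annihilated by $N$, i.e. proportional to $\nu^{\mbf{A}'}$, which is the recurrence \eqref{eq:foliating_rec_spinor}.

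Finally I would weld the two clusters. Writing $[\nu^{\cdot\mbf{A}},\nu^{\cdot\mbf{B}}]^b\nu_b^{\mbf{C}}=\Gamma^{ABC}-\Gamma^{BAC}$ and checking involutivity on the spanning frame $\{\nu^{a\mbf{A}}\}$, condition \eqref{item:fol_spinor4} is equivalent to $\Gamma^{[AB]C}=0$, hence by fact (ii) to $\Gamma^{ABC}=0$, i.e. to \eqref{item:fol_spinor1}. For \eqref{item:fol_spinor6} I would observe that $K\subset N$, so involutivity forces $[K,N]\subset N$, making $(N,K)$ nearly Robinson; Proposition \ref{prop:N_pres} then supplies the almost CR structure $(\ul{H},\ul{J})$ on the leaf space with $\ul{H}^{(0,1)}$ the descent of $N/{}^\C K$, and involutivity of $N$ descends to and lifts from involutivity of $\ul{H}^{(0,1)}$ as in \cite{Nurowski2002}. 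The closing assertions are read off \eqref{item:fol_spinor3}: $\breve{\gamma}_i=0$ gives geodesy, $\breve{\tau}_{\alpha\beta}=\breve{\sigma}_{\alpha\beta}=0$ give that the twist and shear commute with $J$ by Lemma \ref{lem:opt_J}, and $\breve{\gamma}_i=\breve{\zeta}_{\alpha\beta}=0$ give that $N$ is parallel along $K$ by Proposition \ref{prop:Rob_rec}. The main obstacle here is bookkeeping rather than conceptual: getting the symmetrisations and the complex-conjugation dictionary right so that fact (i) reproduces the component lists exactly, and treating $m=2$ separately, where $\breve{G}^{\tiny{\ydskew}}_{\alpha\beta\gamma}$ vanishes identically, $\slashed{\mc{G}}_0^{1,1}=\mc{G}$, and \eqref{eq:pure_spinor1} must be replaced by \eqref{eq:pure_spinor1d6}.
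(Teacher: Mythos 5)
Your proposal is correct, and it is genuinely more self-contained than the proof in the paper, which establishes the same equivalences largely by citation: \eqref{item:fol_spinor1}$\Leftrightarrow$\eqref{item:fol_spinor2} is deferred to \cite{Taghavi-Chabert2016}, \eqref{item:fol_spinor4}$\Leftrightarrow$\eqref{item:fol_spinor5} to \cite{Taghavi-Chabert2012}, and \eqref{item:fol_spinor1}$\Leftrightarrow$\eqref{item:fol_spinor6}, \eqref{item:fol_spinor4}$\Leftrightarrow$\eqref{item:fol_spinor6} to \cite{Hughston1988,Taghavi-Chabert2016,Nurowski2002}; only \eqref{item:fol_spinor1}$\Leftrightarrow$\eqref{item:fol_spinor3} is handled as you do, via Theorem \ref{thm:spinor_intors}. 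Your wiring of the equivalence graph is also different: instead of passing through \eqref{item:fol_spinor6} to connect the spinorial cluster to the involutivity cluster, you link \eqref{item:fol_spinor2}$\Leftrightarrow$\eqref{item:fol_spinor5}$\Leftrightarrow$\eqref{item:fol_spinor4}$\Leftrightarrow$\eqref{item:fol_spinor1} directly, with everything funnelled through the single tensor $\Gamma^{ABC}$ and the two elementary symmetry facts (a tensor skew in an overlapping pair and symmetric in another vanishes; skew in two overlapping pairs is totally skew). This buys a proof readable without the external references, makes transparent why the two $\zeta$--$\tau$ relations coming from $\slashed{\mc{G}}_0^{1,1}$ and $\slashed{\mc{G}}_0^{1,2}$ jointly force $\breve\tau_{\alpha\beta}=\breve\zeta_{\alpha\beta}=0$, and isolates exactly where the maximality of $N$ enters (the Koszul argument uses $N=N^\perp$; the recurrence argument uses that the annihilator of $N$ in $S_+$ is the line $S_+^N$ and that the vectors $\nu^{a\mbf{A}}$ span $N$). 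Your caveat for $m=2$ is the right one: there $\slashed{\mc{G}}_0^{1,1}=\mc{G}$ and $\breve{G}^{\tiny{\ydskew}}_{\alpha\beta\gamma}$ is vacuous, so one must replace \eqref{eq:pure_spinor1} by \eqref{eq:pure_spinor1d6}, i.e.\ by $(\slashed{\mc{G}}_{-1}^{1\times 3})_{[4\i:-1]}$, after which the same cancellation of the two incompatible relations between $\breve\zeta_{\alpha\beta}$ and $\breve\tau_{\alpha\beta}$ reproduces the component list of \eqref{item:fol_spinor3} verbatim. The closing assertions are correctly read off from Lemma \ref{lem:opt_J} and Proposition \ref{prop:Rob_rec}.
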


\begin{proof}
One can show \cite{Taghavi-Chabert2016} that equations \eqref{eq:foliating_spinor} and \eqref{eq:foliating_rec_spinor} are equivalent, from which follows the equivalence \eqref{item:fol_spinor1} and \eqref{item:fol_spinor2}. The equivalence between \eqref{item:fol_spinor1} and \eqref{item:fol_spinor3} is a direct consequence of Theorem \ref{thm:spinor_intors}. The equivalence between \eqref{item:fol_spinor1} and \eqref{item:fol_spinor6} is already given in \cite{Hughston1988,Taghavi-Chabert2016}, while the equivalence between \eqref{item:fol_spinor4} and \eqref{item:fol_spinor5} is established in \cite{Taghavi-Chabert2012}, and that between \eqref{item:fol_spinor4} and \eqref{item:fol_spinor6} in \cite{Nurowski2002}.
\end{proof}

\begin{rem}
It is important to note that the involutivity of the almost Robinson structure $(N,K)$ does \emph{not} imply that the Robinson $3$-form is preserved along $K$, and thus, may not descend to the leaf space $\ul{\mc{M}}$ --- see Proposition \ref{prop-Rob-3-form-const}. If it did, it would imply that the congruence of null geodesics is non-shearing, which is not true in general except in dimension four.
\end{rem}

\begin{rem}\label{rem:complex_ext}
In the analytic category, one may complexify $(\mc{M}, g)$ to a complex Riemanian manifold $(\wt{\mc{M}}, \wt{g})$, and extend $N$ analytically to $(\wt{\mc{M}}, \wt{g})$ \cite{Whitney1959,Woodhouse1977,Eastwood1984}. By the Frobenius theorem, condition \eqref{item:fol_spinor4} of Proposition \ref{prop:fol_spinor} is equivalent to the local existence of a complex foliation of $(\wt{\mc{M}}, \wt{g})$ by $(m+1)$-dimensional complex submanifolds on which $\wt{g}$ is totally degenerate. Condition \eqref{item:fol_spinor5} then tells us that these leaves are totally geodesic with respect to the Levi-Civita connection of $\wt{g}$. See also \cite{Penrose1986,Hughston1988,TaghaviChabert2017} for further details.
\end{rem}

The following proposition gives a characterisation of a certain class of almost Robinson manifolds, not necessarily nearly Robinson.
\begin{prop}\label{prop:al_K-Rob}
Let $(\mc{M}, g, N, K)$ be an almost Robinson manifold with congruence of null curves $\mc{K}$. The following statements are equivalent:
\begin{enumerate}
\item any Robinson $3$-form $\rho_{a b c}$ satisfies \label{item:al_K-Rob1}
\begin{align}\label{eq:closed-Rob3f}
\d \rho & = \alpha \wedge \rho \, , & \mbox{for some $1$-form $\alpha$} \, ;
\end{align}
\item the intrinsic torsion is a section of $\slashed{\mc{G}}_{0}^{1,1} \cap \slashed{\mc{G}}_{0}^{1,3}$ (or $\slashed{\mc{G}}_{0}^{1,3}$ in dimension six), i.e.\ \label{item:al_K-Rob2}
\begin{align*}
\breve{\gamma}_i = \breve{\tau}_{\alpha \beta} =  \breve{\tau}^\circ_{\alpha \bar{\beta}} = \breve{\sigma}_{\alpha \bar{\beta}} = \breve{\zeta}_{\alpha \beta} = \breve{G}^{\tiny{\ydskew}}_{\alpha \beta \gamma}  = \breve{G}^{\circ}_{\bar{\alpha} \beta \gamma} = 0 \, .
\end{align*}
\end{enumerate}
Further, assuming that any of these conditions holds:
\begin{itemize}
\item $\mc{K}$ is geodesic, its shear anti-commutes with the screen bundle complex structure.
\item $(N,K)$ is twist-induced if and only if $\mc{K}$ is twisting.
\item $(N,K)$ is in addition nearly Robinson if and only if $\mc{K}$ is also non-shearing.
\end{itemize}
\end{prop}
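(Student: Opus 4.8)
The plan is to prove the equivalence of (1) and (2) by a direct computation of $\d\rho$ in a Robinson coframe, and then to deduce the three supplementary assertions purely formally from condition (2) together with Lemma \ref{lem:opt_J} and Propositions \ref{prop:N_pres} and \ref{prop:deg+}. Throughout I would use the representative $\rho_{abc}=3\kappa_{[a}\omega_{bc]}$, so that $\rho=\kappa\wedge\omega$ and $\d\rho=\d\kappa\wedge\omega-\kappa\wedge\d\omega$.

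The conceptual point for the equivalence is that requiring $\d\rho=\alpha\wedge\rho$ for some $\alpha$ is exactly the condition that the $4$-form $\d\rho$ be a section of the subbundle $T^*\mc{M}\wedge\rho\subset\bigwedge^4 T^*\mc{M}$. Since $\d\rho=4\nabla_{[a}\rho_{bcd]}$ and the part of $\nabla\rho$ not fixed algebraically by $\rho$ is precisely the intrinsic torsion, the assignment $\mathring{T}\mapsto[\d\rho]$ into the quotient $\bigwedge^4 T^*\mc{M}/(T^*\mc{M}\wedge\rho)$ is a $Q$-equivariant bundle map, and condition (1) is its kernel. I would compute this kernel by expanding $\d\rho$ along the splitting $T\mc{M}=L\oplus H_{K,L}\oplus K$ using the tabulated components of $\nabla\kappa$ and $\nabla\rho$ from Section \ref{sec:Intrinsic_Torsion}, and matching them, stratum by stratum (by number of $\kappa$/$\lambda$-legs and by screen bidegree $(p,q)$), against the image of $\alpha\mapsto\alpha\wedge\kappa\wedge\omega$. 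Because every monomial of $\alpha\wedge\kappa\wedge\omega$ carries the factor $\kappa$ and has screen content of restricted type, the components of $\d\rho$ transverse to this image must vanish while the complementary components only serve to fix $\alpha$; the surviving constraints should be precisely $\breve{\gamma}_i=\breve{\tau}_{\alpha\beta}=\breve{\tau}^\circ_{\alpha\bar\beta}=\breve{\sigma}_{\alpha\bar\beta}=\breve{\zeta}_{\alpha\beta}=\breve{G}^{\tiny{\ydskew}}_{\alpha\beta\gamma}=\breve{G}^{\circ}_{\bar\alpha\beta\gamma}=0$, i.e. membership in $\slashed{\mc{G}}_0^{1,1}\cap\slashed{\mc{G}}_0^{1,3}$ by Theorem \ref{thm:intors-rob}. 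Dimension six must be handled separately, since there the $(3,0)$-type module carrying $\breve{G}^{\tiny{\ydskew}}$ and the $(1,2)_\circ$-type module carrying the defining piece of $\slashed{\mc{G}}_0^{1,3}$ degenerate, which is the source of the parenthetical remark in the statement.

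The three supplementary statements then follow formally. Since $\breve{\gamma}_i=0$, the congruence $\mc{K}$ is geodesic (Table \ref{tab-geom-intors}), and since in addition $\breve{\sigma}_{\alpha\bar\beta}=0$, Lemma \ref{lem:opt_J} gives that the shear anti-commutes with $J$. For the second bullet I would compare (2) with the twist-induced conditions of Proposition \ref{prop:deg+}: condition (2) already forces $\breve{\gamma}_i=\breve{\tau}_{\alpha\beta}=\breve{\tau}^\circ_{\alpha\bar\beta}=\breve{\zeta}_{\alpha\beta}=\breve{G}^{\tiny{\ydskew}}_{\alpha\beta\gamma}=0$, which are exactly the vanishing conditions for a twist-induced structure, so under (2) being twist-induced is equivalent to $\breve{\tau}^\omega\neq0$; and as $\breve{\tau}_{\alpha\beta}=\breve{\tau}^\circ_{\alpha\bar\beta}=0$ the full twist $\breve{\tau}_{ij}$ reduces to its trace $\breve{\tau}^\omega$, so $\breve{\tau}^\omega\neq0$ is equivalent to $\mc{K}$ being twisting. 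For the third bullet I would compare (2) with Proposition \ref{prop:N_pres}: under (2) we have $\breve{\tau}_{\alpha\beta}=\breve{\zeta}_{\alpha\beta}=0$, so the nearly Robinson relation $\breve{\zeta}_{\alpha\beta}=2\i\breve{\tau}_{\alpha\beta}$ holds automatically alongside $\breve{\gamma}_i=0$, whence $(N,K)$ is nearly Robinson iff $\breve{\sigma}_{\alpha\beta}=0$; and since $\breve{\sigma}_{\alpha\bar\beta}=0$ already, the shear reduces to $\breve{\sigma}_{\alpha\beta}$, so this is equivalent to $\mc{K}$ being non-shearing.

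The main obstacle is the middle step, the explicit four-form bookkeeping. The delicate task is to determine, within each screen bidegree and each $\kappa$/$\lambda$-leg stratum, exactly which irreducible pieces of $\d\rho$ lie in $T^*\mc{M}\wedge\rho$ (hence are absorbed into $\alpha$) and which are transverse (hence constrained); in particular one must carefully separate the trace pieces from the primitive ones using the Lefschetz operators $\bigwedge^{(p,q)}H_{K,L}^*\to\bigwedge^{(p+1,q+1)}H_{K,L}^*$, whose injectivity and surjectivity properties change between $m=2$ and $m>2$. This is precisely where the dimension-six exception originates, and it is where I would be most cautious, cross-checking each component identity against the $P_+$-transformation rules recorded in the proof of Theorem \ref{thm:intors-rob} to be sure the list of constrained components is complete and that no free trace component has been spuriously eliminated.
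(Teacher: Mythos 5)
Your proposal is correct and follows essentially the same route as the paper: the paper likewise picks the representative $\rho=\kappa\wedge\omega$, computes the components of $\d\rho-\alpha\wedge\rho$ in an adapted splitting (finding that the constrained components are exactly $\breve{\gamma}_i$, $2\i\breve{\tau}_{\alpha\beta}+\breve{\zeta}_{\alpha\beta}$, $\breve{\tau}_{\alpha\beta}$, $\breve{\tau}^\circ_{\alpha\bar{\beta}}$, $\breve{\sigma}_{\alpha\bar{\beta}}$, $\breve{G}^{\tiny{\ydskew}}_{\alpha\beta\gamma}$, $\breve{G}^{\circ}_{\bar{\alpha}\beta\gamma}$, with the remaining contractions vacuous), and then reads off the three supplementary bullets exactly as you do, via $\breve{\gamma}_i=0$ and Lemma \ref{lem:opt_J} for the shear, the reduction of the twist to $\breve{\tau}^\omega$ for the twist-induced claim, and Proposition \ref{prop:N_pres} (so that $\breve{\sigma}_{\alpha\beta}$ is the sole remaining obstruction) for the nearly Robinson claim. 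The component bookkeeping you defer is precisely what the paper carries out, and your anticipated list of constrained components matches its result, including the degeneration of the $(3,0)$ and $(1,2)_\circ$ modules at $m=2$.
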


\begin{proof}
Choose an optical $1$-form $\kappa_a$ with Robinson $3$-form $\rho_{a b c}$. Then, using $(\d \rho)_{a b c d} = \nabla_{[a} \rho_{b c d]}$, we compute the various components of \eqref{eq:closed-Rob3f} to find
\begin{align*}
\left(\d \rho - \alpha \wedge \rho \right)_{\alpha \beta \bar{\gamma}}{}^{0} & = 0 & \Longleftrightarrow & & \gamma_\alpha = 0 \, ,  \\
\left(\d \rho - \alpha \wedge \rho \right)_{\alpha \beta 0}{}^{0} & = 0 & \Longleftrightarrow & & 2 \i \tau_{\alpha \beta} + \zeta_{\alpha \beta} = 0 \, , \\
\left(\d \rho - \alpha \wedge \rho \right)_{\alpha \bar{\beta} 0}{}^{0} & = 0 & \Longleftrightarrow &  & \sigma_{\alpha \bar{\beta}} = 0 \, , \\
\left(\d \rho - \alpha \wedge \rho \right)_{\alpha \beta \gamma 0}  & = 0 & \Longleftrightarrow & & G^{\tiny{\ydskew}}_{\alpha \beta \gamma} = 0 \, , \\
\left(\d \rho - \alpha \wedge \rho \right)_{\bar{\alpha} \beta \gamma 0} & = 0 & \Longleftrightarrow & & G^{\circ}_{\bar{\alpha} \beta \gamma} = 0 \, ,\\
\left(\d \rho - \alpha \wedge \rho \right)_{\alpha \beta \gamma \bar{\delta}} & = 0 & \Longleftrightarrow & & \tau_{\alpha \beta} = 0 \, , \\
\left(\d \rho - \alpha \wedge \rho \right)_{\alpha \bar{\beta} 0}{}^{0} & = 0 & \Longleftrightarrow &&  \tau^\circ_{\alpha \bar{\beta}} = 0 \, .
\end{align*}
The remaining contractions are vacuous. The equivalence between \eqref{item:al_K-Rob1} and \eqref{item:al_K-Rob2} now follows.

The properties of $\mc{K}$ and its leaf space $\ul{\mc{M}}$ follow from $\breve{\gamma}_i = \breve{\tau}_{\alpha \beta} =  \breve{\tau}^\circ_{\alpha \bar{\beta}} = \breve{\sigma}_{\alpha \bar{\beta}} = \breve{\zeta}_{\alpha \beta}  =0$. In particular, the only non-vanishing component of the twist is $\breve{\tau}^\omega$, which tells us that $(N,K)$ is induced from the twist if and only if $\breve{\tau}^\omega \neq 0$. Finally, with reference to Proposition \ref{prop:N_pres}, we see that the only obstruction to $(N,K)$ being nearly Robinson is given by the $(2,0)$-part of the shear $\breve{\sigma}_{\alpha \beta}$.
\end{proof}

\begin{rem}
Proposition \ref{prop:al_K-Rob} reduces to Proposition \ref{prop:tw-ind_Rob_no_shear} when $\mc{K}$ is (maximally) twisting and non-shearing.
\end{rem}

We now present a couple of examples illustrating notably the results of this section and that of Section \ref{sec:lift}. 

\begin{exa}[The Kerr--NUT--(A)dS metric]\label{exa:KerrNUTAdS}
In \cite{Chen2006}, the authors present the \emph{Kerr--NUT--(A)dS metric} in arbitrary dimensions, which partly generalises the Pleba\'{n}ski--Demia\'{n}ski metric \cite{Plebanski1976}, and admits a Euclidean analogue under a Wick rotation. In dimension $2m+2$, in coordinates $(r, x_\alpha, t, \psi_i)$, where $\alpha, i=1, \ldots m$, this Einstein metric takes the form
\begin{multline*}
g = \frac{U}{X} \d r^2 - \frac{X}{U}\left( \d t + \sum_{k=1}^{m} A^{(k)} \d \psi_k \right)^2 \\
+ \sum_{\alpha=1}^m \left( \frac{U_\alpha}{X_\alpha} \d x_\alpha^2 
+ \frac{X_\alpha}{U_\alpha} \left( \d t + \sum_{k=1}^m A^{(k)}_{\alpha}  \d \psi_k \right)^2 \right) \, ,
\end{multline*}
where
\begin{align*}
U & = \prod_{\beta=1}^m (r^2 + x_\beta^2) \, ,  \qquad 
X =  \sum_{k=1}^{m+1} (-1)^k c_k r^{2k} + M r \, ,  &
A^{(k)} & = \sum_{\mathclap{\nu_1 < \ldots < \nu_k}} x_{\nu_1}^2 \ldots x_{\nu_k}^2 \, ,
\intertext{and, for $\alpha=1, \ldots , m$,}
 U_\alpha & = (r^2 + x_\alpha^2) \prod_{\alpha\neq\beta} (x_\beta^2 - x_\alpha^2) \, ,  &
X_\alpha & = \sum_{k=1}^{m+1} c_k x_{\alpha}^{2k} + L_{\alpha} r \, ,  \\
A^{(k)}_{\alpha} & = \sum_{\mathclap{\substack{\beta_1 < \ldots < \beta_k\\ \beta_i \neq \alpha}}} x_{\beta_1}^2 \ldots x_{\beta_k}^2 - r^2 \sum_{\mathclap{\substack{\beta_1 < \ldots < \beta_{k-1}\\ \beta_i \neq \alpha}}} x_{\beta_1}^2 \ldots x_{\beta_{k-1}}^2 \, .
\end{align*}
Here, $M$, $L_{\alpha}$, $\alpha=1,\ldots m$ and $c_{\alpha}$ are constants related to the mass, NUT parameters, the cosmological constant and rotation parameters of the black hole.

Define 
\begin{align*}
\kappa & =  \sqrt{\frac{U}{2X}} \left( \d r + \frac{X}{U}\left( \d t + \sum_{k=1}^{m} A^{(k)} \d \psi_k \right) \right)  \, , \\
\lambda & = \sqrt{\frac{U}{2X}} \left( \d r - \frac{X}{U}\left( \d t + \sum_{k=1}^{m} A^{(k)} \d \psi_k \right) \right)  \, , \\
\theta^\alpha & = \sqrt{\frac{U_\alpha}{2X_\alpha}} \left(  \d x_\alpha 
+ \i \frac{X_\alpha}{U_\alpha} \left( \d t + \sum_{k=1}^m A^{(k)}_{\alpha}  \d \psi_k \right)\right) \, , \quad (\alpha=1, \ldots , m) \, .
\end{align*}
The set of null $1$-forms $(\kappa, \theta^\alpha)$, where $\kappa$ is real and $\theta^\alpha$ are complex, defines an almost Robinson structure $(N,K)$.  We compute the intrinsic torsion of $(N,K)$, and find, suspending the summation convention,
\begin{flalign*}
\gamma_i & = \sigma_{\alpha \beta} = \tau_{\alpha \beta} = \zeta_{\alpha \beta} = G_{\alpha \beta \gamma} = 0 \, , & \\
\frac{\epsilon}{2m} h_{\alpha \bar{\beta}} + \sigma_{\alpha \bar{\beta}} + \tau_{\alpha \bar{\beta}} & = \sqrt{\frac{X}{2U}} \frac{r - \i x_{\alpha}}{r^2 + x_{\alpha}^2} \delta_{\alpha \bar{\beta}} \, , & \\
E_{\alpha} & = \i \sqrt{\frac{X_{\alpha}}{2U_{\alpha}}} \frac{r + \i x_{\alpha}}{r^2 + x_{\alpha}^2} \delta_{\alpha \bar{\beta}} \, ,
& G_{\bar{\gamma} \alpha \beta} = - \i \sqrt{\frac{2 X_{\beta}}{U_{\beta}}} \frac{1}{x_{\alpha} + x_{\beta}} \delta_{\alpha \bar{\gamma}} \, , & \\
B_{\alpha \beta} & = 0 \, . &
\end{flalign*}
We see at once that the conditions of Proposition \ref{prop:fol_spinor} are met. In particular, $(N,K)$ is involutive, i.e.\ any of the associated spinors satisfies \eqref{eq:foliating_spinor}, but none of the stronger conditions \eqref{eq:pure_spinor3}, \eqref{eq:pure_spinor3d6} and \eqref{eq:pure_spinor0}. Thus, the intrinsic torsion of $(N,K)$ is a section of $\slashed{\mc{G}}_{0}^{1,1} \cap \slashed{\mc{G}}_{0}^{1,2}$, and does not degenerate further.\footnote{This can be seen by inspection of the above computation bearing in mind that here, the vanishing of $B_{\alpha \beta}$ is not invariant under the structure group of $(N,K)$.} In particular, when $m>1$, the Robinson structure is not twist-induced.

In addition, the congruence $\mc{K}$ tangent to $K$ is geodesic, expanding, maximally twisting and shearing, when $m>1$  --- see also \cite{Pravda2007} --- and non-shearing when $m=1$ as is well-known.

Hence, by Propositions \ref{prop:twist_al_CR4} and \ref{prop:fol_spinor}, the Robinson structure $(N,K)$ descends to a contact CR structure on the leaf space of $\mc{K}$.

Similarly, the set of null $1$-forms $(\lambda, \theta^\alpha)$ also defines an almost Robinson structure whose intrinsic torsion share the same properties as $(N,K)$. More generally, it is shown in \cite{Mason2010} that this metric admits $2^{m+1}$ almost null structures, which yield $2^{m-1}$ Robinson structures associated to each of the optical structures $K$ and $L$, all sharing the same properties.

These findings also apply to other related metrics such as the Myers--Perry metric \cite{Myers1986} that may be viewed as a special case of the Kerr--NUT--(A)dS metric in the limit where the NUT parameters and cosmological constant tend to zero.
\end{exa}

\begin{exa}[Taub--NUT--(A)dS and Fefferman--Einstein metrics]\label{exa:F-E_Taub-NUT}
It is shown in \cite{Taghavi-Chabert2021} that any conformal optical geometry $(\mc{M}, \mbf{c}, K)$ of dimension $2m+2$ greater than four, whose associated congruence is  geodesic, twisting and non-shearing, and whose Weyl tensor satisfies
\begin{align}
	W (k, v, k, v) & = 0 \, , &  \mbox{for any sections $k$ of $K$, $v$ of $K^\perp$,} \label{eq:NS-IC}
\end{align}
admits a twist-induced almost Robinson structure $(N,K)$. Here, we recall that $\mbf{c}$ denotes an equivalence class of conformally related Lorentzian metrics. The description of the intrinsic torsion of $(N,K)$ is then described by Proposition \ref{prop:tw-ind_Rob_no_shear}. In particular, the leaf space $\ul{\mc{M}}$ of $\mc{K}$ is endowed with a partially integrable contact almost CR structure $(\ul{H},\ul{J})$ whose associated subconformal structure coincides with that induced by $\accentset{n.e.}{\mbf{c}}$. The involutivity of $(N,K)$ (or equivalently, the vanishing of the Nijenhuis tensor $\ul{\Nh}_{\alpha \beta \gamma}$ of $(\ul{H},\ul{J})$) is equivalent to  the Weyl tensor satisfying
\begin{align*}
	W  (k, u , v, w) & = 0 \, , & \mbox{for any sections $k$ of $K$, $u,v,w$ of $N$.}
\end{align*}

Let us now assume further that the Weyl tensor satisfies
\begin{align*}
	W (k, v, k, \cdot) & = 0 \, , &  \mbox{for any sections $k$ of $K$, $v$ of $K^\perp$.}
\end{align*}
Then locally any Einstein metric $\wh{g}$ in $\mbf{c}$ determines a contact form $\ul{\theta}^0$ on $(\ul{H},\ul{J})$ such that its corresponding almost pseudo-Hermitian structure is almost CR--Einstein. Such a structure is briefly discussed in Remark \ref{rem:CR-Kahler-Einstein} --- see references therein for more precise definitions. Suffices to say that locally $(\ul{\mc{M}},\ul{H},\ul{J},\ul{\theta}^0)$ is fibered over an almost K\"{a}hler--Einstein manifold $(\ut{\mc{M}},\ut{h},\ut{J})$ of dimension $2m$, and the Levi form $\ul{h}$ of $(\ul{H},\ul{J},\ul{\theta}^0)$ can be identified as the pullback of the almost K\"{a}hler--Einstein metric $\ut{h}$.

Further, one can choose a coordinate $\phi$ on the fibers $\mc{M} \accentset{\varpi}{\longrightarrow} \ul{\mc{M}}$ so that
\begin{align*}
		\wh{g} & = \sec^2 \phi \, g \, , & \mbox{for $- \frac{\pi}{2} < \phi < \frac{\pi}{2}$,} 
		\end{align*}
where
\begin{align*}
		g & = 4 \, \varpi^* \ul{\theta}^0 \left( \d \phi + \lambda_0 \, \varpi^* \ul{\theta}^0 \right) + \varpi^* \ul{h}  \, ,
		\end{align*}
		with
		\begin{multline*}
		\lambda_0 = \frac{\ul{\Lambda}}{2m+2}   +   \left( \frac{\Lambda}{2m+1}  - \frac{\ul{\Lambda} }{2m+2} \right)\left(  \sum_{j=0}^{m} a_{j} \cos^{2 j} \phi   -2  a_{m}  \cos^{2m+2} \phi \right) \\
		+ \ul{c} \cos^{2m+1} \phi \sin \phi \, ,
		\end{multline*}
		\begin{align*}
		a_{0} = 1  \, , & & a_{j} =  \frac{2m-2j+4}{2m-2j+1 }  a_{j-1} \, , & & j = 1, \ldots, m \, ,
		\end{align*}
and $\Lambda$, $\ul{\Lambda}$ and $\ul{c}$ are constant. Here, the Ricci scalars of $(\mc{M}, \wh{g}, K)$ and $(\ut{\mc{M}},\ut{h},\ut{J})$ are proportional to  $\Lambda$ and $\ul{\Lambda}$ respectively.

We can compute the intrinsic torsion of $(\mc{M},\wh{g},N,K)$ explicitely. We find
\begin{align*}
\gamma_i = \sigma_{i j} = \tau_{\alpha \beta} = \tau^\circ_{\alpha \bar{\beta}} & = 0 \, , & \tau^\omega & = \sec^2 \phi \, , & \epsilon & = 2m \tan \phi \, ,\\
E_{\alpha} = G_{\alpha} = G^\circ_{\bar{\alpha} \beta \gamma} = G^{\tiny{\ydskew}}_{\alpha \beta \gamma} & = 0 \, , & G^{\tiny{\ydhook}}_{\alpha \beta \gamma} & = - 2  \i \sec^4 \phi \, \ul{\Nh}_{\beta \gamma \alpha} \, , &
B_{\alpha \beta} & = 0 \, .
\end{align*}
Therefore, the intrinsic torsion of $(N,K)$ is a section of 
\begin{itemize}
\item $(\slashed{\mc{G}}_{0}^{0 \times 1})_{[2(m-1)\i:-1]} \cap \slashed{\mc{G}}_0^{1,3} \cap \slashed{\mc{G}}_0^{1,1}$, and
\item $\slashed{\mc{G}}_1^{0,0}$ when $(N,K)$ is involutive.
\end{itemize}
In particular, any of the Robinson spinors associated to $(N,K)$ satisfies \eqref{eq:pure_spinor1}, and if $(N,K)$ is involutive, \eqref{eq:pure_spinor3}. We shall see in Section \ref{sec:conf-aRstr} that these bundles do not depend on the metric $\wh{g}$ but solely on the conformal structure $\mbf{c}$. In fact, the intrinsic torsion of $(N,K)$ with respect to the metric $g$ is similar except that $\tau^{\omega} =1$, $\epsilon = 0$ and $G^{\tiny{\ydhook}}_{\alpha \beta \gamma}  = - 2 \i \ul{\Nh}_{\beta \gamma \alpha}$.

Furthermore we remark that for certain values of $\Lambda$, $\ul{\Lambda}$ and $\ul{c}$, and when $(N,K)$ is involutive, the metric $\wh{g}$ is locally isometric to the Taub--NUT--(A)dS metric of \cite{Bais1985,Awad2002,Alekseevsky2021}  generalising the four-dimensional one of \cite{Taub1951,Newman1963}, or the Fefferman--Einstein metric given in \cite{Leitner2007}.

Finally, there is a secondary almost Robinson structure $(N^*,L)$, dual to $(N,K)$, with non-shearing congruence of null geodesics $\mc{L}$. If the function $\lambda_0$ is non-vanishing, then $\mc{L}$ is twisting and $(N^*,L)$ is twist-induced. Otherwise, $\mc{L}$ is non-twisting.

Details can be found in \cite{Taghavi-Chabert2021}.
\end{exa}

\begin{rem}
One reason why non-shearing congruences of null geodesics in higher dimensions are not as common as in dimension four has to do with curvature. Let us review the various geometric interpretations of the algebraic condition on the Weyl tensor \eqref{eq:NS-IC}. These are as follows:
\begin{itemize}
	\item In dimension \emph{four}, if $\mc{K}$ is geodesic and non-shearing \emph{regardless of whether $\mc{K}$ is non-twisting or not}, then condition \eqref{eq:NS-IC} holds --- see e.g.\ \cite{Stephani2003,Penrose1986}.
	\item In dimensions \emph{greater} than four, if $\mc{K}$ is non-shearing \emph{and non-twisting}, then condition \eqref{eq:NS-IC} holds --- in fact, the Weyl tensor satisfies an even stronger condition \cite{Ortaggio2007}.
	\item In \emph{odd} dimensions greater than four, if $\mc{K}$ is twisting and condition \eqref{eq:NS-IC} holds, then $\mc{K}$ must also be shearing \cite{Ortaggio2007}.
	\item In \emph{even} dimensions greater than four, if $\mc{K}$ is twisting and non-shearing, and condition \eqref{eq:NS-IC} holds, then the twist induces an almost Robinson structure on $(\mc{M},g)$ \cite{Taghavi-Chabert2021}.
\end{itemize}
\end{rem}
\subsection{Analogies between almost Robinson geometry and almost Hermitian geometry}\label{sec:Rob-GH}
Recall that an almost Hermitian manifold consists of a triple $(\mc{M}, g, J)$, where $(\mc{M},g)$ is a $(2m+2)$-dimensional smooth Riemannian manifold, and $J$ is an almost complex structure compatible with $g$, i.e.\ $J \circ g = - g \circ J$. An equivalent definition of an almost Hermitian structure on $(\mc{M},g)$ is as an almost null structure $N$ of real index zero, i.e.\ the complexified tangent bundle splits as ${}^\C T \mc{M} = N \oplus \overline{N}$ \cite{Nurowski2002} -- since in Riemannian signature, an almost null structure always has real index zero, we may dispense of this attribute. The equivalence between the two definitions is established by identifying $N$ and $\overline{N}$ as the eigenbundles of $J$. Dually, one can express the almost Hermitian structure in terms of a non-vanishing section of $\bigwedge^{m+1} \Ann(N)$, which can be normalised up to a phase against its complex conjugate. Locally, or globally if $(\mc{M},g)$ is spin, this section is the `square' of a pure spinor field (of real index 0). It annihilates $N$, while its charged conjugate annihilates $\overline{N}$. Their pairing yields the almost Hermitian $2$-form of $(\mc{M},g,J)$ and powers thereof \cite{Kopczy'nski1992}.

As emphazised in \cite{Nurowski2002}, the point of contact between almost Robinson geometry and almost Hermitian geometry is their underlying almost null structure, and the only distinguishing feature between them is its real index, which is itself determined by the metric signature.

The relation between almost Robinson structures and almost null structures was already investigated in the previous section, especially in Theorem \ref{thm:spinor_intors}, using the results of \cite{Taghavi-Chabert2016}. One can play the same game by studying the geometry of an almost Hermitian manifold $(\mc{M},g,J)$ in the light of its underlying almost null structure $(\mc{M}, {}^\C g, N)$. To this end, we recall  the Gray--Hervella classification of almost Hermitian manifolds given in \cite{Gray1980}. Following the notation of that reference, the bundle $\mc{W}$ of intrinsic torsions of $(\mc{M},g,J)$ splits into irreducible $\mathbf{U}(m+1)$-invariant subbundles as
\begin{align}\label{eq:GH_sum}
\mc{W} & = \mc{W}_1 \oplus \mc{W}_2 \oplus \mc{W}_3 \oplus \mc{W}_4 \, ,
\end{align}
where at any point
\begin{align}
\begin{aligned}\label{eq:GH_sp}
 \mc{W}_1 & \cong \dbl \bigwedge^{(3,0)} (\R^{2m+2})^* \dbr \, , & \mc{W}_2 & \cong  \dbl \ydhook (\R^{2m+2})^*  \dbr \, , \\
  \mc{W}_3 & \cong  \dbl  \bigwedge^{(1,2)}_\circ (\R^{2m+2})^*  \dbr \, , & \mc{W}_4 & \cong  \dbl  \bigwedge^{(1,0)} (\R^{2m+2})^*  \dbr \ \, .
\end{aligned}
\end{align}
There are various ways to characterise the intrinsic torsion $\mathring{T}$ of $(\mc{M}, g, J)$. For instance, $(\mc{M}, g, J)$ is \emph{almost K\"{a}hler}, i.e.\ $\mathring{T}$ is a section of $\mc{W}_2$, if and only if the almost Hermitian $2$-form $\omega = g \circ J$  is closed. It is Hermitian, i.e.\ $\mathring{T}$ is a section of  $\mc{W}_3 \oplus \mc{W}_4$, if and only if the Nijenhuis tensor of the complex structure vanishes.
But only a subset of the Gray--Hervella classes will be relevant to the present discussion, namely those that reflect the geometric properties of the underlying almost null structure.  For instance, one can characterise a Hermitian manifold in terms of a pure spinor field that is recurrent along the totally null distribution it defines \cite{Hughston1988,Lawson1989,Taghavi-Chabert2016}: this is equivalent to the eigenbundles of $J$ being in involution --- see Proposition \ref{prop:fol_spinor} for the Robinson analogue. Proceeding as in the proof of Theorem \ref{thm:spinor_intors}, one can easily prove the equivalence between the first and last columns of Table \ref{table:Hermite-Robinson}, which summarizes the correspondences between the various classes of almost null structures, almost Robinson structures and almost Hermitian structures. The equivalence between  the first and second columns follows directly from Theorem \ref{thm:spinor_intors}. We leave the details for the reader. 
\begin{center}
\begin{table}[!htbp]
\begin{displaymath}
{\renewcommand{\arraystretch}{1.5}
\begin{array}{|c|c|c|}
\hline
\text{Almost null structures} & \text{Almost Robinson structures} & \text{Almost Hermitian structures}  \\
\hline
\hline
\mbox{Eq. \eqref{eq:pure_spinor1}} & \slashed{\mc{G}}_0^{1,1} & \mc{W}_2 \oplus \mc{W}_3 \oplus \mc{W}_4 \\
\mbox{Eq. \eqref{eq:pure_spinor2}} &  \slashed{\mc{G}}_0^{1,2}  & \mc{W}_1 \oplus \mc{W}_3 \oplus \mc{W}_4 \\
\mbox{Eqs. \eqref{eq:pure_spinor1} and \eqref{eq:pure_spinor2}} &  \slashed{\mc{G}}_0^{1,1} \cap  \slashed{\mc{G}}_0^{1,2}  & \mc{W}_3 \oplus \mc{W}_4 \\
\mbox{Eq. \eqref{eq:pure_spinor3}  in dim$>6$} &  \slashed{\mc{G}}_1^{0,0} & \mc{W}_4 \\
\mbox{Eq. \eqref{eq:pure_spinor3d6} in dim $6$} &  \slashed{\mc{G}}_1^{0,0} &  \mc{W}_1 \oplus \mc{W}_4 \\
\mbox{Eq. \eqref{eq:pure_spinor0}} & ({\slashed{\mc{G}}_0^{0 \times 1}})_{[2 \i:1]} &\mc{W}_3 \\
\mbox{Eq. \eqref{eq:pure_spinor3} and \eqref{eq:pure_spinor0}} & \{ 0 \}  & \{ 0 \} \\
%%%%%%%%%%%%%%%%%%%%%
\hline
\end{array}}
\end{displaymath}
\caption{\label{table:Hermite-Robinson}Comparison of the intrinsic torsion of almost Robinson structures and the Gray--Hervella classification of almost Hermitian manifolds on the basis of the properties of their underlying null structure.}
\end{table}
\end{center}

\subsection{Almost Robinson manifolds of Kundt type}\label{sec:alRob_Kundt}
\begin{defn}\label{def:alRob_Kundt}
An almost Robinson manifold $(\mc{M}, g, N, K)$ is said to be \emph{of Kundt type} if $(\mc{M}, g, K)$ is a Kundt spacetime, i.e.\ $K$ is tangent to a non-expanding, non-shearing, non-twisting congruence of null geodesics.
\end{defn}
Equivalently, the intrinsic torsion of such manifolds is a section of $\slashed{\mc{G}}_{-1}^0 \cap \slashed{\mc{G}}_{-1}^1 \cap \slashed{\mc{G}}_{-1}^2$, i.e.\
\begin{align*}
\breve{\gamma}_i = \breve{\epsilon}  = \breve{\tau}_{i j} = \breve{\sigma}_{i j} & = 0 \, .
\end{align*}

For most of this section, however, we shall restrict ourselves to nearly Robinson manifolds of Kundt type, in which case the intrinsic torsion is a section of $\mc{G}^0 = \slashed{\mc{G}}_{-1}^0 \cap \slashed{\mc{G}}_{-1}^1 \cap \slashed{\mc{G}}_{-1}^2 \cap \slashed{\mc{G}}_{-1}^{3,0}$, i.e.\
\begin{align*}
\breve{\gamma}_i = \breve{\epsilon}  = \breve{\tau}_{i j} = \breve{\sigma}_{i j} = \breve{\zeta}_{\alpha \beta} & = 0 \, .
\end{align*}
Note that this implies that $N$ is parallel along $K$. From Section \ref{sec:lift}, $(N,K)$ induces an almost CR structure with totally degenerate Levi form on the leaf space $(\ul{\mc{M}},\ul{H})$, and the screen bundle metric $h$ on $H_K$ descends to a bundle metric $\ul{h}$ on $\ul{H}$. This tells us that $(\ul{H}, \ul{h}, \ul{J})$ is a Hermitian vector bundle. In addition, since $K^\perp$ is in involution, so is $\ul{H}$. Thus, $(\ul{\mc{M}},\ul{H},\ul{h})$ admits a local Riemannian foliation. Putting these two facts together allows us to characterise nearly Robinson manifolds of Kundt type in the following terms.
\begin{prop}
Let  $(\mc{M},g, N, K)$ be a $(2m+2)$-dimensional nearly Robinson manifold of Kundt type with congruence of null geodesics $\mc{K}$. Then the local leaf space $(\ul{\mc{M}}, \ul{H}, \ul{h}, \ul{J})$ of $\mc{K}$ is foliated by a smooth one-parameter family of $2m$-dimensional almost Hermitian manifolds, each tangent to $\ul{H}$.
\end{prop}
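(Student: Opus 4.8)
The plan is to obtain the foliation directly from the involutivity of $\ul{H}$ by the Frobenius theorem, and then to equip its leaves with the almost Hermitian data inherited from $(\ul{H},\ul{h},\ul{J})$. First I would collect the structural facts already assembled in the paragraph preceding the statement. Since $\mc{K}$ is a Kundt congruence it is in particular geodesic and non-twisting, so by Remark \ref{rem:ind_str_opt} the screen distribution descends to an involutive rank-$2m$ distribution $\ul{H}$ on the $(2m+1)$-dimensional leaf space $\ul{\mc{M}}$; since $\mc{K}$ is moreover non-shearing and non-expanding, the same remark guarantees that the screen bundle metric $h$ descends to a genuine positive-definite bundle metric $\ul{h}$ on $\ul{H}$, while by Proposition \ref{prop:N_pres} the bundle complex structure $J$ descends to a bundle complex structure $\ul{J}$ on $\ul{H}$ compatible with $\ul{h}$. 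Thus $(\ul{H},\ul{h},\ul{J})$ is a Hermitian vector bundle over $\ul{\mc{M}}$ with $\ul{H}$ involutive.

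Next I would apply the Frobenius theorem. Writing $\ul{H} = \Ann(\ul{\theta}^0)$ for a local defining $1$-form $\ul{\theta}^0$, involutivity of $\ul{H}$ is exactly the total degeneracy $\ul{\msf{h}}_{\alpha \bar{\beta}} = \ul{\Lv}_{\alpha \beta} = 0$ of the Levi form, which by the structure equations \eqref{eq:al_CR_structure} gives $\d \ul{\theta}^0 = \ul{\alpha} \wedge \ul{\theta}^0$ for some $1$-form $\ul{\alpha}$. Hence $\ul{\theta}^0$ is locally proportional to an exact $1$-form $\d s$, and the level sets of the transverse coordinate $s$ are the integral manifolds of $\ul{H}$. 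Since $\ul{H}$ has corank one, this exhibits $\ul{\mc{M}}$ locally as a product $I \times \ul{\mc{L}}$ over an interval $I$, foliated by a smooth one-parameter family $\{ \ul{\mc{L}}_s \}_{s \in I}$ of $2m$-dimensional leaves, each satisfying $T \ul{\mc{L}}_s = \ul{H}|_{\ul{\mc{L}}_s}$.

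Finally I would equip each leaf with its almost Hermitian structure. Because $T \ul{\mc{L}}_s = \ul{H}|_{\ul{\mc{L}}_s}$, the restrictions $\ul{h}|_{\ul{\mc{L}}_s}$ and $\ul{J}|_{\ul{\mc{L}}_s}$ are a Riemannian metric and a compatible almost complex structure on $\ul{\mc{L}}_s$, so $(\ul{\mc{L}}_s, \ul{h}|_{\ul{\mc{L}}_s}, \ul{J}|_{\ul{\mc{L}}_s})$ is a $2m$-dimensional almost Hermitian manifold, with smoothness in $s$ immediate from that of $\ul{h}$ and $\ul{J}$ as fields on $\ul{\mc{M}}$. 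I would stress that one cannot upgrade \emph{almost} Hermitian to Hermitian here: integrability of $\ul{J}$ on the leaves amounts to the involutivity of $\ul{H}^{(0,1)}$, hence of $N$ by Proposition \ref{prop:fol_spinor}, which the nearly Robinson Kundt hypothesis does not provide. The argument is essentially a bookkeeping of Frobenius together with the descent statements already established, so there is no serious obstacle; the only point requiring care is to verify that the inherited objects restrict to genuine \emph{intrinsic} almost Hermitian structures on the leaves rather than remaining mere bundle data on $\ul{\mc{M}}$, and that the corank-one condition correctly yields a one-parameter family.
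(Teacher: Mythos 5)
Your proposal is correct and follows essentially the same route as the paper: the paper likewise derives the involutivity of $\ul{H}$ from the non-twisting (Kundt) condition, the descent of $h$ and $J$ to $(\ul{h},\ul{J})$ from the non-shearing/non-expanding and nearly Robinson conditions (Remark \ref{rem:ind_str_opt} and Proposition \ref{prop:N_pres}), and then combines the resulting Riemannian foliation with the Hermitian bundle structure on $\ul{H}$. Your explicit Frobenius bookkeeping and the caveat that the leaves are only \emph{almost} Hermitian merely spell out what the paper leaves implicit.
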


In the neighbourhood of every point of $\mc{M}$, we can apply Proposition \ref{prop:Rob->CR} to express the metric $g$ in terms of a Robinson coframe adapted to the Kundt geometry in terms of the leaf space $\ul{\mc{M}}$ as follows. Locally, we shall refer to a Robinson coframe $(\kappa, \theta^{\alpha}, \ol{\theta}{}^{\bar{\alpha}} , \lambda)$ as a \emph{(complex) Kundt coframe} if
\begin{align*}
	\kappa & := 2 \varpi^* \ul{\theta}^{0} \, , & \theta^{\alpha} & = \varpi^* \ul{\theta}^{\alpha} \, ,
\end{align*}
for some exact $1$-form $\ul{\theta}^{0}$ annihilating $\ul{H}$, and coframe $(\ul{\theta}^{\alpha}, \ol{\ul{\theta}}{}^{\bar{\alpha}})$ on $\ul{H}$ unitary with respect to $\ul{h}$. As before, $\varpi$ is the local projection from $\mc{M}$ to $\ul{\mc{M}}$. The $1$-form $\lambda$ must be vertical with respect to $\varpi$.

In addition, one can find local coordinates $u$, $v$ on $\mc{M} := \R \times \ul{\mc{M}} \stackrel{\varpi}{\longrightarrow} \ul{\mc{M}}$ such that
\begin{align*}
	\kappa & = 2 \varpi^* \ul{\theta}^0 =  \d u \, , & \mbox{and} & & k & = g^{-1}( \kappa, \cdot) = \parderv{}{v} \, ,
\end{align*}
i.e.\ $u$ is a smooth function on $\ul{\mc{M}}$ parametrising the leaves of the almost Hermitian foliation, and $v$ is an affine parametre along the geodesics of $\mc{K}$.  This allows us to write
$\lambda$ in the form
\begin{align}\label{eq:vertical_form}
	\lambda & = \d v + \lambda_\alpha \varpi^* \ul{\theta}^\alpha + \lambda_{\bar{\alpha}} \varpi^* \ul{\theta}^{\bar{\alpha}} + \lambda_0 \varpi^* \ul{\theta}^0 \, ,
\end{align}
where $\lambda_\alpha$, $\lambda_{\bar{\alpha}}$ and $\lambda_0$ are smooth functions on $\mc{M}$.

Any two Kundt coframes $(\kappa, \theta^{\alpha}, \overline{\theta}{}^{\bar{\alpha}}, \lambda)$ and $(\wh{\kappa}, \wh{\theta}{}^{\alpha}, \overline{\wh{\theta}}{}^{\bar{\alpha}}, \wh{\lambda})$ are related by the transformations \eqref{eq:Rob_cof_transf} where $\phi^\alpha$ and $\psi{}_{\beta}{}^{\alpha}$ are now required to be constant along $K$, and $\varphi$  constant along $K^\perp$. Such a transformation can be induced from a transformation of adapted coframes on $(\ul{\mc{M}},\ul{H},\ul{h})$, a change of parameter for the almost Hermitian foliation thereon, or a change of affine parameter along $\mc{K}$.

We shall streamline the notation by setting $(\kappa, \theta^i, \lambda ) = (\kappa, \theta^{\alpha}, \overline{\theta}{}^{\bar{\alpha}}, \lambda)$. In the following, we fix a $1$-form $\ul{\theta}^0$ annihilating $\ul{H}$ and a splitting $T \mc{M} = \mrm{span} (\ul{e}_0) \oplus \ul{H}$ where $\ul{e}_0$ is dual to $\ul{\theta}^{\alpha}(\ul{e}_0)=0$. Note that this fixes the freedom in choosing  $(\ul{\theta}{}^{\alpha} , \overline{\ul{\theta}}{}^{\bar{\alpha}})$, up to unitary transformations. We introduce a connection $\ul{\nabla}$ on $\ul{\mc{M}}$ that preserves $\ul{h}_{i j}$, $\ul{\theta}^0$ and $\ul{e}_0$ with torsion tensor $\ul{A}_{i j} = \ul{A}_{(i j)}$, i.e.\
\begin{align*}
\left( \ul{\nabla}_0 \ul{\nabla}_i - \ul{\nabla}_i \ul{\nabla}_0 \right) \ul{f} & = - \ul{A}_i{}^{j} \ul{\nabla}_j \ul{f} \, , & \mbox{for any smooth function $\ul{f}$ on $\ul{\mc{M}}$.}
\end{align*}
Note this connection depends on the choice of $\ul{\theta}^i$ up to orthogonal transformations, and thus on the choice of $\ul{e}_0$. Dropping the pullback maps, we can express the Levi-Civita connection $\nabla$ of $g$ in the following way:
\begin{align}
\begin{aligned}\label{eq:Kundt_conn}
\nabla \kappa & = 2 \,  \kappa \odot E  \, , \\
\nabla \theta^i & =  \ul{\nabla}  \ul{\theta}^i  - 2 \, B^{i}{}_{j}  \ul{\theta}^{j} \odot \kappa  -  C^{i} \kappa \otimes \kappa - \frac{1}{2} \ul{A}^{i}{}_{j} \ul{\theta}^{j} \otimes \kappa - 2 \, E^{i}  \lambda  \odot \kappa \, , \\
\nabla \lambda & = - 2  E \wedge  \lambda - E_0 \kappa \odot \lambda + \kappa \otimes   C  + \frac{1}{2} \ul{A} -  B   \, ,
\end{aligned}
\end{align}
where
\begin{align*}
\ul{A} & =  \ul{A}_{i j} \ul{\theta}^i \otimes \ul{\theta}^j \, , &
B & =  B_{i j} \ul{\theta}^i \otimes \ul{\theta}^j \, , & E & = E_i \ul{\theta}^i + E_0 \ul{\theta}^0 \, ,  & C & = C_i \ul{\theta}^i \, ,
\end{align*}
with  $B_{i j} = B_{[i j]}$, $E_i$, $E_0$ and $C_i$  being functions on $\mc{M}$ satisfying
\begin{align*}
B_{i j} & = - \ul{\nabla}_{[i} \lambda_{j]} + \lambda_{[i} \dot{\lambda}_{j]} \, , \\
C_{i} & = - \frac{1}{2}\left( \ul{\nabla}_i \lambda_0 - \ul{\nabla}_0 \lambda_i - \lambda_i \dot{\lambda}_0 +  \lambda_0 \dot{\lambda}_i - \ul{A}_{i j} \lambda^j \right)  \, , \\
E_{i} & = \frac{1}{2} \dot{\lambda}_i \, , \\
E_0 & = \frac{1}{2} \dot{\lambda}_0 \, .
\end{align*}
This choice of notation is of course not fortuitous since we can then identify the components $E_\alpha$ and $B_{\alpha \beta}$ of the intrinsic torsion.

\begin{lem}\label{lem:lambda}
Let $(\mc{M}, g, N, K)$ be a nearly Robinson manifold of Kundt type  with congruence of null geodesics $\mc{K}$ and leaf space $(\ul{\mc{M}}, \ul{H}, \ul{h}, \ul{J})$. Then, for any Kundt coframe $(\kappa, \theta^{\alpha}, \overline{\theta}{}^{\bar{\alpha}}, \lambda)$, the $1$-form $\lambda$ satisfies
\begin{align*}
E_i & = -(\nabla \lambda)_{i}{}^{0} = -( \d \lambda)_{i}{}^{0} \, ,
\end{align*}
and $\breve{E}_i$ is an invariant of $(N,K)$.

If the intrinsic torsion is in addition a section of $(\slashed{\mc{G}}_{0}^{0 \times 1})_{[-2(m-1)\i:1]} \cap\slashed{\mc{G}}_0^{1,1} \cap \slashed{\mc{G}}_0^{1,2} \cap \slashed{\mc{G}}_0^{1,3}$, then
\begin{align*}
B_{\alpha \beta} & = (\nabla \lambda)_{\alpha \beta} = -( \d \lambda)_{\alpha \beta} \, .
\end{align*}
In particular, $\breve{B}_{\alpha \beta}$ is an invariant of $(N,K)$.
\end{lem}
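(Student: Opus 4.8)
The plan is to read off the two identities directly from the explicit connection coefficients in \eqref{eq:Kundt_conn}, which were computed precisely for a Kundt coframe, and then to invoke the transformation rules of Theorem \ref{thm:intors-rob} to establish the claimed invariance.

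First I would establish the identity $E_i = -(\nabla \lambda)_i{}^0$. From the last line of \eqref{eq:Kundt_conn}, namely $\nabla \lambda = -2 E \wedge \lambda - E_0 \kappa \odot \lambda + \kappa \otimes C + \tfrac{1}{2}\ul{A} - B$, I contract with $\delta^a_i$ (picking out a screen index) on the first slot and with $k^b$ (picking out the upper $0$ index) on the second. Using $k^a \lambda_a = \lambda_0 = 0$ for a Kundt coframe (as $\lambda$ is vertical, $\lambda = \d v + \lambda_\alpha\varpi^*\ul{\theta}^\alpha + \cdots$ and $\kappa_a k^a =1$, $\kappa_a\ell^a$ pairings), together with the fact that $\ul{A}$, $B$, $C$ carry only screen-type horizontal indices and hence vanish upon contraction with $k^b$, only the term $-2 E \wedge \lambda$ survives, whose $i{}^0$ component is $E_i$ by the normalisation $\lambda(k)=1$. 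Since $\nabla$ is torsion-free, $(\nabla\lambda)_{[i}{}^{0]} = (\d\lambda)_i{}^0$, and the symmetric part drops out of the relevant antisymmetric combination, giving $E_i = -(\nabla\lambda)_i{}^0 = -(\d\lambda)_i{}^0$. The claim that $\breve E_i$ is an invariant of $(N,K)$ then follows because, by the definition \eqref{eq:S-H1_b}, $E_i = (\nabla\kappa)_{0i}$ is the $\mc{G}_0^{0,0}$-component of the intrinsic torsion, which is already well-defined once $\breve\gamma_i = \breve\epsilon = \breve\tau_{ij} = \breve\sigma_{ij} = 0$ (the Kundt conditions), since the transformation rule $E_\alpha \mapsto E_\alpha + \tau_{\alpha\beta}\phi^\beta - \sigma_{\alpha\beta}\phi^\beta + \cdots - \gamma_\alpha\phi^\beta\phi_\beta$ from the proof of Theorem \ref{thm:intors-rob} reduces to $E_\alpha \mapsto E_\alpha$ precisely when all lower-weight components vanish.

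For the second identity, I would contract $\nabla\lambda$ with $\delta^a_\alpha\,\delta^b_\beta$. The candidate contributions are the $-2E\wedge\lambda$ term and the $-B$ term; the $\tfrac12\ul{A}$ term is symmetric and contributes to $(\nabla\lambda)_{(\alpha\beta)}$ rather than the skew part, while $\kappa\otimes C$ and $E_0\kappa\odot\lambda$ carry a $\kappa$ factor that is annihilated by $\delta^b_\beta$. The $-2E\wedge\lambda$ term contributes $-(E_\alpha\lambda_\beta - E_\beta\lambda_\alpha)$, i.e.\ terms built from $E_\alpha$ and the horizontal components $\lambda_\alpha$; so one does \emph{not} get $B_{\alpha\beta} = (\nabla\lambda)_{\alpha\beta}$ for free. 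This is where the hypothesis that the intrinsic torsion lies in $(\slashed{\mc{G}}_0^{0\times1})_{[-2(m-1)\i:1]}\cap\slashed{\mc{G}}_0^{1,1}\cap\slashed{\mc{G}}_0^{1,2}\cap\slashed{\mc{G}}_0^{1,3}$ enters: consulting Theorem \ref{thm:intors-rob}, membership in this intersection forces $E_\alpha$ (equivalently $G_\alpha$, via the $(\slashed{\mc{G}}_0^{0\times1})$ condition relating $E_\alpha$ and $G_\alpha$) together with $G^{\tiny\ydskew}$, $G^{\tiny\ydhook}$, $G^\circ$ to vanish, so that the offending $E\wedge\lambda$ contribution drops out and $(\nabla\lambda)_{\alpha\beta} = -B_{\alpha\beta} = (\d\lambda)_{\alpha\beta}$ with the sign absorbed. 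The invariance of $\breve B_{\alpha\beta}$ then follows from the transformation rule for $B_{\alpha\beta}$ in Theorem \ref{thm:intors-rob}: the correction terms are all proportional to $G_{[\alpha}\phi_{\beta]}$, $E_{[\alpha}\phi_{\beta]}$, $\phi^\gamma G^{\tiny\ydskew}_{\gamma\alpha\beta}$, $\phi^\gamma G^{\tiny\ydhook}_{\gamma\alpha\beta}$, $\phi^{\bar\gamma}G^\circ_{\bar\gamma\alpha\beta}$ and quadratic-in-$\phi$ terms in $\tau,\sigma,\zeta,\gamma$, every one of which vanishes under the standing Kundt and degeneracy hypotheses.

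The main obstacle is the bookkeeping in the second identity: one must verify carefully that each of the conditions defining the intersection of $Q$-submodules kills exactly the right connection-coefficient terms, in particular disentangling the $(\slashed{\mc{G}}_0^{0\times1})_{[-2(m-1)\i:1]}$ condition, which is a mixed relation between $E_\alpha$ and $G_\alpha$ rather than the vanishing of either alone. I would handle this by translating each projection condition of Theorem \ref{thm:intors-rob} into the vanishing of the corresponding $\breve G$- and $\breve E$-tensors in the chosen splitting, and then checking term-by-term against \eqref{eq:Kundt_conn} and the $B_{\alpha\beta}$ transformation rule. The computation is otherwise routine contraction of \eqref{eq:Kundt_conn} with the splitting operators, so I would not reproduce it in full.
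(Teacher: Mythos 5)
Your overall strategy is exactly the paper's: the paper's proof consists of two sentences saying that the identities are read off from the connection components in \eqref{eq:Kundt_conn} and that the invariance statements follow from the transformation rules in the proof of Theorem \ref{thm:intors-rob}. Your first identity and both invariance arguments are handled correctly in this spirit; in particular you correctly locate the invariance of $\breve{B}_{\alpha\beta}$ in the vanishing of the correction terms $\bigl(\tfrac{2}{m-1}G_{[\alpha}-4\i E_{[\alpha}\bigr)\phi_{\beta]}$, $\phi^\gamma \breve{G}^{\tiny{\ydskew}}_{\gamma\alpha\beta}$, $\phi^\gamma \breve{G}^{\tiny{\ydhook}}_{\gamma\alpha\beta}$, $\phi^{\bar\gamma}\breve{G}^{\circ}_{\bar\gamma\alpha\beta}$, which is precisely what the four bundles in the hypothesis control.

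There is, however, a concrete misstep in your discussion of the second identity. First, the condition $(\slashed{\mc{G}}_{0}^{0\times1})_{[-2(m-1)\i:1]}$ does \emph{not} force $E_\alpha$ and $G_\alpha$ to vanish: by the definition of $(\Pi_0^{0\times1})_{[z:w]}$ it imposes only the linear relation $-2(m-1)\i\,\breve{E}_\alpha+\breve{G}_\alpha=0$ (the accompanying lower-weight conditions being vacuous for this value of $[z:w]$ and in the Kundt case). That relation is exactly what kills the term $\bigl(\tfrac{2}{m-1}G_{[\alpha}-4\i E_{[\alpha}\bigr)\phi_{\beta]}$ in the transformation of $B_{\alpha\beta}$, but it leaves $E_\alpha$ itself unconstrained; an argument that needs $E_\alpha=0$ would therefore fail. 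Second, the term you identify as the obstruction, $-2(E\wedge\lambda)$ contracted into $(\nabla\lambda)_{\alpha\beta}$, in fact vanishes identically: the splitting operators dual to the Kundt coframe satisfy $\lambda_a\delta^a_\beta=0$, so $E_{[\alpha}\lambda_{\beta]}=0$ with no hypothesis needed. (You appear to be conflating the coframe components $\lambda_\alpha$ of \eqref{eq:vertical_form} with the frame contractions $\lambda_a\delta^a_\alpha$, which are different objects.) The hypothesis is genuinely needed, but for the invariance of $\breve{B}_{\alpha\beta}$ under changes of splitting (and for matching $(\nabla\rho)_{0\alpha\beta0}$ with $(\nabla\lambda)_{\alpha\beta}$), not for suppressing an $E\wedge\lambda$ contribution. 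With that correction the proof goes through as the paper intends.
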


\begin{proof}
 The expressions for $E_i$ and $B_{\alpha \beta}$ follow from reading off the components of the Levi-Civita connections from equations \eqref{eq:Kundt_conn}. The statements on the invariance of the corresponding weighted quantities $\breve{E}_i$ and $\breve{B}_{\alpha \beta}$ can be checked from the transformations in the proof of Theorem \ref{thm:intors-rob}.
\end{proof}

Since both  $\kappa_a$ and $\rho_{a b c}$ are the pull-backs of some $1$-form and $3$-form from $\ul{\mc{M}}$ to $\mc{M}$, we can relate the present classification of almost Robinson structures with the Gray--Hervella classification \cite{Gray1980} of almost Hermitian manifolds. To this end, we simply note that for any splitting $(\ell^a, \delta^a_i, k^a)$, using \eqref{eq:Kundt_conn}, we have
\begin{align}\label{eq:GH-Rob3}
(\nabla \rho)_{i j k 0} & = \ul{\nabla}_i \ul{\omega}_{j k} \, ,
\end{align}
where $\ul{\omega}_{i j} = \ul{J}_i{}^k \ul{h}_{k j}$ is the smooth family of Hermitian $2$-form associated to the almost Hermitian foliation on $(\ul{\mc{M}}, \ul{H}, \ul{h}, \ul{J})$. One can readily check that this does not depend on the choice of coframe --- this essentially follows from equations \eqref{eq:G-1_alt} and the fact that the optical invariants $\breve{\gamma}_i$, $\breve{\epsilon}$, $\breve{\sigma}_{i j}$ and $\breve{\tau}_{i j}$ all vanish. The Gray--Hervella classes can be easily obtained by comparing the LHS and the RHS of equation \eqref{eq:GH-Rob3} with references to \eqref{eq:GH_sum} and \eqref{eq:GH_sp}. We have collected the findings in Table \ref{tab-intors-GH}.
\begin{center}
\begin{table}[!htbp]
\begin{displaymath}
{\renewcommand{\arraystretch}{1.5}
\begin{array}{|c|c|c|c|c|c|c|c|c|}
\hline
\text{Type of almost Hermitian structure} & \multicolumn{4}{c|}{\text{Intrinsic torsion $\ul{\mathring{T}}$}} & \multicolumn{4}{c|}{\text{Intrinsic torsion $\mathring{T}$}}  \\
\text{on $\ul{\mc{M}}$} & \mc{W}_1 & \mc{W}_2 & \mc{W}_3 & \mc{W}_4 & \slashed{\mc{G}}_{0}^{1,0} & \slashed{\mc{G}}_{0}^{1,1} & \slashed{\mc{G}}_{0}^{1,2} & \slashed{\mc{G}}_{0}^{1,3} \\
\hline
\hline
 \text{almost Hermitian} & \checkmark & \checkmark & \checkmark & \checkmark &  &  &  &  \\
%%%%%%%%%%%%%%%%%%%%%
\mc{G}_2 &  & \checkmark & \checkmark & \checkmark
&  & \checkmark &  &  \\
\mc{G}_1 & \checkmark &  & \checkmark & \checkmark
&  &  & \checkmark &  \\
& \checkmark & \checkmark &  & \checkmark
&  &  &  & \checkmark \\
\text{semi-K\"{a}hler} & \checkmark & \checkmark & \checkmark & 
& \checkmark &  &  &  \\
%%%%%%%%%%%%%%%%%%%%%
\text{Hermitian} &  &  & \checkmark & \checkmark
&  & \checkmark & \checkmark &  \\
\text{incl. locally conformally almost K\"{a}hler} &  & \checkmark &  & \checkmark
&  & \checkmark &  & \checkmark \\
&  & \checkmark & \checkmark & 
& \checkmark & \checkmark &  &  \\
& \checkmark &  &  & \checkmark
&  &  & \checkmark & \checkmark \\
& \checkmark &  & \checkmark & 
& \checkmark &  & \checkmark &  \\
\text{quasi-K\"{a}hler} & \checkmark & \checkmark &  & 
& \checkmark &  &  & \checkmark \\
%%%%%%%%%%%%%%%%%%%%%
\text{nearly K\"{a}hler} & \checkmark &  &  & 
& \checkmark &  & \checkmark & \checkmark \\
\text{almost K\"{a}hler} &  & \checkmark &  & 
& \checkmark & \checkmark &  & \checkmark \\
\text{special Hermitian} &  &  & \checkmark & 
& \checkmark & \checkmark & \checkmark &  \\
\text{incl. locally conformally K\"{a}hler} &  &  &  & \checkmark
&  & \checkmark & \checkmark & \checkmark \\
%%%%%%%%%%%%%%%%%%%%%
\text{K\"{a}hler} &  &  &  & 
& \checkmark & \checkmark & \checkmark & \checkmark \\
\hline
\end{array}}
\end{displaymath}
\caption{\label{tab-intors-GH} Relation between the intrinsic torsions $\mathring{T}$ and $\ul{\mathring{T}}$ for nearly Robinson manifolds of Kundt type.}
\end{table}
\end{center}

The following proposition gives a characterisation of an almost Robinson manifold of Kundt type in the case where the leaves of the Riemannian foliation on the leaf space are K\"{a}hler manifolds. We leave the proof to the reader.
\begin{prop}
Let $(\mc{M}, g, N, K)$ be a nearly Robinson manifold of Kundt type with congruence of null geodesics $\mc{K}$ and leaf space $(\ul{\mc{M}}, \ul{H}, \ul{h}, \ul{J})$. The following statements are equivalent.
\begin{enumerate}
\item the intrinsic torsion is a section of $ \slashed{\mc{G}}_{0}^{1,0} \cap \slashed{\mc{G}}_0^{1,1} \cap \slashed{\mc{G}}_0^{1,2} \cap \slashed{\mc{G}}_0^{1,3}$, i.e.\ 
\begin{align*}
\breve{\gamma}_i = \breve{\epsilon} = \breve{\tau}_{i j} = \breve{\sigma}_{i j} = \breve{\zeta}_{\alpha \beta} = \breve{G}_{\alpha} = \breve{G}^{\tiny{\ydskew}}_{\alpha \beta \gamma} = \breve{G}^{\tiny{\ydhook}}_{\alpha \beta \gamma} = \breve{G}^{\circ}_{\bar{\alpha} \beta \gamma} = 0 \, .
\end{align*}
\item any Robinson $3$-form $\rho_{a b c}$ is recurrent along $K^\perp$, i.e.\
\begin{align*}
\kappa_{[a} \nabla_{b]} \rho_{c d e} & = \kappa_{[a} \alpha_{b]} \rho_{c d e} \, , & \mbox{for some $1$-form $\alpha_a$;}
\end{align*}
\item $N$ is parallel along $K^\perp$.
\item Each leaf of the almost Hermitian foliation on $(\ul{\mc{M}}, \ul{H}, \ul{h}, \ul{J})$ is a K\"{a}hler manifold.
\end{enumerate}
\end{prop}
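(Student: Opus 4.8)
The plan is to prove the equivalence of the four conditions by first establishing the key computational identity \eqref{eq:GH-Rob3}, namely $(\nabla \rho)_{ijk0} = \ul{\nabla}_i \ul{\omega}_{jk}$, which is the bridge between the almost Robinson data on $\mc{M}$ and the almost Hermitian data on the leaves of the foliation on $\ul{\mc{M}}$. This identity follows directly from the expression \eqref{eq:Kundt_conn} for the Levi-Civita connection in a Kundt coframe, bearing in mind that all the optical invariants $\breve{\gamma}_i, \breve{\epsilon}, \breve{\sigma}_{ij}, \breve{\tau}_{ij}$ vanish for a nearly Robinson manifold of Kundt type. First I would record, using the definitions \eqref{eq:S-H2_b} together with the splitting of $G_{i\beta\gamma}$ into its irreducible pieces $G^{\tiny{\ydskew}}_{\alpha\beta\gamma}$, $G^{\tiny{\ydhook}}_{\alpha\beta\gamma}$, $G^{\circ}_{\bar{\alpha}\beta\gamma}$, $G_\alpha$, that the $\mc{G}_0$-part of the intrinsic torsion is precisely the collection of components $(\nabla \rho)_{ijk0}$ in the given splitting, and that $\breve\zeta_{\alpha\beta}$ automatically vanishes since the structure is nearly Robinson.

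The equivalence of \textit{(i)} and \textit{(iv)} is then immediate from \eqref{eq:GH-Rob3}: the intrinsic torsion $\mathring{T}$ being a section of $\slashed{\mc{G}}_0^{1,0} \cap \slashed{\mc{G}}_0^{1,1} \cap \slashed{\mc{G}}_0^{1,2} \cap \slashed{\mc{G}}_0^{1,3}$ means exactly that all the $\mc{G}_0$-components $\breve{G}_\alpha, \breve{G}^{\tiny{\ydskew}}_{\alpha\beta\gamma}, \breve{G}^{\tiny{\ydhook}}_{\alpha\beta\gamma}, \breve{G}^{\circ}_{\bar\alpha\beta\gamma}$ vanish, which by \eqref{eq:GH-Rob3} is equivalent to $\ul{\nabla}_i \ul{\omega}_{jk} = 0$ on each leaf, i.e.\ each leaf is K\"ahler with respect to the induced $(\ul{h}, \ul{J})$. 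This is read off directly from Table \ref{tab-intors-GH}: the K\"ahler row is precisely the one with checks in all four of $\slashed{\mc{G}}_0^{1,0}, \slashed{\mc{G}}_0^{1,1}, \slashed{\mc{G}}_0^{1,2}, \slashed{\mc{G}}_0^{1,3}$, corresponding to the vanishing of all four Gray--Hervella modules $\mc{W}_1, \mc{W}_2, \mc{W}_3, \mc{W}_4$.

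For the equivalence of \textit{(i)} with \textit{(ii)} and \textit{(iii)}, I would proceed as in the proofs of Propositions \ref{prop:al_K-Rob} and \ref{prop:deg+}. For \textit{(ii)}, writing out the recurrence condition $\kappa_{[a} \nabla_{b]} \rho_{cde} = \kappa_{[a}\alpha_{b]}\rho_{cde}$ and contracting with suitable combinations of the splitting operators $(\ell^a, \delta^a_\alpha, \delta^a_{\bar\alpha}, k^a)$ isolates each of the $\mc{G}_0$-components in turn; since the $\mc{G}_{-1}$- and $\mc{G}_{-2}$-components already vanish for a Kundt nearly Robinson manifold, the only surviving obstruction to recurrence along $K^\perp$ is the $\mc{G}_0$-part, giving equivalence with \textit{(i)}. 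For \textit{(iii)}, the parallelism of $N$ along $K^\perp$ translates, via the Robinson spinor or equivalently the Robinson $(m+1)$-form $\nu$, into the statement that $\nu$ is recurrent along $K^\perp$, which is again governed by exactly these $\mc{G}_0$-components; this is the same mechanism used in Proposition \ref{prop:fol_spinor} restricted from $N$ to $K^\perp$.

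The main obstacle I anticipate is purely bookkeeping: one must verify that in the Kundt setting the components $(\nabla\rho)_{ijk0}$ cleanly capture \emph{all four} irreducible $\mc{G}_0$-pieces with no cross-contamination from the other graded summands, and that the identification in \eqref{eq:GH-Rob3} respects the $\U(m)$-irreducible decomposition matching the Gray--Hervella modules $\mc{W}_1, \ldots, \mc{W}_4$ exactly as tabulated. Once \eqref{eq:GH-Rob3} is in hand and the vanishing of the lower-weight invariants is invoked, each equivalence reduces to a short contraction argument entirely analogous to those already carried out in the preceding propositions, so no genuinely new difficulty arises; this is why the proof is left to the reader.
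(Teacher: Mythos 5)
Your proposal is correct and follows exactly the route the paper sets up (the paper itself omits the proof): the identity \eqref{eq:GH-Rob3}, combined with the vanishing of all $\gr_{-2}$- and $\gr_{-1}$-components for a nearly Robinson manifold of Kundt type, reduces every condition to the vanishing of the four components $\breve{G}_\alpha$, $\breve{G}^{\tiny{\ydskew}}_{\alpha\beta\gamma}$, $\breve{G}^{\tiny{\ydhook}}_{\alpha\beta\gamma}$, $\breve{G}^{\circ}_{\bar{\alpha}\beta\gamma}$, matched against the Gray--Hervella modules via Table \ref{tab-intors-GH}, with the remaining equivalences handled by contraction arguments as in Propositions \ref{prop:al_K-Rob}, \ref{prop:Rob_rec} and \ref{prop:fol_spinor}. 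The only phrasing to tighten is your claim that ``the only surviving obstruction to recurrence along $K^\perp$ is the $\mc{G}_0$-part'': the component $\breve{E}_i$ also lives in $\gr_0(\mc{G})$ but is not an obstruction in (ii) --- it merely fixes the screen part of the recurrence $1$-form $\alpha$ (with $\alpha(k)=0$ forced) --- so the obstruction is precisely the $\mc{G}_0^{1,k}$-part, as you correctly list when stating condition (i).
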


Besides the classes of nearly Robinson manifolds of Kundt type enumerated in Table \ref{tab-intors-GH}, other interesting degeneracy conditions on the intrinsic torsion are also possible. In fact, these can be partly characterised by the covariant derivative of the $1$-form $\lambda$ using Lemma \ref{lem:lambda}.

\begin{prop}\label{prop:Kundt_Rob_EG}
Let $(\mc{M}, g, N, K)$ be a nearly Robinson manifolds of Kundt type  with congruence of null geodesics $\mc{K}$ and leaf space $(\ul{\mc{M}}, \ul{H}, \ul{h}, \ul{J})$. Let $[z:w] \in \CP^1$ such that $z \neq 0$. The following two statements are equivalent.
\begin{enumerate}
\item the intrinsic torsion of $(N,K)$ is a section of $(\slashed{\mc{G}}_0^{0\times 1})_{[z:w]}$, i.e.\
\begin{align*}
\breve{\gamma}_i = \breve{\epsilon} = \breve{\tau}_{i j} = \breve{\sigma}_{i j} = \breve{\zeta}_{\alpha \beta} = 0 \, , & &
z \breve{E}_\alpha + w \, \breve{G}_{\alpha}  & = 0 \, .
\end{align*}
\item for any choice of Kundt coframe, the components $\lambda_{\alpha}$ of the vertical $1$-form $\lambda$ given by \eqref{eq:vertical_form} are determined by
\begin{align}\label{eq:lambda01}
\lambda_{\alpha} & = \ul{\lambda}_{\alpha}^{(0)}  - 2 \frac{w}{z}  v \ul{\lambda}^{(1)}_{\alpha} \, ,
\end{align}
for some smooth functions $\ul{\lambda}_\alpha^{(0)}$ and $\ul{\lambda}_\alpha^{(1)}$ on $\ul{\mc{M}}$ such that
\begin{align}\label{eq:lambda1}
 \ul{\lambda}_{\alpha}^{(1)} & =  G_{\alpha} =  \ul{h}^{i j} (\ul{\nabla}\ul{\omega} )_{i j \alpha} \, , & \mbox{where $\ul{\omega}_{i j} = \ul{J}_{i}{}^{k} \ul{h}_{k j}$.}
\end{align}
\end{enumerate}

Finally, for any optical vector field $k$, with $\kappa = g(k,\cdot)$, the Weyl tensor satisfies
\begin{align}\label{eq:W_sp}
k^a W_{a b c [d} \kappa_{e]} k^c & = 0 \, .
\end{align}
\end{prop}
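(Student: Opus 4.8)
The plan is to reduce \eqref{eq:W_sp} to the vanishing of two ``positive boost weight'' components of the Weyl tensor, and then to dispatch each using the Kundt hypotheses. First I would set $M_{bd}:=k^a W_{abcd}k^c$ and record its elementary algebraic properties: the pair symmetry $W_{abcd}=W_{cdab}$ gives $M_{bd}=M_{db}$, the antisymmetry $W_{abcd}=-W_{bacd}$ gives $k^bM_{bd}=0$, and trace-freeness of $W$ gives $g^{bd}M_{bd}=0$. With $\kappa=g(k,\cdot)$ and a null vector field $\ell$ normalised by $g(k,\ell)=1$ (so $\kappa_b\ell^b=1$), the identity \eqref{eq:W_sp}, which reads $M_{b[d}\kappa_{e]}=0$, is equivalent to $M_{bd}=\mu\,\kappa_b\kappa_d$ for the scalar $\mu=W(k,\ell,k,\ell)$: contracting $M_{bd}\kappa_e=M_{be}\kappa_d$ with $\ell^e$ gives $M_{bd}=(M_{be}\ell^e)\kappa_d$, and symmetry of $M$ then forces the prefactor to be proportional to $\kappa_b$. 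In a semi-null frame $(\ell,e_i,k)$ dual to a Robinson (Kundt) coframe this says precisely that the screen--screen block $M_{ij}=W(k,e_i,k,e_j)$ and the screen--$\ell$ block $M_{i\ell}=W(k,e_i,k,\ell)$ vanish, the $k$-row being automatically zero and the $(\ell,\ell)$-entry being unconstrained.

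For the screen--screen (boost weight $+2$) block I would invoke that $\mc{K}$, being a Kundt congruence, is in particular non-shearing and non-twisting, so that condition \eqref{eq:NS-IC}, $W(k,v,k,v)=0$ for all $v\in\Gamma(K^\perp)$, holds --- this is recalled in the introduction and proved in \cite{Ortaggio2007}. Polarising in $v$ and using the pair symmetry $W(k,e_i,k,e_j)=W(k,e_j,k,e_i)$ yields $2M_{ij}=0$, hence $M_{ij}=0$. Alternatively, this follows directly from the explicit connection \eqref{eq:Kundt_conn}: the vanishing of the expansion, shear and twist makes the boost weight $+2$ part of the Riemann tensor purely of Ricci type, which is exactly the part removed in passing to the Weyl tensor.

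The crux is the screen--$\ell$ (boost weight $+1$) block $M_{i\ell}$. Here my plan is to differentiate the optical relation $\nabla_a\kappa_b=\kappa_aE_b+E_a\kappa_b$ read off from \eqref{eq:Kundt_conn} --- in which $E$ is the $1$-form of that display, with $k^aE_a=0$, so that $k^c\nabla_c\kappa_b=0$ --- a second time, skew-symmetrise, and apply the Ricci identity to express $k^ak^c e_i^b\ell^d R_{abcd}$ through $\nabla E$ and $E\otimes E$. Using $k^c\nabla_c\kappa_b=0$ together with the explicit form of $E$, the boost weight $+1$ part of the Riemann tensor along $k$ reduces to derivatives of the (identically vanishing) optical scalars plus terms of Ricci type; subtracting the Schouten contribution then gives $M_{i\ell}=0$. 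Equivalently, and more economically, one may simply invoke \cite{Ortaggio2007}, where a non-expanding, non-shearing, non-twisting congruence of null geodesics is shown to make $k$ a Weyl-aligned null direction of alignment order at least two, i.e.\ both the boost weight $+2$ and $+1$ components of $W$ along $k$ vanish, which is exactly $M_{ij}=M_{i\ell}=0$.

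The main obstacle is precisely this boost weight $+1$ component. Unlike $M_{ij}=0$, which is immediate from $W(k,v,k,v)=0$ and the symmetries of $W$, the vanishing of $M_{i\ell}$ does not follow from the pointwise algebra alone and genuinely uses the first-order Kundt data; the delicate point in a self-contained derivation is to isolate the trace-free (Weyl) part and to verify that the Schouten/Ricci contributions to $k^aW_{abc[d}\kappa_{e]}k^c$ cancel. I expect the cleanest write-up to perform only the short linear-algebra reduction of the first paragraph and to cite \cite{Ortaggio2007} for the Weyl-alignment statement supplying both $M_{ij}=0$ and $M_{i\ell}=0$.
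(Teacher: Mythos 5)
Your proposal addresses only the final sentence of the proposition and never touches the equivalence between (1) and (2), which is its principal content. That equivalence is where the paper's proof does its (short) work: from the Kundt connection formulae \eqref{eq:Kundt_conn} one reads off $E_\alpha = \tfrac{1}{2}\dot{\lambda}_\alpha$, where the dot denotes differentiation along $k = \partial/\partial v$, so the condition $z\breve{E}_\alpha + w\,\breve{G}_\alpha = 0$ becomes the ODE $z\dot{\lambda}_\alpha + 2w\,G_\alpha = 0$ along the geodesics of $\mc{K}$; since $G_\alpha = \ul{h}^{ij}(\ul{\nabla}\ul{\omega})_{ij\alpha}$ is constant along the fibres, integration in the affine parameter $v$ yields precisely \eqref{eq:lambda01} and \eqref{eq:lambda1} (and this is where the hypothesis $z \neq 0$ enters). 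Without this step, conditions (1)/(2) play no role in your argument at all --- and they must, for the reason below.

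The second, fatal, gap is your claim that \eqref{eq:W_sp} holds for every Kundt congruence. Your linear-algebra reduction of \eqref{eq:W_sp} to $M_{ij}=0$ and $M_{i\ell}=0$ is correct, and $M_{ij}=0$ (boost weight $+2$) is indeed automatic; but $M_{i\ell}=W(k,\ell,k,e_i)$ is a boost weight $+1$ component, and \cite{Ortaggio2007} does not give alignment order two for Kundt spacetimes --- it gives only (a strengthening of) \eqref{eq:NS-IC}, i.e.\ the vanishing of the boost weight $+2$ part. Had you carried out the Ricci-identity computation you sketch, using $\nabla\kappa = 2\,\kappa\odot E$ from \eqref{eq:Kundt_conn}, you would find $R(k,\ell,k,e_i) = \dot{E}_i = \tfrac{1}{2}\ddot{\lambda}_i$ up to Ricci-type terms; and $E_i$ is \emph{not} one of the vanishing optical scalars of a Kundt congruence --- it is generically $v$-dependent. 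The correct statement, which the paper cites from \cite{Podolsky2013}, is that \eqref{eq:W_sp} is \emph{equivalent} to $\lambda_i$ being linear in $v$. The final claim of the proposition therefore holds only because condition (2), via \eqref{eq:lambda01} (together with reality, which handles $\lambda_{\bar{\alpha}}$), delivers exactly that linearity; read unconditionally, your version would render the equivalence with \eqref{eq:lambda01} vacuous and is simply false for a generic vertical $1$-form \eqref{eq:vertical_form}.
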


\begin{proof}
This is a straightforward computation. By definition, the condition $z E_\alpha + w \, G_{\alpha}  = 0$ can be rewritten as $z  \dot{\lambda}_{\alpha}  + 2 w \, G_{\alpha}  = 0$, which has solution given precisely by \eqref{eq:lambda01} and \eqref{eq:lambda1}.

For the last part, it is shown in \cite{Podolsky2013} that condition \eqref{eq:W_sp} on Weyl tensor is equivalent to $\lambda_i$ being linear in $v$.
\end{proof}

\begin{rem}
Weaker conditions, where one takes $z$ and $w$ to be non-constant complex-valued smooth functions in Proposition \ref{prop:Kundt_Rob_EG}, are possible. In this case, it is no longer true that $\lambda_i$ is linear in $v$.
\end{rem}

The next proposition follows from the interpretation of the vanishing of the intrinsic torsion as the reduction of the holonomy of the Levi-Civita connection to $Q$, or equivalently, to the parallelism of the distribution $N$. The last item follows from Lemma \ref{lem:lambda}.

\begin{prop}\label{prop:no_intors}
Let $(\mc{M}, g, N, K)$ be an almost Robinson manifold with congruence of null curves $\mc{K}$. The following statements are equivalent.
\begin{enumerate}
\item the intrinsic torsion vanishes identically, i.e.\ 
\begin{align*}
\breve{\gamma}_i = \breve{\epsilon} = \breve{\tau}_{i j} = \breve{\sigma}_{i j} = \breve{\zeta}_{\alpha \beta} = \breve{E}_i = \breve{G}_{\alpha} = \breve{G}^{\tiny{\ydskew}}_{\alpha \beta \gamma} =  \breve{G}^{\tiny{\ydhook}}_{\alpha \beta \gamma} =  \breve{G}^{\circ}_{\bar{\alpha} \beta \gamma}= \breve{B}_{\alpha \beta} = 0 \, .
\end{align*}
\item the holonomy of the Levi-Civita connection is reduced to $Q = (\R_{>0} \times \U(m))\ltimes (\R^{2m})^*$, the structure group of $(N,K)$.
\item any Robinson $3$-form $\rho_{b c d}$ is recurrent, i.e.\
\begin{align*}
 \nabla_a \rho_{b c d} & = \alpha_a \rho_{b c d} \, , & \mbox{for some $1$-form $\alpha_a$.}
\end{align*}
\item $N$ is parallel, i.e.\ for any $v \in \Gamma(N)$, $\nabla v \in \Gamma(N)$.
\item any Robinson spinor $\nu$ is recurrent, i.e.\
\begin{align*}
\left( \nabla_a \nu^{b \mbf{A}} \right) \nu_{b}{}^{\mbf{B}} & = 0 \, , & \mbox{i.e.} & & \left( \nabla_a \nu^{[\mbf{A}'} \right) \nu^{\mbf{B}']} & = 0 \, .
\end{align*}
\item $(\mc{M}, g, N, K)$ is of Kundt type with leaf space $(\ul{\mc{M}}, \ul{H}, \ul{h}, \ul{J})$ where $\ul{h}$ is a smooth one-parameter family of K\"{a}hler metrics, and for any choice of Kundt coframe, the components $\lambda_{i}$ of the vertical $1$-form $\lambda$ given by \eqref{eq:vertical_form} are smooth functions $\ul{\lambda}_{i}^{(0)}$ on $\ul{\mc{M}}$, and, at any point, $\ul{\nabla}_{[i} \ul{\lambda}_{j]}^{(0)}$ is an element of $\u(m)$
\end{enumerate}
\end{prop}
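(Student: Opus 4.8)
The plan is to prove Proposition \ref{prop:no_intors} by establishing the circular chain of implications $(2) \Leftrightarrow (4) \Leftrightarrow (5) \Leftrightarrow (3)$ as the core equivalences, then bootstrap from these to the concrete description $(6)$ via $(1)$. The heart of the matter is that the intrinsic torsion is, by construction, the obstruction to the holonomy of $\nabla$ being reducible to $Q$, so the equivalence of $(1)$ and $(2)$ is essentially the definition of intrinsic torsion together with Theorem \ref{thm:intors-rob}: the full list of vanishing weighted components in $(1)$ is precisely the statement that the projection $\mathring{T}$ to $\mc{G} = \mc{F}^Q \times_Q \mbb{G}$ is zero in every irreducible $Q_0$-summand listed in Table \ref{tab:table-irred-mod-G_bdl}. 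First I would observe that the vanishing of \emph{all} of $\breve\gamma_i, \breve\epsilon, \breve\tau_{ij}, \breve\sigma_{ij}, \breve\zeta_{\alpha\beta}, \breve E_i, \breve G_\alpha, \breve G^{\ydskew}_{\alpha\beta\gamma}, \breve G^{\ydhook}_{\alpha\beta\gamma}, \breve G^\circ_{\bar\alpha\beta\gamma}, \breve B_{\alpha\beta}$ exhausts the full set of generators \eqref{eq:S-H1_b}--\eqref{eq:S-H2_b}, so $\mathring T$ lies in the zero section; this is the standard $G$-structure fact that vanishing intrinsic torsion is equivalent to the existence of a torsion-free $Q$-connection, hence to holonomy reduction $(2)$.

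Next I would handle $(2) \Leftrightarrow (4) \Leftrightarrow (5) \Leftrightarrow (3)$. Since $Q$ is by definition the stabiliser of the Robinson structure $(N, K)$ (equivalently, of a Robinson $3$-form $\rho_{abc}$, or of a Robinson spinor $\nu^{\mbf A'}$ up to scale), holonomy reduction to $Q$ means there is a $\nabla$-parallel reduction of the frame bundle to $\mc F^Q$. The subtlety is that $Q$ contains the boost factor $\R_{>0}$ and the phase $S^1$ acting on spinors, so "$Q$-holonomy" does not force $N$ itself to be genuinely parallel but only parallel \emph{as a distribution}, i.e.\ $\nabla_v w \in \Gamma(N)$ for $v$ arbitrary and $w \in \Gamma(N)$; this is statement $(4)$. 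Dually, $\rho_{abc}$ and $\nu^{\mbf A'}$ are then parallel only up to a connection $1$-form valued in the one-dimensional representations of Section \ref{sec:one-dim_rep}, which is exactly the recurrence conditions $(3)$ and $(5)$. I would make this precise by noting that $\nabla_a \rho_{bcd} = \alpha_a \rho_{bcd}$ is the statement that the induced connection on the line bundle $\mc E(-1) \otimes \ldots$ spanned by $\rho$ differs from flat by $\alpha_a$, and that $(\nabla_a \nu^{b\mbf A})\nu_b^{\mbf B} = 0$ is, by the computations in the proof of Theorem \ref{thm:spinor_intors}, equivalent to $N$ being parallel; the second displayed form $(\nabla_a \nu^{[\mbf A'})\nu^{\mbf B']} = 0$ follows by the same identities used there. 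This establishes the equivalence of $(2),(3),(4),(5)$, and since each of these is equivalent to the vanishing of the relevant $Q_0$-components (again by Theorem \ref{thm:intors-rob}), to $(1)$ as well.

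The remaining and most substantial task is the equivalence with the explicit local model $(6)$. Assuming $(1)$, the vanishing of $\breve\gamma_i, \breve\epsilon, \breve\tau_{ij}, \breve\sigma_{ij}$ means $\mc K$ is a non-expanding, non-shearing, non-twisting congruence of null geodesics, so $(\mc M, g, N, K)$ is of Kundt type (Definition \ref{def:alRob_Kundt}) and nearly Robinson (since $\breve\zeta_{\alpha\beta} = 0$ with the remaining torsion vanishing forces the bracket conditions of Proposition \ref{prop:N_pres}). Thus I may apply the Kundt machinery: by Proposition \ref{prop:Rob->CR} the metric takes the form \eqref{eq:al_Rob_met} with a Kundt coframe, and the leaf space is foliated by almost Hermitian manifolds. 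The vanishing of $\breve G_\alpha, \breve G^{\ydskew}_{\alpha\beta\gamma}, \breve G^{\ydhook}_{\alpha\beta\gamma}, \breve G^\circ_{\bar\alpha\beta\gamma}$ together with \eqref{eq:GH-Rob3} says $\ul\nabla_i \ul\omega_{jk} = 0$, i.e.\ the foliation is by K\"ahler manifolds (the bottom row of Table \ref{tab-intors-GH}). Finally, the vanishing of $\breve E_i$ and $\breve B_{\alpha\beta}$ is translated, via Lemma \ref{lem:lambda}, into $E_i = -(\d\lambda)_i{}^0 = 0$ and $B_{\alpha\beta} = -(\d\lambda)_{\alpha\beta} = 0$; reading off the formulas for $E_i, E_0, B_{ij}$ in terms of $\dot\lambda_i, \dot\lambda_0$ and $\ul\nabla_{[i}\lambda_{j]}$ from \eqref{eq:Kundt_conn}, one finds $\dot\lambda_i = 0$ (so the $\lambda_i$ are pullbacks $\ul\lambda_i^{(0)}$ of functions on $\ul{\mc M}$) and that $\ul\nabla_{[i}\lambda_{j]}^{(0)}$ is of type $(1,1)$, i.e.\ lies in $\u(m)$. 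The hard part will be organizing the bookkeeping in this last step so that the holonomy statement $(2)$ and the analytic condition on $\ul\nabla_{[i}\ul\lambda_{j]}^{(0)} \in \u(m)$ are seen to be equivalent: one must verify that the remaining freedom in the Kundt coframe (constant-along-$K$ unitary transformations and reparametrizations) does not affect the type of $\ul\nabla_{[i}\lambda_{j]}^{(0)}$, so that the $(1,1)$-condition is genuinely invariant, matching the invariance of $\breve B_{\alpha\beta}$ asserted in Lemma \ref{lem:lambda}.
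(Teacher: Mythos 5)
Your proposal is correct and follows the same route the paper indicates: the equivalence of (1)--(5) is the interpretation of vanishing intrinsic torsion as reduction of the Levi-Civita holonomy to $Q$ (equivalently, parallelism of $N$, recurrence of $\rho$ and of $\nu$), and item (6) is extracted from Lemma \ref{lem:lambda} together with the Kundt coframe formulae and \eqref{eq:GH-Rob3}. The only point worth stating explicitly in your first step is that the torsion-free $Q$-compatible connection produced by the vanishing of the intrinsic torsion is necessarily the Levi-Civita connection itself (since $Q$ stabilises $g$), which is what turns ``existence of a torsion-free $Q$-connection'' into the holonomy reduction of $\nabla$ asserted in (2).
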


\begin{rem}
We note that if a Robinson $3$-form is recurrent, so is any optical $1$-form.
\end{rem}

\begin{rem}
One can also weaken the assumptions of Proposition \ref{prop:no_intors}, by supposing that on each leaf tangent to $\ul{H}$, the metric $\ul{h}_{i j}$ is almost K\"{a}hler, i.e.\ the Hermitian $2$-form is closed, rather than K\"{a}hler. Then locally one may take $\ul{\lambda}_i^{(0)}$ to be any potential $1$-form for the Hermitian $2$-form. Then $\ul{\nabla}_{[i} \ul{\lambda}_{j]}^{(0)}$ is an element of $\u(m)$.  This implies that $B_{\alpha \beta} =0$, i.e.\ the $\mc{G}_1^{0,0}$-component of the intrinsic torsion vanishes.
\end{rem}

Finally, the next two results, stated without proofs, are concerned with further holonomy reduction.
\begin{prop}
Let $(\mc{M}, g, N, K)$ be an almost Robinson manifold with congruence of null curves $\mc{K}$. The following statements are equivalent:
\begin{enumerate}
\item the holonomy of the Levi-Civita connection is reduced to a subgroup of $\U(m)\ltimes (\R^{2m})^*$;
\item $(\mc{M}, g, N, K)$ admits a parallel Robinson $3$-form $\rho_{b c d}$, i.e.\ $\nabla_a \rho_{b c d} = 0$;
\item $(\mc{M}, g, N, K)$ is of Kundt type with leaf space $(\ul{\mc{M}}, \ul{H}, \ul{h})$ where $\ul{h}$ is a smooth one-parameter family of K\"{a}hler metrics, and for any choice of Kundt coframe, the components $\lambda_{i}$ and $\lambda_{0}$ of the vertical $1$-form $\lambda$ given by \eqref{eq:vertical_form} are smooth functions $\ul{\lambda}_{i}^{(0)}$ and $\ul{\lambda}_{0}^{(0)}$  on $\ul{\mc{M}}$, and, at any point, $\ul{\nabla}_{[i} \ul{\lambda}_{j]}^{(0)}$ is an element of $\u(m)$.
\end{enumerate}
If any of these conditions holds, $(\mc{M}, g, N, K)$ admits a parallel optical vector field.
\end{prop}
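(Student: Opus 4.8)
The plan is to prove $(1) \Leftrightarrow (2)$ via the holonomy principle, and $(2) \Leftrightarrow (3)$ by refining Proposition \ref{prop:no_intors} with the explicit Kundt connection \eqref{eq:Kundt_conn}. The final assertion will drop out of the $(2) \Rightarrow (1)$ argument.

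For $(1) \Leftrightarrow (2)$, I would invoke the standard correspondence between holonomy reduction and parallel tensors: the restricted holonomy group at a point is contained in a closed subgroup $H$ of $\SO^0(2m+1,1)$ if and only if every $H$-invariant tensor at that point extends to a parallel tensor field. The key algebraic input is that the stabiliser in $\SO^0(2m+1,1)$ of the Robinson $3$-form $\rho_{abc}$, with its scale fixed, is exactly $\U(m) \ltimes (\R^{2m})^*$. Indeed, by \eqref{eq:3-form-sq2} the contraction $\rho_{ab}{}^e \rho_{cde}$ recovers $\kappa_{[a} g_{b][c} \kappa_{d]}$, so fixing $\rho_{abc}$ pins down $\kappa_a$ up to sign, and the orientation of $K$ removes this ambiguity; this kills the boost factor $\R_{>0}$ of $Q$, leaving precisely the subgroup preserving both $\kappa_a$ and the screen Hermitian form $\omega_{ij}$, namely $\U(m) \ltimes (\R^{2m})^*$. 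Thus a parallel $\rho_{abc}$ is equivalent to the holonomy being contained in $\mathrm{Stab}(\rho) = \U(m) \ltimes (\R^{2m})^*$.

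The final assertion then follows by the same contraction: if $\nabla_a \rho_{bcd} = 0$, then the tensor $\kappa_{[a} g_{b][c} \kappa_{d]}$ is parallel, hence so is its trace $\kappa_a \kappa_b$; writing $\mu_{fa} = \nabla_f \kappa_a$, the relation $\mu_{fa} \kappa_b + \kappa_a \mu_{fb} = 0$, contracted with a dual null field $\ell^b$ (with $\ell^b \kappa_b = 1$), forces $\mu_{fa} = -\kappa_a (\mu_{fb} \ell^b)$, and resubstituting gives $\mu_{fb}\ell^b = 0$, so $\nabla_a \kappa_b = 0$. Hence $k = g^{-1}(\kappa, \cdot)$ is a parallel optical vector field.

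For $(2) \Rightarrow (3)$, a parallel Robinson $3$-form is in particular recurrent, so Proposition \ref{prop:no_intors} applies: $(\mc{M}, g, N, K)$ is of Kundt type with a one-parameter family of K\"ahler leaves on $(\ul{\mc{M}}, \ul{H}, \ul{h}, \ul{J})$, and the components $\lambda_i$ descend to functions $\ul{\lambda}_i^{(0)}$ on $\ul{\mc{M}}$ with $\ul{\nabla}_{[i} \ul{\lambda}_{j]}^{(0)} \in \u(m)$. It remains to isolate the extra condition on $\lambda_0$ upgrading recurrence to parallelism. Reading off \eqref{eq:Kundt_conn}, the vanishing intrinsic torsion gives $E_i = 0$, so $\nabla \kappa = 2\, \kappa \odot E$ collapses to $\nabla_a \kappa_b = E_0\, \kappa_a \kappa_b$ with $E_0 = \tfrac{1}{2} \mathsterling_k \lambda_0$; hence $\kappa_a$, and therefore $\rho_{abc}$, is parallel precisely when $E_0 = 0$, i.e.\ when $\lambda_0$ descends to a function $\ul{\lambda}_0^{(0)}$ on $\ul{\mc{M}}$, which is exactly the added requirement in $(3)$. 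The converse $(3) \Rightarrow (2)$ reverses this computation. The main obstacle will be the first step: verifying that $\U(m) \ltimes (\R^{2m})^*$ is the \emph{full} stabiliser of $\rho_{abc}$ and justifying the holonomy principle in this Lorentzian setting, where the stabiliser is non-compact; the Kundt computation in the remaining steps is routine once the intrinsic torsion is known to vanish.
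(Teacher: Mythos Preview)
The paper explicitly states that this proposition (together with the next one) is ``stated without proofs'', so there is no proof in the paper to compare against. Your argument is correct and supplies what the paper omits: the equivalence $(1)\Leftrightarrow(2)$ is exactly the holonomy principle applied to the observation that $\mathrm{Stab}(\rho_{abc})=\U(m)\ltimes(\R^{2m})^*$, and $(2)\Leftrightarrow(3)$ is obtained by refining Proposition~\ref{prop:no_intors} via the Kundt connection \eqref{eq:Kundt_conn}, isolating $E_0=\tfrac12\dot\lambda_0$ as the sole remaining obstruction. Two minor remarks: in the ``resubstituting'' step for $\nabla\kappa=0$, it is cleaner to contract $\mu_{fa}=-\kappa_a(\mu_{fb}\ell^b)$ once more with $\ell^a$ to get $\mu_{fa}\ell^a=-\mu_{fb}\ell^b$, hence $\mu_{fa}\ell^a=0$; and for the converse $(3)\Rightarrow(2)$ you should make explicit the step that $\rho$ recurrent together with $\kappa$ parallel forces $\rho$ parallel, which follows by differentiating \eqref{eq:rho2->kap}.
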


\begin{prop}
Let $(\mc{M}, g, N, K)$ be an almost Robinson spin manifold with congruence of null curves $\mc{K}$. The following statements are equivalent:
\begin{enumerate}
\item the holonomy of the Levi-Civita connection is reduced to a subgroup of $\SU(m)\ltimes (\R^{2m})^*$;
\item $(\mc{M}, g, N, K)$ admits a parallel Robinson spinor $\nu$, i.e.\ $\nabla_a \nu = 0$;
\item $(\mc{M}, g, N, K)$ admits a parallel optical vector field and a parallel Robinson $3$-form;
\item $(\mc{M}, g, N, K)$ is of Kundt type with leaf space $(\ul{\mc{M}}, \ul{H}, \ul{h})$ where $\ul{h}$ is a smooth one-parameter family of Ricci-flat K\"{a}hler metrics, and for any choice of Kundt coframe, the components $\lambda_{i}$ and $\lambda_{0}$ of the vertical $1$-form $\lambda$ given by \eqref{eq:vertical_form} are smooth functions $\ul{\lambda}_{i}^{(0)}$ and $\ul{\lambda}_{0}^{(0)}$  on $\ul{\mc{M}}$, and, at any point, $\ul{\nabla}_{[i} \ul{\lambda}_{j]}^{(0)}$ is an element of $\su(m)$.
\end{enumerate}
\end{prop}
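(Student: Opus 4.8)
The plan is to prove the four conditions equivalent by the same strategy used for Proposition \ref{prop:no_intors} and the preceding ($\U(m)$) proposition, the only new ingredient being that the object detecting the reduction from $\U(m)$ to $\SU(m)$ is the \emph{complex} Robinson $(m+1)$-form — equivalently the pure spinor $\nu^{\mbf{A}'}$ itself — rather than the real Robinson $3$-form. The backbone is the holonomy principle together with the weight bookkeeping of Section \ref{sec:one-dim_rep}. First I would settle the equivalence (1)$\Leftrightarrow$(2). By \eqref{eq:spin_complex_boost} the Robinson spinor $\nu^{\mbf{A}'}$ is a section of $S_+^N = \mc{E}(-\tfrac12,0)$, on which $(\e^\varphi,\psi,\phi)\in Q$ acts by $(\e^\varphi \det\psi)^{-1/2}$. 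Since $\e^\varphi>0$ and $|\det\psi|=1$, a nonzero vector is fixed precisely when $\e^\varphi=1$ and $\det\psi=1$, with the nilpotent part $\phi$ free; hence the stabiliser of $\nu^{\mbf{A}'}$ in $Q$, and by Lemma \ref{lem:real_index} in $\Spin^0(2m+1,1)$, is exactly $\SU(m)\ltimes(\R^{2m})^*$. The holonomy principle then gives that $\mathrm{Hol}(\nabla)$ lies in a subgroup of $\SU(m)\ltimes(\R^{2m})^*$ if and only if $\nabla$ admits a nonzero parallel section of $S_+^N$, i.e.\ a parallel Robinson spinor.

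The implication (2)$\Rightarrow$(3) is immediate. The charge conjugation is induced by the $\nabla$-parallel reality structure on $S$, so $\overline{\nu}$ is parallel whenever $\nu^{\mbf{A}'}$ is; consequently all the bilinear invariants of \eqref{arr:rob_spin_inv}, in particular $\kappa_a$, $\rho_{abc}$, and the complex $(m+1)$-form $\nu_{a_0 \ldots a_m}$, are parallel. This yields a parallel optical vector field $k = g^{-1}(\kappa,\cdot)$ and a parallel Robinson $3$-form at once.

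The hard part is the converse (3)$\Rightarrow$(2), and this is where I expect the main obstacle. A parallel $\kappa_a$ together with a parallel $\rho_{abc}$ reduce the holonomy only to $\U(m)\ltimes(\R^{2m})^*$ by the preceding proposition: equivalently they fix the modulus and the $\U(m)$-structure but pin down $\nu^{\mbf{A}'}$ only up to a pointwise \emph{phase}, so that $\nabla_a \nu^{\mbf{B}'} = \mathrm{i}\,\theta_a\,\nu^{\mbf{B}'}$ for some real $1$-form $\theta_a$, the residual $U(1)$-connection on the determinant line. The genuine content to be extracted from (3) is the vanishing of this $\theta_a$, and since the real $3$-form $\rho$ is by construction insensitive to the determinant, control of $\theta_a$ must come from the finer complex-volume datum. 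I would supply it by routing through (4): the $\U(m)$-reduction puts $g$ into Kundt normal form \eqref{eq:Kundt_conn} with Kähler leaves, and the structure equations identify $\d\theta$ with a multiple of the leaf-wise Ricci form, so that $\theta$ is gauge-trivial exactly when the leaves are Ricci-flat Kähler. Establishing this identification — i.e.\ that the phase obstruction equals the leaf Ricci form — is the crux of the whole statement.

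Finally I would close the loop with (2)$\Leftrightarrow$(4). Starting from Proposition \ref{prop:no_intors} and its $\U(m)$-refinement, a parallel spinor forces the intrinsic torsion to vanish, hence the Kundt form with $\ul{\nabla}_{[i}\ul{\lambda}^{(0)}_{j]}\in\u(m)$ and Kähler leaves. The additional parallelism of $\nu^{\mbf{A}'}$ (over and above that of $\rho$) is then read off from the connection formulae \eqref{eq:Kundt_conn} and Lemma \ref{lem:lambda} as the vanishing of the trace (determinant) part, which upgrades $\ul{\nabla}_{[i}\ul{\lambda}^{(0)}_{j]}$ to an $\su(m)$-valued form and, on each leaf, produces a parallel complex volume form, i.e.\ Ricci-flat Kähler (Calabi--Yau) leaf metrics; conversely these two conditions reassemble the parallel complex $(m+1)$-form and hence, on the spin manifold, the parallel spinor. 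I expect this last step to be routine once the phase/Ricci-form identification of the previous paragraph is in place, as it only specialises the Kundt connection formulae already derived.
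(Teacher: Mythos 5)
The paper states this proposition explicitly without proof (``stated without proofs''), so there is no argument of the authors to compare yours against; judged on its own terms, your treatment of (1)$\Leftrightarrow$(2) via the stabiliser computation on $\mc{E}(-\tfrac{1}{2},0)$, of (2)$\Rightarrow$(3) via the bilinears \eqref{arr:rob_spin_inv}, and your outline of (2)$\Leftrightarrow$(4) are all reasonable. The genuine gap is (3)$\Rightarrow$(2), and it is not merely presentational.

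By the identity \eqref{eq:3-form-sq2}, a parallel Robinson $3$-form already forces $\kappa_a$ to be parallel, so hypothesis (3) adds nothing to condition (2) of the immediately preceding ($\U(m)$) proposition and is therefore equivalent to holonomy in $\U(m)\ltimes(\R^{2m})^*$. Your own analysis correctly isolates the residual obstruction as the curvature $\d\theta$ of the induced connection on $\bigwedge^{m+1}\Ann(N)$ (the leafwise Ricci form together with the trace part of $\ul{\nabla}_{[i}\ul{\lambda}^{(0)}_{j]}$), but you then propose to kill it ``by routing through (4)''. Statement (4) is one of the conclusions to be established, not a hypothesis, so this step is circular: identifying $\d\theta$ with the Ricci form does not make it vanish. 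Nor can the implication be repaired as stated. On $\mc{M}=\R^{1,1}\times\CP^{3}$ with $g=2\,\d u\,\d v+g_{FS}$, $K=\mathrm{span}(\partial_v)$ and $N=\C\,\partial_v\oplus T^{0,1}\CP^{3}$, both $\kappa=\d u$ and $\rho_{abc}=3\kappa_{[a}\omega_{bc]}$ are parallel, yet $\mathrm{Hol}(g)=\U(3)\not\subset\SU(3)\ltimes(\R^{6})^*$ and there is no parallel spinor. What (3) is missing is exactly the datum you name at the start: it should demand a parallel \emph{complex} Robinson $(m+1)$-form, i.e.\ a parallel section of $\mc{E}(-1,0)=\bigwedge^{m+1}\Ann(N)\cong S_+^N\otimes S_+^N$, on which $Q$ acts by $(\e^{\varphi}\det\psi)^{-1}$; with that replacement your stabiliser computation closes (3)$\Leftrightarrow$(2) at once (up to the usual square-root sign issue, handled locally by the spin assumption), and the remainder of your plan goes through.
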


\begin{exa}[Metrics of supergravity]\label{exa:SUGRA}
Parallel Robinson structures are relevant to the study of solutions to the supergravity equations. These equations are rather restrictive. For instance, it is shown in \cite{Chamseddine2003} (see also \cite{Gutowski2003}) that solutions known as 
$(1,0)$-vacua (up to local isometry) are six-dimensional Lie groups admitting a bi-invariant Lorentzian metric and an anti-self-dual $3$-form induced from the Lie bracket. In the same reference, it is proved that these must be either Minkowski space, a one-parameter family of so-called Freud--Rubin vacua on $AdS_3 \times S^3$ with equal radii, or a six-dimensional Nappi--Witten vacuum. The latter is locally isometric to a certain Cahan--Wallach space. In coordinates $(u, v, x^1, x^2, x^3, x^4)$, the metric is given by
\begin{align}\label{eq:CW_metric}
g & = \d u \left( \d v - \frac{1}{2} \ut{h}_{i j} x^i x^j \d u \right) + \ut{h}_{i j} \d x^i \d x^j \, ,
\end{align}
where $\ut{h}_{i j}$ is the standard Euclidean metric on $\R^4$. The anti-self-dual $3$-form induced from the Lie bracket takes the form
\begin{align}\label{eq:CW_Rob3f}
\rho & = 2 \d u \wedge \left( \d x^1 \wedge \d x^2 + \d x^3 \wedge \d x^4 \right) \, .
\end{align}
It can be checked that $\rho$ defines a Robinson $3$-form with optical $1$-form $\kappa = \d u$.
Both $\kappa$ and $\rho$ are parallel with respect to the Levi-Civita connection.
\end{exa}

One may start with a Kundt geometry $(\mc{M},g,K)$ with congruence of null geodesics $\mc{K}$ whose leaf space $\ul{\mc{M}}$ of $\mc{K}$ is a fiber bundle over a smooth $2m$-dimensional Riemannian manifold $(\ut{\mc{M}}, \ut{h})$. Then any almost complex structure on $\ut{\mc{M}}$ compatible with $\ut{h}$ lifts to an almost Robinson structure on $(\mc{M}, g)$ compatible with $K$. Depending on its topology, $(\ut{\mc{M}}, \ut{h})$ may admit many almost Hermitian structures, each with a specific intrinsic torsion, or even families thereof. For instance, let us take $(\ut{\mc{M}}, \ut{h})$ to be Euclidean space. Then locally there are infinitely many Hermitian structures (see e.g.\ \cite{Eells1985}) that can be lifted to an almost Robinson structure $(N,K)$ on $(\mc{M},g)$.

A less trivial example follows.

\begin{exa}
Let $(\ut{\mc{M}},\ut{h})$ be the Iwasawa manifold, that is, the quotient of the three-dimensional complex Heisenberg group by a discrete subgroup. In \cite{Abbena2001}, the authors construct almost Hermitian structures in the following Gray--Hervella classes
\begin{align*}
\mc{W}_2 \oplus \mc{W}_3 \oplus \mc{W}_4 \, , && 
\mc{W}_1 \oplus \mc{W}_3 \oplus \mc{W}_4 \, , &&
\mc{W}_1 \oplus \mc{W}_2 \oplus \mc{W}_4 \, , &&
\mc{W}_1 \oplus \mc{W}_2 \oplus \mc{W}_3 \, .
\end{align*}
Of these, the set of all invariant Hermitian structures on $(\ut{\mc{M}},\ut{h})$ is known to consist of the union of a point (its bi-invariant Hermitian structure) and a 2-sphere \cite{Abbena1997,Ketsetzis2004}. These are in fact special Hermitian, i.e.\ their intrinsic torsion is of class $\mc{W}_3$. For topological reasons, $\ut{\mc{M}}$ cannot admit any K\"{a}hler structure. It is also conjectured that $\ut{\mc{M}}$ cannot admit almost Hermitian structures in the classes $\mc{W}_1$ and $\mc{W}_4$.

The Kundt geometry $(\mc{M}, g, K)$ associated to $(\ut{\mc{M}}, \ut{h})$ admits almost Robinson structures corresponding to the almost Hermitian structures on $(\ut{\mc{M}}, \ut{h})$, and their classes of intrinsic torsion can be read off from Table \ref{tab-intors-GH}.
\end{exa}

As illustrated in the following example, not every almost Robinson structure on a Kundt spacetime is a nearly Robinson structure.
\begin{exa}
Take $\mc{M} = \R \times \R \times \R^{2m}  \cong \R \times \R \times \C^m  = (u, v, z^{\alpha}, \ol{z}{}^{\bar{\alpha}})$, and let $g$ be the metric on $\mc{M}$ given by
\begin{align*}
g & = 2 \d u \left( \d v + \lambda_{\alpha} \d z^\alpha + \lambda_{\bar{\alpha}} \d \ol{z}^{\bar{\alpha}}   +  \lambda_0  \d u \right) + 2 \, \ut{h}_{\alpha \bar{\beta}}  \d z^\alpha  \d \overline{z}^{\bar{\beta}} \, ,
\end{align*}
where $\lambda_{\alpha}$, $\lambda_{\bar{\alpha}}$ and $\lambda_{0}$ are arbitrary smooth functions on $\mc{M}$, and $\ut{h}_{\alpha \bar{\beta}}$ is the standard Hermitian form on $\C^m$.  Let $k = \frac{\partial}{\partial v}$ and set $K := \mathrm{span}(k)$. Then $(\mc{M}, g, K)$ is a Kundt spacetime, and any almost Robinson structure $(N,K)$ on $\mc{M}$ compatible with $K$ is annihilated by the set of $1$-forms
\begin{align*}
\kappa & = g(k, \cdot) = \d u \, , & \theta^\alpha = \d z^\alpha + \phi^{\alpha}{}_{\bar{\beta}} \d \overline{z}^{\bar{\beta}} \, ,
\end{align*}
for some complex-valued functions $\phi^{\alpha \beta}$ on $\mc{M}$ with $\phi^{\alpha \beta} = \phi^{[\alpha \beta]}$ -- the functions $\phi^{\alpha \beta}$ are essentially the components of a section of the bundle $\mathrm{Gr}_{m+1}^+(\mc{M},g,K)$ of (self-dual) almost Robinson structures compatible with $K$ --- see Remark \ref{rem:sp_Rob}. Note that $(\theta^{\alpha})$ does \emph{not} constitute a unitary coframe for $\ut{h}_{\alpha \bar{\beta}}$ in general. Choosing the $\phi^{\alpha \beta}$ such that $\mathsterling_k \phi^{\alpha \beta} \neq 0$ anywhere yields a coframe that does not descend to the leaf space $\ul{\mc{M}}$ of $\mc{K}$. Since the manifold is Kundt, it is clear that the remaining obstruction to $(N,K)$ being nearly Robinson is the $\mc{G}_{-1}^{3,0}$-component $\breve{\zeta}_{\alpha \beta}$ of the intrinsic torsion, which, here, can be identified with $\mathsterling_k \phi_{\alpha \beta}$. Take for instance, $\phi^{\alpha \beta} = f(v) \ul{\phi}^{\alpha \beta}$ for some smooth function $f$ of $v$ and smooth functions $\ul{\phi}^{\alpha \beta}$ on $\ul{\mc{M}}$. For definiteness, let us assume $m=2$. The bundle of all almost Hermitian structures on $\R^4$ has fibers isomorphic to $\CP^1$. Any unitary frame $(\theta^1, \theta^2)$ for $\ul{h}$ takes the form
\begin{align*}
\theta^1 & = \left( a \bar{a} + b \bar{b} \right)^{-\frac{1}{2}}  \left( a \,  \d z^1 + b \, \d \overline{z}^2 \right) \, , &
\theta^2 & = \left( a \bar{a} + b \bar{b} \right)^{-\frac{1}{2}} \left( a \, \d z^2 - b \, \d \overline{z}^1 \right) \, .
\end{align*}
for some smooth complex-valued functions $a$ and $b$ on $\mc{M}$ with $a, b$ not both vanishing. Note that this expression is invariant under non-zero rescaling of $(a,b)$, so at any point, $[a:b]$ defines an element of $\CP^1$ as expected. Take $a=1$ and $b$ any smooth complex-valued function depending on $v$, i.e.\ $\mathsterling_k b \neq 0$. Then $(N,K)$ is an almost Robinson structure on $(\mc{M}, g, K)$ that does not descend to $(\ul{\mc{M}}, \ul{H}, \ul{h})$, i.e.\ it is not nearly Robinson.
\end{exa}

\subsection{Almost Robinson manifolds of Robinson--Trautman type}\label{sec:alRob_RT}
In complete analogy with Definition \ref{def:alRob_Kundt}, we make the following definition.
\begin{defn}\label{def:alRob_RT}
An almost Robinson manifold $(\mc{M}, g, N, K)$ is said to be \emph{of Robinson--Trautman type} if $(\mc{M}, g, K)$ is a Robinson--Trautman spacetime, i.e.\ the congruence of curves tangent to $K$ is geodesic, expanding, non-shearing and non-twisting.
\end{defn}
The intrinsic torsion of such a manifold is a section of $\slashed{\mc{G}}_{-1}^1 \cap \slashed{\mc{G}}_{-1}^2$, with non-degenerate $\mc{G}_{-1}^0$-component i.e.\
\begin{align*}
\breve{\gamma}_i = \breve{\sigma}_{i j} = \breve{\tau}_{i j} = 0 \, , &&  \breve{\epsilon} & \neq 0 \, .
\end{align*}
Again, it is natural to consider nearly Robinson manifolds of Robinson--Trautman type. These manifolds enjoy properties similar to those of their Kundt counterparts. In particular, the leaf space $\ul{\mc{M}}$ of the congruence $\mc{K}$ tangent to $K$ is foliated by $2m$-dimensional almost Hermitian manifolds. The intrinsic torsion of $(\mc{M}, g, N, K)$ can also be related to the Gray--Hervella class of the almost Hermitian foliation as in Table \ref{tab-intors-GH} \emph{except} in cases where $\slashed{\mc{G}}_{0}^{1,0}$ is involved --- this would contradict the fact that $\mc{K}$ is expanding. This means that the only Gray--Hervella classes allowed would be those containing $\mc{W}_4$.

However, since a Robinson--Trautmann spacetime is conformal to a Kundt spacetime \cite{Pravda2017,Fino2020}, one may still consider the full Gray--Hervella classification applied to the almost Hermitian foliation on $\ul{\mc{M}}$. For instance, a nearly Robinson manifold of Robinson--Trautman type with intrinsic torsion in $\slashed{\mc{G}}_0^{1,1} \cap \slashed{\mc{G}}_0^{1,3}$ will arise from an almost Hermitian foliation on $\ul{\mc{M}}$ of either class $\mc{W}_2 \oplus \mc{W}_4$ or class $\mc{W}_2$.

\begin{exa}[The Tangherlini--Schwarzschild metric]
The smooth manifold $\mc{M} = \R \times \R_{>0} \times S^{2m}$ admits the Tangherlini--Schwarzschild metric -- see e.g.\ \cite{Podolsky2006a}. Then $(\mc{M},g,K)$ is a Ricci-flat Robinson--Trautman spacetime which does not admit any global Robinson structure except in the case $m=1$. This follows from the fact that the $2m$-sphere admits a Hermitian structure if and only if $m=1$. However, since $S^{2m}$ is conformally flat, it locally admits infinitely many Hermitian structures -- these correspond precisely to holomorphic sections of the twistor bundles over $S^{2m}$, a Riemannian articulation of the Kerr theorem -- see e.g.\ \cite{Eells1985}.
\end{exa}

In the above example, there is no distinguished (almost) Robinson structure on the Robinson--Trautman manifold. However, as the following example due to \cite{Ortaggio2008} shows, the Einstein--Maxwell equations may single out an almost Robinson structure on a Robinson--Trautman spacetime.

\begin{exa}\label{exa:RT}
Let $(\mc{M}, g, K)$ be a Robinson--Trautman optical geometry of dimension $2m+2$. Suppose $g$ satisfies the Einstein--Maxwell equations with an electromagnetic field $F_{a b}=F_{[a b]}$, that is, $F_{a b}$ is closed and co-closed, and the Einstein field equations take the form
\begin{align*}
\mathrm{Ric}_{a b} & = \frac{1}{m} \Lambda g_{a b} + 8 \pi T_{a b} + \frac{m-1}{2m} g_{a b} T_{c d} T^{c d} \, ,
\end{align*}
where $\Lambda$ is the cosmological constant, and the energy-momentum tensor is given by
\begin{align*}
T_{a b} & = \frac{1}{4 \pi} \left( F_{a c} F_b{}^c - \frac{1}{4} g_{a b} F_{c d} F^{c d} \right) \, .
\end{align*}
We assume further that $F_{a b}$ satisfying $k^a F_a{}^{[b} k^{c]} = 0$ for any section $k$ of $K$. For definiteness, assume $m>2$. Then \cite{Ortaggio2008}, there exist coordinates $(r, u, x^i)$ such that the metric takes the form
\begin{align*}
g & = -2 \d u \d r - 2 H(r) (\d u)^2 + r^2 \ut{h}_{i j}(x) \d x^i \d x^j  \, ,
\end{align*}
where $\ut{h}_{i j}$ is a metric on each slice of constant $(r,u)$,
\begin{align*}
2 H (r) & = K - \frac{2 \Lambda}{2m(2m+1)} r^2 - \frac{\mu}{r^{2m-1}} + \frac{2 Q^2}{2m(2m-1)} \frac{1}{r^{2(2m-1)}} - \frac{\|\ut{F}\|^2}{2m(2m-3)} \frac{1}{r^2} \, ,
\end{align*}
and the electromagnetic field is given by
\begin{align*}
F & = \frac{Q}{r^{2m}} \d r \wedge \d u  + \frac{1}{2} \ut{F}_{i j}(x) \d x^i \wedge \d x^j \, .
\end{align*}
Here, $K \in \{ -1, 0, 1 \}$, $\mu$, $Q$ and $\|\ut{F}\|^2 = \ut{F}_{i j} \ut{F}^{i j}$ are constants, and $k = \frac{\partial}{\partial r}$ is a null vector field tangent to $K$ whose congruence is also geodetsic, non-twisting and non-shearing. The vector field $\ell = \frac{\partial}{\partial u} - H (r) \frac{\partial}{\partial r}$ defines a optical structure $L$ dual to $K$. Set $\kappa = g(k,\cdot) = \d u $ and $\lambda = g(\ell, \cdot) = - \d r - H(r) \d u$. Assuming $\| F \|^2 \neq 0$, the electromagnetic $2$-form $F_{a b}$ determines two almost Robinson structures $(N_K,K)$ and $(N_L,L)$: their associated $3$-forms are proportional to $\kappa \wedge F$ and $\lambda \wedge F$ respectively. By virtue of the Maxwell equations, it can be shown \cite{Ortaggio2008} that the metric $\ut{h}_{i j}$ is almost K\"{a}hler--Einstein. It follows from Table \ref{tab-intors-GH} that the intrinsic torsion of each of these almost Robinson structures must be a section of $\slashed{\mc{G}}_{0}^{1,1} \cap \slashed{\mc{G}}_{0}^{1,3}$.

The six-dimensional case is similar, and further generalisations of these results can be found in \cite{Kokoska2021}.
\end{exa}

As for Kundt spacetimes, one can associate to any smooth $2m$-dimensional Riemannian manifold $(\ut{\mc{M}}, \ut{h})$ a Robinson--Trautman geometry $(\mc{M},g,K)$ with congruence of null geodesics $\mc{K}$ such that the leaf space $\ul{\mc{M}}$ of $\mc{K}$ is the trivial line bundle $\R \times \ut{\mc{M}}$. Any almost complex structure on $\ut{\mc{M}}$ compatible with $\ut{h}$ lifts to an almost Robinson structure on $(\mc{M}, g)$ compatible with $K$.

\subsection{Compatible linear connections}\label{sec:lin_conn}
We end Section \ref{sec:geometry} with a brief consideration of linear connections compatible with a given almost Robinson structure.

\begin{prop}\label{prop:lin_conn}
Let $(\mc{M},g,N,K)$ be an almost Robinson manifold. Fix a splitting $(\ell^a , \delta^a_i , k^a)$. Define a linear connection $\nabla'$ with
\begin{align}\label{eq:comp_conn}
\nabla'_a \xi_b & = \nabla_a \xi_b - Q_{a b}{}^c \xi_c \, , & \mbox{for any $1$-form $\xi_a$,}
\end{align}
where $Q_{a b c}$ is a tensor such that
\begin{align*}
 Q^{0 0 0} =   Q_{i}{}^{0 0} & = Q_{0}{}^{0 0} =  Q^{0}{}_{0}{}^0  = 0 \, , \\
Q^0{}_{j}{}^0 = -  Q^{0 0}{}_{j}  & =  \gamma_j \, , \\
Q_{i j}{}^0 = -   Q_{i}{}^0{}_{j} & =  \frac{1}{2m} \epsilon \, h_{i j} + \tau_{i j} + \sigma_{i j}  \, , &
Q^0{}_{\alpha \bar{\beta}} & = -\frac{1}{2m} \epsilon \, h_{\alpha \bar{\beta}}  - \tau_{\alpha \bar{\beta}} \, , \\
Q_{i (j k)} & = -\frac{1}{\i(m-1)} G_i h_{j k}  \, , \\
Q_{0 j}{}^0 = Q_{j 0}{}^0 & = - Q_{0}{}^0{}_{j} = - Q^{0}{}_{0 k}   = E_ j  \, , \\
Q_{0 (j k)} & = -\frac{1}{2m} f_0 h_{j k}  \, ,
\end{align*}
and
\begin{align*}
Q^0{}_{\beta \gamma}  & =  - \tau_{\beta \gamma} + \frac{1}{2 \i} \zeta_{\beta \gamma}  \, , \\
Q_{\alpha \beta \gamma}  & = \frac{1}{2 \i}  G_{\alpha \beta \gamma} \, , &
Q_{\bar{\alpha} \beta \gamma}  & =  \frac{1}{2 \i} G_{\bar{\alpha} \beta \gamma} \, , \\
Q_{\alpha \beta \bar{\gamma}}  & = - \frac{2}{\i(m-1)} G_{(\alpha} h_{\beta) \bar{\gamma}}  \, , &
Q_{\alpha \bar{\gamma} \beta}  & =   - \frac{1}{\i(m-1)} G_{\alpha} h_{\beta \bar{\gamma}}   \, , \\
Q_{0 \beta \gamma}  & = \frac{1}{2 \i} B_{\alpha \beta} \, .
\end{align*}

Then $\nabla'$ is a connection compatible with $K$ and $[h]$, i.e.\
\begin{align*}
\nabla'_u \kappa (v) & = 0 \, , & \mbox{for any $u \in \Gamma(T \mc{M})$, $v \in \Gamma(K^\perp)$,} \\
\nabla'_u g (v ,w) & = 0 \, , & \mbox{for any $u \in \Gamma(T \mc{M})$, $v,w \in \Gamma(K^\perp)$,}
\end{align*}
with torsion tensor satisfying
\begin{align*}
T^{0}{}_{j}{}^{0} & = - \gamma_{j} \, , \\
T_{i j}{}^{0} & = - 2 \, \tau_{i j} \, , &  T^{0}{}_{j}{}_{k} & =  - \sigma_{j k}  \, ,  \\
T^{0}{}_{0}{}^{0} = T^{0}{}_{0 k} & = T_{0 j}{}^{0} = 0 \, , 
\end{align*}
and
\begin{align*}
T^{0}{}_{[\beta \gamma]} & = - \frac{1}{2\i} \zeta_{\beta \gamma}  - \tau_{\beta \gamma}  \, , &
T^{0}{}_{(\beta \gamma)} & =  - \sigma_{\beta \gamma}  \, , \\
T_{[\alpha \beta \gamma]} & = - \frac{1}{2\i} G^{\tiny{\ydskew}}_{\alpha \beta \gamma}   \, , &
T_{\alpha (\beta \gamma)} & = \frac{1}{4\i} G^{\tiny{\ydhook}}_{(\beta \gamma) \alpha}   \, , \\
T_{\bar{\alpha} [\beta \gamma]}   & = - \frac{1}{2\i} G^\circ_{\bar{\alpha} \beta \gamma}  \, , \\
 T_{\alpha \beta \bar{\gamma}} = T_{\bar{\alpha} (\beta \gamma)}  & = T_{0 [\beta \gamma]}  = 0 \, .
\end{align*}
\end{prop}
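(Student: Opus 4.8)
The plan is to construct $\nabla'$ by explicitly prescribing its difference tensor $Q_{abc}$ relative to the Levi-Civita connection $\nabla$, and then to verify directly the three assertions of the proposition: that $\nabla'$ preserves the optical structure $K$, that it preserves the conformal screen metric $[h]$, and that its torsion has the stated components. Since $\nabla' = \nabla - Q$ with $\nabla$ torsion-free, the torsion of $\nabla'$ is simply $T'_{ab}{}^c = -2 Q_{[ab]}{}^c$, so the torsion computation reduces to reading off the skew part of $Q$ in its first two indices. The key conceptual point is that $Q$ is chosen so that its \emph{symmetric} part in the first two indices is tailored to correct the failure of $\nabla$ to preserve the relevant structures, while its skew part produces the prescribed torsion, which in turn encodes exactly the intrinsic torsion components $\gamma_i, \tau_{ij}, \sigma_{ij}, \zeta_{\alpha\beta}, G_{\alpha\beta\gamma}$ and $B_{\alpha\beta}$.

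First I would establish the compatibility with $K$ and $[h]$. Compatibility with the optical structure means $\nabla'_u \kappa(v) = 0$ for $u$ arbitrary and $v \in \Gamma(K^\perp)$; expanding via \eqref{eq:comp_conn}, this is the requirement that $(\nabla_a \kappa_b)$ restricted appropriately equals $Q_{ab}{}^c \kappa_c$ on $K^\perp$. Using the definitions \eqref{eq:S-H1_b} of $\gamma_i, \epsilon, \tau_{ij}, \sigma_{ij}, E_i$ as the components of $\nabla_a \kappa_b$, one checks that the prescribed values $Q^0{}_j{}^0 = \gamma_j$, $Q_{ij}{}^0 = \tfrac{1}{2m}\epsilon h_{ij} + \tau_{ij} + \sigma_{ij}$, $Q_{0j}{}^0 = E_j$ absorb precisely the screen-space components of $\nabla \kappa$, so that $\nabla' \kappa$ vanishes on $K^\perp$ as claimed. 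For conformal compatibility with $h$, one computes $\nabla'_u g(v,w) = -2 Q_{a(bc)} v^b w^c$ for $v,w \in \Gamma(K^\perp)$ and verifies that the symmetric screen part $Q_{i(jk)}$ is a pure trace, namely proportional to $h_{jk}$ (this is exactly the content of the lines giving $Q_{i(jk)} = -\tfrac{1}{\i(m-1)} G_i h_{jk}$ and $Q_{0(jk)} = -\tfrac{1}{2m} f_0 h_{jk}$), so that $\nabla'$ rescales $h$ conformally rather than preserving it on the nose. This is why the statement asserts compatibility with the conformal class $[h]$ and not $h$ itself.

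Next I would compute the torsion components by antisymmetrising $Q$. The screen-space torsion entries $T^0{}_j{}^0 = -\gamma_j$, $T_{ij}{}^0 = -2\tau_{ij}$, $T^0{}_{jk} = -\sigma_{jk}$ follow from the skew parts of $Q^0{}_j{}^0$, $Q_{ij}{}^0$ and $Q^0{}_{\alpha\bar\beta}$ together with $Q_{i}{}^0{}_j$; here one uses that $\tau_{ij}$ is already skew and $\sigma_{ij}$ symmetric and tracefree, so their contributions sort themselves into the skew and symmetric slots as indicated. The holomorphic-type entries involving $\zeta_{\beta\gamma}$, $G^{\scaleobj{0.35}{\ydskew}}_{\alpha\beta\gamma}$, $G^{\scaleobj{0.35}{\ydhook}}_{\alpha\beta\gamma}$, $G^\circ_{\bar\alpha\beta\gamma}$ and $B_{\alpha\beta}$ come from antisymmetrising the $(1,0)$- and $(0,1)$-typed blocks $Q^0{}_{\beta\gamma}$, $Q_{\alpha\beta\gamma}$, $Q_{\bar\alpha\beta\gamma}$, $Q_{0\beta\gamma}$; I would use the symmetry relations recorded just after \eqref{eq:S-H2} (e.g.\ $\zeta_{\alpha\beta} = -\zeta_{\beta\alpha}$, $G^{\scaleobj{0.35}{\ydskew}}_{(\alpha\beta)\gamma} = 0$, $G^{\scaleobj{0.35}{\ydhook}}_{[\alpha\beta\gamma]} = 0$, $B_{\alpha\beta} = -B_{\beta\alpha}$) to decompose each into its irreducible pieces and match against the claimed torsion.

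The main obstacle will be the holomorphic sector: the tensor $G_{\alpha\beta\gamma}$ decomposes into the totally skew part $G^{\scaleobj{0.35}{\ydskew}}$ and the mixed-symmetry hook part $G^{\scaleobj{0.35}{\ydhook}}$, and one must carefully track how antisymmetrising $Q_{\alpha\beta\gamma} = \tfrac{1}{2\i} G_{\alpha\beta\gamma}$ over the first two indices distributes across these Young-symmetry types to reproduce both $T_{[\alpha\beta\gamma]} = -\tfrac{1}{2\i} G^{\scaleobj{0.35}{\ydskew}}_{\alpha\beta\gamma}$ and $T_{\alpha(\beta\gamma)} = \tfrac{1}{4\i} G^{\scaleobj{0.35}{\ydhook}}_{(\beta\gamma)\alpha}$ simultaneously, while also confirming the vanishing entries $T_{\alpha\beta\bar\gamma} = T_{\bar\alpha(\beta\gamma)} = T_{0[\beta\gamma]} = 0$. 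These last vanishings require the trace terms $Q_{\alpha\beta\bar\gamma}$ and $Q_{\alpha\bar\gamma\beta}$ (both built from $G_\alpha$) to cancel upon antisymmetrisation, which is a genuine consistency check on the normalising coefficients rather than an automatic consequence. Everything else is routine index bookkeeping organised by boost weight and by the $Q_0$-irreducible decomposition of Theorem \ref{thm-main-Rob}.
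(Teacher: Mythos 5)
Your proposal is correct and follows exactly the paper's route: the paper's proof is a one-line appeal to the identity $T_{abc} = -2\,Q_{[ab]c}$ for the torsion of $\nabla' = \nabla - Q$ together with a direct componentwise verification, which is precisely the computation you lay out. The extra detail you supply on the compatibility checks and on sorting the $G_{\alpha\beta\gamma}$ contributions by Young symmetry is a faithful expansion of what the paper leaves implicit as "a straightforward computation."
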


\begin{proof}
This is a straightforward computation using \eqref{eq:comp_conn} and the fact that the torsion tensor is given by $T_{a b c} = - 2 \, Q_{[a b] c}$.
\end{proof}

\begin{rem}
The linear connection defined in the proposition above depends in general on the choice of splitting. Note however that even with fixed $k$ and $\lambda$, $\nabla'$ as defined in the proof of the proposition is not unique, the undefined components of $Q_{a b c}$ in the proposition above being entirely arbitrary and not affecting the property of the torsion.
\end{rem}

We may set all the remaining components of $Q_{[a b] c}$ to zero, and obtain the following corollary.
\begin{cor}\label{cor:lin_conn}
Let $(\mc{M},g,N,K)$ be an almost Robinson manifold. Suppose that the intrinsic torsion of $(N,K)$ is a section of $\slashed{\mc{G}}_{-1}^1 \cap \slashed{\mc{G}}_0^1 \cap \slashed{\mc{G}}_0^2 \cap \slashed{\mc{G}}_0^3$. Then $(\mc{M},g,N,K)$ admits a torsion-free connection that preserves $(N,K)$ and (the conformal class of) the screen bundle metric.
\end{cor}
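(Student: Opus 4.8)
The plan is to build the compatible torsion-free connection explicitly, as a modification of the Levi-Civita connection, by exploiting the structure already set up in Proposition \ref{prop:lin_conn}. Recall that that proposition produces, from a choice of splitting $(\ell^a, \delta^a_i, k^a)$, a connection $\nabla'$ compatible with $K$ and the conformal screen metric $[h]$, whose torsion tensor has components determined componentwise by the intrinsic torsion quantities $\gamma_i, \epsilon, \tau_{ij}, \sigma_{ij}, \zeta_{\alpha\beta}, G^{\tiny{\ydskew}}_{\alpha\beta\gamma}, G^{\tiny{\ydhook}}_{\alpha\beta\gamma}, G^\circ_{\bar\alpha\beta\gamma}$, together with the freedom that the remaining components of $Q_{abc}$ are arbitrary. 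The key observation is that the hypothesis that the intrinsic torsion is a section of $\slashed{\mc{G}}_{-1}^1 \cap \slashed{\mc{G}}_0^1 \cap \slashed{\mc{G}}_0^2 \cap \slashed{\mc{G}}_0^3$ forces precisely the vanishing of all the intrinsic-torsion quantities that feed into the torsion tensor $T_{abc}$ of $\nabla'$.

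First I would unwind the membership conditions. By Theorem \ref{thm:intors-rob} and the definitions in \eqref{eq:G_sl_optical}, a section of $\slashed{\mc{G}}_{-1}^1 = \slashed{\mc{G}}_{-1}^{1,0} \cap \slashed{\mc{G}}_{-1}^{1,1} \cap \slashed{\mc{G}}_{-1}^{1,2}$ has $\breve\gamma_i = \breve\tau^\omega = \breve\tau_{\alpha\beta} = \breve\tau^\circ_{\alpha\bar\beta} = 0$, hence $\breve\tau_{ij}=0$ entirely; a section of $\slashed{\mc{G}}_0^1$ (the optical $\slashed{\mc{G}}_0^{1,0}\cap\slashed{\mc{G}}_0^{1,1}\cap\slashed{\mc{G}}_0^{1,2}\cap\slashed{\mc{G}}_0^{1,3}$) kills $\breve G_\alpha$, $\breve G^{\tiny{\ydskew}}_{\alpha\beta\gamma}$, $\breve G^{\tiny{\ydhook}}_{\alpha\beta\gamma}$, $\breve G^\circ_{\bar\alpha\beta\gamma}$; and the conditions on $\slashed{\mc{G}}_0^2$ and $\slashed{\mc{G}}_0^3$ (whatever their precise definitions supply) are intended to remove $\breve\sigma_{ij}$ and $\breve\zeta_{\alpha\beta}$. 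I would then match these against the torsion formulae displayed at the end of Proposition \ref{prop:lin_conn}: the nonzero torsion components are all built from $\gamma_j$, $\tau_{ij}$, $\sigma_{jk}$, $\zeta_{\beta\gamma}$, $G^{\tiny{\ydskew}}_{\alpha\beta\gamma}$, $G^{\tiny{\ydhook}}_{(\beta\gamma)\alpha}$ and $G^\circ_{\bar\alpha\beta\gamma}$. Under the hypothesis every one of these vanishes, so $T_{abc}\equiv 0$, i.e.\ the connection $\nabla'$ supplied by Proposition \ref{prop:lin_conn} is already torsion-free.

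The remaining step is to verify that this torsion-free $\nabla'$ in fact preserves $(N,K)$ and not merely $K$ and $[h]$. Here I would argue that compatibility with $K$ and with the screen conformal structure $[h]$ together with compatibility with the complex structure $J$ on $H_K$ is what is meant by preserving $(N,K)$; since $\nabla'$ is a $Q_0$-connection by construction (its defining deformation $Q_{abc}$ takes values in $\mf q$ modulo the freedom we have used), it automatically preserves $J$ and hence the eigenbundle splitting ${}^\C H_K = H_K^{(1,0)}\oplus H_K^{(0,1)}$, which is exactly the datum of $(N,K)$ relative to $K$. I expect the main obstacle to be precisely this bookkeeping: one must check that the components of $Q_{abc}$ appearing in Proposition \ref{prop:lin_conn} that are \emph{not} forced to vanish (for instance the $Q^0{}_{\beta\gamma}$, $Q_{\alpha\beta\gamma}$, $Q_{0\beta\gamma}$ terms) either vanish under the hypothesis or lie in $\mf q$ so that they do not spoil the preservation of $J$, and that the $\slashed{\mc{G}}_0^2$, $\slashed{\mc{G}}_0^3$ conditions are correctly transcribed. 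This is a finite check against Tables \ref{tab:table-irred-mod-P_bdl} and \ref{tab:table-irred-mod-G_bdl} and the transformation rules in the proof of Theorem \ref{thm:intors-rob}, so I would present it as a direct verification rather than a conceptual difficulty, noting that the arbitrariness in the undetermined components of $Q_{abc}$ (already flagged in the Remark following Proposition \ref{prop:lin_conn}) can be fixed so that $\nabla'$ is a genuine $\mf q$-connection.
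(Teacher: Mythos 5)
Your proposal is correct and takes essentially the same route as the paper, which dispatches the corollary in a single line ("we may set all the remaining components of $Q_{[ab]c}$ to zero") by observing that the torsion of the connection of Proposition \ref{prop:lin_conn} is built exclusively from $\breve{\gamma}_i$, $\breve{\tau}_{ij}$, $\breve{\sigma}_{ij}$, $\breve{\zeta}_{\alpha\beta}$, $\breve{G}^{\tiny{\ydskew}}_{\alpha\beta\gamma}$, $\breve{G}^{\tiny{\ydhook}}_{\alpha\beta\gamma}$ and $\breve{G}^{\circ}_{\bar{\alpha}\beta\gamma}$, all of which the hypothesis annihilates. Your reading of the undefined symbols $\slashed{\mc{G}}_0^1,\slashed{\mc{G}}_0^2,\slashed{\mc{G}}_0^3$ as $\slashed{\mc{G}}_0^{1,1},\slashed{\mc{G}}_0^{1,2},\slashed{\mc{G}}_0^{1,3}$ is the intended one (compare the invocation of this corollary in the proof of Theorem \ref{thm:integrable_Rob}), and these do force $\breve{\sigma}_{ij}=\breve{\zeta}_{\alpha\beta}=0$ through the projections $\Pi_{-1}^{2,1}$, $\Pi_{-1}^{2,0}$ and $\Pi_{-1}^{3,0}$ appearing in their definitions in Theorem \ref{thm:intors-rob}, so your hedge on that point resolves in your favour.
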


\section{Conformal almost Robinson structures}\label{sec:conf-aRstr}
As for optical geometry and almost Hermitian geometry, the notion of almost Robinson structure, or indeed almost null structure, finds a very natural setting in conformal geometry. We shall follow the conventions set up in \cite{Bailey1994,Fino2020}, and denote a conformal structure on a smooth manifold $\mc{M}$ by $\mbf{c}$. For each $w \in \R$, the bundle of conformal densities of weight $w$ is denoted by $\mc{E}[w]$. In particular, a choice of ray subundle $\mc{E}_+[1]$ of $\mc{E}[1]$ is referred to as the bundle of conformal scales. The conformal structure can be encoded by means of the \emph{conformal metric} $\bm{g}_{a b}$, that is a non-degenerate global section of $\bigodot^2 T^* \mc{M} \otimes \mc{E}[2]$. For each $g$ in $\mbf{c}$, we extend the Levi-Civita connection $\nabla$ of $g$ to a linear connection on $\mc{E}[w]$ for each $w \in \R$. The exterior covariant derivative will be denoted by $\d^\nabla$. Further details can be found in the aforementioned references.

\begin{defn}
Let $(\mc{M},\mbf{c})$ be an oriented and time-oriented Lorentzian conformal manifold of dimension $2m+2$. An \emph{almost Robinson structure} on $(\mc{M},\mbf{c})$ consists of a pair $(N,K)$ where $N$ is a complex distribution of rank $m+1$ totally null with respect to $g$, and $K$ a real line distribution such that ${}^\C K = N \cap \overline{N}$. We shall call $(N,K)$ a \emph{nearly Robinson structure} when $[K,N] \subset N$, and a \emph{Robinson structure} when $[N,N] \subset N$.

We shall accordingly refer to the quadruple $(\mc{M},\mbf{c},N, K)$ as an \emph{almost (conformal) Robinson manifold}, as a \emph{nearly (conformal) Robinson manifold} or as a \emph{(conformal) Robinson manifold}.
\end{defn}

\begin{rem}
As in the metric case, one can describe an almost conformal Robinson manifold as an almost null structure (of real index one).
\end{rem}

The conformal metric $\bm{g}_{a b}$ induces a conformal bundle metric $\bm{h}_{i j}$ on $H_K$. Proposition \ref{prop:char-Robinson-mfld} can immediately be translated into the conformal setting as follows:

\begin{prop}\label{prop:equiv_Rob_conf}
Let $(\mc{M},\mbf{c})$ be an oriented and time-oriented Lorentzian manifold of dimension $2m+2$. The following are equivalent:
\begin{enumerate}
\item $(\mc{M},\mbf{c})$ is endowed with an almost Robinson structure $(N,K)$;
\item $(\mc{M},\mbf{c})$ admits a totally null complex $(m+1)$-form of conformal weight $m+2$;
\item $(\mc{M},\mbf{c})$ is endowed with an optical structure $K$ whose screen bundle $H_K = K^\perp/K$ is equipped with a bundle complex structure compatible with the induced conformal structure;
\item $(\mc{M},\mbf{c})$ admits a $1$-form $\bm{\kappa}_a$ of conformal weight $2$, and a $3$-form $\bm{\rho}_{a b c}$ of conformal weight $4$ such that
\begin{align*}
\bm{\rho}_{ab} \, {}^e \bm{\rho} _{cde} = - 4 \bm{\kappa}_{[a} \bm{g}_{b][c} \bm{\kappa}_{d]} \, ;
\end{align*}
\item $(\mc{M},\mbf{c})$ admits a pure spinor of real index one.
\end{enumerate}
\end{prop}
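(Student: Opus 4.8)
The plan is to obtain Proposition \ref{prop:equiv_Rob_conf} as a direct conformal upgrade of the metric-signature Proposition \ref{prop:char-Robinson-mfld}, which was itself the bundle version of the algebraic Proposition \ref{prop:char-Robinson}. The key observation is that every object appearing in Proposition \ref{prop:char-Robinson-mfld} is \emph{conformally weightless as a distribution} but \emph{conformally weighted as a tensor}, so the only genuine work is bookkeeping of conformal weights. First I would fix a metric $g \in \mbf{c}$ and apply Proposition \ref{prop:char-Robinson-mfld} pointwise and smoothly to $(\mc{M}, g)$; this already gives all five equivalences \emph{for that choice of representative}. The task is then to check that each characterisation is independent of the choice of $g \in \mbf{c}$, and to record the correct conformal weights.

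The main steps, in order, are as follows. For (1) $\Leftrightarrow$ (3): a complex distribution $N$ totally null with respect to $g$ is totally null with respect to $\e^{2 f} g$ as well, since total nullity $\wt{g}(v,w) = 0$ is unaffected by positive rescaling; likewise $K$, $K^\perp$, and the splitting ${}^\C K = N \cap \overline{N}$ depend only on $\mbf{c}$. The screen bundle $H_K = K^\perp/K$ carries the \emph{conformal} bundle metric $\bm{h}_{i j}$ induced by $\bm{g}_{a b}$, and the complex structure $J$ built from $N$ as in Section \ref{sec:Robinson-optical} is manifestly metric-independent; compatibility $J \circ \bm{h} = - \bm{h} \circ J$ holds because it held for every representative. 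For (2): a simple totally null $(m+1)$-form defines, and is defined by, the line $\bigwedge^{m+1} \Ann(N)$; the weight $m+2$ is determined by requiring $\bm{\nu}$ to be a genuine section of $\bigwedge^{m+1} T^* \mc{M} \otimes \mc{E}[m+2]$, which I would verify by tracking how the identification $\mc{E}(-1,0) \cong \bigwedge^{m+1} \Ann(N)$ of \eqref{eq:complex_boost} interacts with the boost weight built into $K \cong \mc{E}(-1)$ and the density weights. For (4): starting from $\bm{\kappa}_a = \bm{g}_{a b} k^b$ with $k$ a section of $K \cong \mc{E}[-1]$, one finds $\bm{\kappa}_a$ naturally has weight $2$, and $\bm{\rho}_{a b c} = 3 \bm{\kappa}_{[a} \bm{\omega}_{b c]}$ then carries weight $4$; I would check that the quadratic identity $\bm{\rho}_{ab}{}^e \bm{\rho}_{cde} = -4 \bm{\kappa}_{[a} \bm{g}_{b][c} \bm{\kappa}_{d]}$ is weight-homogeneous (every term has weight $8$ after raising indices with $\bm{g}^{ab}$ of weight $-2$), so it makes invariant sense. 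For (5): purity of a spinor is a linear-algebraic condition at each point, governed by \eqref{eq:pure}, and the real index is fixed to one by Lemma \ref{lem:real_index}; under a conformal rescaling the spin bundle is unchanged up to a density twist, so the existence of a pure spinor of real index one is a conformal notion.

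The expected main obstacle is purely the weight accounting, particularly reconciling the two sources of scaling: the \emph{boost weight} carried by sections of $K$, $L$, and the screen bundle (from the $\R_{>0}$ factor in $Q$) and the genuine \emph{conformal density weight} from $\mc{E}[w]$. In the metric setting these are distinct gradings, but in the conformal setting a choice of scale partially ties them together, and I would need to make precise the statement that $\bm{\kappa}$ has conformal weight $2$, $\bm{\rho}$ weight $4$, and $\bm{\nu}$ weight $m+2$ by relating the conformal-density bundles $\mc{E}[w]$ to the boost-spin densities $\mc{E}(w, w')$ introduced in Section \ref{sec:one-dim_rep} and \eqref{eq:spin_complex_boost}. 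Concretely, I would identify the conformal scale bundle restricted to the $Q$-structure with an appropriate combination of $\mc{E}(\tfrac{1}{2},\tfrac{1}{2})$-type bundles and read off the weights from \eqref{arr:rob_spin_inv}, exactly as indicated by the consistency check $\mc{E}(w) = \mc{E}(\tfrac{w}{2}, \tfrac{w}{2})/S^1$ already recorded in the excerpt. Once these identifications are in place, the proof reduces to invoking Proposition \ref{prop:char-Robinson-mfld} representative-by-representative together with the manifest conformal invariance of total nullity, and I would state that the remaining verifications are routine and omitted.
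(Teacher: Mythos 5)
Your proposal follows essentially the same route as the paper, which obtains Proposition \ref{prop:equiv_Rob_conf} as an immediate translation of Proposition \ref{prop:char-Robinson-mfld} into the conformal setting, the only substantive work being the weight bookkeeping (the paper pins down the weight $m+2$ of $\bm{\nu}$ from the relation $\bm{\nu}_{a a_1\ldots a_m}\overline{\bm{\nu}}_{b}{}^{a_1\ldots a_m}\propto\bm{\kappa}_a\bm{\kappa}_b$, which is exactly the kind of density identification you propose to track). One trivial slip: both sides of the quadratic identity in (4) have conformal weight $6$, not $8$, but this does not affect your conclusion that the identity is weight-homogeneous.
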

We shall follow the terminology already introduced in Section \ref{sec:geometry}: thus the $1$-form $\bm{\kappa}_a$ and $3$-form $\bm{\rho}_{a b c}$ given in Proposition \ref{prop:equiv_Rob_conf} will be referred to as \emph{optical $1$-form} and \emph{Robinson $3$-form} respectively, and so on.

With reference to the proposition above, the bundle complex structure $J_i{}^j$ yields a bundle Hermitian structure $\bm{\omega}_{i j} = J_{i}{}^k \bm{h}_{k j}$ of conformal weight $2$. For a given optical $1$-form $\bm{\kappa}_a$ we obtain a Robinson $3$-form $\bm{\rho}_{a b c} = 3 \bm{\kappa}_{[a} \bm{\omega}_{b c]}$ of conformal weight $4$, where $\bm{\omega}_{a b}$ is such that $k^c \bm{\omega}_{c [a} \bm{\kappa}_{b]} = 0$, $\bm{\omega}_{i j} = \bm{\omega}_{a b} \delta^a_i \delta_j^b$. The complex $(m+2)$-form $\bm{\nu}_{a_0 a_1 \ldots a_m}$ is required to have conformal $m+1$ since
\begin{align*}
\bm{\nu}_{a a_1 \ldots a_m}  \overline{\bm{\nu}}_{b}{}^{a_1 \ldots a_m} \,  \propto \, \bm{\kappa}_a \bm{\kappa}_b \, .
\end{align*}
The relation with pure spinor fields is analogous to the Lorentzian case. Here, neither a Robinson spinor nor its charge conjugate are conformally weighted, but the van der Waerden symbols carry some conformal weight. This means that for each $k=0,\ldots , m+1$, the spinor bilinear form with values in (complex) $k$-forms has conformal weight $k+1$. We omit the details, which will play no r\^{o}le in the subsequent discussion.

\subsection{Conformal invariants of an almost Robinson structure}
An optical geometry with a congruence of null geodesics has two conformal invariants, the shear and the twist, which we may view as fields of conformal weight two \cite{Robinson1985,Fino2020}. To determine the conformal invariants of an almost Robinson manifold $(\mc{M}, \mbf{c}, N, K)$, we examine how the covariant derivative of the Robinson $3$-form changes under a change of metrics $\wh{g} = \e^{2 \varphi} g$ for some smooth function $\varphi$ as given below:
\begin{align*}
\wh{\nabla}_a \bm{\kappa}_b & = \nabla_a \bm{\kappa}_b + 2 \Upsilon_{[a} \bm{\kappa}_{b]} + \bm{g}_{a b} \Upsilon^c \bm{\kappa}_c \, , \\
\wh{\nabla}_a \bm{\rho}_{b c d} & = \nabla_a \bm{\rho}_{b c d} + 3 \Upsilon_a \bm{\rho}_{b c d} - 3  \Upsilon_{[b} \bm{\rho}_{c d] a} + 3 \, \bm{g}_{a [b} \bm{\rho}_{c d] e} \Upsilon^e \, ,
\end{align*}
where $\Upsilon = \d \varphi$. Projecting these tensors into their components with splitting operators, we find that
\begin{align*}
\wh{\gamma}_{i} & = \e^{2 \varphi} \gamma_i   \, , \\
\wh{\tau}_{i j} & = \e^{2 \varphi} \tau_{i j}  \, , &
\wh{\sigma}_{i j} & = \e^{2 \varphi} \sigma_{i j} \, , &
\wh{\epsilon} & = \epsilon + 2m \Upsilon^{c} k_{c} \, , \\
\wh{\zeta}_{\beta \gamma} & = \e^{4 \varphi} \zeta_{\beta \gamma} \, , \\
\wh{E}_{i} & = \e^{2 \varphi}  \left( E_{i}  - \Upsilon_{i} \right) \, , \\
\wh{G}_{\gamma}  & = \e^{2 \varphi} \left(G_{\gamma}  - 2 (m-1) \i \Upsilon_{\gamma} \right) \, , \\
\wh{G}^{\tiny{\ydskew}}_{\alpha \beta \gamma} & = \e^{4 \varphi} G^{\tiny{\ydskew}}_{\alpha \beta \gamma} \, , &
\wh{G}^{\tiny{\ydhook}}_{\alpha \beta \gamma} & = \e^{4 \varphi} G^{\tiny{\ydhook}}_{\alpha \beta \gamma} \, , &
 \wh{G}^{\circ}_{\bar{\alpha} \beta \gamma}  & = \e^{4 \varphi} G^{\circ}_{\bar{\alpha} \beta \gamma}  \, , \\
\wh{B} _{\beta \gamma} & =  \e^{4 \varphi} B_{\beta \gamma} \, .
\end{align*}
Note that
\begin{align*}
2(m-1) \i \wh{E} _{\gamma} - \wh{G}_{\gamma}  & = \e^{2 \varphi} \left( 2(m-1) \i E _{\gamma} - G_{\gamma} \right) \, .
\end{align*}
We also find that for any $[z:w] \in \CP^1$,
\begin{align*}
z \wh{\tau}_{\alpha \beta} + w \wh{\zeta}_{\beta \gamma} & = \e^{4 \varphi} \left( z \tau_{\alpha \beta} + w \zeta_{\beta \gamma} \right) \, .
\end{align*}
From these computations, we immediately conclude:

\begin{thm}\label{thm:conf-Rob}
Let $(\mc{M}, \mbf{c},N, K)$ be an almost conformal Robinson manifold. Let $g$ be a metric in $\mbf{c}$ so that $(\mc{M}, g , K)$ is an optical geometry with bundle of intrinsic torsions $\mc{G}$. Any conformally invariant subbundle of $\mc{G}$ must be an intersection of the following:
\begin{align*}
& \slashed{\mc{G}}_{-2}^{0,0} \, , \\
& \slashed{\mc{G}}_{-1}^{1,0} \, , 
&& \slashed{\mc{G}}_{-1}^{1,1} \, , 
&& \slashed{\mc{G}}_{-1}^{1,2} \, , \\
& \slashed{\mc{G}}_{-1}^{2,0} \, , 
&& \slashed{\mc{G}}_{-1}^{2,1} \, , 
&& \slashed{\mc{G}}_{-1}^{3,0} \, , \\
& \slashed{\mc{G}}_0^{1,1} \, ,
&& \slashed{\mc{G}}_0^{1,2} \, ,
&& \slashed{\mc{G}}_0^{1,3} \, ,
&& (\slashed{\mc{G}}_{0}^{0 \times 1})_{[2(m-1)\i:-1]} \, , \\
& \slashed{\mc{G}}_1^{0,0} \, , \\
& (\slashed{\mc{G}}_{-1}^{1 \times 2})_{[x:y]} \, , && \mbox{\scriptsize{$[x:y] \in \RP^1$}} \, , \\
& (\slashed{\mc{G}}_{-1}^{1 \times 3})_{[z:w]} \, , && \mbox{\scriptsize{$[z:w] \in \CP^1$}} \, .
\end{align*}
\end{thm}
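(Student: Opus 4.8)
The plan is to read off the conformally invariant subbundles directly from the transformation rules for the weighted components of the intrinsic torsion under a conformal rescaling $\wh{g} = \e^{2\varphi} g$, which have just been computed immediately above the statement. The key observation is that a $Q$-invariant subbundle $\slashed{\mc{G}}_i^{j,k}$ (or one of the isotypic families) is conformally invariant precisely when the defining vanishing conditions on the projections $\Pi_i^{j,k}(\Gamma)$ are preserved under the inhomogeneous shifts by $\Upsilon = \d\varphi$. From the displayed formulae, the components $\gamma_i$, $\tau_{ij}$, $\sigma_{ij}$, $\zeta_{\beta\gamma}$, $G^{\ydskew}_{\alpha\beta\gamma}$, $G^{\ydhook}_{\alpha\beta\gamma}$, $G^\circ_{\bar\alpha\beta\gamma}$ and $B_{\beta\gamma}$ all transform \emph{homogeneously} (by a positive power of $\e^\varphi$), while $\epsilon$, $E_i$ and $G_\gamma$ pick up an inhomogeneous term proportional to $\Upsilon$.

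First I would note that any subbundle cut out by the vanishing of homogeneously transforming components is automatically conformally invariant; this immediately yields the invariance of $\slashed{\mc{G}}_{-2}^{0,0}$, $\slashed{\mc{G}}_{-1}^{1,0}$, $\slashed{\mc{G}}_{-1}^{1,1}$, $\slashed{\mc{G}}_{-1}^{1,2}$, $\slashed{\mc{G}}_{-1}^{2,0}$, $\slashed{\mc{G}}_{-1}^{2,1}$, $\slashed{\mc{G}}_{-1}^{3,0}$, $\slashed{\mc{G}}_0^{1,1}$, $\slashed{\mc{G}}_0^{1,2}$, $\slashed{\mc{G}}_0^{1,3}$, $\slashed{\mc{G}}_1^{0,0}$, together with the families $(\slashed{\mc{G}}_{-1}^{1\times 2})_{[x:y]}$ and $(\slashed{\mc{G}}_{-1}^{1\times 3})_{[z:w]}$, the latter because $z\tau_{\alpha\beta} + w\zeta_{\beta\gamma}$ transforms homogeneously for every $[z:w]\in\CP^1$. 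The subtler part is handling the inhomogeneously transforming components: $\slashed{\mc{G}}_{-1}^{0,0}$ (governed by $\epsilon$) and $\slashed{\mc{G}}_0^{0,0}$ (governed by $E_i$) are \emph{not} conformally invariant on their own, since one can gauge $\epsilon$ or $E_i$ away (or in) by a suitable choice of $\Upsilon$. The resolution is to seek invariant combinations. From $\wh{G}_\gamma = \e^{2\varphi}(G_\gamma - 2(m-1)\i\,\Upsilon_\gamma)$ and $\wh{E}_i = \e^{2\varphi}(E_i - \Upsilon_i)$ one reads off the noted identity $2(m-1)\i\,\wh E_\gamma - \wh G_\gamma = \e^{2\varphi}(2(m-1)\i\,E_\gamma - G_\gamma)$, so the combination $2(m-1)\i E_\gamma - G_\gamma$ is conformally weighted; its vanishing defines exactly $(\slashed{\mc{G}}_0^{0\times 1})_{[2(m-1)\i:-1]}$, which is therefore the unique conformally invariant member of that isotypic family.

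The argument then concludes by invoking the lattice structure of $Q$-submodules established in Theorem \ref{thm:intors-rob}: every $Q$-submodule arises as an intersection of the maximal submodules $\slashed{\mbb{G}}_i^{j,k}$ and the isotypic families, and an intersection of conformally invariant subbundles is conformally invariant, while any conformally invariant subbundle must itself be such an intersection and hence built only from the invariant generators listed. I would spell out that neither $\slashed{\mc{G}}_{-1}^{0,0}$, $\slashed{\mc{G}}_0^{0,0}$, nor any of the remaining family members appear, since their defining conditions involve $\epsilon$, $E_i$ or $G_\gamma$ in a way that is destroyed by a generic $\Upsilon$. The main obstacle is the bookkeeping around the two isotypic families indexed by $\RP^1$ and $\CP^1$: one must verify that the interior combinations $x\epsilon + y\tau^\omega$ and $z E_\alpha + w G_\alpha$ fail to be conformally invariant for generic parameter values — because $\epsilon$ and $E_\alpha$ shift — while pinning down the single distinguished value $[2(m-1)\i:-1]$ for which the $\Upsilon$-dependence cancels, exactly as the transformation of $G_\gamma$ dictates. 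Once these cancellations are checked against the formulae above, the theorem follows.
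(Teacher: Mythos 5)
Your proposal is correct and follows essentially the same route as the paper: the paper derives the theorem directly from the displayed conformal transformation rules of the components $\gamma_i,\tau_{ij},\sigma_{ij},\epsilon,\zeta_{\alpha\beta},E_i,G_\gamma,G^{\tiny\ydskew},G^{\tiny\ydhook},G^\circ,B$, noting exactly as you do that the homogeneously transforming components cut out invariant subbundles, that $\epsilon$ and $E_i$ shift inhomogeneously by $\Upsilon$, and that $2(m-1)\i E_\gamma - G_\gamma$ is the unique weighted combination in the $0\times 1$ family, singling out $[2(m-1)\i:-1]$. Your appeal to the lattice of $Q$-submodules from Theorem \ref{thm:intors-rob} for the ``must be an intersection'' claim matches the paper's implicit reasoning.
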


As for conformal optical geometries, there is a subclass $\accentset{n.e.}{\mbf{c}}$ of metrics in $\mbf{c}$ with the property that whenever $g$ is in $\accentset{n.e.}{\mbf{c}}$, the congruence $\mc{K}$ is non-expanding, i.e.\ for any $k \in \Gamma(K)$ with $\kappa = g(k,\cdot)$, $\kappa \, \div  k -  \nabla_k \kappa = 0$.

We also know from \cite{Fino2020} that there exists a family of optical vector fields $k$ such that $\mathsterling_k \bm{\kappa} = 0$ where $\kappa = \bm{g}(k,\cdot)$.  If the corresponding Robinson $3$-form $\bm{\rho}_{a b c}$ is preserved along $K$, then $\mathsterling_k  \bm{\rho}_{a b c} = 0$, where the Lie derivative is given by
\begin{align*}
\mathsterling_k  \bm{\rho}_{a b c} & = k^d \nabla_d  \bm{\rho}_{a b c} + 3  \bm{\rho}_{d [a b} \nabla_{c]} k^d - \frac{4}{n+2}  \bm{\rho}_{a b c} \nabla_d k^d \, .
\end{align*}
Here $\nabla$ is the Levi-Civita connection of any metric in $\mbf{c}$. The details are left to the reader.

\subsection{Conformally parallel Robinson structures}
In \cite{Fino2020}, we saw that under certain conditions, one can find metrics in $\mbf{c}$ for which the optical structure is parallel. We extend this result to the Robinson setting.

\begin{prop} \label{PropprallelalmostRob}
Let $(\mc{M}, \mbf{c} ,N, K)$ be an almost Robinson manifold with congruence of null curves $\mc{K}$. Suppose that the intrinsic torsion of $(N,K)$ for some (and thus any) metric $g$ in $\mbf{c}$ is a section of $\slashed{\mc{G}}_{-1}^{1} \cap \slashed{\mc{G}}_1^{0,0}$, i.e.\
\begin{align}
\breve{\gamma}_ i = \breve{\tau}_ {i j} = \breve{\sigma}_{i j} = \breve{\zeta}_{\alpha \beta} = \breve{G}^{\tiny{\ydskew}}_{\alpha \beta \gamma} = \breve{G}^{\tiny{\ydhook}}_{\alpha \beta \gamma} = \breve{G}^{\circ}_{\bar{\alpha} \beta \gamma} = \breve{B}_{\alpha \beta} & = 0 \, , \nonumber \\
2(m-1) \i \breve{E} _{\gamma} - \breve{G}_{\gamma} & = 0 \, . \label{eq:remains}
\end{align}
Equivalently, any Robinson spinor satisfies \eqref{eq:pure_spinor3} and any optical $1$-form $\bm{\kappa}$ satisfies $\bm{\kappa} \wedge \d^{\nabla} \bm{\kappa} = 0$. Suppose further that the Weyl tensor $W_{a b c d}$ satisfies
\begin{align*}
k^a W_{a b [c d} \bm{\kappa}_{e]} & = 0 \, .
\end{align*}
Then locally, there is a subclass $\accentset{par.}{\mbf{c}}$ of metrics in $\mbf{c}$ with the property that whenever $g$ is in $\accentset{par.}{\mbf{c}}$,  the almost Robinson structure is parallel, i.e.\ any Robinson spinor, and thus any optical $1$-form and Robinson $3$-form, are recurrent. In particular, the holonomy of the Levi-Civita connection of any metric in $\accentset{par.}{\mbf{c}}$ is contained in $Q=(\R_{>0} \times \U(m))\ltimes (\R^{2m})^*$. Any two metrics in $\accentset{par.}{\mbf{c}}$ differ by a factor constant along $K^\perp$.
\end{prop}

\begin{proof}
The hypothesis can be expressed by saying that the only three possibly non-vanishing components of the intrinsic torsion are $\breve{\epsilon}$, $\breve{E}_i$ and $\breve{G}_{\alpha}$, the latter two being related by \eqref{eq:remains}. It is already shown in \cite{Fino2020} that the curvature prescription yields the existence of the subclass $\accentset{par.}{\mbf{c}}$ for which we have $\breve{E}_i=0$ and $\breve{\epsilon}=0$. But then this implies that $\breve{G}_\alpha = 0$. Hence, the intrinsic torsion vanishes and the result follows.
\end{proof}

As a direct consequence of Proposition \ref{prop:Kundt_Rob_EG}, we obtain:
\begin{prop}
Let $(\mc{M}, \mbf{c} ,N, K)$ be an almost Robinson manifold with congruence of null curves $\mc{K}$. Suppose that the intrinsic torsion of $(N,K)$ for some (and thus any) metric $g$ in $\mbf{c}$ is a section of $\slashed{\mc{G}}_{-1}^{1} \cap \slashed{\mc{G}}_0^{1,1} \cap \slashed{\mc{G}}_0^{1,2} \cap \slashed{\mc{G}}_0^{1,3} \cap (\slashed{\mc{G}}_{0}^{0 \times 1})_{[2(m-1)\i:-1]}$, i.e.\
\begin{align*}
\breve{\gamma}_ i = \breve{\tau}_ {i j} = \breve{\sigma}_{i j} = \breve{\zeta}_{\alpha \beta} = \breve{G}^{\tiny{\ydskew}}_{\alpha \beta \gamma} = \breve{G}^{\tiny{\ydhook}}_{\alpha \beta \gamma} = \breve{G}^{\circ}_{\bar{\alpha} \beta \gamma} & = 0 \, , \\
2(m-1) \i \breve{E} _{\gamma} - \breve{G}_{\gamma} & = 0 \, .
\end{align*}
Then for every metric $g$ in $\accentset{n.e.}{\mbf{c}}$, $(\mc{M}, g, N, K)$ is a nearly Robinson manifold of Kundt type with congruence of null geodesics $\mc{K}$.

Denote by $(\ul{\mc{M}}, \ul{h}, \ul{J})$ denote the leaf space of $\mc{K}$, and set $\ul{\omega}_{i j} = \ul{J}_{i}{}^{k} \ul{h}_{k j}$. Then there exists a Kundt frame $(\kappa, \theta^i , \lambda)$ such that the vertical $1$-form $\lambda$ has components $\lambda_i$ given by
\begin{align*}
\lambda_\ell & = \ul{\lambda}_\ell^{(0)} - v \frac{1}{m-1}  \ul{h}^{i j} \left( \ul{\nabla}_i  \ul{\omega}_{j k} \right) \ul{J}_\ell{}^{k} \, ,
\end{align*}
where $\ul{\lambda}_i^{(0)}$ are smooth functions on $\ul{\mc{M}}$, and $v$ an affine parameter along the geodesics of $\mc{K}$.
\end{prop}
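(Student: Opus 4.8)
The plan is to reduce everything to Proposition~\ref{prop:Kundt_Rob_EG} applied with the specific value $[z:w] = [2(m-1)\i:-1]$, the only genuinely new work being the passage between the complex-frame and real-frame expressions for $\lambda$. Throughout I assume $m>1$, as the displayed formula requires a factor $\tfrac{1}{m-1}$ and the chosen $[z:w]$ has $z\neq 0$ only then.

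First I would settle the Kundt and nearly Robinson claims. By the conformal transformation rules computed just before Theorem~\ref{thm:conf-Rob}, the weighted components $\breve{\gamma}_i$, $\breve{\tau}_{ij}$, $\breve{\sigma}_{ij}$ and $\breve{\zeta}_{\alpha\beta}$ merely rescale by a positive power of the conformal factor, so their vanishing is conformally invariant; this is exactly why the five subbundles in the hypothesis occur in the list of Theorem~\ref{thm:conf-Rob}, and it justifies the phrase ``for some (and thus any) metric''. Since $\breve{\epsilon}\mapsto\breve{\epsilon}+2m\,\Upsilon^c k_c$ under a conformal change, choosing $g$ in $\accentset{n.e.}{\mbf{c}}$ forces $\breve{\epsilon}=0$. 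Combined with $\breve{\gamma}_i=\breve{\tau}_{ij}=\breve{\sigma}_{ij}=0$ this places the intrinsic torsion in $\slashed{\mc{G}}_{-1}^0\cap\slashed{\mc{G}}_{-1}^1\cap\slashed{\mc{G}}_{-1}^2$, i.e.\ $\mc{K}$ is of Kundt type by Definition~\ref{def:alRob_Kundt}. The extra vanishing $\breve{\zeta}_{\alpha\beta}=0$ (so that $\breve{\gamma}_i=\breve{\sigma}_{\alpha\beta}=0$ and $\breve{\zeta}_{\alpha\beta}=2\i\,\breve{\tau}_{\alpha\beta}$ hold trivially) promotes this to $\mc{G}^0=\slashed{\mc{G}}_{-1}^0\cap\slashed{\mc{G}}_{-1}^1\cap\slashed{\mc{G}}_{-1}^2\cap\slashed{\mc{G}}_{-1}^{3,0}$, whence $(N,K)$ is nearly Robinson by Proposition~\ref{prop:N_pres}. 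Thus $(\mc{M},g,N,K)$ is a nearly Robinson manifold of Kundt type, with induced leaf-space data $(\ul{\mc{M}},\ul{H},\ul{h},\ul{J})$.

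Next I would invoke Proposition~\ref{prop:Kundt_Rob_EG} directly. The membership $(\slashed{\mc{G}}_0^{0\times1})_{[2(m-1)\i:-1]}$ in the hypothesis is precisely the condition required there for $[z:w]=[2(m-1)\i:-1]$, and $z=2(m-1)\i\neq0$ since $m>1$, so the proposition supplies a Kundt coframe in which, by \eqref{eq:lambda1},
\begin{align*}
\lambda_\alpha & = \ul{\lambda}_\alpha^{(0)} - 2\,\tfrac{w}{z}\,v\,\ul{\lambda}^{(1)}_\alpha \, , & \ul{\lambda}^{(1)}_\alpha & = G_\alpha = \ul{h}^{ij}(\ul{\nabla}\ul{\omega})_{ij\alpha} \, ,
\end{align*}
with $\ul{\lambda}_\alpha^{(0)}$ functions on $\ul{\mc{M}}$; the right-hand side of the second equation is built from leaf-space data alone, so it descends to $\ul{\mc{M}}$, this being guaranteed by $N$ being parallel along $K$ for a nearly Robinson manifold of Kundt type. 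Substituting $w/z=-1/(2(m-1)\i)$ yields $-2w/z=1/((m-1)\i)=-\i/(m-1)$, hence $\lambda_\alpha=\ul{\lambda}_\alpha^{(0)}-\tfrac{\i}{m-1}\,v\,G_\alpha$.

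Finally I would pass to the real frame, which is the one step prone to sign error and thus the main (if minor) obstacle. Writing $\lambda_\ell=\delta_\ell^\alpha\lambda_\alpha+\delta_\ell^{\bar\alpha}\lambda_{\bar\alpha}$ and using that $\lambda$ is real, so $\lambda_{\bar\alpha}=\overline{\lambda_\alpha}=\ul{\lambda}_{\bar\alpha}^{(0)}+\tfrac{\i}{m-1}v\,G_{\bar\alpha}$, and setting $\ul{\lambda}_\ell^{(0)}:=\delta_\ell^\alpha\ul{\lambda}_\alpha^{(0)}+\delta_\ell^{\bar\alpha}\ul{\lambda}_{\bar\alpha}^{(0)}$, one obtains
\begin{align*}
\lambda_\ell & = \ul{\lambda}_\ell^{(0)} - \tfrac{v}{m-1}\,\i\left(\delta_\ell^\alpha G_\alpha - \delta_\ell^{\bar\alpha}G_{\bar\alpha}\right) \, .
\end{align*}
It then remains to recognise the bracketed term as $\ul{J}_\ell{}^k G_k$ with $G_k=\ul{h}^{ij}(\ul{\nabla}_i\ul{\omega}_{jk})$: the splitting identities $\delta^\ell_\alpha\ul{J}_\ell{}^k=\i\,\delta^k_\alpha$ and $\delta^\ell_{\bar\alpha}\ul{J}_\ell{}^k=-\i\,\delta^k_{\bar\alpha}$ give $\delta^\ell_\alpha(\ul{J}_\ell{}^k G_k)=\i\,G_\alpha$ and $\delta^\ell_{\bar\alpha}(\ul{J}_\ell{}^k G_k)=-\i\,G_{\bar\alpha}$, so that $\ul{J}_\ell{}^k G_k=\i(\delta_\ell^\alpha G_\alpha-\delta_\ell^{\bar\alpha}G_{\bar\alpha})$. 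Feeding this back in produces exactly $\lambda_\ell=\ul{\lambda}_\ell^{(0)}-\tfrac{v}{m-1}\,\ul{h}^{ij}(\ul{\nabla}_i\ul{\omega}_{jk})\,\ul{J}_\ell{}^k$, which is the asserted formula; tracking the three independent sources of factors of $\i$ (from $[2(m-1)\i:-1]$, from reality of $\lambda$, and from the action of $\ul{J}$) is all that the final bookkeeping requires.
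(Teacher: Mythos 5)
Your proposal is correct and follows exactly the route the paper intends: the paper states this result as a direct consequence of Proposition \ref{prop:Kundt_Rob_EG} with $[z:w]=[2(m-1)\i:-1]$, after first noting that the hypotheses (conformally invariant by the transformation rules preceding Theorem \ref{thm:conf-Rob}) together with $g\in\accentset{n.e.}{\mbf{c}}$ force the nearly Robinson Kundt conditions. Your explicit real-frame conversion via $\ul{J}_\ell{}^k G_k=\i\bigl(\delta_\ell^\alpha G_\alpha-\delta_\ell^{\bar\alpha}G_{\bar\alpha}\bigr)$ is the bookkeeping the paper leaves implicit, and your signs check out.
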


\subsection{Conformal lift of almost CR structures}
In this short section, we revisit the lift of almost CR structures considered in Section \ref{sec:lift}. A conformal version of Proposition \ref{prop:lift} can be formulated in the following terms:
\begin{prop}\label{prop:conf_lift}
Let $(\ul{\mc{M}}, \ul{H}, \ul{J})$ be a $(2m+1)$-dimensional oriented almost CR manifold, and $\mc{M} := \R \times \ul{\mc{M}} \stackrel{\varpi}{\longrightarrow} \ul{\mc{M}}$ be a trivial line bundle over  $\ul{\mc{M}}$.

Let $\left( (\ul{\theta}^0, \ul{\theta}^\alpha, \overline{\ul{\theta}}{}^{\bar{\alpha}}) , h_{\alpha \bar{\beta}} , \lambda \right)$ and $\left( (\wh{\ul{\theta}}{}^0, \wh{\ul{\theta}}{}^{\alpha}, \overline{\wh{\ul{\theta}}}{}^{\bar{\alpha}}) , \wh{h}_{\alpha \bar{\beta}} , \wh{\lambda} \right)$ be two triplets that give rise to two almost Robinson geometries $(\mc{M}, g, N, K)$ and $(\mc{M}, \wh{g}, N, K)$ as in Proposition \ref{prop:lift}. Suppose $(\ul{\theta}^0, \ul{\theta}{}^\alpha, \overline{\ul{\theta}}{}^{\bar{\alpha}})$ and $(\wh{\ul{\theta}}{}^0, \wh{\ul{\theta}}{}^\alpha, \overline{\wh{\ul{\theta}}}{}^{\bar{\alpha}})$ are related by \eqref{eq:CR_transf}. Then
\begin{align*}
\wh{g} & =  \e^{\ul{\varphi}} g \, ,
\end{align*}
if and only if $\ul{\psi}{}_{\alpha}{}^{\gamma}$ is an element of $\U (m)$ at every point, i.e.\
\begin{align*}
\wh{h}_{\alpha \bar{\beta}} & =  \e^{\ul{\varphi}} h_{\gamma \bar{\delta}} (\ul{\psi}^{-1}){}_{\alpha}{}^{\gamma}  (\ul{\psi}^{-1}){}_{\bar{\beta}}{}^{\bar{\delta}} \, , \\
\intertext{and $\lambda$ transforms as}
\wh{\lambda} & = \lambda - \frac{1}{2} \ul{\psi}{}_{\alpha}{}^{\beta} \ul{\phi}_{\beta} \ul{\theta}^{\alpha} - \frac{1}{2} \ul{\psi}{}_{\bar{\alpha}}{}^{\bar{\beta}} \ul{\phi}_{\bar{\beta}} \overline{\ul{\theta}}^{\bar{\alpha}} - \frac{1}{2} \ul{\phi}_{\beta} \ul{\phi}^{\beta} \ul{\theta}^0 \, .
\end{align*}
\end{prop}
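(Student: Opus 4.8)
The plan is to prove the proposition by direct substitution, parallelling the (self-evident) computation behind Proposition \ref{prop:lift} and keeping careful track of where the conformal factor $\e^{\ul{\varphi}}$ propagates. First I would write both metrics in the form \eqref{eq:al_Rob_met}, suppressing the pullback $\varpi^*$ and writing $\odot$ for the symmetric product:
\begin{align*}
g & = 4 \, \ul{\theta}^0 \odot \lambda + 2 \, h_{\alpha \bar{\beta}} \, \ul{\theta}^\alpha \odot \overline{\ul{\theta}}{}^{\bar{\beta}} \, , & \wh{g} & = 4 \, \wh{\ul{\theta}}{}^0 \odot \wh{\lambda} + 2 \, \wh{h}_{\alpha \bar{\beta}} \, \wh{\ul{\theta}}{}^\alpha \odot \overline{\wh{\ul{\theta}}}{}^{\bar{\beta}} \, .
\end{align*}
Then I would substitute the coframe transformation \eqref{eq:CR_transf}, namely $\wh{\ul{\theta}}{}^0 = \e^{\ul{\varphi}} \ul{\theta}^0$ and $\wh{\ul{\theta}}{}^\alpha = \ul{\psi}_\beta{}^\alpha \ul{\theta}^\beta + \ul{\phi}^\alpha \ul{\theta}^0$, into the expression for $\wh{g}$, write $\wh{\lambda}$ as an unknown combination of $\lambda, \ul{\theta}^0, \ul{\theta}^\gamma, \overline{\ul{\theta}}{}^{\bar{\gamma}}$, and collect terms with respect to the coframe $(\ul{\theta}^0, \ul{\theta}^\alpha, \overline{\ul{\theta}}{}^{\bar{\alpha}}, \lambda)$ of $\mc{M}$. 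This reduces the identity $\wh{g} = \e^{\ul{\varphi}} g$ to a short, finite list of component equations to be matched against $\e^{\ul{\varphi}} g = 4 \e^{\ul{\varphi}} \ul{\theta}^0 \odot \lambda + 2 \e^{\ul{\varphi}} h_{\alpha \bar{\beta}} \ul{\theta}^\alpha \odot \overline{\ul{\theta}}{}^{\bar{\beta}}$.

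The decisive comparison is the pure $(1,1)$ screen part, i.e.\ the coefficient of $\ul{\theta}^\gamma \odot \overline{\ul{\theta}}{}^{\bar{\delta}}$, which reads $\wh{h}_{\alpha \bar{\beta}} \ul{\psi}_\gamma{}^\alpha \ul{\psi}_{\bar{\delta}}{}^{\bar{\beta}} = \e^{\ul{\varphi}} h_{\gamma \bar{\delta}}$, equivalently $\wh{h}_{\alpha \bar{\beta}} = \e^{\ul{\varphi}} h_{\gamma \bar{\delta}} (\ul{\psi}^{-1})_\alpha{}^\gamma (\ul{\psi}^{-1})_{\bar{\beta}}{}^{\bar{\delta}}$; this is the asserted law for $\wh{h}$, and it is equivalent to $\ul{\psi} \in \U(m)$ at every point precisely when one demands, in keeping with the fixed conformal class $[h_{\alpha \bar{\beta}}]$ of the remark preceding the proposition, that $\wh{h}$ be the conformal rescaling $\e^{\ul{\varphi}} h$ of $h$, in which case the screen equation collapses to $h_{\alpha \bar{\beta}} \ul{\psi}_\gamma{}^\alpha \ul{\psi}_{\bar{\delta}}{}^{\bar{\beta}} = h_{\gamma \bar{\delta}}$, i.e.\ $\ul{\psi}$ preserves $h$. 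The coefficient of $\ul{\theta}^0 \odot \lambda$ forces the $\lambda$-component of $\wh{\lambda}$ to be $1$: here the single factor $\e^{\ul{\varphi}}$ already carried by $\wh{\ul{\theta}}{}^0$ supplies exactly what is needed, which is why — in contrast with Proposition \ref{prop:lift} — no factor $\e^{-\ul{\varphi}}$ survives in front of $\lambda$. Finally the coefficients of $\ul{\theta}^0 \odot \ul{\theta}^\gamma$, of $\ul{\theta}^0 \odot \overline{\ul{\theta}}{}^{\bar{\delta}}$ and of $\ul{\theta}^0 \odot \ul{\theta}^0$ determine the remaining components of $\wh{\lambda}$; using the unitarity just obtained to raise and lower indices with $h_{\alpha \bar{\beta}}$, these evaluate to $-\tfrac{1}{2} \ul{\psi}_\alpha{}^\beta \ul{\phi}_\beta$, its conjugate, and $-\tfrac{1}{2} \ul{\phi}_\beta \ul{\phi}^\beta$, reproducing the claimed transformation law for $\wh{\lambda}$.

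Conversely, I would substitute $\ul{\psi} \in \U(m)$ together with the two displayed formulas for $\wh{h}$ and $\wh{\lambda}$ back into $\wh{g}$ and verify directly that $\wh{g} = \e^{\ul{\varphi}} g$; one checks that the $(2,0)$ coefficient of $\ul{\theta}^\gamma \odot \ul{\theta}^\delta$ as well as the $\lambda \odot \lambda$ and $\ul{\theta}^\gamma \odot \lambda$ coefficients vanish identically on both sides, so that no further constraints arise and the verification is consistent. I expect the only genuine difficulty to be the index bookkeeping in the mixed terms — in particular converting $\wh{h}_{\alpha \bar{\beta}} \ul{\psi}_\gamma{}^\alpha \ul{\phi}^{\bar{\beta}}$ into $\ul{\psi}_\gamma{}^\beta \ul{\phi}_\beta$, which rests on the unitarity relation $h_{\gamma \bar{\zeta}} (\ul{\psi}^{-1})_{\bar{\beta}}{}^{\bar{\zeta}} = \ul{\psi}_\gamma{}^\delta h_{\delta \bar{\beta}}$ — which is routine but must be carried out carefully to match all signs and factors. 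There should be no conceptual obstacle, since the whole argument is the conformal shadow of Proposition \ref{prop:lift}: the single new feature is that the global factor $\e^{\ul{\varphi}}$ relating $\wh{g}$ to $g$ is absorbed into $\wh{h}$ rather than into the $\lambda$-term of $\wh{\lambda}$.
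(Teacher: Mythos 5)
Your proposal is correct and is exactly the computation the paper intends: the paper offers no written proof beyond declaring the result ``pretty much tautological'' as the conformal shadow of Proposition \ref{prop:lift}, and your direct substitution of \eqref{eq:CR_transf} into \eqref{eq:al_Rob_met}, coefficient-by-coefficient comparison against $\e^{\ul{\varphi}}g$, and use of the unitarity relation to tidy the mixed terms is precisely that tautological argument, spelled out. Your observation that the factor $\e^{\ul{\varphi}}$ carried by $\wh{\ul{\theta}}{}^0$ is what removes the $\e^{-\ul{\varphi}}$ from the $\lambda$-term (in contrast with Proposition \ref{prop:lift}) correctly identifies the only new feature.
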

Again, the proof is pretty much tautological.

\subsection{Non-shearing twist-induced almost Robinson structures}
Most of the results in Sections \ref{sec:lift}, \ref{sec:other}, \ref{sec:tw-ind_al_Rob} and \ref{sec:spinor_des} can be safely formulated in the conformal setting. In particular, we give the conformal version of Proposition \ref{prop:tw-ind_Rob_no_shear} below, which is a direct consequence of Proposition 5.16 of \cite{Fino2020}:
\begin{prop}\label{prop-adapted_frame_max_tw-nsh-nexp_conf}
Let $(\mc{M}, \mbf{c}, N, K)$ be a $(2m+2)$-dimensional conformal twist-induced almost Robinson manifold with non-shearing congruence of null geodesics $\mc{K}$. We then have the following properties:
\begin{enumerate}
\item For each $g \in \accentset{n.e.}{\mbf{c}}$, there exists a unique pair $(k, \ell)$ where $k$ is a generator of $\mc{K}$ and $\ell$ a null vector field such that $g(k, \ell) = 1$ and  $\kappa = g( k , \cdot)$ satisfies
\begin{align*}
\d \kappa (k , \cdot) & = 0 \, , & \d \kappa (\ell , \cdot) & = 0 \, .
\end{align*}
In particular, the twist $\tau$ of $k$ is represented by $\d \kappa$ and determines the screen bundle Hermitian form of $(N,K)$ with respect to $g$, i.e.\ we have $(\d \kappa)_{i j} = \tau_{i j} = \omega_{i j} = J_{i}{}^{k} h_{j k}$.

If $( k, \ell )$ and $( \wh{k}, \wh{\ell} )$ be any two such pairs corresponding to metrics $g$ and $\wh{g}$ in $\accentset{n.e.}{\mbf{c}}$, with $\wh{g} = \e^{\ul{\varphi}} g$ for some smooth function $\ul{\varphi}$ constant along $K$ then,
\begin{align}\label{eq:kl}
\wh{k} & = k \, , &
\wh{\ell} & = \e^{-\ul{\varphi}} \left( \ell + \frac{1}{2} \omega^{-1}( \cdot, \ul{\Upsilon} ) - \frac{1}{4} \| \ul{\Upsilon} \|^2_g k \right)\, ,
\end{align}
where $\ul{\Upsilon} = \d \ul{\varphi}$.

\item $(N,K)$ induces a partially integrable contact almost CR structure $(\ul{H},\ul{J})$ on the leaf space $\ul{\mc{M}}$ of $\mc{K}$. In particular, $(\ul{H},\ul{J})$ is equipped with a subconformal structure $\ul{\mbf{c}}_{\ul{H},\ul{J}}$ compatible with $\ul{J}$ and also induced from $\mbf{c}$, and there is a one-to-one correspondence between metrics in $\accentset{n.e.}{\mbf{c}}$ and contact $1$-forms for $(\ul{H},\ul{J})$. More specifically, for any two adapted frames $(\ul{\theta}^0, \ul{\theta}^{\alpha} , \ol{\ul{\theta}}{}^{\bar{\alpha}})$ and $(\wh{\ul{\theta}}{}^0, \wh{\ul{\theta}}{}^{\alpha} , \ol{\wh{\ul{\theta}}}{}^{\bar{\alpha}})$ for $(\ul{H},\ul{J})$ related by
\begin{align*}
	\wh{\ul{\theta}}{}^0 & =  \e^{\ul{\varphi}} \ul{\theta}^0 \, , &
	\widehat{\ul{\theta}}{}^{\alpha} & =  \ul{\theta}^\alpha + \i \ul{\Upsilon}^{\alpha} \ul{\theta}^0 \, , 
\end{align*}
up to $\U(m)$-transformations, and with Levi forms related by $\wh{\ul{\msf{h}}}_{\alpha \bar{\beta}} =  \e^{2 \varphi} \ul{\msf{h}}_{\alpha \bar{\beta}}$,  the corresponding lifts in $\accentset{n.e.}{\mbf{c}}$ are given by
\begin{align*}
g & = 4 \,  \varpi^*\ul{\theta}{}^0 \, \lambda + 2 \, \varpi^*\left( \ul{\msf{h}}_{\alpha \bar{\beta}} \ul{\theta}{}^\alpha \,  \overline{\ul{\theta}}{}^{\bar{\beta}} \right) \, , & 
\wh{g} & = 4 \,  \varpi^*\wh{\ul{\theta}}{}^0 \, \wh{\lambda} + 2 \, \varpi^*\left( \wh{\ul{\msf{h}}}_{\alpha \bar{\beta}}  \wh{\ul{\theta}}{}^\alpha \,  \ol{\wh{\ul{\theta}}}{}^{\bar{\beta}} \right) \, ,
\end{align*}
where
\begin{align*}
 \widehat{\lambda} & =  \lambda + \frac{1}{2} \i \ul{\Upsilon}_\alpha \theta^{\alpha} - \frac{1}{2} \i \ul{\Upsilon}_{\bar{\alpha}} \theta^{\bar{\alpha}} - \frac{1}{2} \ul{\Upsilon}_\alpha \ul{\Upsilon}^\alpha \ul{\theta}^0\, ,
\end{align*}
and $\wh{g} = \e^{\ul{\varphi}} g$ .

In addition, $\varpi^*\ul{\theta}{}^0 = g(k, \cdot)$, $\varpi^*\wh{\ul{\theta}}{}^0 = \wh{g}(k, \cdot)$,  $\lambda = g(\ell, \cdot)$, and $\widehat{\lambda} = \wh{g}(\wh{\ell}, \cdot)$, where $(k,\ell)$ and $(\wh{k},\wh{\ell})$ are related by \eqref{eq:kl}.
\end{enumerate}
\end{prop}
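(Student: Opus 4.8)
The plan is to reduce the statement almost entirely to its purely optical counterpart, Proposition 5.16 of \cite{Fino2020}, and then to decorate the resulting data with the complex structure carried by the twist. First I would note that, by Proposition \ref{prop:twist-ind_Rob} and the remark following it, the congruence $\mc{K}$ of a twist-induced almost Robinson structure is automatically geodesic and maximally twisting; combined with the standing non-shearing hypothesis, this says that the underlying conformal optical geometry $(\mc{M}, \mbf{c}, K)$ is geodesic, maximally twisting and non-shearing. This is exactly the setting of Proposition 5.16 of \cite{Fino2020}, which I would invoke verbatim to obtain, for each $g \in \accentset{n.e.}{\mbf{c}}$, the existence and uniqueness of the pair $(k,\ell)$, the normalisations $g(k,\ell)=1$ and $\d\kappa(k,\cdot)=\d\kappa(\ell,\cdot)=0$ with $\kappa=g(k,\cdot)$, and the transformation law \eqref{eq:kl}. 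This disposes of the hard analytic content of part (1).

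To finish part (1) I would identify the twist of $k$ with the screen-bundle Hermitian form. Since $(N,K)$ is twist-induced, Proposition \ref{prop:twist-ind_Rob} singles out the preferred Robinson $3$-form $\rho_{abc}=\kappa_{[a}\nabla_{b}\kappa_{c]}$ of \eqref{eq:tw-Rob}, which also equals $3\,\kappa_{[a}\omega_{bc]}$; contracting with suitable splitting operators and using $\d\kappa(k,\cdot)=0$ then yields $(\d\kappa)_{ij}=\tau_{ij}=\omega_{ij}=J_{i}{}^{k}h_{jk}$, as asserted. For part (2), the induced geometry on $\ul{\mc{M}}$ is already controlled by earlier results: Proposition \ref{prop:tw-ind_Rob_no_shear} shows that $(N,K)$ is nearly Robinson and that $(\ul{H},\ul{J})$ is partially integrable and contact, hence equipped with the subconformal structure $\ul{\mbf{c}}_{\ul{H},\ul{J}}$ of Section \ref{sec:CR}. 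The one-to-one correspondence between metrics in $\accentset{n.e.}{\mbf{c}}$ and contact forms for $(\ul{H},\ul{J})$ combines the optical correspondence of Remark \ref{rem:ind_str_opt}, namely that metrics in $\accentset{n.e.}{\mbf{c}}$ correspond to generators $k$ with $\mathsterling_{k}\kappa=0$ and hence to sections of $\Ann(\ul{H})$, with the observation that for a partially integrable contact structure such sections are precisely the contact forms and correspond bijectively to metrics in $\ul{\mbf{c}}_{\ul{H},\ul{J}}$.

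The remaining, and only computational, step is to translate the vector-field data $(k,\ell)$ into the coframe data $(\kappa,\theta^{\alpha},\lambda)$ and to verify the displayed transformations. Setting $\kappa=g(k,\cdot)=\varpi^{*}\ul{\theta}^{0}$, choosing $(\ul{\theta}^{\alpha})$ unitary for the Levi form $\ul{\msf{h}}_{\alpha\bar{\beta}}$, and $\lambda=g(\ell,\cdot)$, Proposition \ref{prop:Rob->CR} together with the identification of the twist with $\ul{\msf{h}}$ forces $g$ into the displayed non-shearing form. I would then substitute $\wh{\ell}=\e^{-\ul{\varphi}}\bigl(\ell+\tfrac12\omega^{-1}(\cdot,\ul{\Upsilon})-\tfrac14\|\ul{\Upsilon}\|_{g}^{2}\,k\bigr)$ from \eqref{eq:kl} into $\wh{\lambda}=\wh{g}(\wh{\ell},\cdot)$ with $\wh{g}=\e^{\ul{\varphi}}g$, and read off the stated transformation of $\lambda$; the coframe shift $\wh{\ul{\theta}}{}^{\alpha}=\ul{\theta}^{\alpha}+\i\ul{\Upsilon}^{\alpha}\ul{\theta}^{0}$ is exactly the change of adapted coframe $\ul{\phi}^{\alpha}=\i\ul{\msf{h}}^{\alpha\bar{\beta}}(\d\ul{\varphi})_{\bar{\beta}}$ recorded in Section \ref{sec:CR} for partially integrable contact structures.

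The main obstacle is bookkeeping rather than conceptual: one must keep track of the fact that the single object $\omega$ plays two roles here, the optical twist $\d\kappa$ and the Robinson screen-bundle metric, so that the term $\tfrac12\omega^{-1}(\cdot,\ul{\Upsilon})$ in \eqref{eq:kl} matches, after applying $g$ and its conformal rescaling, the coframe shift $\i\ul{\Upsilon}^{\alpha}$; and one must maintain consistency of the factors of $2$ and $4$ and of the conformal weights throughout. No idea beyond \cite{Fino2020} is required, and the work is essentially organisational.
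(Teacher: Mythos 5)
Your proposal is correct and follows essentially the same route as the paper, which states this proposition without further argument as a direct consequence of Proposition 5.16 of \cite{Fino2020} combined with the earlier results on twist-induced structures (Propositions \ref{prop:twist-ind_Rob} and \ref{prop:tw-ind_Rob_no_shear}) and the coframe transformation rules of Section \ref{sec:CR}. You have merely made explicit the bookkeeping that the authors leave to the reader.
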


\begin{rem}
It is shown in \cite{Taghavi-Chabert2021} how the Levi-Civita connection of a metric in $\accentset{n.e.}{\mbf{c}}$ relates to the Webster--Tanaka connection of its corresponding almost pseudo-Hermitian structure. In this way, one can identify $\ul{\Upsilon}_{\alpha}$ in Proposition \ref{prop-adapted_frame_max_tw-nsh-nexp_conf} as the $(1,0)$-part of the difference between two Webster--Tanaka connections.
\end{rem}

\begin{exa}[Fefferman construction]\label{exa:Feff}
There is a well-known canonical construction, originally due to Fefferman \cite{Fefferman1976,Fefferman1976a} and later characterised by Sparling, Graham \cite{Graham1987} (see also \cite{Cap2008,Cap2010}), which associates to any contact CR structure $(\ul{\mc{M}}, \ul{H}, \ul{J})$ of dimension $2m+1$  a conformal structure $\mbf{c}$ of Lorentzian signature and dimension $2m+2$ on the total space of a circle bundle $\mc{M} \stackrel{\varpi}{\longrightarrow} \ul{\mc{M}}$, namely the quotient of $\bigwedge^{m+1}(\Ann(\ul{H}^{(1,0)}))$ with the zero section removed, by a $\R_{>0}$-action. More explicitly, let $\phi$ be a fiber coordinate on $\mc{M}$. Choose of contact form $\ul{\theta}^0$ of $(\ul{H}, \ul{J})$ with Levi form $\msf{h}_{\alpha \bar{\beta}}$ and corresponding Webster--Tanaka connection $1$-form $\ul{\Gamma}_{\alpha}{}^{\beta}$, \emph{Webster--Schouten scalar $\ul{\Rho}$}, and define the $1$-form
\begin{align*}
 \lambda & = \d \phi+ \frac{1}{m+2} \left( \i \ul{\Gamma}_{\alpha}{}^{\alpha} - \i \frac{1}{2} \ul{\msf{h}}^{\alpha \bar{\beta}} \d \ul{\msf{h}}_{\alpha \bar{\beta}} - \ul{\Rho} \ul{\theta}^0 \right)\, .
\end{align*}
We may view $\lambda$ as a Weyl connection on the fiber bundle $\mc{M}$. Then
\begin{align*}
g & = 4 \,  \varpi^*\ul{\theta}{}^0 \, \lambda + 2 \, \varpi^*\left( \ul{\msf{h}}_{\alpha \bar{\beta}} \ul{\theta}{}^\alpha \,  \overline{\ul{\theta}}{}^{\bar{\beta}} \right) 
\end{align*}
is a metric in the Fefferman conformal class $\mbf{c}$. One can check that any change of contact forms induces a change of metrics in $\mbf{c}$ as described in Proposition \ref{prop-adapted_frame_max_tw-nsh-nexp_conf}.

Note that in this particular case, the fibers of $\mc{M} \stackrel{\varpi}{\longrightarrow} \ul{\mc{M}}$ are generated by a null conformal Killing field $k$ that is Killing for any metric $g$ in $\accentset{n.e.}{\mbf{c}}$ --- in this case, we have $\kappa = g(k,\cdot) = 2 \varpi^* \ul{\theta}^0$ for some corresponding contact $1$-form $\ul{\theta}^0$.

A generalisation of this construction to the partially integrable case is given in \cite{Leitner2010}, and characterised in \cite{Taghavi-Chabert}. An appropriate modification of this construction yields Taub-NUT metrics as shown in \cite{Alekseevsky2021} and \cite{Taghavi-Chabert2021}.
\end{exa}

We end this section with the following proposition regarding the most degenerate conformally invariant condition on the intrinsic torsion. Its proof can easily be obtained from Theorem \ref{thm:intors-rob}, and the geometric interpretations are a direct consequence of the various results of Section \ref{sec:geometry}.
\begin{prop}\label{prop:cplx_intors}
Let $(\mc{M}, \mbf{c}, N, K)$ be an almost Robinson manifold with congruence of null curves $\mc{K}$. Let $g$ be any metric in $\mbf{c}$, and suppose its intrinsic torsion is a section of $\slashed{\mc{G}}_1^{0,0}$, i.e.\
\begin{align*}
	\breve{\gamma}_i = \breve{\tau}_{\alpha \beta} = \breve{\tau}^\circ_{\alpha \bar{\beta}} = \breve{\sigma}_{i j} = \breve{\zeta}_{\alpha \beta} = 
	\breve{G}^{\tiny{\ydskew}}_{\alpha \beta \gamma} =
	\breve{G}^{\tiny{\ydhook}}_{\alpha \beta \gamma} =
	\breve{G}^{\circ}_{\bar{\alpha} \beta \gamma} = 2(m-1) \i \breve{E} _{\gamma} - \breve{G}_{\gamma} =
	\breve{B}_{\beta \gamma} = 0 \, .
\end{align*}
 Then $\mc{K}$ is a non-shearing congruence of null geodesics, $(N,K)$ is involutive, and:
\begin{itemize}
\item If $\mc{K}$ is non-twisting, i.e.\ $\breve{\tau}^{\omega} = 0$, the leaf space is foliated by Hermitian manifolds of Gray--Hervella class $\mc{W}_4$, and $(\mc{M}, g, N, K)$ is a Robinson manifold of Kundt type for any metric $g$ in $\accentset{n.e.}{\mbf{c}}$, and of Robinson--Trautman type otherwise.
\item If $\mc{K}$ is twisting, i.e.\ $\breve{\tau}^{\omega} \neq 0$, the almost Robinson structure is induced by the twist of $\mc{K}$, and descends to a contact CR structure $(\ul{H}, \ul{J})$ on the leaf space $\ul{\mc{M}}$ of $\mc{K}$. There is a one-to-one correspondence between metrics in $\accentset{n.e.}{\mbf{c}}$ and contact forms of $(\ul{H}, \ul{J})$.
\end{itemize}
\end{prop}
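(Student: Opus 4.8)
The plan is to extract the geometric content directly from the defining conditions of $\slashed{\mc{G}}_1^{0,0}$ recorded in Theorem \ref{thm:intors-rob} and then to assemble the conclusions from the propositions of Section \ref{sec:geometry}, so that essentially no new computation is required. First I would note that the hypothesis forces $\breve{\gamma}_i = 0$ and $\breve{\sigma}_{ij} = 0$, whence by Table \ref{tab-geom-intors} the congruence $\mc{K}$ is geodesic and non-shearing. For involutivity I would use that the inclusion diagram of Theorem \ref{thm:intors-rob} places $\slashed{\mc{G}}_1^{0,0}$ inside $\slashed{\mc{G}}_0^{1,1} \cap \slashed{\mc{G}}_0^{1,2}$; equivalently, the assumed vanishing of $\breve{\gamma}_i$, $\breve{\tau}_{\alpha\beta}$, $\breve{\sigma}_{\alpha\beta}$, $\breve{\zeta}_{\alpha\beta}$, $\breve{G}^{\tiny{\ydskew}}_{\alpha\beta\gamma}$, $\breve{G}^{\tiny{\ydhook}}_{\alpha\beta\gamma}$ is precisely condition \eqref{item:fol_spinor3} of Proposition \ref{prop:fol_spinor}, so $N$ is in involution, and in fact $N$ is parallel along $K$. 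Since ${}^\C K \subset N$, this also makes $(N,K)$ nearly Robinson, placing us in the setting of Sections \ref{sec:lift}, \ref{sec:alRob_Kundt} and \ref{sec:alRob_RT}. The only components not forced to vanish are $\breve{\epsilon}$, $\breve{\tau}^\omega$ and the pair $(\breve{E}_\alpha, \breve{G}_\alpha)$ tied by $\breve{G}_\alpha = 2(m-1)\i\,\breve{E}_\alpha$, and this bookkeeping is what drives the two cases.

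In the non-twisting case $\breve{\tau}^\omega = 0$, combining with the already-vanishing $\breve{\tau}_{\alpha\beta}$ and $\breve{\tau}^\circ_{\alpha\bar{\beta}}$ gives $\breve{\tau}_{ij} = 0$, so $\mc{K}$ is non-twisting as well. Passing to a metric $g$ in $\accentset{n.e.}{\mbf{c}}$ kills the expansion, $\breve{\epsilon} = 0$, leaving a geodesic, non-expanding, non-shearing, non-twisting congruence, i.e.\ the Kundt condition of Definition \ref{def:alRob_Kundt}; together with involutivity this yields a Robinson manifold of Kundt type, while for generic $g \in \mbf{c}$ one has $\breve{\epsilon} \neq 0$ and Definition \ref{def:alRob_RT} gives Robinson--Trautman type. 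To identify the Gray--Hervella class of the induced almost Hermitian foliation I would invoke the dictionary \eqref{eq:GH-Rob3} and Table \ref{tab-intors-GH}: the vanishing of $\breve{G}^{\tiny{\ydskew}}_{\alpha\beta\gamma}$, $\breve{G}^{\tiny{\ydhook}}_{\alpha\beta\gamma}$ and $\breve{G}^{\circ}_{\bar{\alpha}\beta\gamma}$ (membership in $\slashed{\mc{G}}_0^{1,1} \cap \slashed{\mc{G}}_0^{1,2} \cap \slashed{\mc{G}}_0^{1,3}$) together with the surviving $\breve{G}_\alpha$ (so not a section of $\slashed{\mc{G}}_0^{1,0}$) selects exactly the locally conformally K\"ahler class $\mc{W}_4$.

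In the twisting case $\breve{\tau}^\omega \neq 0$, the twist reduces to its $\omega$-part, so that $\breve{\gamma}_i = \breve{\tau}_{\alpha\beta} = \breve{\tau}^\circ_{\alpha\bar{\beta}} = 0$ with $\breve{\tau}^\omega \neq 0$, which is the characterisation of a twist-induced structure in Proposition \ref{prop:twist-ind_Rob} (consistent with the further degeneracies of Proposition \ref{prop:deg+}, since $\breve{\zeta}_{\alpha\beta}$ and $\breve{G}^{\tiny{\ydskew}}_{\alpha\beta\gamma}$ vanish). Being in addition non-shearing, the intrinsic torsion lands in exactly the bundle $\slashed{\mc{G}}_{-1}^{1,1} \cap \slashed{\mc{G}}_{-1}^{1,2} \cap \slashed{\mc{G}}_{-1}^2 \cap \slashed{\mc{G}}_{-1}^{3,0} \cap \slashed{\mc{G}}_0^{1,1} \cap \slashed{\mc{G}}_0^{1,3}$ of Proposition \ref{prop:tw-ind_Rob_no_shear}, which gives a partially integrable contact almost CR leaf space $(\ul{H}, \ul{J})$; the involutivity of $N$ upgrades partial integrability to integrability via Proposition \ref{prop:fol_spinor}, producing a genuine contact CR structure. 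The one-to-one correspondence between metrics in $\accentset{n.e.}{\mbf{c}}$ and contact forms of $(\ul{H}, \ul{J})$ is then read off from part~(2) of Proposition \ref{prop-adapted_frame_max_tw-nsh-nexp_conf}.

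The routine but genuinely necessary steps are the component-by-component matching against Theorem \ref{thm:intors-rob} and, in dimension six, checking that the $m=2$ form of $\slashed{\mc{G}}_1^{0,0}$ and the collapse $\slashed{\mc{G}}_0^{1,1} = \slashed{\mc{G}}_0^{1,3} = \mc{G}$ leave every conclusion intact. The main obstacle I anticipate is organisational rather than analytic: correctly reading the Gray--Hervella class off Table \ref{tab-intors-GH} in the non-twisting case, which hinges on distinguishing that the intrinsic torsion lies in $\slashed{\mc{G}}_0^{1,1} \cap \slashed{\mc{G}}_0^{1,2} \cap \slashed{\mc{G}}_0^{1,3}$ but \emph{not} in $\slashed{\mc{G}}_0^{1,0}$, and on ensuring that the invoked propositions, stated in the metric setting, transfer verbatim to the present conformal hypothesis, which they do because each relevant vanishing condition cuts out a conformally invariant subbundle by Theorem \ref{thm:conf-Rob}.
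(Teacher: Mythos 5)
Your proposal is correct and follows essentially the same route the paper intends: the paper offers no written proof beyond remarking that the result ``can easily be obtained from Theorem~\ref{thm:intors-rob}'' with the geometric interpretations drawn from Section~\ref{sec:geometry}, and your assembly via Table~\ref{tab-geom-intors}, Proposition~\ref{prop:fol_spinor}, Table~\ref{tab-intors-GH}, and Propositions~\ref{prop:twist-ind_Rob}, \ref{prop:tw-ind_Rob_no_shear} and \ref{prop-adapted_frame_max_tw-nsh-nexp_conf} is exactly that argument, including the correct observations that conformal invariance of $\slashed{\mc{G}}_1^{0,0}$ lets the metric-level results transfer and that the surviving $\breve{G}_\alpha$ pins down the class $\mc{W}_4$.
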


\subsection{The Mariot--Robinson theorem}\label{sec:Robinson_theorem}
A solution to the vacuum Maxwell equations on a four-dimensional Lorentzian manifold $(\mc{M}, g)$ is a $2$ form $F$ that is both closed and co-closed, i.e.\ $\d F = \d \star F = 0$ -- here $\star$ is the Hodge duality operator. Note that these equations are conformally invariant, which justifies the inclusion of this section at this point. Such a solution is said to be \emph{null} or \emph{algebraically special} if $F$ satisfies the algebraic constraint $k \hook F =0$ for some null vector field $k$, and $\kappa \wedge F = 0$ where $\kappa = g(k,\cdot)$.

The \emph{Mariot theorem} \cite{Mariot1954}  states that any solution to the vacuum Maxwell equations gives rise to a non-shearing congruence of null geodesics. The congruence is generated by $k = g^{-1}(\kappa , \cdot)$. The converse, known as the \emph{Robinson theorem} \cite{Robinson1961}, is also true, provided that we work in the analytic category: one can construct an analytic null solution to the vacuum Maxwell equations from any analytic non-shearing congruence of null geodesics. We can clearly substitute non-shearing congruence of null geodesics by Robinson structure here, and this move allows for generalisation of the theorem to irreducible spinor fields \cite{Penrose1986}. In fact, it makes its understanding more transparent as we shall now explain. We note that the assumption of analyticity is crucial for the implication part of the theorem as references \cite{Tafel1985,Tafel1986} make clear.

Following \cite{Eastwood1995,Mason1995}, we first note that any null $2$-form $F$ must be the sum of a self-dual totally null simple complex $2$-form $\nu$ and its (anti-self-dual) complex conjugate $\overline{\nu}$.  In the language of the present paper, $\nu$ is called a complex Robinson $2$-form and annihilates an almost null structure $N$. The condition that $F$ be both closed and co-closed is equivalent to $\nu$ being closed. But if $\nu$ is closed, $N$ must be integrable, i.e.\ $N$ is a null structure (or equivalently, a Robinson structure). Conversely, if $N$ is an analytic self-dual null structure, it gives rise to a foliation $\mc{N}$ by two-dimensional totally null complex leaves on the complexification $(\wt{\mc{M}}, \wt{g})$ of $(\mc{M}, g)$ -- see Remark \ref{rem:complex_ext}. Take any $2$-form $\ul{\nu}$ on the two-dimensional local leaf space $\ul{\wt{\mc{M}}}$ of $\mc{N}$. Then $\ul{\nu}$ is necessarily closed, and so is its pullback from $\ul{\wt{\mc{M}}}$ to $\wt{\mc{M}}$. A completely parallel argument applies to the complex conjugate of $\ul{\nu}$, and their sum gives rise to an analytic null solution to the vacuum Maxwell equation on restriction to $(\mc{M}, g)$.

The complex `part' of Mariot--Robinson theorem was later generalised to even dimensions in \cite{Hughston1988} and to odd dimensions in \cite{Taghavi-Chabert2017a}. Its proof hinges on the same reasoning. We work in the analytic category with a complex Riemannian manifold $(\wt{\mc{M}}, \wt{g})$ of dimension $2m+2$: a totally null (and thus simple) $(m+1)$-form $\nu$ defines an almost null structure $N$, and if $\nu$ is closed, $N$ is integrable. Conversely, any null structure $N$ gives rise to a foliation $\mc{N}$ by $(m+1)$-dimensional totally null complex leaves on $\wt{\mc{M}}$. The pullback of any form of top degree on the $(m+1)$-dimensional local  leaf space of $\mc{N}$ is a totally null $(m+1)$-form that is necessarily closed, and also co-closed since it is either self-dual or anti-self-dual.

If we now start with an analytic Lorentzian manifold $(\mc{M}, g)$ of dimension $2m+2$, we can apply the above result to analytic Robinson structures by extending them to the complexification of $(\mc{M}, g)$, which eventually leads to a suitable Lorentzian articulation of the Mariot--Robinson theorem.

\subsection{The Kerr theorem}\label{sec:Kerr_theorem}
We now describe all local analytic Robinson structures on even-dimensional Minkowski space $\mbb{M}$. This problem is conformally invariant, and as such, is most elegantly formulated in the language of twistor geometry. The result, now known as the \emph{Kerr theorem}, was initially motivated by the search for Kerr--Schild solutions to the Einstein field equations \cite{Kerr2009}, but came to play a seminal r\^{o}le in Penrose's then-nascent twistor theory \cite{Penrose1967}.

We first review the story in dimension four, where analytic Robinson structures are identified with analytic non-shearing congruences of null geodesics. The appropriate framework is the so-called \emph{twistor correspondence} (also referred to as the \emph{Klein correspondence}), which we shall presently describe -- see \cite{Penrose1967,Ward1990,Huggett1994} for details. We consider  a four-dimensional complex vector space $\mbb{T}$. The Grassmannian of two-planes in $\mbb{T}$ is a smooth four-dimensional complex projective quadric $\mc{Q}$ in $\mbb{P} \left( \bigwedge^2 \mbb{T} \right) \cong \CP^5$, and as such, is naturally equipped with a complex holomorphic conformal structure. There are two disjoint families of two-dimensional linear subspaces of $\mc{Q}$, elements of which are called \emph{$\alpha$-planes} and \emph{$\beta$-planes}, according to whether these planes are self-dual or anti-self-dual. The $\alpha$-planes of $\mc{Q}$ are parametrised by the points of the projective space $\mbb{PT} \cong \CP^3$ , known as the \emph{twistor space} of $\mc{Q}$, and the $\beta$-planes are parametrised by the points of \emph{dual twistor space} $\mbb{PT}^*$. Twistor space contains an analytic family $\mc{F}$ of complex lines parametrised by the points of $\mc{Q}$. We thus have a geometric correspondence between $\mc{Q}$ and $\mbb{PT}$. The Kerr theorem in this context extends this correspondence to one between null foliations in $\mc{Q}$ and hypersurfaces in $\mbb{PT}$. To be precise, locally, a null structure on $\mc{Q}$ is simply a foliation by $\alpha$-planes, and it immediately follows that its leaf space can be viewed as a hypersurface in $\mbb{PT}$ intersecting the lines of $\mc{F}$ transversely. Conversely, any null structure on $\mc{Q}$ arises in this way.

To consider real Minkowski space, we introduce a Hermitian inner product $\langle \cdot , \cdot \rangle$ on $\mbb{T}$ of signature $(2,2)$. Under the action of the stabiliser $\mbf{SU}(2,2)$ of $\langle \cdot , \cdot \rangle$, $\mc{Q}$ decomposes into six orbits, one of which we identify as \emph{compactified Minkowski space} $\mbb{M}^{c} := S^3 \times S^1$. In other words, $\mc{Q}$ is the complexification of $\mbb{M}^{c}$. This comes as no surprise considering that $\mbf{SU}(2,2)$  is the double cover of the conformal group $\SO(4,2)$.  Similarly, $\mbb{PT}$ admits the orbit decomposition
\begin{align}\label{eq:PT_decomp}
\mbb{PT} & = \mbb{PT}_+ \sqcup \mbb{PN} \sqcup \mbb{PT}_- \, ,
\end{align}
where
\begin{align}
\begin{aligned}\label{eq:orbit_PT}
\mbb{PT}_+ & := \left\{ [Z] \in \mbb{PT} : \langle Z, Z \rangle > 0 \right\} \, , \\
\mbb{PN}  & := \left\{ [Z] \in \mbb{PT} : \langle Z, Z \rangle = 0 \right\} \, , \\
\mbb{PT}_- & := \left\{ [Z] \in \mbb{PT} : \langle Z, Z \rangle < 0 \right\} \, .
\end{aligned}
\end{align}
Here, $Z$ can be viewed as complex coordinates on $\mbb{T}\cong \mbf{C}^4$. As a real hypersurface in $\CP^3$, $\mbb{PN}$ is the five-dimensional CR hypersphere, i.e.\ $\mbb{PN}$ has topology $S^3 \times S^2$ and is equipped with a contact CR structure of signature $(1,1)$. It also turns out that $\mbb{PN}$ is the space of null lines in $\mbb{M}^{c}$.

Now consider an analytic Robinson structure $(N,K)$ on some subset of $\mbb{M}^{c}$. Let $\mc{K}$ be the non-shearing congruence of null geodesics tangent to $K$ in $\mbb{M}^{c}$, and $\mc{N}$ the complex foliation by (self-dual) totally null $2$-planes tangent to $N$ in $\mc{Q}$. Denote their respective leaf spaces by $\ul{\mc{M}}_{\mc{K}}$ and $\ul{\mc{M}}_{\mc{N}}$. Then, identifying $\ul{\mc{M}}_{\mc{N}}$ as a complex hypersurface in $\mbb{PT}$, the Kerr theorem asserts that $\ul{\mc{M}}_{\mc{K}}$ is a three-dimensional CR submanifold of $\mbb{PN}$ that arises as the intersection of $\ul{\mc{M}}_{\mc{N}}$ and $\mbb{PN}$. Non-analytic Robinson structures on Minkowski space can also be dealt with as a limiting case \cite{Tafel1985,Tafel1986,Penrose1986}.

The generalisation of the Kerr theorem to higher dimensions in the complex case was carried out in \cite{Hughston1988,TaghaviChabert2017} and is analogous. We consider a smooth projective complex quadric $\mc{Q}$ in $\CP^{2m+3}$, which we may identify as the space of null lines in $\C^{2m+4}$ --- see \cite{Harnad1992,Harnad1995}. An $\alpha$-plane in $\mc{Q}$ is now a self-dual linear subspace of $\mc{Q}$ of dimension $m+1$ and a $\beta$-plane its anti-self-dual counterpart. As before, we define the twistor space $\mbb{PT}$ of $\mc{Q}$ to be the space of all $\alpha$-planes of $\mc{Q}$, and the \emph{primed} twistor space $\mbb{PT}'$ of $\mc{Q}$ to be the space of all $\beta$-planes of $\mc{Q}$. When $m$ is odd, $\mbb{PT}' \cong \mbb{PT}^*$, while when $m$ is even $\mbb{PT} \cong \mbb{PT}^*$ and $\mbb{PT}' \cong (\mbb{PT}')^*$. From an algebraic viewpoint, it is convenient to realise $\mbb{PT}$ and $\mbb{PT}'$ as the spaces of pure spinors, up to scale, for the double cover $\mbf{Spin}(2m+4,\C)$ of the complex conformal group $\mbf{SO}(2m+4,\C)$.

Twistor space is a complex manifold of dimension $\frac{1}{2}(m+1)(m+2)$ and contains an analytic family $\mc{F}$ of $\frac{1}{2}m(m+1)$ complex submanifolds parametrised by the points of $\mc{Q}$. In this complex setting, the Kerr theorem states \cite{Hughston1988,TaghaviChabert2017} that any local analytic null structure $N$ on $\mc{Q}$ locally gives rise to a complex submanifold $\ul{\mc{N}}$ of dimension $m+1$ meeting $\mc{F}$ transversely, and every null structure arises in this way. In effect, the submanifold $\ul{\mc{N}}$ is none other than the leaf space of the foliation tangent to $N$.

From this complex description, it is only a small step to obtain the Lorentzian version of the Kerr theorem. Just as in dimension four, under the action of the real form $\SO(2m+2,2)$ of $\SO(2m+4,\C)$ or its spin analogue, $\mc{Q}$ decomposes into six orbits, which includes compactified Minkowski space $\mbb{M}^{c} := S^{2m+1} \times S^1$. To deal with $\mbb{PT}$, we note that the spin representation for the conformal group $\SO(2m+2,2)$ is equipped with a Hermitian inner product $\langle \cdot , \cdot \rangle$ of split signature \cite{Budinich1988}, which restricts to a Hermitian form on $\mbb{PT}$. Using the terminology and results of  \cite{Kopczy'nski1992}, we find that $\mbb{PT}$ admits the decomposition \eqref{eq:PT_decomp} where its orbits are defined  just as in \eqref{eq:orbit_PT}. Their interpretation is as follows:
\begin{itemize}
\item $\mbb{PN}$ is of real dimension $(m+1)(m+2)-1$, and consists of self-dual $(m+2)$-dimensional linear subspaces of $\mc{Q}$ of \emph{real index $2$}: these are the pure spinors (up to scale) that are null with respect to the Hermitian inner product on $\mbb{PT}$;
\item $\mbb{PT}_+$ and $\mbb{PT}_-$ are of real dimension $(m+1)(m+2)$, and consist of self-dual $(m+2)$-dimensional linear subspaces of $\mc{Q}$ of \emph{real index $0$}: these are the pure spinors (up to scale) that are spacelike, respectively timelike, with respect to the Hermitian inner product on $\mbb{PT}$.
\end{itemize}
In particular, by virtue of being a real hypersurface defined by the vanishing of the Hermitian form on $\mbb{PT}$, the orbit $\mbb{PN}$ is a \emph{CR manifold}, whose Levi form, one can check, has signature $(m,m)$.

At present, let us take $(N,K)$ to be an analytic Robinson structure  on some subset of $\mbb{M}^{c}$ with congruence of null geodesics $\mc{K}$ and complex foliation by (self-dual) totally null $(m+1)$-planes $\mc{N}$ in $\mc{Q}$. View the leaf space $\ul{\mc{M}}_{\mc{N}}$ of $\mc{N}$ as a complex submanifold in $\mbb{PT}$. Then the intersection of $\ul{\mc{M}}_{\mc{N}}$ with $\mbb{PN}$ is a $(2m+1)$-dimensional CR submanifold of $\mbb{PN}$, which is precisely the leaf space $\ul{\mc{M}}_{\mc{K}}$ of $\mc{K}$.

\begin{rem}
It is crucial to note that for $m>1$, $\mbb{PN}$ \emph{cannot} be identified with the $(4m+1)$-dimensional space of null lines $\mbb{NL}$ in $\mbb{M}^c$. In general $\mbb{NL}$ is a homogeneous space equipped with a \emph{Lie contact structure} \cite{Sato1989}, and does not admit any distinguished CR structure unless $m=1$.

There are thus two ways of embedding the leaf space of the null geodesic congruence $\mc{K}$ associated to a Robinson structure: one as a submanifold of $\mbb{NL}$ and the other as a \emph{CR} submanifold of $\mbb{PN}$. But only the latter can encode the CR structure of the leaf space of $\mc{K}$.
\end{rem}

\begin{rem}
As stated, the Kerr theorem is only concerned with the involutivity of an almost null structure be it in the complex case or in Lorentzian signature. Further degeneracy conditions on the intrinsic torsion of the null structure will in general impact the way the leaf space of a null complex foliation sits in twistor space. These features have only been marginally investigated so far.\footnote{In fact, this was already demonstrated in the odd-dimensional analogue of the Kerr theorem in \cite{TaghaviChabert2017}. In even dimensions, complex case, this was established in unpublished work by Jan Gutt (Private communication with the third author.)}
\end{rem}

\section{Generalised almost Robinson geometries}\label{sec:gen_Rob}

\subsection{Generalised Robinson structures}
We now present a variant of the notion of almost Robinson structure, which in dimension four corresponds to the notion of optical geometry presented in \cite{Trautman1984,Trautman1985,Robinson1985,Robinson1986,Robinson1989,Musso1992,Trautman1999}, and which was referred to as \emph{generalised optical geometry} in \cite{Fino2020}.
\begin{defn}\label{def:gen_Rob}
Let $\mc{M}$ be a smooth manifold of dimension $2m+2$. A \emph{generalised almost Robinson structure} consists of a triple $(N,K,\mbf{o})$,  where $N$ is a complex $(m+1)$-plane distribution, $K := N \cap T \mc{M}$ is a real line distribution on $\mc{M}$, and $\mbf{o}$ an equivalence class of Lorentzian metrics such that
\begin{enumerate}
\item for each $g$ in $\mbf{o}$, $N$ is null with respect to the complex linear extension of $g$;
\item any two metrics $g$ and $\wh{g}$ in $\mbf{o}$ are related by
\begin{align}\label{eq:optical-metric}
 \wh{g} & = \e^{2 \varphi} \left( g + 2 \, \kappa \, \alpha \right) \, ,
\end{align}
for some smooth function $\varphi$ and $1$-form $\alpha$ on $\mc{M}$, and $\kappa = g(k ,\cdot)$ for some non-vanishing section $k$ of $K$.
\end{enumerate}
We shall say that the generalised almost Robinson structure is
\begin{itemize}
\item \emph{restricted} if any of the $1$-forms $\alpha$ in \eqref{eq:optical-metric} satisfies $\alpha(k)=0$.
\item \emph{of Kerr--Schild type} if any of the $1$-forms $\alpha$ in \eqref{eq:optical-metric} satisfies $\alpha \wedge \kappa=0$.
\end{itemize}
We shall refer to $(\mc{M},N,K,\mbf{o})$ as a \emph{generalised almost Robinson geometry}. In addition, we shall call $(\mc{M},N,K,\mbf{o})$ a \emph{generalised nearly Robinson geometry} if $[K,N] \subset N$, and a \emph{generalised Robinson geometry} if $[N,N] \subset N$.
\end{defn}
We see in particular that a generalised almost Robinson structure determines a generalised optical geometry $(K,\mbf{o})$ in the sense of \cite{Fino2020}. It is straightforward to check that the two conditions above are well-defined. In particular, the property of $K$ and $N$ being totally null does not depend on the choice of metric in $\mbf{o}$, and neither does the notion of orthogonal complement $K^\perp$ of $K$. A generalised almost Robinson geometry $(\mc{M}, N, K, \mbf{o})$ also has an associated congruence of null curves $\mc{K}$ tangent to $K$.

The following lemma is immediate.
\begin{lem}\label{lem:gen_al_Rob}
Let $(\mc{M},N,K,\mbf{o})$ be a generalised almost Robinson geometry. For each metric $g$ in $\mbf{o}$, $(N,K)$ is an almost Robinson structure on $(\mc{M},g)$.
\end{lem}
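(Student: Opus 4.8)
The plan is to unwind the two bullet conditions in Definition \ref{def:gen_Rob} and verify directly that for a fixed $g \in \mbf{o}$, the pair $(N,K)$ satisfies the hypotheses of the almost Robinson structure defined in Section \ref{sec:geometry}. Recall that an almost Robinson structure on $(\mc{M},g)$ requires $N$ to be a complex distribution of rank $m+1$ that is totally null with respect to ${}^\C g$, together with $K$ a real line distribution such that ${}^\C K = N \cap \overline{N}$. The first of these is handed to us directly by condition (1) of Definition \ref{def:gen_Rob}, which asserts that $N$ is null with respect to the complex linear extension of each $g$ in $\mbf{o}$; combined with the standing assumption that $N$ has rank $m+1$, this gives that $N$ is a maximal totally null distribution for $g$.

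The only point requiring a short argument is the compatibility relation between $K$ and $N$, namely that ${}^\C K = N \cap \overline{N}$. First I would note that by definition $K = N \cap T\mc{M}$, so ${}^\C K \subseteq N \cap \overline{N}$: indeed, any real section of $K$ lies in $N$ and, being real, also lies in $\overline{N}$, and this inclusion passes to the complexification. For the reverse inclusion, I would invoke Lemma \ref{lem:real_index} pointwise: since $g$ is Lorentzian of signature $(2m+1,1)$, the complexified tangent space at each point is Minkowski space $({}^\C \V, {}^\C g)$, and any maximal totally null subspace of it has real index one. Thus $\dim_\C(N \cap \overline{N}) = 1$ at every point, which together with the established inclusion ${}^\C K \subseteq N \cap \overline{N}$ forces equality, since ${}^\C K$ is itself one-dimensional (as $K$ is a real line distribution). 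This identifies $(N,K)$ as an almost Robinson structure in exactly the sense of the definition.

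The main subtlety — though it is not really an obstacle — is to confirm that this verification is independent of the choice of $g$ in $\mbf{o}$, so that the statement ``for each metric $g$'' is consistent. Here the key observation is that the transformation \eqref{eq:optical-metric}, $\wh{g} = \e^{2\varphi}(g + 2\,\kappa\,\alpha)$ with $\kappa = g(k,\cdot)$ and $k$ a section of $K$, preserves the property that $N$ is totally null. This I would check directly: for $v,w \in N$ one has $\kappa(v) = g(k,v) = 0$ because $k \in K \subset N$ and $N$ is $g$-totally null, so the correction term $2\,\kappa\,\alpha$ evaluated on $N \times N$ vanishes, and the overall conformal factor $\e^{2\varphi}$ does not affect nullity. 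Hence ${}^\C\wh{g}(v,w) = \e^{2\varphi}\,{}^\C g(v,w) = 0$, confirming that $N$ remains totally null for $\wh{g}$. The line distribution $K$ and its relation to $N$ do not depend on the metric at all, being defined purely as $N \cap T\mc{M}$. Assembling these observations completes the proof; the whole argument is essentially a pointwise application of Lemma \ref{lem:real_index} together with the elementary fact that the perturbation direction $\kappa$ annihilates $N$.
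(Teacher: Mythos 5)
Your proof is correct and is essentially the argument the paper has in mind: the paper simply declares this lemma ``immediate'' and offers no written proof, and your verification --- total nullity of $N$ from condition (1) of Definition \ref{def:gen_Rob}, the identification ${}^\C K = N \cap \overline{N}$ via the inclusion ${}^\C K \subseteq N \cap \overline{N}$ together with the real-index-one count from Lemma \ref{lem:real_index}, and the observation that the perturbation $2\,\kappa\,\alpha$ in \eqref{eq:optical-metric} vanishes on $N \times N$ because $\kappa$ annihilates $N$ --- is exactly the routine unwinding being elided. No gaps.
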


We shall therefore re-employ the terminology used in the previous sections. In particular, any non-vanishing section of $K$ will be called an optical vector field, and any $1$-form annihilating $K^\perp$ an optical $1$-form. The definition of a complex Robinson $(m+1)$-form as a section of $\bigwedge^{m+1} \Ann(N)$ does not depend on the choice of metric in $\mbf{o}$, and neither does the notion of Robinson $3$-form. To see this, we take, for specificity, two metrics $g$ and $\wh{g}$ in $\mbf{o}$ related via
\begin{align*}
\wh{g}_{a b} & = g_{a b} + 2 \kappa_{(a} \alpha_{b)} \, ,
\end{align*}
for some $1$-form $\alpha_a$. We can choose splitting operators for $(N, K)$ for each of the metrics: $(\kappa_a, \delta_a^\alpha, \delta_a^{\bar{\alpha}} , \lambda_a)$ and $(\ell^a, \delta^a_\alpha, \delta^a_{\bar{\alpha}} , k^a)$ for $g_{a b}$, and $(\wh{\kappa}_a, \wh{\delta}_a^\alpha, \wh{\delta}_a^{\bar{\alpha}} , \wh{\lambda}_a)$ and $(\wh{\ell}^a, \wh{\delta}^a_\alpha, \wh{\delta}^a_{\bar{\alpha}} , \wh{k}^a)$ for $\wh{g}_{a b}$ such that
\begin{align}
\begin{aligned}\label{eq:spli_change}
\wh{\kappa}_a & = \kappa_a \, , & \wh{\delta}_a^\alpha & = \delta_a^\alpha  \, , & \wh{\lambda}_a & = \lambda_a + \alpha_a \, , \\
\wh{k}^a & = \beta k^a \, , & \wh{\delta}^a_\alpha & = \delta^a_\alpha - \beta \alpha_\alpha k^a \, , & \wh{\ell}^a & = \ell^a - \beta \alpha_0 k^a \, .
\end{aligned}
\end{align}
Here, $\beta = (1 + \alpha^0)^{-1}$. Let $\omega_{i j}$ be the bundle Hermitian structure for $(N, K)$. Then its associated Robinson $3$-form $\rho_{a b c} = 3 \kappa_{[a} \omega_{b c]}$ where $\omega_{a b} = \omega_{i j} \delta^i_a \delta^j_b$ remains invariant under the change \eqref{eq:spli_change}.

Generalised almost Robinson geometry arises naturally in the context of lifts of almost CR manifolds as described in Section \ref{sec:lift}.

\begin{prop}\label{prop:lift_gen}
Let $(\ul{\mc{M}}, \ul{H}, \ul{J})$ be a $(2m+1)$-dimensional almost CR manifold, and $\mc{M} := \R \times \ul{\mc{M}}$ be a trivial line bundle over  $\ul{\mc{M}}$. Then $\mc{M}$ is naturally equipped with a generalised almost Robinson structure $(N, K, \mbf{o})$ such that for any metric $g$ in $\mbf{o}$, $(\mc{M}, g, N, K)$ is a nearly Robinson structure, i.e.\ the intrinsic torsion of the corresponding almost Robinson structure $(N, K)$ on $(\mc{M},g)$ is a section of $(\slashed{\mc{G}}_{-1}^{1 \times 3})_{[-2\i:1]} \cap \slashed{\mc{G}}_{-1}^{2,0}$.
\end{prop}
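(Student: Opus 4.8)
The statement to prove is Proposition~\ref{prop:lift_gen}: that the trivial line bundle $\mc{M} = \R \times \ul{\mc{M}}$ over an almost CR manifold $(\ul{\mc{M}}, \ul{H}, \ul{J})$ carries a canonical generalised almost Robinson structure $(N, K, \mbf{o})$ whose metric representatives are all nearly Robinson. My plan is to manufacture the data $(N, K, \mbf{o})$ directly from the almost CR structure using pullbacks along the projection $\varpi: \mc{M} \to \ul{\mc{M}}$, invoke Proposition~\ref{prop:lift} to verify the nearly Robinson property for each individual metric, and then check that the equivalence class $\mbf{o}$ is genuinely a generalised almost Robinson equivalence class in the sense of Definition~\ref{def:gen_Rob}. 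Since the excerpt explicitly flags this proof as ``self-evident'', the work is really in assembling the right pieces rather than in any deep computation.

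\textbf{Construction of the data.} First I would fix, locally, a CR coframe $(\ul{\theta}^0, \ul{\theta}^\alpha, \ol{\ul{\theta}}{}^{\bar{\alpha}})$ adapted to $(\ul{H}, \ul{J})$ and set
\begin{align*}
N & := \Ann\left( \varpi^* \ul{\theta}^0, \varpi^* \ul{\theta}^\alpha \right) \, , & K & := N \cap T \mc{M} \, ,
\end{align*}
which is manifestly a complex $(m+1)$-plane distribution with $K$ the real line distribution $\Ann(\varpi^* \ul{\theta}^0)^\perp$ once a metric is chosen. For the equivalence class $\mbf{o}$, I would take precisely the metrics of the form \eqref{eq:al_Rob_met} produced by Proposition~\ref{prop:lift}, namely $g = 4\, \varpi^* \ul{\theta}^0 \, \lambda + 2\, h_{\alpha \bar{\beta}} \varpi^* \ul{\theta}^\alpha \, \varpi^* \ol{\ul{\theta}}{}^{\bar{\beta}}$, letting the Hermitian matrix $h_{\alpha \bar{\beta}}$ and the vertical $1$-form $\lambda$ range over all admissible choices. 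The key point to verify here is that $N$ is totally null for \emph{every} such $g$, which is immediate from the form of the metric since $\varpi^* \ul{\theta}^0$ and $\varpi^* \ul{\theta}^\alpha$ all annihilate $N$ and the metric pairs $\varpi^* \ul{\theta}^\alpha$ only against $\varpi^* \ol{\ul{\theta}}{}^{\bar{\beta}}$.

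\textbf{Verification of the two conditions and the nearly Robinson property.} Next I would confirm that any two metrics in $\mbf{o}$ are related as in \eqref{eq:optical-metric}. Comparing two metrics of the form \eqref{eq:al_Rob_met} built from possibly different data $(h_{\alpha\bar\beta}, \lambda)$ and $(\wh{h}_{\alpha\bar\beta}, \wh{\lambda})$, the difference is supported in the $\lambda$-term and in a conformal rescaling of the screen part, which by a direct matching reproduces the shape $\wh{g} = \e^{2\varphi}(g + 2\, \kappa\, \alpha)$ with $\kappa = g(k,\cdot)$; here the transformation law for $\lambda$ in Proposition~\ref{prop:lift} (and the analogous conformal version in Proposition~\ref{prop:conf_lift}) supplies the explicit $\varphi$ and $\alpha$. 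The nearly Robinson conclusion then follows from Proposition~\ref{prop:lift} itself, which asserts that each $(\mc{M}, g, N, K)$ with $g$ of the form \eqref{eq:al_Rob_met} is a nearly Robinson structure; combined with Proposition~\ref{prop:N_pres}, condition~\eqref{item:N_pres3}, this is exactly the statement that the intrinsic torsion is a section of $(\slashed{\mc{G}}_{-1}^{1\times 3})_{[-2\i:1]} \cap \slashed{\mc{G}}_{-1}^{2,0}$.

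\textbf{The main obstacle.} The one genuinely substantive point is the well-definedness of $\mbf{o}$ as an equivalence class independent of the choice of CR coframe: a change of adapted coframe \eqref{eq:CR_transf} by $(\ul{\varphi}, \ul{\psi}_\beta{}^\alpha, \ul{\phi}^\alpha)$ with $\ul{\psi}$ \emph{not} unitary alters the metric in a way that is \emph{not} of the form \eqref{eq:optical-metric}, since it deforms the screen bundle Hermitian form by a genuine $\mbf{GL}(m,\C) \bmod \U(m)$ element. Thus I must either restrict to unitary $\ul{\psi}$ (as in Proposition~\ref{prop:lift}, where non-unitary changes are absorbed into the choice of $h_{\alpha\bar\beta}$) or, more invariantly, define $\mbf{o}$ through the distribution data $(N, K)$ directly and observe that every metric making $N$ null and inducing a compatible screen structure lies in a single $\mbf{o}$-class. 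The cleanest route is to note that $N$ and $K$ are fixed as distributions, that $\mbf{o}$ is defined as the class of all Lorentzian metrics for which $N$ is null and which arise via \eqref{eq:al_Rob_met}, and that the freedom in $h_{\alpha\bar\beta}$ and $\lambda$ precisely generates the $(\e^{2\varphi}, \alpha)$-freedom of \eqref{eq:optical-metric}; this sidesteps the coframe dependence entirely. I expect this bookkeeping, rather than any computation, to be where care is required.
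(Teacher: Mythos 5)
Your overall strategy --- build $(N,K)$ by pullback, realise $\mbf{o}$ as a family of lifts in the sense of Proposition~\ref{prop:lift}, and deduce the nearly Robinson property metric-by-metric from Propositions~\ref{prop:lift} and~\ref{prop:N_pres} --- is the paper's approach, and those parts are fine. The genuine gap is in your definition of $\mbf{o}$: you let the Hermitian matrix $h_{\alpha\bar{\beta}}$ range over \emph{all} admissible choices, and in your closing paragraph you assert that the freedom in $h_{\alpha\bar{\beta}}$ and $\lambda$ ``precisely generates'' the $(\e^{2\varphi},\alpha)$-freedom of \eqref{eq:optical-metric}. That is false for $m>1$. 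If $g$ and $\wh{g}$ are two lifts built from the same CR coframe with Hermitian forms $h_{\alpha\bar{\beta}}$ and $\wh{h}_{\alpha\bar{\beta}}$, then $\wh{g}=\e^{2\varphi}(g+2\,\kappa\,\alpha)$ forces $\wh{h}_{\alpha\bar{\beta}}=\e^{2\varphi}h_{\alpha\bar{\beta}}$: since $\kappa$ annihilates $K^\perp$, the term $2\,\kappa\odot\alpha$ does not affect the induced screen bundle metric, so the screen metrics of two metrics in a single $\mbf{o}$-class can differ only by a conformal factor. The space of Hermitian forms modulo scale is $\GL(m,\C)/(\R_{>0}\cdot\U(m))$, which is positive-dimensional for $m>1$, so the collection you construct is strictly larger than any one equivalence class and violates the second condition of Definition~\ref{def:gen_Rob}. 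This is exactly the $\GL(m,\C)\bmod\U(m)$ deformation you yourself flag as the obstacle, but your proposed resolution does not remove it --- it merely restates the too-large class.

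The fix, which is what the paper's proof does, is to define $\mbf{o}$ as the set of all lifts of $(\ul{\mc{M}},\ul{H},\ul{J})$ for a \emph{fixed conformal class} $[h_{\alpha\bar{\beta}}]$ of Hermitian forms, with $\lambda$ arbitrary. With that restriction your verification goes through: the residual freedom is a conformal rescaling of $h_{\alpha\bar{\beta}}$ together with an arbitrary change of $\lambda$, which is precisely the $(\e^{2\varphi},\alpha)$-freedom of \eqref{eq:optical-metric}; the distributions $N$ and $K$ are independent of the lift, so Lemma~\ref{lem:gen_al_Rob} applies; and Proposition~\ref{prop:lift} combined with Proposition~\ref{prop:N_pres} gives the stated intrinsic torsion class for every $g$ in $\mbf{o}$. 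The word ``naturally'' in the statement should be read accordingly: the generalised structure depends on the choice of $[h_{\alpha\bar{\beta}}]$, a point the paper makes explicitly in the remark following Proposition~\ref{prop:lift}.
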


\begin{proof}
This is a direct consequence of Proposition \ref{prop:lift}: the equivalence class $\mbf{o}$ of metrics on $\mc{M}$ related via \eqref{eq:optical-metric} is simply the set of all lifts of $(\ul{\mc{M}}, \ul{H}, \ul{J})$ to $\mc{M}$ for a \emph{fixed} choice of a conformal class of Hermitian forms $[h_{\alpha \bar{\beta}}]$. By construction, $(N,K)$ is clearly a nearly Robinson structure. The distributions $N$ and $K$ do not depend on the lift, and by Lemma \ref{lem:gen_al_Rob}, $(N,K)$ is an almost Robinson structure on $(\mc{M},g)$.
\end{proof}

The key idea of the next theorem is that it allows us to construct families of Lorentzian metrics equipped with almost Robinson structures sharing the same geometric properties. To be precise, choosing a metric $g$ in $\mbf{o}$ determines an almost Robinson structure on $(\mc{M},g)$, and one may ask which subbundles of the bundle of intrinsic torsions $\mc{G}$ do not depend on the choice of metric $g$ in $\mbf{o}$. We shall call those that remain invariant under such a change \emph{$\mbf{o}$-invariant} subbundles.

\begin{thm}\label{thm:gen_Rob_prop}
Let $(\mc{M}, N, K, \mbf{o})$ be a generalised almost Robinson geometry. Let $g $ be a metric in $\mbf{o}$ so that $(N, K)$ defines an almost Robinson structure for $(\mc{M},g)$ with bundle of intrinsic torsions $\mc{G}$.
\begin{enumerate}
\item The following subbundles of $\mc{G}$ are $\mbf{o}$-invariant:\\
\begin{tabular}{m{2.2cm} m{2.2cm} m{2.2cm} m{2.2cm}}
$\slashed{\mc{G}}_{-2}^{0,0}$, \\
 $\slashed{\mc{G}}_{-1}^{1,0}$, & $\slashed{\mc{G}}_{-1}^{1,1}$, & $\slashed{\mc{G}}_{-1}^{1,2}$, \\
  $\slashed{\mc{G}}_{-1}^{2,0}$, & $\slashed{\mc{G}}_{-1}^{2,1}$, &
  $(\slashed{\mc{G}}_{-1}^{1 \times 2})_{[x:y]}$, & \scriptsize{$[x:y] \in \RP^1$}, \\
  $\slashed{\mc{G}}_{-1}^{1,1} \cap \slashed{\mc{G}}_{-1}^{3,0}$, & $(\slashed{\mc{G}}_{-1}^{1 \times 3})_{[-2\i:1]}$, \\
  $\slashed{\mc{G}}_{-1}^{1,1}  \cap  \slashed{\mc{G}}_0^{1,1}$, &
  $\slashed{\mc{G}}_{-1}^{1,1}  \cap \slashed{\mc{G}}_0^{1,2}$, &
  $\slashed{\mc{G}}_{-1}^{1,1}  \cap \slashed{\mc{G}}_0^{1,3}$.
\end{tabular}\\
Any $\mbf{o}$-invariant subbundle of a generalised almost Robinson geometry that is not restricted must be an intersection of these.

\item Assuming that the generalised almost Robinson geometry is restricted, in addition the following subbundles of $\mc{G}$ are $\mbf{o}$-invariant:\\
\begin{tabular}{m{2.2cm} m{2.2cm}}
 $\slashed{\mc{G}}_{-1}^{3,0}$, \\
  $(\slashed{\mc{G}}_{-1}^{1 \times 3})_{[z:w]}$, &  \scriptsize{$[z:w] \in \CP^1$}, \\
 $\slashed{\mc{G}}_0^{1,1}$.
 \end{tabular}\\
Any $\mbf{o}$-invariant subbundle of a restricted generalised almost Robinson geometry that is not of Kerr--Schild type must be an intersection of these and the ones in (1).

\item Assuming that the generalised almost Robinson geometry is of Kerr--Schild type, in addition the following subbundles of $\mc{G}$ are $\mbf{o}$-invariant:\\
\begin{tabular}{m{2.2cm} m{2.2cm} m{2.2cm}}
$\slashed{\mc{G}}_0^{1,2}$, &  $\slashed{\mc{G}}_0^{1,3}$, & $(\slashed{\mc{G}}_{0}^{0 \times 1})_{[-2(m-1)\i:1]}$,
\end{tabular}\\
and\\
\begin{tabular}{m{2.2cm} m{2.2cm}}
$\slashed{\mc{G}}_1^{0,0}$, & when $m>2$, \\
$\slashed{\mc{G}}_{-1}^{1,1} \cap \slashed{\mc{G}}_1^{0,0}$, & when $m=2$.
 \end{tabular}\\
Any $\mbf{o}$-invariant subbundles of a generalised almost Robinson geometry of Kerr--Schild type must be an intersection of these and the ones in (1) and (2).
\end{enumerate}
\end{thm}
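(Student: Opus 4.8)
The plan is to reduce the statement to the transformation rules already computed in the proof of Theorem \ref{thm:intors-rob}. The key observation is that a change of metric \eqref{eq:optical-metric} within $\mbf{o}$ factors into two pieces: a conformal rescaling $g \mapsto \e^{2\varphi} g$, and a ``shift'' $g \mapsto g + 2\kappa\,\alpha$. The conformal piece is already controlled by the transformation rules in Section \ref{sec:conf-aRstr} leading to Theorem \ref{thm:conf-Rob}, which tells us precisely which subbundles of $\mc{G}$ are conformally invariant. Thus the entire content of the present theorem lies in analysing the effect of the shift $\wh{g}_{ab} = g_{ab} + 2\kappa_{(a}\alpha_{b)}$ on the weighted components $\breve{\gamma}_i, \breve{\epsilon}, \breve{\tau}_{ij}, \ldots, \breve{B}_{\alpha\beta}$ of the intrinsic torsion. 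First I would record, as in \eqref{eq:spli_change}, how a compatible choice of splitting operators transforms under the shift, noting that $\alpha_a$ decomposes into its components $\alpha_\alpha$, $\alpha_{\bar\alpha}$ (the $\mbb{H}_K$-part) and $\alpha_0 = k^a\alpha_a$ (the $K$-part) and $\alpha^0 = \ell^a \alpha_a$ (the $L$-part).

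The main computation is then to express $\wh{\nabla}_a$, the Levi-Civita connection of $\wh{g}$, in terms of $\nabla_a$ and $\alpha_a$, and to read off how each $\breve{\cdot}$ changes. This is where the three cases of the theorem come apart. For a \emph{general} shift, $\alpha_a$ is arbitrary; for a \emph{restricted} one, $\alpha_0 = 0$; for one of \emph{Kerr--Schild type}, $\alpha_\alpha = \alpha_{\bar\alpha} = 0$, leaving only $\alpha_0$ (and $\alpha^0$, which is gauge). I expect the transformation rules to mirror closely the change-of-splitting rules \eqref{eq:basis-change} already exploited in Theorem \ref{thm:intors-rob}: indeed, the $P_+$-action there is generated by $\phi_i \in (\R^{2m})^*$, which plays the same r\^ole as the screen-space part $\alpha_i$ of the shift, while the genuinely new ingredient is the $K$-direction part $\alpha_0$. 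Concretely, I would show that the shift by $\alpha_i$ acts on the intrinsic torsion through the same nilpotent transformations as in the proof of Theorem \ref{thm:intors-rob}, so that the $\mbf{o}$-invariant subbundles available in the non-restricted case coincide with the $Q$-submodules already appearing there intersected with the conformally invariant ones of Theorem \ref{thm:conf-Rob}. This produces list (1). The extra invariants in (2) arise because setting $\alpha_0 = 0$ kills certain terms mixing the higher graded pieces (those involving $\phi^\alpha\phi_\alpha$ in the rules for $G_\alpha$, $B_{\alpha\beta}$, $\zeta_{\alpha\beta}$), freeing up $\slashed{\mc{G}}_{-1}^{3,0}$, the families $(\slashed{\mc{G}}_{-1}^{1\times3})_{[z:w]}$, and $\slashed{\mc{G}}_0^{1,1}$. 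The further invariants in (3) follow because, for Kerr--Schild shifts, only the $\alpha_0$-term survives, acting trivially on most of the grade-$0$ and grade-$1$ pieces.

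The steps, in order, would be: (i) split $\alpha_a$ into its $K$-, $\mbb{H}_K$-, and $L$-components and fix adapted splittings as in \eqref{eq:spli_change}; (ii) compute the difference tensor $\wh{\Gamma}_{ab}{}^c - \Gamma_{ab}{}^c$ of the two Levi-Civita connections in terms of $\alpha_a$ and $\varphi$ using the standard formula for the change of connection under $\wh{g} = \e^{2\varphi}(g + 2\kappa\alpha)$; (iii) project onto each $Q_0$-invariant component via the operators $\Pi_i^{j,k}$ of Appendix \ref{app:proj}, obtaining transformation rules for $\breve{\gamma}_i, \ldots, \breve{B}_{\alpha\beta}$; (iv) for each of the three cases, identify which linear combinations remain invariant, and match them against the $Q$-submodules of Theorem \ref{thm:intors-rob} and the conformal classes of Theorem \ref{thm:conf-Rob} to conclude that the listed subbundles are $\mbf{o}$-invariant and that any $\mbf{o}$-invariant subbundle is an intersection thereof. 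The argument for the final ``intersection'' assertions is the same orbit-theoretic reasoning used in Proposition 3.4 of \cite{Fino2020} and in the proof of Theorem \ref{thm:intors-rob}: a subbundle is $\mbf{o}$-invariant if and only if it is stable under the relevant group of transformations, and the stable subbundles are exactly the intersections of the maximal ones.

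The hard part will be step (iii): carefully tracking how the $L$-direction component $\alpha^0$ and the quadratic terms $\alpha^i\alpha_i$ feed into the grade-$0$ and grade-$1$ components such as $\breve{E}_i$, $\breve{G}_\alpha$, and $\breve{B}_{\alpha\beta}$, since these carry the subtle distinctions among the three cases and govern the dimension-six exceptions (where $\slashed{\mc{G}}_1^{0,0}$ must be replaced by $\slashed{\mc{G}}_{-1}^{1,1}\cap\slashed{\mc{G}}_1^{0,0}$). I anticipate these are structurally identical to the quadratic-in-$\phi$ terms already appearing in the proof of Theorem \ref{thm:intors-rob}, so the bulk of the labour can be imported from there rather than redone; the dimension-six specialisation will likewise follow from the identities $G^{\tiny{\ydskew}}_{\gamma\alpha\beta} = G^{\circ}_{\bar\gamma\alpha\beta} = 0$ recorded in that proof. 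A final check is to verify the consistency with Theorem \ref{thm:conf-Rob} by confirming that setting $\alpha_a = 0$ (pure conformal rescaling) recovers exactly the conformally invariant list, which serves as a useful internal cross-validation of the computation.
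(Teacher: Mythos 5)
Your overall strategy coincides with the paper's: any $\mbf{o}$-invariant subbundle must be conformally invariant, so one restricts attention to the list of Theorem \ref{thm:conf-Rob} and then analyses the pure shift $\wh{g}_{ab} = g_{ab} + 2\kappa_{(a}\alpha_{b)}$ via the difference tensor $Q_{ab}{}^{c}$ of the two Levi-Civita connections, exactly as in the paper's proof and its Appendix \ref{app:gen_al_Rob}. However, there is a genuine gap in your central computational claim. You assert that the shift acts on the intrinsic torsion ``through the same nilpotent transformations as in the proof of Theorem \ref{thm:intors-rob}'', so that the bulk of the labour can be imported from there. This is false: the change-of-splitting rules \eqref{eq:basis-change} involve $\phi_i$ only \emph{algebraically}, whereas the difference tensor of the two Levi-Civita connections contains first derivatives of $\alpha$ --- terms such as $(\d\alpha)_{\beta}{}^{0}$ in \eqref{eq:Q3} and $(\nabla\alpha)_{[\beta\gamma]}$ in \eqref{eq:Q7} --- which have no analogue in the $P_+$-action. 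These derivative terms are precisely what govern the case distinctions you need to establish: for instance, the invariance of $\slashed{\mc{G}}_0^{1,1}$ under restricted shifts relies on $(\wh{\nabla}\rho)_{[\alpha\beta\gamma]0}$ vanishing by skew-symmetry, the failure of $\slashed{\mc{G}}_0^{1,2}$ outside the Kerr--Schild case comes from a surviving term $\alpha_{(\alpha}\tau_{\beta)\gamma}$, and the invariance of $\slashed{\mc{G}}_1^{0,0}$ only for Kerr--Schild shifts hinges on the identity $(\d\alpha)_{\beta\gamma} = \alpha_0\,\tau_{\beta\gamma}$ valid once $\alpha = \alpha_0\kappa$ --- which also produces the dimension-six exception $\slashed{\mc{G}}_{-1}^{1,1}\cap\slashed{\mc{G}}_1^{0,0}$, since there $\tau_{\alpha\beta}$ need not vanish a priori. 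None of this can be read off from the quadratic-in-$\phi$ terms of Theorem \ref{thm:intors-rob}; it requires the explicit case-by-case evaluation of the $Q$-contractions.

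A secondary but consequential slip: you state that the restricted condition is $\alpha_0 = 0$ while the Kerr--Schild condition leaves ``only $\alpha_0$''. In the paper's conventions $\alpha(k) = k^a\alpha_a = \alpha^{0}$, so restricted means $\alpha^{0} = 0$, whereas Kerr--Schild means $\alpha_a = \alpha_0\kappa_a$ with only $\alpha_0 = \ell^a\alpha_a$ surviving (and $\alpha^0 = 0$ automatically). As written, your conditions would make Kerr--Schild incompatible with restricted, which contradicts the inclusion of the cases; since the entire trichotomy of the theorem rests on which component of $\alpha$ is killed in each case, this needs to be fixed before step (iv) can be carried out.
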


\begin{proof}
Let $g$ be a metric in $\mbf{o}$. Any subbundle of $\mc{G}$ that is invariant under changes of metrics in $\mbf{o}$ must also be conformally invariant. Thus, it is enough to consider the subbundles given in Theorem \ref{thm:conf-Rob}, and a metric $\wh{g}$ in $\mbf{o}$ related to $g$ by
\begin{align*}
\wh{g}_{a b} & = g_{a b} + 2 \kappa_{(a} \alpha_{b)} \, ,
\end{align*}
for some $1$-form $\alpha_a$. Denote by $\wh{\nabla}$ and $\nabla$ their corresponding Levi-Civita connections. Then for any $1$-form $\xi_a$, we have
\begin{align*}
\wh{\nabla}_a \xi_b = \nabla_a \xi_b - Q_{a b}{}^{c} \xi_c \, ,
\end{align*}
where $Q_{a b c} = Q_{a b}{}^{d} g_{d c}$ is given explicitly in Appendix \ref{app:gen_al_Rob}. From \eqref{eq_DD1}, \eqref{eq:Q1}, \eqref{eq:Q2} and \eqref{eq:Q3}, we immediately find that the subbundles $\slashed{\mc{G}}_{-2}^{0,0}$, $\slashed{\mc{G}}_{-1}^{1,0}$, $\slashed{\mc{G}}_{-1}^{1,1}$, $\slashed{\mc{G}}_{-1}^{1,2}$, $\slashed{\mc{G}}_{-1}^{2,0}$, $\slashed{\mc{G}}_{-1}^{2,1}$ do not depend on the choice of metric in $\mbf{o}$. The same clearly applies to the subbundles $(\slashed{\mc{G}}_{-1}^{1 \times 2})_{[x:y]}$ for any $[x:y] \in \RP^1$.

We now proceed with the remaining subbundles given in Theorem \ref{thm:conf-Rob}. Throughout, $\mr{T}$ will denote the intrinsic torsion of $(\mc{M},g,N,K)$.
\begin{itemize}
\item  Suppose $\mathring{T}$ is a section of $\slashed{\mc{G}}_{-1}^{3,0}$. Then by \eqref{eq_DD2} and \eqref{eq:Q4}, we have
\begin{align*}
(\wh{\nabla}  \rho)^{0}{}_{\beta \gamma 0}
& =  2 \i \alpha^{0} \tau_{\beta \gamma} \, .
\end{align*}
The LHS is zero if and only if either $\tau_{\alpha \beta} = 0$ or $\alpha(k) =0$. The former is equivalent to $\mathring{T}$ being a section of $\slashed{\mc{G}}_{-1}^{1,1} \cap \slashed{\mc{G}}_{-1}^{3,0}$.

\item  Suppose $\mathring{T}$ is a section of $(\slashed{\mc{G}}_{-1}^{1 \times 3})_{[z:w]}$ where $[z:w] \in \CP^1$. Then by \eqref{eq_DD2}, \eqref{eq_DD3} and \eqref{eq:Q4}, we have
\begin{align*}
 \i z ( \wh{\nabla} \rho )_{[\alpha \beta]}{}^{0}{}_{0} + w (\wh{\nabla} \rho )^{0}{}_{\alpha \beta 0}
& = ( z + 2 w \i )  \alpha^{0}  \tau_{\alpha \beta}  \, .
\end{align*}
We immediately conclude that $(\slashed{\mc{G}}_{-1}^{1 \times 3})_{[-2 \i:1]}$ is $\mbf{o}$-invariant. Suppose now $[z:w] \neq [-2\i:1]$. Then, we must have either $\tau_{\alpha \beta} = 0$ or $\alpha(k)=0$.

\item  Suppose $\mathring{T}$ is a section of $\slashed{\mc{G}}_0^{1,1}$. This subbundle is contained in $(\slashed{\mc{G}}_{-1}^{1 \times 3})_{[4\i:-1]}$, which we know is $\mbf{o}$-invariant provided either $\alpha(k)=0$ or $\tau_{\alpha \beta}=\zeta_{\alpha \beta}=0$. In addition, by skew-symmetry we find that
\begin{align*}
(\wh{\nabla} \rho )_{[\alpha \beta \gamma] 0} 
& = 0 \, ,
\end{align*}
which does not yield any further conditions. Hence, $\slashed{\mc{G}}_0^{1,1}$ is $\mbf{o}$-invariant.

\item Suppose $\mathring{T}$ is a section of $\slashed{\mc{G}}_0^{1,2}$.   This is a subbundle of $\slashed{\mc{G}}_{-1}^{1,2}$, i.e.\ $\sigma_{\alpha \beta} = 0$.  It is also contained in $(\slashed{\mc{G}}_{-1}^{1 \times 3})_{[2\i:-1]}$. So, for $\mbf{o}$-invariance, we must have
\begin{itemize}
\item[$\diamond$] either $\tau_{\alpha \beta}=\zeta_{\alpha \beta}=0$. In this case, by \eqref{eq_DD4}, \eqref{eq:Q5} and \eqref{eq:Q2}, we find that
\begin{align*}
(\wh{\nabla} \rho )_{(\alpha \beta) \gamma 0}
& = 0 \, .
\end{align*}
\item[$\diamond$] or $\alpha(k)=0$, from which we find
\begin{align*}
(\wh{\nabla} \rho )_{(\alpha \beta) \gamma 0}
& = \alpha_{(\alpha} \tau_{\beta) \gamma} \, ,
\end{align*}
which tells us that one must impose in addition $\alpha(v)=0$ for all $v \in \Gamma(K^\perp)$ for invariance.
\end{itemize}

\item Suppose $\mathring{T}$ is a section of $\slashed{\mc{G}}_0^{1,3}$. This is also a subbundle of $\slashed{\mc{G}}_{-1}^{1,2}$ and $\slashed{\mc{G}}_{-1}^{2,0}$, i.e.\ $\tau^\circ_{\alpha \bar{\beta}} = \sigma_{\alpha \bar{\beta}} = 0$. It is also contained in $\slashed{\mc{G}}_{-1}^{3,0}$, so for $\mbf{o}$-invariance, we must have
\begin{itemize}
\item[$\diamond$] either $\tau_{\alpha \beta} = 0$ in which case, by \eqref{eq_DD5}, \eqref{eq:Q6} and \eqref{eq:Q3},
\begin{align*}
\left( (\wh{\nabla} \rho )_{\bar{\alpha} \beta \gamma 0} \right)_\circ & = 0 \, .
\end{align*}
If we assume $\tau_{\alpha \beta} = 0$, then no further conditions are necessary. 
\item[$\diamond$]  or $\alpha(k)=0$. Then, again,  by \eqref{eq_DD5}, \eqref{eq:Q6} and \eqref{eq:Q3}, we have
\begin{align*}
\left( (\wh{\nabla} \rho )_{\bar{\alpha} \beta \gamma 0} \right)_\circ
& = 2 \i \left( \alpha_{\bar{\alpha}} \tau_{\beta \gamma} \right)_\circ \, ,
\end{align*}
from which we immediately conclude that $\alpha(v)=0$ for all $v \in \Gamma(K^\perp)$ for $\mbf{o}$-invariance.
\end{itemize}

\item Suppose $\mathring{T}$ is a section of $(\slashed{\mc{G}}_{0}^{0 \times 1})_{[-2(m-1)\i:1]}$. This is contained in  $(\slashed{\mc{G}}_{-1}^{1 \times 3})_{[2(m-1)\i:1]}$, which is $\mbf{o}$-invariant if $\alpha(k) = 0$ or $\tau_{\alpha \beta} = \zeta_{\alpha \beta} = 0$. In particular, by \eqref{eq_DD8}, \eqref{eq_DD9}, \eqref{eq:Q4} and \eqref{eq:Q6}, we have
\begin{align*}
2 h^{\gamma \bar{\alpha}} (\wh{\nabla} \rho )_{0 \beta \gamma \bar{\alpha}} + h^{\gamma \bar{\alpha}} (\wh{\nabla} \rho )_{\bar{\alpha} \beta \gamma 0}   & =
 - 4 (m-1)  \i  Q_{0 \beta}{}^{0} -  \i h^{\gamma \bar{\alpha}} \left( Q_{\bar{\alpha} \beta \gamma} -  Q_{\bar{\alpha} \gamma \beta } \right) \, .
\end{align*}
Comparing the two terms on the RHS shows that for invariance to hold, one needs $\alpha \wedge \kappa = 0$.

\item Suppose $\mathring{T}$ is a section of $\slashed{\mc{G}}_1^{0,0}$. In dimension greater than six, this is a subbundle of $\slashed{\mc{G}}_{-1}^{1,2}$, i.e.\ $\tau_{\alpha \beta} = 0$. Then, by \eqref{eq_DD7}, \eqref{eq:Q7}, \eqref{eq:Q4}, we have
\begin{align*}
(\wh{\nabla} \rho )_{0 \beta \gamma 0}
& =  2 \i Q_{0 [\beta \gamma]} \, .
\end{align*}
where
\begin{align*}
 Q_{0 [\beta \gamma]} & = - Q_{0 [\beta}{}^{0} \alpha_{\gamma]} - \alpha_{[\beta}  (\nabla \kappa)_{\gamma] 0}  + ( \nabla \alpha)_{[\beta \gamma]}  \, , \\
Q_{0 [\beta}{}^{0} \alpha_{\gamma]} 
 & = 
 -  \beta( \d \alpha)^{0}{}_{[\beta} \alpha_{\gamma]}   + \frac{1}{2} \beta \left( ( \nabla \kappa)_{0 [\beta}  + ( \nabla \kappa)_{[\beta 0}  \right) \alpha_{\gamma]} \alpha^0 
\end{align*}
For $\mbf{o}$-invariance, we need $\alpha(v)=0$ for all $v \in K^\perp$, i.e.\ $\alpha \wedge \kappa=0$. Then we are left with
\begin{align*}
(\wh{\nabla} \rho )_{0 \beta \gamma 0}
& =  2 \i  ( \d  \alpha )_{\beta \gamma}  = 2 \i \alpha^0 \tau_{\beta \gamma} \, ,
\end{align*}
since $\alpha_a = \alpha_0 \kappa_a$, which we know it zero. Invariance then follows.

In dimension six, $\tau_{\alpha \beta}$ is not necessarily zero. Suppose it is not. Since $\slashed{\mc{G}}_1^{0,0} \subset (\slashed{\mc{G}}_{0}^{0 \times 1})_{[-2\i:1]}$, we must have $\alpha(k)=0$, and by \eqref{eq_DD7}, \eqref{eq:Q7}, \eqref{eq:Q4}, we find
\begin{align*}
Q_{0 [\beta \gamma]} = -  Q_{0 [\beta}{}^{0} \alpha_{\gamma]} - \alpha_{[\beta} ( \nabla \kappa)_{\gamma] 0}  + ( \d \alpha )_{[\beta \gamma]} + \alpha_{0}  \tau_{\beta \gamma} \, ,
\end{align*}
and
\begin{align*}
Q_{0 [\beta}{}^{0} \alpha_{\gamma]} 
  = 
 - (\d \alpha)^{0}{}_{[\beta} \alpha_{\gamma]}  
\end{align*}
For $\mbf{o}$-invariance we must also assume $\alpha(v)=0$ for all $v \in K^\perp$. Then we are left with
\begin{align*}
(\wh{\nabla} \rho )_{0 \beta \gamma 0}  = 2 \i ( \d \alpha )_{\beta \gamma}+ 2 \i \alpha_{0} \tau_{\beta \gamma} = 4 \i \,  \alpha_{0}  \tau_{\beta \gamma}  \, ,
\end{align*}
where we have made use of the fact that $\alpha_a = \alpha_0 \kappa_a$. So for $\mbf{o}$-invariance, we must have either $\alpha_a = 0$, which we rule out, or assume in addition $\tau_{\alpha \beta} = 0$ and so $\zeta_{\alpha \beta} = 0$.
\end{itemize}
\end{proof}

\begin{rem}
The $\mbf{o}$-invariance of the subbundles $(\slashed{\mc{G}}_{-1}^{1 \times 3})_{[-2\i:1]}$ and $\slashed{\mc{G}}_0^{1,1} \cap \slashed{\mc{G}}_0^{1,2}$ in Theorem \ref{thm:gen_Rob_prop} comes as no surprise. These correspond to $N$ being preserved by the flow of any section of $K$, and the involutivity of $N$ respectively, and these geometric properties do not depend on the equivalence class $\mbf{o}$.
\end{rem}

\begin{rem}
Theorem \ref{thm:gen_Rob_prop} should be contrasted with the situation regarding generalised optical geometries, where the $\mbf{o}$-invariants are precisely the conformal invariants, namely the shear and twist, of the optical geometry of some metric $g$ in $\mbf{o}$ as pointed out in \cite{Robinson1985,Fino2020}.
\end{rem}

\subsection{Generalised Robinson geometries as $G$-structures}
Following the original definition of a generalised optical structure of \cite{Trautman1984,Trautman1985,Robinson1985,Robinson1986,Robinson1989,Trautman1999}, we express an equivalent definition of a generalised almost Robinson geometry in the following terms:
\begin{prop}\label{prop:gen-opt-G-str}
Let $\mc{M}$ be a smooth oriented $(2m+2)$-dimensional manifold. Then the following statements are equivalent.
\begin{enumerate}
\item $\mc{M}$ is endowed with a generalised Robinson structure $(K,\mbf{o})$.\label{item:gen_Rob}
\item $\mc{M}$ is endowed with a pair of distributions $K^{(1)}$ and $K^{(2m+1)}$ of rank $1$ and $2m+1$ respectively such that
\begin{align}\label{eq:filt}
K^{(1)} \subset K^{(2m+1)}
\end{align} and its associated screen bundle $K^{(2m+1)}/K^{(1)}$ is equipped with a conformal structure of Riemannian signature together with a compatible bundle complex structure.\label{item:gen_Rob-G}
\end{enumerate}
\end{prop}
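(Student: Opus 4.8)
The plan is to establish the equivalence of Proposition \ref{prop:gen-opt-G-str} by showing each of the two statements unpacks to exactly the same data, namely a filtration \eqref{eq:filt} together with a conformal Hermitian structure on its screen bundle. The key observation is that a generalised almost Robinson structure $(N,K,\mbf{o})$ is, by Lemma \ref{lem:gen_al_Rob}, an almost Robinson structure on $(\mc{M},g)$ for \emph{each} $g \in \mbf{o}$, and the relevant algebraic data transform covariantly under the metric change \eqref{eq:optical-metric}. So the whole proof is essentially a matter of tracking which pieces of Proposition \ref{prop:char-Robinson-mfld}(3) survive the passage from a fixed metric to the equivalence class $\mbf{o}$.

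First I would prove \eqref{item:gen_Rob}$\Rightarrow$\eqref{item:gen_Rob-G}. Given $(N,K,\mbf{o})$, set $K^{(1)} := K$ and $K^{(2m+1)} := K^\perp$, where the orthogonal complement is taken with respect to any $g \in \mbf{o}$. The point to verify is that $K^\perp$ does not depend on the choice of metric in $\mbf{o}$: this follows immediately from the form \eqref{eq:optical-metric}, since for $v \in \Gamma(\mc{M})$ one has $\wh{g}(k,v) = \e^{2\varphi}(g(k,v) + \kappa(k)\alpha(v) + \kappa(v)\alpha(k))$, and $\kappa(k) = g(k,k) = 0$ because $K$ is null, so $\wh{g}(k,v) = \e^{2\varphi}(g(k,v) + \kappa(v)\alpha(k))$ vanishes precisely when $g(k,v) = 0$ for all generators $k$, giving the same $K^\perp$. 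Thus the filtration \eqref{eq:filt} is well-defined. The screen bundle $K^\perp/K$ then inherits a conformal class of Riemannian bundle metrics $[h]$ (each $g \in \mbf{o}$ descends to a positive-definite $h$ on $K^\perp/K$, and these differ by the positive conformal factor $\e^{2\varphi}$ and the $\kappa$-tensor term, which dies on the quotient). Finally, $N$ induces the bundle complex structure $J$ on $K^\perp/K$ exactly as in Section \ref{sec:Robinson-optical}, via the eigenbundle decomposition ${}^\C H_K = H_K^{(1,0)} \oplus H_K^{(0,1)}$ with $H_K^{(0,1)} = N/{}^\C K$; this depends only on $N$ and $K$, not on the metric, and is compatible with each $h$, hence with $[h]$.

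For the converse \eqref{item:gen_Rob-G}$\Rightarrow$\eqref{item:gen_Rob}, I start from the filtration \eqref{eq:filt} with conformal Hermitian data $([h],J)$ on the screen bundle, and define $K := K^{(1)}$ and $N$ as the preimage in ${}^\C K^{(2m+1)}$ of the $-\i$-eigenbundle $H^{(0,1)}$ of $J$, so that $N := \{ v \in {}^\C K^{(2m+1)} : J(v + {}^\C K) = -\i (v + {}^\C K)\}$; this is the bundle analogue of the construction in Section \ref{sec:Robinson-optical}, and the same computation there shows $N$ is totally null of rank $m+1$ with ${}^\C K = N \cap \overline{N}$. The metric class $\mbf{o}$ is then constructed by choosing, for each representative $h$ of the conformal class and each choice of null line $L$ complementary to $K$ inside a splitting, a Lorentzian metric of the form $g = 2\kappa\lambda + \varpi^* h$ as in \eqref{eq:splitTM}; one checks that varying the splitting $L$ (equivalently, the transverse $1$-form) and the conformal factor produces exactly the equivalence relation \eqref{eq:optical-metric}, while $N$ remains null for every such $g$ since $N \subset {}^\C K^{(2m+1)}$ and the $h$-nullity of $H^{(0,1)}$ is preserved.

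The main obstacle I anticipate is the careful bookkeeping in the converse direction, specifically verifying that the family of metrics constructed really is a \emph{single} equivalence class under \eqref{eq:optical-metric} and that $N$ is simultaneously null for all of them, rather than merely for one canonical representative. The subtlety is that \eqref{eq:optical-metric} allows both a conformal rescaling $\e^{2\varphi}$ and an additive ``shear-free'' term $2\kappa\alpha$ with $\alpha$ essentially unconstrained off $K$; I must confirm that the additive term corresponds precisely to the freedom in choosing the complementary null line $L$ (the change-of-splitting \eqref{eq:basis-change}) and does not spoil the totally null condition $g(v,w)=0$ for $v,w \in \Gamma(N)$. This is where the computation $\wh{g}(v,w) = g(v,w) + \kappa(v)\alpha(w) + \kappa(w)\alpha(v)$ together with $\kappa(v) = \kappa(w) = 0$ for $v,w \in N \subset {}^\C K^{(2m+1)} = \Ann(\kappa)$ closes the argument; once this is seen, the two conditions of Definition \ref{def:gen_Rob} are both verified and the equivalence is complete.
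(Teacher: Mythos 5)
Your proposal is correct and follows essentially the same route as the paper: the paper reduces statement (1) to the generalised optical case by citing the equivalence from \cite{Fino2020} between $(K,\mbf{o})$ and the filtration with a conformal screen structure, and then invokes Proposition \ref{prop:equiv_Rob_conf} together with the metric-independence of the screen bundle to handle the complex structure, whereas you simply open up those two black boxes and verify the same facts by direct computation. The only cosmetic remark is that the nondegeneracy of $\wh{g}$ already forces $1+\alpha(k)\neq 0$, which is the implicit ingredient in your computation of $\wh{g}(k,v)$.
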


\begin{proof}
Recall from \cite{Fino2020} that a generalised optical structure $(K, \mbf{o})$ is equivalent to the existence of a filtration \eqref{eq:filt}, where $K^{(1)}$ is identified with $K$, together with a conformal structure on the screen bundle $K^{(2m+1)}/K^{(1)}$. Since a generalised almost Robinson structure is in particular a generalised optical structure, it suffices to exhibit a compatible bundle complex structure on $K^{(2m+1)}/K^{(1)}$. But this follows directly from Proposition \ref{prop:equiv_Rob_conf} and the fact that the screen bundle does not depend on the choice of metric in $\mbf{o}$.
\end{proof}

A generalised almost Robinson structure on a smooth manifold $\mc{M}$ can therefore be regarded as a $G$-structure where the structure group of the frame bundle of $\mc{M}$ is reduced from $\mbf{SL}(2m+2,\R)$ (or $\mbf{GL}(2m+2, \R)$ if we drop the assumption that $\mc{M}$ is oriented) to the closed Lie subgroup $H$  that stabilises the filtration \eqref{eq:filt}, together with a conformal structure and compatible bundle complex structure on the  screen bundle. One can easily check that $H$ has dimension $(m+2)^2$.

Under the assumption of real-analyticity, the generalised  almost Robinson geometry $(\mc{M}, N, K,\mbf{o})$ is integrable as a $G$-structure if and only if there exist local coordinates $(u, v, z^\alpha, \bar{z}^{\bar{\alpha}})$ on $\mc{M}$, where $u$ and $v$ are real, $z^\alpha$ complex, and $\bar{z}^{\bar{\alpha}}= \overline{z^\alpha}$ such that
\begin{enumerate}
\item $\frac{\partial}{\partial v}$ spans $K$,
\item $\d u$ annihilates $K^\perp$,
\item $(\d u, \d z^{\alpha} )$ annihilate $N$, and
\item $\mbf{o}$ contains the Minkowski metric $g = 2 \, \d u \, \d v + 2 \, \ut{h}_{\alpha \bar{\beta}} \d z^{\alpha} \, \d \overline{z}^{\bar{\beta}} $,  where $\ut{h}_{\alpha \bar{\beta}}$ is the standard Hermitian metric on $\C^m$.
\end{enumerate}

The characterisation of integrable generalised optical structure was dealt with in \cite{Robinson1985,Fino2020}. In the case of generalised almost Robinson geometries, we have the following result.
\begin{thm}\label{thm:integrable_Rob}
Let $(\mc{M},N,K,\mbf{o})$ be a generalised almost Robinson geometry with congruence of null curves $\mc{K}$. The following statements are equivalent.
\begin{enumerate}
\item There exists a torsionfree linear connection $\nabla'$ compatible with $(N,K)$ and $\mbf{o}$, \label{item:lin_conn}
\begin{align}
\nabla'_u v  & \in \Gamma(N) \, , & \mbox{for any $v \in \Gamma(N)$, $u \in \Gamma(T \mc{M})$,} \label{eq:N_comp_gen} \\
\nabla'_u g (v , w) & \,  \propto \,  g (v , w)  \, , & \mbox{for any $g \in \mbf{o}$, $v,w \in \Gamma(K^\perp)$, $u \in \Gamma(T \mc{M})$.} \label{eq:g_comp_gen} 
\end{align}
\item For any metric $g$ in $\mbf{o}$, $(\mc{M}, g , N, K)$ is a nearly Robinson manifold whose intrinsic torsion is a section of  $\slashed{\mc{G}}_{-1}^{1} \cap \slashed{\mc{G}}_0^{1,1} \cap  \slashed{\mc{G}}_0^{1,2} \cap \slashed{\mc{G}}_0^{1,3}$. In particular, it is of Robinson--Trautman or Kundt type.\label{item:non-sh-non-sh-cong}
\end{enumerate}
Further, in the neighbourhood of any point in $\mc{M}$, there exists smooth functions $u$ and $v$ such that
\begin{itemize}
\item $\frac{\partial}{\partial v}$ spans $K$,
\item $\d u$ annihilates $K^\perp$, and
\item $\mbf{o}$ contains the metric $g = 2 \,  \d u \d v +  \ul{h}$, where $\ul{h}$ is a family of conformally flat Hermitian metrics of Gray--Hervella class $\mc{W}_4$ smoothly parametrised by $u$,
\end{itemize}
if and only if any of the conditions \eqref{item:lin_conn} and \eqref{item:non-sh-non-sh-cong} holds together with the condition
\begin{align}\label{eq:W_cond}
\left( \kappa_{[a} W_{b c] [d e} \kappa_{f]} \right)_\circ & = 0 \, ,
\end{align}
for any $1$-form $\kappa$ annihilating $K^\perp$, where $W_{a b c d}$ is the Weyl tensor of any metric in $\mbf{o}$.
\end{thm}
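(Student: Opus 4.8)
The statement has two parts to establish: the equivalence of \eqref{item:lin_conn} and \eqref{item:non-sh-non-sh-cong}, and the characterisation of the integrable (flat model) case in terms of the conformally flat Hermitian foliation together with the Weyl condition \eqref{eq:W_cond}. The plan is to treat the first equivalence by direct translation of the two compatibility requirements on $\nabla'$ into conditions on the intrinsic torsion via the tensor $Q_{a b}{}^c$ governing the difference $\nabla'_a \xi_b = \nabla_a \xi_b - Q_{a b}{}^c \xi_c$, exactly as in Proposition \ref{prop:lin_conn} and Corollary \ref{cor:lin_conn}.

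\emph{First equivalence.} I would start from the general compatible connection constructed in Proposition \ref{prop:lin_conn}, whose torsion components are listed there. The requirement \eqref{eq:N_comp_gen} that $N$ be preserved (not merely up to $K$) combined with the torsion-freeness forces the vanishing of the torsion components involving the $\breve{\tau}_{ij}$, $\breve{\sigma}_{ij}$, $\breve{\zeta}_{\alpha\beta}$ and the $\breve{G}$-type quantities, while \eqref{eq:g_comp_gen}, which only demands compatibility with the \emph{conformal} class of the screen metric, allows a nonzero trace (i.e.\ $\breve{\epsilon}$ and the trace of $\breve{G}$ need not vanish). Reading Corollary \ref{cor:lin_conn} backwards, a torsion-free compatible connection exists exactly when the intrinsic torsion is a section of $\slashed{\mc{G}}_{-1}^1 \cap \slashed{\mc{G}}_0^{1,1} \cap \slashed{\mc{G}}_0^{1,2} \cap \slashed{\mc{G}}_0^{1,3}$; the key point is that this characterisation is $\mbf{o}$-invariant by Theorem \ref{thm:gen_Rob_prop}, so the condition holds for one metric in $\mbf{o}$ if and only if it holds for all. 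That these classes force $\mc{K}$ to be of Kundt or Robinson--Trautman type follows because $\breve{\tau}_{ij} = \breve{\sigma}_{ij} = 0$ (from $\slashed{\mc{G}}_{-1}^1$ together with $\slashed{\mc{G}}_0^{1,1}$ controlling $\breve{\zeta}$ and the twist) leaves only the expansion $\breve{\epsilon}$ free, matching Definitions \ref{def:alRob_Kundt} and \ref{def:alRob_RT}.

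\emph{The coordinate/integrability statement.} For the final "if and only if", the plan is to invoke Proposition \ref{prop:Rob->CR} to write $g$ locally as a lift \eqref{eq:al_Rob_met} of the almost CR leaf space, and then use the analysis of Section \ref{sec:alRob_Kundt}, in particular equation \eqref{eq:GH-Rob3}, which identifies the Gray--Hervella class of the almost Hermitian foliation with the $\slashed{\mc{G}}_0^{1,k}$-components of $\mr{T}$. The classes appearing in \eqref{item:non-sh-non-sh-cong} pin the foliation down to Gray--Hervella type $\mc{W}_4$ (Hermitian with closed Lee form), i.e.\ locally conformally K\"ahler in the degenerate sense, by Table \ref{tab-intors-GH}. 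The residual freedom is then the conformal flatness of each leaf, which is precisely what the trace-free Weyl condition \eqref{eq:W_cond} encodes: restricting $W_{abcd}$ to $K^\perp$ and projecting onto the screen bundle recovers (a multiple of) the Weyl tensor of the $2m$-dimensional Hermitian metric $\ul{h}$, whose vanishing for $2m \geq 4$ is conformal flatness. I would verify this by contracting \eqref{eq:W_cond} with the splitting operators $(\ell^a, \delta^a_\alpha, \delta^a_{\bar\alpha}, k^a)$, as in the proof of Proposition \ref{prop:Kundt_Rob_EG} where a companion Weyl condition \eqref{eq:W_sp} was handled, and reading off that the trace-free screen components reproduce the leafwise Weyl tensor. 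Conversely, a conformally flat $\mc{W}_4$ Hermitian foliation lifts via Proposition \ref{prop:lift} to a metric of the stated form, and one checks \eqref{eq:W_cond} holds.

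\textbf{Main obstacle.} The delicate step is the last one: extracting exactly \emph{conformal flatness of the leaves} from the mixed symmetry condition \eqref{eq:W_cond} on the ambient Weyl tensor. The tensor $\kappa_{[a} W_{bc][de} \kappa_{f]}$ has a subtle index symmetry, and its trace-free part (with respect to the ambient conformal metric) must be disentangled from the parts already controlled by the Kundt/Robinson--Trautman structure and the $\mc{W}_4$ condition. I expect that one must use the Kundt connection formulae \eqref{eq:Kundt_conn} to relate the ambient curvature to the leafwise curvature $\ul{R}$ of $\ul{\nabla}$, carefully tracking the expansion term $\breve{\epsilon}$ in the Robinson--Trautman case (where a conformal rescaling to Kundt type, available by \cite{Pravda2017,Fino2020}, is needed so that the leaf metric is genuinely recovered). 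Ensuring that no spurious curvature contributions from the $\lambda$-direction survive in the trace-free screen projection, and that the count of independent components matches the $2m$-dimensional Weyl tensor, is where the real care lies.
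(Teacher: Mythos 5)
Your proposal is correct and follows essentially the same route as the paper: the equivalence of \eqref{item:lin_conn} and \eqref{item:non-sh-non-sh-cong} is obtained by parametrising the difference tensor $Q_{ab}{}^{c}$ of a torsion-free connection satisfying \eqref{eq:N_comp_gen}--\eqref{eq:g_comp_gen} and reading off the surviving components of $\nabla\kappa$ and $\nabla\rho$ (with Corollary \ref{cor:lin_conn} supplying the converse direction and Theorem \ref{thm:gen_Rob_prop} the $\mbf{o}$-invariance), exactly as the paper does. For the final integrability statement the paper simply invokes Theorem 7.6 of \cite{Fino2020} together with Table \ref{tab-intors-GH}, so the extraction of leafwise conformal flatness from \eqref{eq:W_cond} that you flag as the main obstacle is delegated to that earlier result rather than recomputed.
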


\begin{proof}
We show that \eqref{item:lin_conn} implies \eqref{item:non-sh-non-sh-cong}. Let $\nabla'$ be as given in \eqref{item:lin_conn}. Note that we can reexpress \eqref{eq:g_comp_gen} equivalently as
\begin{align}
\nabla'_a g_{b c} & = \beta_a g_{b c} + 2 \, \gamma_{a (b} \kappa_{c)}  \, ,
\end{align}
for some tensor fields $\beta_a$ and $\gamma_{a b}$, and where $\kappa_a$ anniliates $K^\perp$. Now, the difference between $\nabla'$ and the Levi-Civita connection $\nabla$ for some $g$ in $\mbf{o}$ can be expressed uniquely by
\begin{align*}
\nabla'_a \alpha_b & = \nabla_a \alpha_b - Q_{a b}{}^{c} \alpha_c \, , & \mbox{for any $1$-form $\alpha_a$,}
\end{align*}
where
\begin{align*}
Q_{a b c} & = \frac{1}{2} \beta_c g_{a b} + \gamma_{c (a} \kappa_{b)} - \beta_{(a} g_{b) c} - \gamma_{(a b)} \kappa_c - \kappa_{(a} \gamma_{b) c} \, .
\end{align*}
This follows from the requirement that $\nabla'$ be also torsion-free, and \eqref{eq:g_comp_gen} holds. Now choosing splitting operators $(\ell^a , \delta^a_i , k^a ) = (\ell^a , \delta^a_\alpha ,  \delta^a_{\bar{\alpha}} , k^a)$ for $g$, we find
\begin{align*}
(\nabla \kappa)^0{}_{j} & = 0 \, , &
(\nabla \kappa)_{i j} & = \frac{1}{2} \beta^0 h_{i j} \, , &
(\nabla \kappa)_{0 j}  & = \frac{1}{2} \left( \beta_j +  \gamma^0{}_j -  \gamma_j{}^0 \right) \, ,
\end{align*}
and
\begin{align*}
(\nabla \rho )^{0}{}_{\alpha \beta 0} & = 0 \, , &
(\nabla \rho )_{\alpha \beta \gamma 0} & = 0 \, , &
(\nabla \rho )_{\bar{\alpha} \beta \gamma 0} & = 2 \i \beta_{[\beta} h_{\gamma] \bar{\alpha}} \, , &
(\nabla \rho )_{0 \beta \gamma 0}  & = \gamma_{[\beta \gamma]} \, ,
\end{align*}
and it follows immediately (see e.g.\ equations \eqref{eq:S-H1_b} and \eqref{eq:S-H2_b}) that the intrinsic torsion of the almost Robinson structure of $(\mc{M}, g, N, K)$ is a section of $\slashed{\mc{G}}_{-1}^{1} \cap \slashed{\mc{G}}_0^{1,1} \cap  \slashed{\mc{G}}_0^{1,2} \cap \slashed{\mc{G}}_0^{1,3}$. We note that this is independent of the choice of metric in $\mbf{o}$ by Theorem \ref{thm:gen_Rob_prop}.

To prove that \eqref{item:non-sh-non-sh-cong} implies \eqref{item:lin_conn}, we note that since $\slashed{\mc{G}}_0^{1,1} \cap  \slashed{\mc{G}}_0^{1,2} \cap \slashed{\mc{G}}_0^{1,3}$ is a $Q$-invariant subbundle of $\slashed{\mc{G}}_{-1}^{3,0} \cap \slashed{\mc{G}}_{-1}^{1} \cap \slashed{\mc{G}}_{-1}^{2}$, we can simply take the linear connection given in Corollary \ref{cor:lin_conn}.

The final part of the proof follows from Theorem 7.6 of \cite{Fino2020} and Table \ref{tab-intors-GH}.
\end{proof}

\begin{rem}
Consider now a generalised almost Robinson geometry $(\mc{M}, N, K, \mbf{o})$ for which the Minkowski metric $\eta$ belongs to $\mbf{o}$, i.e.\ any metric in $\mbf{o}$ is conformal to a metric of the form
\begin{align*}
g & = \eta + 2 \kappa \alpha \, ,
\end{align*}
for some optical $1$-form $\kappa$ and $1$-form $\alpha$. This means that $(N,K)$ is an almost Robinson structure for Minkowski space $(\mc{M}, \eta)$.  Using the Kerr theorem of Section \ref{sec:Kerr_theorem}, one can then generate many non-flat Robinson manifolds.
\end{rem}

In the particular case where $(\mc{M}, N, K, \mbf{o})$ is of Kerr--Schild type, we recover the original Kerr--Schild metric, considered by Kerr and Schild in \cite{Kerr2009} in dimension four, and which is an exact first-order perturbation of the Minkowski metric.

\begin{exa}[The Myers--Perry metric]\label{exa:MP}
Let us write the Minkowski metric in standard coordinates $(t, x^\alpha , y^\alpha, z )_{\alpha=1,\ldots, m}$ in dimension $2m+2$,
\begin{align*}
\eta & = - (\d t )^2 + \sum_{\alpha=1}^m \left( (\d x^\alpha)^2 +  (\d y^\alpha)^2 \right) + (\d z)^2  \, ,
\end{align*}
and let
\begin{align*}
\kappa & = \d t + \sum_{\alpha=1}^m \frac{r(x^\alpha \d x^\alpha + y^\alpha \d y^\alpha)+a_\alpha(x^\alpha \d y^\alpha - y^\alpha \d x^\alpha)}{r^2 + a_\alpha^2} + \frac{z}{r} \d r \, ,
\end{align*}
and
\begin{align*}
f & = \frac{M r^2}{1- \sum_{\alpha=1}^m  \frac{a_\alpha^2 \left( (x^\alpha)^2 + (y^\alpha)^2 \right)}{(r^2+a_\alpha^2)^2}} \frac{1}{\prod_{\alpha=1}^m (r^2 + a_\alpha^2)} \, .
\end{align*}
Here, the radial coordinate is defined by
\begin{align*}
\sum_{\alpha=1}^m \frac{(x^\alpha)^2 + (y^\alpha)^2}{r^2 + a_\alpha^2} + \frac{z^2}{r^2} & = 1 \, .
\end{align*}
Then, the Kerr-Myers-Perry metric in Kerr-Schild form is given by \cite{Myers1986}
\begin{align*}
g & = \eta + f \kappa^2 \, .
\end{align*}
Being a relative of the Kerr-NUT-(A)dS metric -- see Example \ref{exa:KerrNUTAdS} -- the Myers--Perry metric admits two sets of $2^{m-1}$ Robinson structures corresponding to two optical structures.

As shown in \cite{Mason2010}, these Robinson structures are defined by the eigenspinors of a so-called \emph{conformal Killiang-Yano $2$-form} $\xi_{ab}$ that is also closed, i.e.\ $\xi_{ab}$ satisfies the overdetermined system of linear first order partial differential equations:
\begin{align*}
\nabla_a \xi_{b c} & = \frac{2}{2m+1} g_{a [b} \nabla^d \xi_{c] d} \, .
\end{align*}
Since these Robinson structures exist for $\eta$, by the Kerr theorem, they must arise from a complex submanifold of dimension $m+1$ of twistor space -- see Section \ref{sec:Kerr_theorem}. In \cite{TaghaviChabert2017}, the construction of this submanifold is given as the locus of a system of polynomials of degree two whose coefficients are determined by the prolongation of the conformal Killing-Yano equation.
\end{exa}

\begin{exa}[Fefferman--Einstein metrics and Taub--NUT--(A)dS metrics]
	Reference \cite{Taghavi-Chabert2021} shows that the Fefferman--Einstein metric and the Taub--NUT--(A)dS metric belong to the same generalised almost Robinson geometry.
\end{exa}

\begin{exa}[Metrics of supergravity]
	The metric \eqref{eq:CW_metric} together with its Robinson $3$-form \eqref{eq:CW_Rob3f}  in Example \ref{exa:SUGRA} belongs to the equivalence class of metrics of an integrable generalised almost Robinson geometry of Kerr--Schild type, and to which the Minkowski metric also belongs.
\end{exa}

\section{Generalisation to other metric signatures and odd dimensions}\label{sec:other_sign}
The setting of the present article can be easily adapted to any pseudo-Riemannian manifold $(\mc{M},g)$ of signature $(p+1,q+1)$ for any even integer $p,q$ with $p + q =2m$, where there is also a notion of almost null structure $N$, defined to be a totally null complex distribution of rank $m$. In this general case, the real index $r$ of $N$ can take any of the values \cite{Kopczy'nski1992}
\begin{align*}
r \equiv \min(p+1,q+1) \pmod{2} \, .
\end{align*}
When $pq \neq 0$, it is therefore necessary to define an almost Robinson structure as an almost null structure \emph{of real index one}. Equivalently, these can be characterised as an optical geometry whose screen bundle is endowed with a bundle complex structure compatible with the screen bundle metric. One has to be cautious in the definition of a twist-induced almost Robinson structure since the screen bundle metric is no longer positive-definite. This difference is also reflected in the pure spinor approach, which now may be of different real indices: beside the purity condition \eqref{eq:pure}, pure spinors of real index one now satisfy further algebraic conditions \cite{Cartan1967,Kopczy'nski1992}. Other than these considerations, the properties of the intrinsic torsion given in the present article will apply to different metric signatures.

Finally, one can also define an almost Robinson structure $(N,K)$ on a $(2m+3)$-dimensional smooth Lorentzian manifold $(\mc{M}, g)$ (or its conformal analogue). Here, $N$ is a totally null  complex $(m+1)$-plane distribution, i.e.\ an almost null structure, of real index one so that $K$ is the rank-one null distribution arising from the real span of $N \cap \overline{N}$.  In dimension three, they are equivalent to optical geometries. Unlike in even dimensions, the real index has to be specified here, since generically, in odd dimensions, the real index is zero \cite{Kopczy'nski1997}. Another crucial difference is that $N$ is now strictly contained in its rank-$(m+2)$ orthogonal complement $N^\perp$, which makes the algebraic classification of its intrinsic torsion significantly more involved. For instance, one may require the integrability of either $N$ or $N^\perp$, or both. Nevertheless, these are also relevant to the study of solutions to Einstein field equations in higher dimensions as was shown in \cite{Mason2010,Taghavi-Chabert2011}. The geometry of almost null structures in odd dimensions is investigated in \cite{Taghavi-Chabert2012,TaghaviChabert2017,Taghavi-Chabert2017a,Nurowski2015}. Almost Robinson structures can also be defined in signatures $(2p+1,2q+2)$ for any $p,q$ with $pq\neq0$ as an almost null structure of real index one.

\appendix

\section{Projections}\label{app:proj}
In this appendix, we define the projections from the modules of intrinsic torsion $\mbb{G}$ to its irreducible $Q_0$-modules as given in Theorem \ref{thm-main-Rob}. We choose a Robinson $3$-form  $\rho_{a b c}$ associated to an optical $1$-form $\kappa_a$, and splitting operators $(\ell^a, \delta^a_i, k^a) = (\ell^a, \delta^a_\alpha, \delta^a_{\bar{\alpha}}, k^a)$, $(\delta^i_{\alpha} , \delta^i_{\bar{\alpha}})$, with $\kappa_a = g_{a b} k^b$. We shall be using these to convert index types, with the additional convention that if $\alpha_a$ is a $1$-form, we shall write $k^a \alpha_a = \alpha^0$ and $\ell^a \alpha_a = \alpha_0$. 

As usual, the screen space symmetric bilinear form, the Hermitian form and the complex structure will be denoted $h_{i j}$, $\omega_{i j}$ and $J_i{}^j$ respectively. Now let $\Gamma_{ab}{}^{c}$ of $\V^* \otimes \g$ and $\omega_{a b} = \omega_{i j} \delta^i_a \delta^j_b$. so that  $\rho_{a b c} = 3 \kappa_{[a} \omega_{b c]}$. We streamline notation by setting
\begin{align*}
(\Gamma \cdot \kappa)_{ab} & := - \Gamma_{ab}{}^{c} \kappa_c \, , &
(\Gamma \cdot \omega)_{abc} & := 2 \, \Gamma_{a[b} \, {}^d \omega_{c]d} \, , &
(\Gamma \cdot \rho)_{abcd} & := - 3 \, \Gamma_{a[b}{}^{e} \rho_{cd]e} \, .
\end{align*}
In particular,
\begin{align*}
(\Gamma \cdot \rho)_{abcd} = 3 (\Gamma \cdot \omega)_{a[bc} \kappa_{d]} + 3 (\Gamma \cdot \kappa)_{a[b} \omega_{c d]} \, .
\end{align*}
Note also that $(\Gamma \cdot \kappa)_{ab} k^b = 0$ and $(\Gamma \cdot \omega)_{abc} k^c = - (\Gamma \cdot \kappa)_{ac} J_b \,{}^c$. Then  it is easy to check
\begin{align}
(\Gamma \cdot \rho)_{a}{}^{0}{}_{j k} & = 0 \, , &
(\Gamma \cdot \rho)_{a 0 j k} & = (\Gamma \cdot \omega)_{a j k} + (\Gamma \cdot \kappa)_{a 0} \omega_{j k} \, , \nonumber \\
(\Gamma \cdot \rho)_{a i j k}& =  3 (\Gamma \cdot \kappa)_{a[i} \omega_{j k]} \, , &
(\Gamma \cdot \rho)_{a}{}^{0}{}_{0 k} & = - (\Gamma \cdot \kappa)_{a j} J_k{}^j \, . \label{eq:G-1_alt}
\end{align}

Let us first recall the projections from the module of intrinsic torsions to its irreducible $P_0$-modules from \cite{Fino2020}:
\begin{align}\label{eq-proj-intors-sim}
\begin{aligned}
\Pi_{-2}^0 & : \V^* \otimes \g \rightarrow \mbb{G}_{-2}^0 \, , && \Gamma_{ab}{}^{c} \mapsto \Pi_{-2}^0(\Gamma)_i := (\Gamma \cdot \kappa)^{0}{}_{i}  \, , \\
\Pi_{-1} & : \V^* \otimes \g \rightarrow \mbb{G}_{-1} \, , && \Gamma_{ab}{}^{c} \mapsto \Pi_{-1}(\Gamma)_{i j} := (\Gamma \cdot \kappa)_{i j} \, , \\
\Pi_{-1}^0 & : \V^* \otimes \g \rightarrow \mbb{G}_{-1}^0 \, , && \Gamma_{ab}{}^{c} \mapsto \Pi_{-1}^0(\Gamma) :=  \Pi_{-1}(\Gamma)_{i j} h^{i j} \, , \\
\Pi_{-1}^1 & : \V^* \otimes \g \rightarrow \mbb{G}_{-1}^1 \, , && \Gamma_{ab}{}^{c} \mapsto \Pi_{-1}^1 (\Gamma)_{ij} := \Pi_{-1}(\Gamma)_{[i j]}  \, , \\
\Pi_{-1}^2 & : \V^* \otimes \g \rightarrow \mbb{G}_{-1}^2 \, , && \Gamma_{ab}{}^{c} \mapsto \Pi_{-1}^2 (\Gamma)_{ij} := \Pi_{-1}(\Gamma)_{(i j)_\circ}  \, , \\
\Pi_0^0 & : \V^* \otimes \g \rightarrow \mbb{G}_0^0 \, , && \Gamma_{ab}{}^{c} \mapsto 
\Pi_0^0 (\Gamma)_i := (\Gamma \cdot \kappa)_{0 i}  \, .
\end{aligned}
\end{align}
With reference to equations \eqref{eq:G-1_alt}, we also define alternatives to $\Pi_{-1}$ and $\Pi_0$:
\begin{align*}
{\Pi'}_{-1}  & : \V^* \otimes \g \rightarrow \mbb{G}_{-1} : \Gamma \mapsto {\Pi'}_{-1} (\Gamma)_{i j} := (\Gamma \cdot \rho)_{i j}{}^{0}{}_{0} \, , \\
{\Pi'}_{0} & : \V^* \otimes \g \rightarrow \mbb{G}_{0} : \Gamma \mapsto {\Pi'}_{0} (\Gamma)_{i} := ( \Gamma \cdot \rho)_{0 i j k}  h^{j k} \, .
\end{align*}
We note the relation ${\Pi'}_{-1} (\Gamma)_{i j} = - \Pi_{-1} (\Gamma)_{i k} J_{j}{}^k$.

In dimension six, there are two further projections from $\V^* \otimes \g$ to the self-dual and anti-self-dual parts of $\mbb{G}_{-1}^1$ --- see \cite{Fino2020}. These will not be needed as they will be subsumed in the projections to $\mbb{G}_{-1}^{1,0} \oplus \mbb{G}_{-1}^{1,1}$ and $\mbb{G}_{-1}^{1,2}$ given below.

We shall presently introduce, for each $i,j,k$, a $Q_0$-module epimorphism $\Pi_i^{j,k} : \V^* \otimes \g \rightarrow \mbb{G}_i^{j,k}$ with the properties that $\V^* \otimes \mf{q}$ lies in the kernel of $\Pi_i^{j,k}$, and $\Pi_i^{j,k}$ descends to a projection from $\mbb{G}$ to $\mbb{G}_i^{j,k}$. By construction, the kernel of $\Pi_i^{j,k} \mod \V \otimes \mf{q}$ is precisely isomorphic to the complement $(\mbb{G}_i^{j,k})^c$ of $\mbb{G}_i^{j,k}$ in $\mbb{G}$ as $Q_0$-modules, i.e.\
\begin{align}\label{eq:proj2}
\ker \Pi_i^{j,k} / \left( \V^* \otimes \mf{q} \right) & \cong  (\mbb{G}_i^{j,k} )^c \, , & \mbox{i.e.} & & \left( \ker \Pi_i^{j,k} / \left( \V^* \otimes \mf{q} \right) \right)^c & \cong \mbb{G}_i^{j,k} \, .
\end{align}

Let us define the projections
\begin{align*}
\Pi_{-1}^{3,0} & :  \V^* \otimes \g \rightarrow \mbb{G}_{-1}^{3,0} \, , && \Gamma \mapsto \Pi_{-1}^{3,0} (\Gamma)_{j k} := \frac{1}{2} (\Gamma \cdot \rho)^0{}_{0 \ell_1 \ell_2} \left( \delta^{\ell_1}_j \delta^{\ell_2}_k - J_j \, {}^{\ell_1} J_k \, {}^{\ell_2} \right) \, , \\
\Pi_0^1 & : \V^* \otimes \g \rightarrow \mbb{G}_0^1 \, , && \Gamma\mapsto \Pi_0^1 (\Gamma)_{ijk} := \frac{1}{2} (\Gamma \cdot \rho)_{i \ell_1 \ell_2 0} \left( \delta^{\ell_1}_j \delta^{\ell_2}_k - J_j \, {}^{\ell_1} J_k \, {}^{\ell_2} \right) \, , \\
\Pi_1^{0,0} & : \V^* \otimes \g \rightarrow \mbb{G}_1^{0,0} \, , && \Gamma \mapsto \Pi_1^{0,0} (\Gamma)_{jk} :=  \frac{1}{2} (\Gamma \cdot \rho)_{0 0 \ell_1 \ell_2} \left( \delta^{\ell_1}_j \delta^{\ell_2}_k - J_j \, {}^{\ell_1} J_k \, {}^{\ell_2} \right) \, .
\end{align*}
 To guide the reader, we shall note that
\begin{align*}
 \Pi_{-1}^{3,0} (\Gamma)_{\alpha \beta} & = (\Gamma \cdot \omega)^0{}_{\alpha \beta} \, , &
 \Pi_0^1 (\Gamma)_{ijk} & =  (\Gamma \cdot \omega)_{i j k} &
 \Pi_{1}^{0,0} (\Gamma)_{jk} & = (\Gamma \cdot \omega)_{0 j k} \, .
\end{align*}
We can define the remaining projections $\Pi_{i}^{j,k}  : \V^* \otimes \g \rightarrow \mbb{G}_{i}^{j,k}$ by the properties
\begin{align}
\begin{aligned}\label{eq:proj_Qmod}
 \Pi_{-1}^{1,0} (\Gamma) & := [ \Pi_{-1}^1 (\Gamma)_{\alpha \bar{\beta}} h^{\alpha \bar{\beta}} ] = \Pi_{-1}^1 (\Gamma)_{ij} \omega^{ij} \, , \\
 \Pi_{-1}^{1,1} (\Gamma)_{i j} & := \dbl \Pi_{-1}^1 (\Gamma)_{\alpha \beta} \dbr \, , &
 \Pi_{-1}^{1,2} (\Gamma)_{i j} & := [ \left( \Pi_{-1}^1 (\Gamma)_{\alpha \bar{\beta}} \right)_\circ ] \, , \\
 \Pi_{-1}^{2,0} (\Gamma)_{i j} & := [ \Pi_{-1}^2 (\Gamma)_{\alpha \bar{\beta}} ]  \, , &
\Pi_{-1}^{2,1} (\Gamma)_{i j} & :=  \dbl \Pi_{-1}^2 (\Gamma)_{\alpha \beta}  \dbr  \, , \\
\Pi_0^{0,0} (\Gamma)_i & :=  \dbl \Pi_0^0 (\Gamma)_\alpha \dbr \, , \\
\Pi_0^{1,0} (\Gamma)_i & :=  \dbl \Pi_0^1 (\Gamma)_{\bar{\alpha} \beta \gamma} h^{\beta \bar{\alpha}} \dbr = \Pi_0^1 (\Gamma)_{j k i} h^{j k} \, , &
 \Pi_0^{1,1} (\Gamma)_{i j k} & :=  \dbl \Pi_0^1 (\Gamma)_{[\alpha \beta \gamma]} \dbr \, , \\
\Pi_0^{1,2} (\Gamma)_{i j k} & := \dbl \Pi_0^1 (\Gamma)_{(\alpha \beta) \gamma} \dbr  \, , &
\Pi_0^{1,3} (\Gamma)_{i j k} & :=  \dbl  \left( \Pi_0^1 (\Gamma)_{\bar{\alpha} \beta \gamma} \right)_\circ \dbr \, .
\end{aligned}
\end{align}
where we recall that $\dbl \cdot \dbr$ and $[ \cdot ]$ denote the real spans of the enclosed quantities.

That these are indeed projections is not too difficult to check. We also define a variant of the maps $\Pi_{-1}^{1,1}$ and $\Pi_{0}^{1,0}$ by
\begin{align*}
{\Pi'}_{-1}^{1,1} & : \V^* \otimes \g \rightarrow \mbb{G}_{-1}^{1,1} \, , \Gamma \mapsto {\Pi'}_{-1}^{1,1} (\Gamma)_{i j} := \dbl \i {\Pi'}_{-1} (\Gamma)_{[\alpha \beta]} \dbr \, , \\
{\Pi'}_{0}^{0,0} & : \V^* \otimes \g \rightarrow \mbb{G}_{0}^{0,0} \, ,  \Gamma \mapsto {\Pi'}_{0}^{0,0} (\Gamma)_{i} := \frac{1}{2m-2} {\Pi'}_{0} (\Gamma)_{i j k} \omega^{j k} =  - \frac{1}{m-1} \dbl  \i (\Gamma \cdot \rho)_{0 \alpha \beta \bar{\gamma}}  h^{\beta \bar{\gamma}} \dbr \, .
\end{align*}
One can indeed verify that these satisfy $\Pi_{-1}^{1,1} (\Gamma)_{i j}  =  {\Pi'}_{-1}^{1,1} (\Gamma)_{i j}$ and $\Pi_{0}^{1,0} (\Gamma)_{i} = {\Pi'}_{0}^{1,0} (\Gamma)_{i}$. We are now in the position to introduce the following families of maps: for any $[x:y] \in \RP^1$, $[z:w] \in \CP^1$,
\begin{align*}
(\Pi_{-1}^{0 \times 1})_{[x:y]} &:  \V^* \otimes \g \rightarrow \mbb{G}_{-1}^{0,0} \oplus  \mbb{G}_{-1}^{1,0} \, ,  \Gamma \mapsto (\Pi_{-1}^{0 \times 1})_{[x:y]} (\Gamma) := x \, \Pi_{-1}^{0,0}(\Gamma) + y \, \Pi_{-1}^{1,0}(\Gamma) \, , \\
(\Pi_{-1}^{1 \times 2})_{[x:y]} &:  \V^* \otimes \g \rightarrow \mbb{G}_{-1}^{1,2} \oplus  \mbb{G}_{-1}^{2,0} \, ,  \Gamma \mapsto (\Pi_{-1}^{1 \times 2})_{[x:y]} (\Gamma) := [ x \, \Pi_{-1}^{2,0}(\Gamma)_{\alpha \bar{\beta}} -  y \, \i \, \Pi_{-1}^{1,2}(\Gamma)_{\alpha \bar{\beta}} ] \, , \\
(\Pi_{-1}^{1 \times 3})_{[z:w]} &:  \V^* \otimes \g \rightarrow \mbb{G}_{-1}^{1,1} \oplus  \mbb{G}_{-1}^{3,0} 
 \, ,  \Gamma \mapsto (\Pi_{-1}^{1 \times 3})_{[z:w]} (\Gamma) := \dbl z \, {\Pi'}_{-1}^{1,1}(\Gamma)_{\alpha \beta} + w \, {\Pi}_{-1}^{3,0}(\Gamma)_{\alpha \beta} \dbr \, , \\
(\Pi_{0}^{0 \times 1})_{[z:w]} &:  \V^* \otimes \g \rightarrow \mbb{G}_{0}^{0,0} \oplus  \mbb{G}_{0}^{1,0}  \, , \Gamma \mapsto (\Pi_{0}^{0 \times 1})_{[z:w]} (\Gamma) := \dbl z \, {\Pi'}_{0}^{0,0}(\Gamma)_{\alpha} + w \, \Pi_{0}^{1,0}(\Gamma)_{\alpha} \dbr \, .
\end{align*}

Finally, we give alternative forms of the maps defined above in terms of algebraic relations with $J_i{}^j$, $\omega_{i j}$ and $h_{i j}$:
\begin{align*}
\begin{aligned}
\Pi_{-2}^{0,0} & : \V^* \otimes \g \rightarrow \mbb{G}_{-2}^{0.0} \, , \Gamma \mapsto \Pi_{-2}^{0,0}(\Gamma)_i := \Pi_{-2}^0(\Gamma)_i \, , \\
\Pi_{-1}^{0,0} & : \V^* \otimes \g \rightarrow \mbb{G}_{-1}^{0,0} \, ,   \Gamma \mapsto \Pi_{-1}^{0,0}(\Gamma) := \Pi_{-1}^0(\Gamma) \, , \\
\Pi_{-1}^{1,0} & : \V^* \otimes \g \rightarrow \mbb{G}_{-1}^{1,0} \, ,  \Gamma \mapsto \Pi_{-1}^{1,0} (\Gamma) := \Pi_{-1}^1 (\Gamma)_{ij} \omega^{ij}  \, , \\
\Pi_{-1}^{1,1} & : \V^* \otimes \g \rightarrow \mbb{G}_{-1}^{1,1} \, ,  \Gamma \mapsto \Pi_{-1}^{1,1} (\Gamma)_{ij} := \frac{1}{2}\Pi_{-1}^1 (\Gamma)_{k\ell} \left( \delta^k_i \delta^\ell_j - J_i \, {}^k J_j \, {}^\ell \right) \, , \\
\Pi_{-1}^{1,2} & : \V^* \otimes \g \rightarrow \mbb{G}_{-1}^{1,2} \, ,  \Gamma \mapsto \Pi_{-1}^{1,2} (\Gamma)_{ij} := \frac{1}{2}\Pi_{-1}^1 (\Gamma)_{k\ell} \left( \delta^k_i \delta^\ell_j + J_i \, {}^k J_j \,{}^\ell - \frac{2}{n} \omega_{ij} \omega^{k \ell} \right) \, , \\
\Pi_{-1}^{2,0} & : \V^* \otimes \g \rightarrow \mbb{G}_{-1}^{2,0} \, ,  \Gamma \mapsto \Pi_{-1}^{2,0} (\Gamma)_{ij} := \frac{1}{2} \Pi_{-1}^2 (\Gamma)_{k\ell} \left( \delta^k_i \delta^\ell_j + J_i \,{}^k J_j \,{}^\ell \right)   \, , \\
\Pi_{-1}^{2,1} & : \V^* \otimes \g \rightarrow \mbb{G}_{-1}^{2,1} \, ,  \Gamma \mapsto \Pi_{-1}^{2,1} (\Gamma)_{ij} := \frac{1}{2} \Pi_{-1}^2 (\Gamma)_{k\ell} \left( \delta^k_i \delta^\ell_j - J_i \,{}^k J_j \,{}^\ell \right)   \, , \\
\Pi_{-1}^{3,0} & : \V^* \otimes \g \rightarrow \mbb{G}_{-1}^{3,0} \, ,  \Gamma \mapsto \Pi_{-1}^{3,0} (\Gamma)_{j k} := \frac{1}{2} (\Gamma \cdot \rho)^0{}_{0 \ell_1 \ell_2} \left( \delta^{\ell_1}_j \delta^{\ell_2}_k - J_j \, {}^{\ell_1} J_k \, {}^{\ell_2} \right)  \, ,
\end{aligned}
\end{align*}
\begin{align*}
\begin{aligned}
\Pi_0^{0,0} & : \V^* \otimes \g \rightarrow \mbb{G}_0^{0,0} \, ,  \Gamma \mapsto 
\Pi_0^{0,0} (\Gamma)_i := \Pi_0^0 (\Gamma)_i  \, , \\
\Pi_0^{1,0} & : \V^* \otimes \g \rightarrow \mbb{G}_0^{1,0} \, ,  \Gamma \mapsto \Pi_0^{1,0} (\Gamma)_k :=  \Pi_0^1 (\Gamma)_{ijk} h^{ij} \, , \\
\Pi_0^{1,1} & : \V^* \otimes \g \rightarrow \mbb{G}_0^{1,1} \, ,  \Gamma \mapsto \Pi_0^{1,1} (\Gamma)_{ijk} :=  \frac{1}{2} \Pi_0^1 (\Gamma)_{\ell m [i} \left( \delta^\ell_j \delta^m_{k]} - J_j \,{}^\ell J_{k]} \,{}^m \right) \, , \\
\Pi_0^{1,2} & : \V^* \otimes \g \rightarrow \mbb{G}_0^{1,2} \, ,  \Gamma \mapsto \Pi_0^{1,2} (\Gamma)_{ijk} :=  \frac{1}{2} \Pi_0^1 (\Gamma)_{\ell m k} \left( \delta^\ell_{(i} \delta^m_{j)} - J_{(i} \,{}^\ell J_{j)} \,{}^m \right) \, , \\
\Pi_0^{1,3} & : \V^* \otimes \g \rightarrow \mbb{G}_0^{1,3} \, ,  \\
& \Gamma \mapsto \Pi_0^{1,3} (\Gamma)_{ijk} :=  \frac{1}{2} \Pi_0^1 (\Gamma)_{\ell m [k} \left( \delta^m_{j]} \delta^\ell_i + J_{j]} \,{}^m J_i \,{}^\ell - \frac{2}{m-1} \left( h_{j]i} h^{\ell m} - \omega_{j]i} \omega^{\ell m}\right) \right) \, , \\
\Pi_1^{0,0} & : \V^* \otimes \g \rightarrow \mbb{G}_1^{0,0} \, ,  \Gamma \mapsto \Pi_1^{0,0} (\Gamma)_{jk} :=  \frac{1}{2} (\Gamma \cdot \rho)_{0 0 \ell_1 \ell_2} \left( \delta^{\ell_1}_j \delta^{\ell_2}_k - J_j \, {}^{\ell_1} J_k \, {}^{\ell_2} \right) \, .
\end{aligned}
\end{align*}
Note that for the map $\Pi_0^{1,3}$, we use the identity $\omega^{ij} ( \Gamma \cdot \omega )_{ijk} = h^{ij} ( \Gamma \cdot \omega )_{ij\ell} J_k \,{}^\ell$, or equivalently $h^{ij} ( \Gamma \cdot \omega )_{ijk} = - \omega^{ij} ( \Gamma \cdot \omega )_{ij\ell} J_k \,{}^\ell$. Other useful identities include
\begin{align*}
\Pi_{-1}^{1,1} (\Gamma)_{ki} J_j \,{}^k & = \Pi_{-1}^1 (\Gamma)_{k[i} J_{j]} \,{}^k \, , &
\Pi_{-1}^{1,2} (\Gamma)_{ki} J_j \,{}^k & = \Pi_{-1}^1 (\Gamma)_{k\ell} \left( J_{(i} \,{}^k \delta_{j)}^\ell + \frac{1}{n-2} h_{ij} \omega^{k\ell} \right) \, , \\
\Pi_{-1}^{2,0} (\Gamma)_{ki} J_j \,{}^k & = \Pi_{-1}^2 (\Gamma)_{k[i} J_{j]} \,{}^k \, , &
\Pi_{-1}^{2,1} (\Gamma)_{ki} J_j \,{}^k & = \Pi_{-1}^2 (\Gamma)_{k(i} J_{j)} \,{}^k \, , \\
\Pi_0^{1,0} (\Gamma)_j J_k \,{}^j & = \omega^{ij} ( \Gamma \cdot \omega )_{ijk}  \, .
\end{align*}
For the remaining modules, we record, for each $[x:y] \in \RP^1$, $[z,w] \in \CP^1$,
\begin{align*}
(\Pi_{-1}^{0 \times 1})_{[x:y]} &:  \V^* \otimes \g \rightarrow \mbb{G}_{-1}^{0,0} \oplus  \mbb{G}_{-1}^{1,0} \, ,  \Gamma \mapsto (\Pi_{-1}^{0 \times 1})_{[x:y]} (\Gamma) := x \, \Pi_{-1}^{0,0}(\Gamma) + y \, \Pi_{-1}^{1,0}(\Gamma) \, , \\
(\Pi_{-1}^{1 \times 2})_{[x:y]} &:  \V^* \otimes \g \rightarrow \mbb{G}_{-1}^{1,2} \oplus  \mbb{G}_{-1}^{2,0} \, ,  \Gamma \mapsto (\Pi_{-1}^{1 \times 2})_{[x:y]} (\Gamma) := x \, \Pi_{-1}^{1,2}(\Gamma)_{i k} J_j \, {}^k + y \, \Pi_{-1}^{2,0}(\Gamma)_{i j} \, , \\
(\Pi_{-1}^{1 \times 3})_{[z:w]} &:  \V^* \otimes \g \rightarrow \mbb{G}_{-1}^{1,1} \oplus  \mbb{G}_{-1}^{3,0} \, , \\
&  \Gamma \mapsto (\Pi_{-1}^{1 \times 3})_{[z:w]} (\Gamma) :=  \Re\left(z \, {\Pi'}_{-1}^{1,1}(\Gamma)_{i j} + w \, {\Pi}_{-1}^{3,0}(\Gamma)_{i j} \right) \\
& \qquad \qquad \qquad \qquad \qquad \qquad \qquad \qquad  + \Im\left(z \, {\Pi'}_{-1}^{1,1}(\Gamma)_{i k} + w \, \Pi_{-1}^{3,0}(\Gamma)_{i k} \right)  J_{j}{}^{k}   \, , \\
(\Pi_{0}^{0 \times 1})_{[z:w]} &:  \V^* \otimes \g \rightarrow \mbb{G}_{0}^{0,0} \oplus  \mbb{G}_{0}^{1,0} \, , \\
& \Gamma \mapsto (\Pi_{0}^{0 \times 1})_{[z:w]} (\Gamma) := \Re\left(z \, \Pi_{0}^{0,0}(\Gamma)_{i} + w \, \Pi_{0}^{1,0}(\Gamma)_{i} \right) \\
& \qquad \qquad \qquad \qquad \qquad \qquad \qquad \qquad + \Im\left(z \, \Pi_{0}^{0,0}(\Gamma)_{j} + w \, \Pi_{0}^{1,0}(\Gamma)_{j} \right)  J_{i}{}^{j} \, ,
\end{align*}
where $\Re(\cdot)$ and $\Im(\cdot)$ denote the real and imaginary parts respectively.

\section{Generalised almost Robinson geometry --- connections}\label{app:gen_al_Rob}
Let $(\mc{M}, N, K, \mbf{o})$ be a generalised almost Robinson geometry, and let $g$ and $\wh{g}$ be two metrics in $\mbf{o}$ related via
\begin{align*}
\wh{g}_{a b} & = g_{a b} + 2 \kappa_{(a} \alpha_{b)} \, ,
\end{align*}
for some $1$-form $\alpha_a$. Denote by $\wh{\nabla}$ and $\nabla$ their corresponding Levi-Civita connections. Then for any $1$-form $\nu_a$, we have
\begin{align*}
\wh{\nabla}_a \nu_b = \nabla_a \nu_b - Q_{a b}{}^{c} \nu_c \, ,
\end{align*}
where $Q_{a b c} = Q_{a b}{}^{d} g_{d c}$ is given by
\begin{multline*}
Q_{a b c} + Q_{a b}{}^{d} \alpha_{c} \kappa_{d} + Q_{a b}{}^{d} \kappa_{c} \alpha_{d} = - ( \nabla_c \kappa_{(a} ) \alpha_{b)}  - ( \nabla_c \alpha_{(a} ) \kappa_{b)} \\ +  ( \nabla_{(a}\kappa_{b)} ) \alpha_{c}  +  ( \nabla_{(a}\alpha_{b)} ) \kappa_{c} 
+ \alpha_{(a}  ( \nabla_{b)}\kappa_{c} )  + \kappa_{(a}  ( \nabla_{b)}\alpha_{c} ) 
\end{multline*}
Set $\beta = (1 + \alpha^0)^{-1}$. Contracting this expression with instances of $k^a$, $\delta^a_{\alpha}$, $\delta^a_{\bar{\alpha}}$ and $\ell^a$, and using the definitions \eqref{eq:S-H1_b} and \eqref{eq:S-H2_b} yields
\begin{align}
Q_{a}{}^{0 0} &  = 0 \, , \label{eq:Q1} \\
Q_{\alpha \beta}{}^{0}
& = - \beta \gamma_{(\alpha} \alpha_{\beta)} + \beta \alpha^0 \sigma_{\alpha \beta}  \, , \label{eq:Q2}  \\
Q_{\bar{\alpha} \beta}{}^{0}
   & = -  \frac{1}{2} \beta ( \nabla \kappa )^0{}_0 \alpha_{\beta}  - \frac{1}{2} \beta \gamma_\beta \alpha_{0} + \alpha^0 \beta ( \d \kappa )_{0 \beta}    +  \beta  ( \d \alpha)_{\beta}{}^0 \, .\label{eq:Q3} 
\end{align}
\begin{align}
Q^{0}{}_{\beta \gamma} 
& =  \alpha_{(\gamma}  \gamma_{\beta)} + \alpha^{0}  \tau_{\beta \gamma} \, , \label{eq:Q4} \\
Q_{\alpha \beta \gamma} 
& = - Q_{\alpha \beta}{}^{0} \alpha_{\gamma} + 2 \alpha_{(\alpha}  \tau_{\beta) \gamma}  + \alpha_{\gamma}  \sigma_{\alpha \beta} \, , \label{eq:Q5}\\
Q_{\bar{\alpha} \beta \gamma}
& = - Q_{\bar{\alpha} \beta}{}^{0} \alpha_{\gamma} - \alpha_{[\beta} ( \nabla \kappa)_{\gamma] \bar{\alpha}} + ( \nabla \kappa)_{\bar{\alpha} (\beta}  \alpha_{\gamma)} + \alpha_{\bar{\alpha}}  \tau_{\beta \gamma} \, , \label{eq:Q6} \\
Q_{0 \beta \gamma} & = - Q_{0 \beta}{}^{0} \alpha_{\gamma} - \alpha_{[\beta} (\nabla \kappa)_{\gamma] 0}  + ( \d \alpha )_{[\beta \gamma]} +  ( \nabla \kappa )_{0 (\beta} \alpha_{\gamma)}  + \alpha_{0}  \tau_{\beta \gamma} \label{eq:Q7} 
\end{align}
Now,
\begin{align*}
\wh{\nabla}_a \kappa_b & = \nabla_a \kappa_b - Q_{a b}{}^{c} \kappa_c \, , &
\wh{\nabla}_a \rho_{b c d} & = \nabla_a \rho_{b c d} - 3 \, Q_{a [b}{}^{e} \rho_{c d] e} \, , 
\end{align*}
so that
\begin{align}
 (\wh{\nabla} \kappa)_{a b} k^a \delta^b_i & =  \gamma_i \, , &
(\wh{\nabla}_{a b} )\delta^a_{(i} \delta^b_{j)_\circ} & = \sigma_{i j} -   Q_{(i j)_\circ}{}^0 \, , &
(\wh{\nabla}_{a b}) \delta^a_{[i} \delta^b_{j]} & = \tau_{i j}  \, , \label{eq_DD1}
\end{align}
\begin{align}
(\wh{\nabla} \rho)_{a b c d}) k^a \delta^b_\beta \delta^c_\gamma \ell^ d 
& = \zeta_{\beta \gamma} + 2 \i Q^{0}{}_{[\beta \gamma]} \, ,  \label{eq_DD2} \\
(\wh{\nabla} \rho)_{a b c d}) \delta^a_{[\alpha} \delta^b_{\beta]} k^c \ell^d & = 
- \i \tau_{\alpha \beta}  - \i  Q^{0}{}_{[\alpha \beta]}  \, ,  \label{eq_DD3} \\
(\wh{\nabla} \rho)_{a b c d}) \delta^a_\alpha \delta^b_\beta \delta^c_\gamma \ell^ d 
& = G_{\alpha \beta \gamma} + 2 \i Q_{\alpha [\beta \gamma]} \, ,  \label{eq_DD4}\\
(\wh{\nabla} \rho)_{a b c d}) \delta^a_{\bar{\alpha}} \delta^b_\beta \delta^c_\gamma \ell^ d 
& = G_{\bar{\alpha} \beta \gamma} + 2 \i Q_{\bar{\alpha} [\beta \gamma]} \, ,  \label{eq_DD5} \\
(\wh{\nabla} \rho)_{a b c d}) \ell^a \delta^b_\beta \delta^c_\gamma \ell^ d 
& = B_{\beta \gamma} + 2 \i Q_{0 [\beta \gamma]} \, ,  \label{eq_DD6} \\
(\wh{\nabla} \rho)_{a b c d})  \ell^a \delta^b_{\beta} \delta^c_{\gamma} \delta^d_{\bar{\alpha}}
& = 2 \i E_{[\beta} h_{\gamma] \bar{\alpha}} - 2 \i Q_{0 [\beta}{}^{0} h_{\gamma] \bar{\alpha}} \, ,  \label{eq_DD7} \\
h^{\gamma \bar{\alpha}} (\wh{\nabla}_a \rho_{b c d} )\ell^a \delta^b_{\beta} \delta^c_{\gamma} \delta^d_{\bar{\alpha}}
& = (m-1) \left( \i E_{\beta}  - 2 \i  Q_{0 \beta}{}^{0} \right) \, ,  \label{eq_DD8} \\
h^{\gamma \bar{\alpha}} (\wh{\nabla}_a \rho_{b c d}) \delta^a_{\bar{\alpha}} \delta^b_\beta \delta^c_\gamma \ell^ d 
& = - G_{\beta} +  \i h^{\gamma \bar{\alpha}} \left( Q_{\bar{\alpha} \beta \gamma} -  Q_{\bar{\alpha} \gamma \beta } \right) \, .  \label{eq_DD9}
\end{align}
In particular,
\begin{multline*}
- 2 h^{\gamma \bar{\alpha}} (\wh{\nabla}_a \rho_{b c d} )\ell^a \delta^b_{\beta} \delta^c_{\gamma} \delta^d_{\bar{\alpha}} - h^{\gamma \bar{\alpha}} (\wh{\nabla}_a \rho_{b c d}) \delta^a_{\bar{\alpha}} \delta^b_\beta \delta^c_\gamma \ell^ d  =  G_{\beta} -2(m-1) \i E_{\beta} \\
 + 4 (m-1)  \i  Q_{0 \beta}{}^0 -  \i h^{\gamma \bar{\alpha}} \left( Q_{\bar{\alpha} \beta \gamma} -  Q_{\bar{\alpha} \gamma \beta } \right) \, .
\end{multline*}

%%%%%%%%%%%%%%%%%%%%%%%%%%%%%%%
%
% Bibliography
%
% Uncomment the following lines
% if package biblatex not used
%
%%%%%%%%%%%%%%%%%%%%%%%%%%%%%%%

%\bibliographystyle{alpha}
%\bibliography{/mnt/c/Users/tagha/Dropbox/Research/Bibliography/biblio}

\printbibliography

\end{document}